\pgfplotsset{compat=1.13}
\newcommand\C{{\mathbb C}}
\newcommand\R{{\mathbb R}}
\newcommand\PP{{\mathbb P}}
\newtheorem{theorem}{Theorem}[section]
\newtheorem{lemma}[theorem]{Lemma}
\newtheorem{proposition}[theorem]{Proposition}
\newtheorem{corollary}[theorem]{Corollary}
\theoremstyle{definition}
\newtheorem{definition}[theorem]{Definition}
\newtheorem{remark}[theorem]{Remark}
\newtheorem{ipotesi}[theorem]{Assumption}
\title{The masterpieces of John Forbes Nash Jr.}
\author{Camillo De Lellis\\
delellis@math.uzh.ch\\ \\
Institut f\"ur Mathematik\\ 
Universit\"at Z\"urich\\}
\begin{document}

\maketitle

\tableofcontents

\chapter*{Introduction}

John Nash has written very few papers: if for each mathematician in the 20th century we were to divide the depth, originality, and impact of the corresponding production by the number of works, he would most likely be on top of the list, and even more so if we were to divide by the number of pages. In fact all his fundamental contributions can be stated in very few lines: certainly another measure of his genius, but making any survey of his theorems utterly useless.
Discussing the impact of Nash's work is certainly redundant, since all his fundamental contributions have already generated a large literature and an impressive number of surveys and lecture notes.  ``Reworking'' his proofs in my own way, or giving my personal perspective, would be of little interest: much better mathematicians have already developed deep and well-known theories from his seminal papers.

\medskip

When I wask asked to write this contribution to the Abel Volumes I felt enormously honored, but precisely for the reasons listed above it took me very little to realize how difficult it would have been to write something even modestly useful.
This note is therefore slightly unusual: I have just tried to rewrite the original papers in a more modern language while adhering as much as possible to the original arguments. In fact Nash used often a rather personal notation and wrote in a very informal way, here and there a few repetitions can be avoided and the discussions of some, nowadays standard, facts can be removed.  In a sense my role has been simply that of a translator: I just hope to have been a decent one, namely that I have not introduced (too many) errors and wrong interpretations. In particular I hope that these notes might save some time to those scholars who want to work out the details of Nash's original papers, although I strongly encourage anybody to read the source: any translation of any masterpiece always loses something compared to the original and the works of Nash are true masterpieces of the mathematics of the 20th century!

\medskip

These notes leave aside Nash's celebrated PhD thesis on game theory and focus on the remaining four fundamental papers that have started an equal number of revolutions in their respective topics, namely the 1952 note on real algebraic varieties, the 1954 paper on $C^1$ isometric embeddings, the 1956 subsequent work on smooth isometric embeddings and finally the 1958 H\"older continuity theorem for solutions to linear (uniformly) parabolic partial differential equations with bounded nonconstant coefficients. Even the casual reader will realize that everything can be understood up to the smallest detail with a very limited amount of knowledge: I dare say that any good graduate student in mathematics will be able to go through the most relevant arguments with little effort. 

\medskip

I have decided to leave aside the remaining works of Nash in ``pure mathematics'' either because their impact has not been as striking as that of the four mentioned above (as it is the
case for the works \cite{Nash1955,Nash1962,Nash1966}) or because, as it is the case for \cite{Nash1996}, although its impact has been major, this is mainly due to the questions raised by Nash rather than to the actual theorems proved by him. However, for completeness I have included a last chapter with a brief discussion of these remaining four (short!) notes in pure mathematics and of the ``Nash blowup''. 

\section*{Acknowledgements}

I am very grateful to Helge and Ragni for entrusting to me this portion of the Nash volume, a wonderful occasion to deepen my understanding of the mathematics of a true genius, who has had a tremendous influence in my own work.

Most of the manuscript has been written while I was visiting the CMSA at Harvard and I wish to thank Shing-Tung Yau and the staff at CMSA for giving me the opportunity to carry on my work in such a stimulating environment. 

Several friends and colleagues have offered me kind and invaluable help with various portions of this note. In particular I wish to thank
Davide Vittone for giving me several precious suggestions with the Chapters 2 and 3 and reading very carefully all the manuscript;
Gabriele Di Cerbo, Riccardo Ghiloni and J\'anos Kollar for clarifying several important points concerning Chapter 1 and pointing out
a few embarassing mistakes; Tommaso de Fernex and J\'anos Kollar for kindly reviewing a first rather approximate version of Section 5.4;
Eduard Feireisl for his suggestions on Section 5.3; Cedric Villani for allowing me to steal a couple of paragraphs from his beautiful review of \cite{Nash1958} in the Nash memorial article \cite{NashMemorial}; Francois Costantino for helping me with a delicate topological issue;
Jonas Hirsch and Govind Menon for proofreading several portions of the manuscript; Helge Holden for going through all the manuscript with
extreme care. 

\medskip

This work has been supported by the grant agreement 154903 of the Swiss National Foundation.  

\chapter{Real algebraic manifolds}

\section{Introduction}

After his famous PhD thesis in game theory (and a few companion notes on the topic) Nash directed his attention to geometry and specifically to the classical
problem of embedding smooth manifolds in the Euclidean space.\footnote{In a short autobiographical note, cf.~\cite[Ch.~2]{EssentialNash}, Nash states that he made his
important discovery while completing his PhD at Princeton. In his own words ``\ldots I was fortunate enough, besides developing the idea which led to ``NonCooperative Games'', also to make a nice discovery relating manifolds and real algebraic varieties. So, I was prepared actually for the possibility that the game theory work would not be regarded as acceptable as
a thesis in the mathematics department and then that I could realize the objective of a Ph.D. thesis with the other results.''} Consider a smooth closed manifold $\Sigma$ of dimension $n$ (where with {\em closed} we mean, as usual,
that $\Sigma$ is compact and has no boundary). A famous theorem of Whitney (cf.~\cite{Whitney1936,Whitney1944}) shows that $\Sigma$ can be embedded smoothly in $\mathbb R^{2n}$, namely that there
exists a smooth map $w: \Sigma \to \mathbb R^{2n}$ whose differential has full rank at every point (i.e., $w$ is an {\em immersion}) and which is injective (implying therefore
that $w$ is an homeomorphism of $\Sigma$ with $w (\Sigma)$). 

Clearly $w (\Sigma)$ is a smooth submanifold of $\mathbb R^{2n}$ diffeomorphic to $\Sigma$. Whitney showed also that $w$ can be perturbed smoothly to a second embedding $v$ so that
$v (\Sigma)$ is a {\em real analytic} submanifold, namely for every $p\in v (\Sigma)$ there is a neighborhood $U$ of $p$ and a real analytic map $u: U \to \mathbb R^n$ such that
$\{u =0\} = U \cap v (\Sigma)$ and $Du$ has full rank. Whitney's theorem implies, in particular, that any closed smooth manifold $\Sigma$ can be given a real analytic structure, namely an atlas $\mathcal{A}$ of charts where the changes of coordinates between pairs of charts are real analytic mappings. 

In his only note on the subject, the famous groundbreaking paper \cite{Nash1952} published in 1952, Nash gave a fundamental contribution to real algebraic geometry, showing that indeed it is possible to realize any smooth closed manifold of dimension $n$ as an {\em algebraic} submanifold of
$\mathbb R^{2n+1}$. We recall that, classically, any subset of $\mathbb R^N$ consisting of the common zeros of a collection of polynomial equations is called an {\em algebraic subvariety}. We can assign a dimension to any algebraic subvariety using a purely algebraic concept (see below) and the resulting number coincides with the usual metric definitions of
dimension for a subset of the Euclidean space (for instance with the Hausdorff dimension, see \cite[Ch.~2]{EG} for the relevant definition).
The main theorem of Nash's note is then the following.

\begin{theorem}[Existence of real algebraic structures]\label{t:alg_main}
For any closed connected smooth $n$-dimensional manifold $\Sigma$ there is a smooth embedding $v: \Sigma \to \mathbb R^{2n+1}$ such that $v (\Sigma)$ is a connected component of an $n$-dimensional
algebraic subvariety of $\mathbb R^{2n+1}$. 
\end{theorem}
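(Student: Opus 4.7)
My plan is to realize $v(\Sigma)$ as a connected component of the zero set of finitely many polynomials on $\mathbb{R}^{2n+1}$, obtained by polynomially perturbing a real analytic defining system for a Whitney embedding of $\Sigma$ and using the $C^1$-stability of transverse zeros.

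First, by the analytic refinement of Whitney's theorem recalled above, I fix a real analytic embedding $\phi_0 : \Sigma \to \mathbb{R}^{2n+1}$ and let $M := \phi_0(\Sigma)$, a compact connected real analytic $n$-submanifold of codimension $n+1$. On a real analytic tubular neighborhood $U$ of $M$ I construct real analytic functions $f_1, \ldots, f_k : U \to \mathbb{R}$ (for some $k \geq n+1$) such that $M = \{f_1 = \cdots = f_k = 0\} \cap U$ and for every $p \in M$ the differentials $df_1(p), \ldots, df_k(p)$ span the $(n+1)$-dimensional conormal space $N_p^\ast M$. When the normal bundle of $M$ is trivial one may take $k = n+1$; in general a redundant system is required.

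Next, I approximate each $f_i$ in $C^1$ on a compact set $K$, with $M \subset \mathrm{int}(K) \subset K \subset U$, by a polynomial $P_i \in \mathbb{R}[x_1, \ldots, x_{2n+1}]$ via the classical Weierstrass theorem. Since transversality of the vanishing is an open condition in $C^1$, for sufficiently tight approximations the implicit function theorem gives that $M' := \{P_1 = \cdots = P_k = 0\} \cap \mathrm{int}(K)$ is a smooth $n$-submanifold which is a $C^1$-graph over $M$, hence compact, connected and diffeomorphic to $\Sigma$. Setting $V := \{P_1 = \cdots = P_k = 0\} \subset \mathbb{R}^{2n+1}$ (an algebraic subvariety), the identity $M' = V \cap \mathrm{int}(K)$ makes $M'$ open in $V$, while its compactness makes it closed in $V$; hence $M'$ is a connected component of $V$. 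Replacing $V$ by the union of its $n$-dimensional irreducible components (still algebraic) yields an $n$-dimensional algebraic subvariety of which $M'$ is a connected component, and the embedding $v := j \circ \phi_0$ (where $j : M \to M'$ is the graph diffeomorphism) proves the theorem.

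The main obstacle is the overdetermined nature of the system when the normal bundle of $M$ is nontrivial: a generic polynomial perturbation of an overdetermined analytic system has empty zero set, so one must verify that the $P_i$ still admit a transverse $n$-dimensional zero locus near $M$. I would handle this by exploiting the stable triviality $TM \oplus NM \simeq \mathbb{R}^{2n+1} \times M$: one chooses the redundant analytic system so that a distinguished subsystem of $n+1$ equations has linearly independent differentials along $M$, applies the implicit function theorem only to this subsystem, and then invokes the smallness of the approximation to guarantee that the remaining equations still vanish on the resulting manifold. Equivalently, one may first embed $\Sigma$ in a higher-dimensional Euclidean space in which the normal bundle becomes trivial, carry out the polynomial construction there with $k = n+1$, and then project back to $\mathbb{R}^{2n+1}$ along generic rational coordinates, preserving algebraicity.
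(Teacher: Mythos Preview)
You have correctly isolated the central difficulty --- the overdetermined system when the normal bundle of $M\subset\mathbb{R}^{2n+1}$ is nontrivial --- but neither of your two proposed fixes closes the gap.

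Fix (a) fails on two counts. First, a ``distinguished subsystem of $n+1$ equations with linearly independent differentials along all of $M$'' exists precisely when the normal bundle admits a global frame, i.e.\ is trivial; stable triviality of $T M\oplus N M$ does not provide this. Second, and more fundamentally, even if such a subsystem existed, applying the implicit function theorem to $P_1,\ldots,P_{n+1}$ produces a manifold $M'$ on which the remaining $P_{n+2},\ldots,P_k$ are \emph{small}, not \emph{zero}. Smallness of a polynomial on a compact set never forces it to vanish there, so $M'\cap V$ will generically be empty --- exactly the phenomenon you yourself flagged.

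Fix (b) is the right instinct (and is, in outline, what Nash does), but the phrase ``project back to $\mathbb{R}^{2n+1}$ along generic rational coordinates, preserving algebraicity'' hides the entire content of the theorem. The image of a real algebraic set under a linear projection is in general only semi-algebraic, not algebraic; one must argue carefully (via the complexification and a proper finite projection, as in the paper's proof of Theorem~\ref{t:alg_main_2}) that for a generic projection the image is genuinely Zariski-closed and that $M'$ remains a connected component after projecting. This is nontrivial and is where the real work lies.

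For comparison, Nash's route avoids defining functions for $M$ altogether. He polynomially approximates the normal-projection matrix $\mathbf K(x)$ and the displacement $\pi(x)-x$, then uses the factorization of the characteristic polynomial of the approximating matrix to write down a map whose zero set is the perturbed manifold. This map is not polynomial in $x$ (the factor $R_x$ depends analytically on $x$), so he lifts to $\mathbb{R}^{m+n}$ by adjoining the coefficients of $R_x$ as new variables, obtaining a genuinely polynomial system there whose zero set has $\Psi(\Gamma)$ as an \emph{isolated} connected component. Only then does he project down to $\mathbb{R}^{2n+1}$, supplying the complexification argument to preserve algebraicity. The key device --- replacing a local trivialization of the normal bundle by the globally defined projection matrix and its characteristic polynomial --- is precisely what circumvents the obstruction you ran into.
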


It turns out that for any point $p\in v (\Sigma)$ there is a neighborhood $U$ such that $U\cap v (\Sigma)$ is the zero set of $n+1$ polynomials with linearly independent gradients.  
In his note Nash proved also the following approximation statement, see Theorem~\ref{t:alg_approx}: any smooth embedding $w: \Sigma \to \mathbb R^m$ can be smoothly approximated by an embedding $\bar v$ so that $\bar v (\Sigma)$ is a portion of an $n$-dimensional algebraic subvariety of $\mathbb R^{m}$.
However, in order to achieve the stronger property in Theorem~\ref{t:alg_main}, namely that $\bar v (\Sigma)$ is a {\em connected component} of the subvariety, Nash's argument needs to increase  the target. 
He conjectured that this is not necessary, cf.~\cite[p.~420]{Nash1952}, a fact which was proved much later by Akbulut and King, see \cite{AK}. He also conjectured the existence of a smooth embedding
$z$ (in {\em some} Euclidean space $\mathbb R^N$) such that $z (\Sigma)$ is the whole algebraic subvariety, not merely a connected component, and this was proved by Tognoli in 
\cite{Tognoli}. Both \cite{Tognoli} and \cite{AK} build upon a previous work of Wallace, \cite{Wallace}.

As it happens for the real analytic theorem of Whitney, it follows from Theorem~\ref{t:alg_main} that any smooth closed manifold can be given a real algebraic structure, see below for the precise definition. In his note Nash proved also that such structure is indeed unique, cf.~Theorem~\ref{t:alg_uniq}. 

\medskip

As already mentioned in the previous paragraph, Nash left a few conjectures and open questions in his paper, which were subsequently resolved through the works of Wallace, Tognoli, and Akbulut and King: we refer the reader to King's paragraph in Nash's memorial article \cite{NashMemorial} for further details. The ideas of his paper have generated a large body of literature in real algebraic geometry and terms like Nash manifolds, Nash functions, and Nash rings are commonly used to describe some of the objects arising from his argument for Theorem~\ref{t:alg_main}, see for instance \cite{BCR,Shiota}.

\section{Real algebraic structures and main statements}

Following Nash we introduce a suitable algebraic structure on closed real analytic manifolds $\Sigma$. In \cite{Nash1952} such structures are called {\em real algebraic manifolds}. Since however nowadays  the latter expression is used for a different object, in order to avoid confusion and to be consistent with the current terminology, we will actually use the
term ``Nash manifolds'' for the objects introduced by Nash.

Note that, by the classical Whitney's theorem recalled in the previous section, there is no loss of generality in assuming
the existence of a real analytic atlas for any differentiable manifold $\Sigma$.
The notion of Nash manifold allows Nash to recast Theorem~\ref{t:alg_main} in an equivalent form. The latter
will be given in this section, together with several other interesting conclusions, whose proofs will all be postponed to the next sections. 

\begin{definition}[Basic sets]\label{d:basic}
Any finite collection $\{f_1, \ldots , f_N\}$ of smooth real valued functions over $\Sigma$ is called a {\em basic set}
if the map $f = (f_1, \ldots, f_N)$ is an embedding of $\Sigma$ into $\mathbb R^N$. 
\end{definition}

\begin{definition}[Nash manifolds]\label{d:alg_manifold}
A {\em Nash manifold} is given by a pair $(\Sigma, \mathscr{R})$ where $\Sigma$ is a real analytic manifold
of dimension $n$ and $\mathscr{R}$ a ring of real valued functions over $\Sigma$ satisfying the following requirements:
\begin{itemize}
\item[(a)] Any $f\in \mathscr{R}$ is real analytic;
\item[(b)] $\mathscr{R}$ contains a basic set;
\item[(c)] The transcendence degree of $\mathscr{R}$ must be $n$, more precisely for any collection of $n+1$ distinct elements $f_1, \ldots, f_{n+1}\in \mathscr{R}$ there is a nontrivial polynomial $P$ in $n+1$ variables such that $P (f_1, \ldots , f_{n+1})=0$;
\item[(d)] $\mathscr{R}$ is maximal in the class of rings satisfying (a), (b), (c).
\end{itemize}
\end{definition}

An important (and not difficult) fact following from the definitions is that the algebraic structure of the ring determines
in a suitable sense the manifold $\Sigma$ and hence that the structure as Nash manifold is essentially
unique for every $\Sigma$. 

\begin{proposition}[Algebraic description of Nash manifolds]\label{p:alg_uniq}
On any Nash manifold $(\Sigma, \mathscr{R})$ there is a one-to-one correspondence between maximal ideals of
$\mathscr{R}$ and points of $\Sigma$, more precisely:
\begin{itemize}
\item[(I)] $\mathscr{I} \subset \mathscr{R}$ is a maximal ideal if and only if 
$\mathscr{I} = \{f\in \mathscr{R}: f (p) = 0\}$ for some $p\in \Sigma$.
\end{itemize}
Moreover, if $(\Sigma_1, \mathscr{R}_1)$
and $(\Sigma_2, \mathscr{R}_2)$ are two Nash manifolds, then a map $\phi: \mathscr{R}_1 \to \mathscr{R}_2$ is
a ring isomorphism if and only if there is a real analytic diffeomorphism $\varphi: \Sigma_1 \to \Sigma_2$ such that
$\phi (f) = f \circ \varphi^{-1}$ for any $f \in \mathscr{R}_1$. 
\end{proposition}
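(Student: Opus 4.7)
My plan proceeds in three stages: extracting two closure properties from the maximality axiom (d), proving (I), and finally building the diffeomorphism in (II). For the first stage, given any $c\in\R$, the ring generated by $\mathscr{R}\cup\{c\}$ preserves (a) and (b) trivially, and preserves (c) because $c$ is algebraic over $\R$; maximality then forces $\R\subset\mathscr{R}$. An identical argument based on the relation $g\cdot(1/g)-1=0$ shows that whenever $g\in\mathscr{R}$ does not vanish anywhere on $\Sigma$, the reciprocal $1/g$ lies in $\mathscr{R}$.

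Part (I) follows from these closures together with compactness. Evaluation at $p$ surjects $\mathscr{R}$ onto $\R$ (since $\R\subset\mathscr{R}$), so its kernel $\mathscr{I}_p$ is maximal, and distinct points yield distinct $\mathscr{I}_p$ by injectivity of the basic set. Conversely, if a maximal ideal $\mathscr{I}$ is not of this form, for every $p\in\Sigma$ I can pick $g_p\in\mathscr{I}$ with $g_p(p)\neq 0$; continuity makes $g_p$ nonzero on a neighborhood $U_p$, and compactness of $\Sigma$ extracts a finite subcover $\{U_{p_i}\}_{i=1}^k$. Then $g=\sum_i g_{p_i}^2\in\mathscr{I}$ vanishes nowhere, so $1/g\in\mathscr{R}$ by the reciprocal closure, forcing $1=g\cdot(1/g)\in\mathscr{I}$, a contradiction.

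For the nontrivial direction of (II), let $\phi:\mathscr{R}_1\to\mathscr{R}_2$ be a ring isomorphism. Since ring isomorphisms preserve maximal ideals, part (I) furnishes a bijection $\psi:\Sigma_2\to\Sigma_1$ characterized by $\phi^{-1}(\mathscr{I}_q)=\mathscr{I}_{\psi(q)}$. Passing to quotients, $\phi$ descends to a field isomorphism $\mathscr{R}_1/\mathscr{I}_{\psi(q)}\to\mathscr{R}_2/\mathscr{I}_q$; via evaluation both residue fields are canonically identified with $\R$, so this descent becomes a field automorphism of $\R$. Any such automorphism fixes $\mathbb{Q}$, preserves positivity (using $x\geq 0 \iff x=y^2$), hence preserves order, and is therefore the identity. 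Unwinding the commutative square yields the pointwise formula $\phi(f)(q)=f(\psi(q))$. For analyticity I pick a basic set $f_1,\ldots,f_N\in\mathscr{R}_1$: each $f_i\circ\psi=\phi(f_i)$ lies in $\mathscr{R}_2$ and is real analytic. Near any $p=\psi(q)$, the embedding $f=(f_1,\ldots,f_N)$ admits a real analytic left inverse on a neighborhood of its image in $\R^N$, and composing this inverse with $(\phi(f_1),\ldots,\phi(f_N))$ realizes $\psi$ as a real analytic map near $q$; the symmetric argument applied to $\phi^{-1}$ gives analyticity of $\psi^{-1}=\varphi$.

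The converse direction of (II) reduces to the formal verification that pullback along such a real analytic diffeomorphism is a ring isomorphism between the two Nash rings. I expect the main obstacle to be the constants-fixing step in the third paragraph: while the bijection $\psi$ drops out of (I) immediately, recovering the pointwise formula $\phi(f)(q)=f(\psi(q))$ relies on combining axiom (d) (to place $\R$ inside both rings) with the algebraic rigidity of $\R$ as a field. Without this identification $\psi$ would only be a set-theoretic bijection, with no structure available to promote it to a real analytic diffeomorphism.
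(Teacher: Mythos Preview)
Your proof is correct and follows essentially the same strategy as the paper. The two differences are cosmetic: where the paper shows $\phi$ fixes real constants by working inside the ring (using maximality to place $\sqrt{f}\in\mathscr{R}$ for positive $f$, hence $\phi$ preserves order on functions and therefore on constants), you descend to the residue fields and invoke the rigidity of field automorphisms of $\R$ directly; and where the paper proves analyticity of $\varphi$ by enlarging the basic sets on both sides so that the two embeddings land on a common image $S\subset\R^{N+M}$, you instead use a local real analytic left inverse of the single embedding $f$. Both variants work and the underlying ideas are the same.
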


Consider now a Nash manifold $(\Sigma, \mathscr{R})$ and recall that by Definition~\ref{d:alg_manifold}(b) we prescribe the
existence of a basic set $\mathscr{B} = \{f_1, \ldots , f_N\}\subset \mathscr{R}$: it follows that $f = (f_1, \ldots, f_N)$ is an
analytic embedding of $\Sigma$ into $\mathbb R^N$. On the other hand by Definition~\ref{d:alg_manifold}(c) there is
a set of nontrivial polynomial relations between the $f_i$'s (because $N>n$) and so it appears naturally that $f (\Sigma)$ is in
fact a subset of a real algebraic variety. Following Nash we will call $f (\Sigma)$ a {\em representation} of the corresponding
Nash manifold. 

\begin{definition}[Representations]\label{d:represent}
If $(\Sigma, \mathscr{R})$ is a Nash manifold, $\mathscr{B} = \{f_1, \ldots , f_N\} \subset \mathscr{R}$ a basic set and $f= (f_1, \ldots , f_N): \Sigma \to \mathbb R^N$, then $f (\Sigma)$ is 
called an {\em algebraic representation} of $(\Sigma, \mathscr{R})$. 
\end{definition}

In order to relate representations with algebraic subvarieties of the Euclidean space we need to introduce the concept
of {\em sheets} of an algebraic subvariety. 

\begin{definition}[Sheets]\label{d:sheets}
A sheet of a real algebraic subvariety $V\subset \mathbb R^N$ is a subset $S\subset V$ satisfying the following
requirements:
\begin{itemize}
\item[(a)] For any $p,q\in S$ there is a real analytic arc $\gamma:[0,1]\to S$ with $\gamma (0)=q$ and $\gamma (1) =p$. 
\item[(b)] $S$ is a maximal subset of $V$ with property (a).
\item[(c)] There is at least one point $p\in S$ with a neighborhood $U$ such that $U\cap V \subset S$. 
\end{itemize}
\end{definition}

Clearly, if $V\subset \mathbb R^N$ is an algebraic subvariety and $S\subset V$ a connected component which happens to be a
submanifold of $\mathbb R^N$, then $S$ is in fact a sheet of $A$. However:
\begin{itemize}
\item[(i)] there might be ``smooth'' sheets which go across singularities, for instance,
if we take $V = \{(x,y) : xy=0\}\subset \mathbb R^2$ and $S = \{(x,y): x=0\}$, cf.~Figure \ref{fig:1};
\item[(ii)] there might be sheets which are connected components of $V$ but are singular, for instance
Bernoulli's lemniscate $\{(x,y): (x^2+y^2)^2 = 2 y^2 - 2y^2\}$ is a connected subvariety of the plane
consisting of one single sheet, singular at the origin.
\end{itemize}

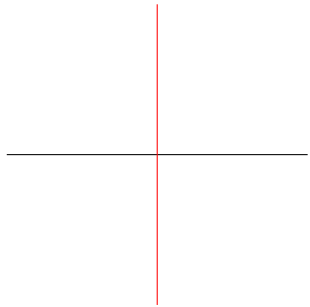
\begin{figure}[H]
\centering
\begin{tikzpicture}
\draw (-2,0) -- (2,0);
\draw [color = red] (0,2) -- (0,-2);
\end{tikzpicture}
\caption{The set $S = \{(x,y)\in \mathbb R^2:x=0\}$ is a sheet of the algebraic subvariety $V = \{(x,y): xy=0\}$. Note that, although the origin is a singular point of $V$, it is not a singular point of $S$. Moreover $S$ is {\em not} a connected component of $V$.}\label{fig:1}
\end{figure}

\begin{figure}[H]
\centering
\begin{tikzpicture}
\begin{axis}[hide axis]
\addplot[domain=-1.41:-0.1, samples=100] {sqrt(sqrt(4*x*x+1)-x*x-1)};
\addplot[domain=-0.1:0.1, samples=100] {sqrt(sqrt(4*x*x+1)-x*x-1)};
\addplot[domain=0.1:1.41, samples=100] {sqrt(sqrt(4*x*x+1)-x*x-1)};

\addplot[domain=-1.414:-1.41, samples=100] {sqrt(sqrt(4*x*x+1)-x*x-1)};
\addplot[domain=-1.4141:-1.414, samples=50] {sqrt(sqrt(4*x*x+1)-x*x-1)};
\draw[line width = 0.2mm] ({-sqrt(2)},0)--({-sqrt(2)},0.01);

\addplot[domain=1.41:1.414, samples=100] {sqrt(sqrt(4*x*x+1)-x*x-1)};
\addplot[domain=1.414:1.4141, samples=50] {sqrt(sqrt(4*x*x+1)-x*x-1)};
\draw[line width = 0.2mm] ({sqrt(2)},0)--({sqrt(2)},0.01);


\addplot[domain=-1.41:-0.1, samples=100] {-sqrt(sqrt(4*x*x+1)-x*x-1)};
\addplot[domain=-0.1:0.1, samples=100] {-sqrt(sqrt(4*x*x+1)-x*x-1)};
\addplot[domain=0.1:1.41, samples=100] {-sqrt(sqrt(4*x*x+1)-x*x-1)};

\addplot[domain=-1.41:-1.414, samples=100] {-sqrt(sqrt(4*x*x+1)-x*x-1)};
\addplot[domain=-1.4141:-1.414, samples=50] {-sqrt(sqrt(4*x*x+1)-x*x-1)};
\draw[line width = 0.2mm] ({-sqrt(2)},0) -- ({-sqrt(2)},-0.01);

\addplot[domain=1.41:1.414, samples=100] {-sqrt(sqrt(4*x*x+1)-x*x-1)};
\addplot[domain=1.414:1.4141, samples=50] {-sqrt(sqrt(4*x*x+1)-x*x-1)};
\draw[line width = 0.2mm] ({sqrt(2)},0)--({sqrt(2)},-0.01);


\end{axis}
\end{tikzpicture}
\caption{Bernoulli's lemniscate is an algebraic subvariety of $\mathbb R^2$ which consists of a single sheet. Note that it is singular at the origin.}\label{fig:2}
\end{figure}
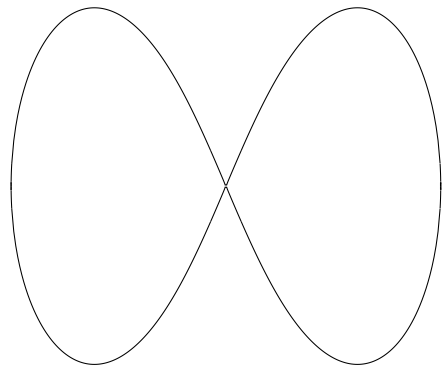

An important observation by Nash is that, by simple considerations, any representation of a Nash manifold is in fact a sheet of an irreducible algebraic subvariety with
dimension equal to that of the manifold. Recall that an algebraic subvariety $V$ is called {\em irreducible} if it cannot be written as the
union of two proper subsets which are also subvarieties. More precisely we have

\begin{proposition}[Characterization of representations]\label{p:rep=var}
A representation of a connected Nash manifold $(\Sigma, \mathscr{R})$ is always a sheet of an irreducible subvariety 
$V$ whose dimension is the same as that of $\Sigma$. Conversely, if $v: \Sigma \to \mathbb R^N$ is a real analytic embedding
of a closed real analytic manifold $\Sigma$ whose image $v (\Sigma)$ is a sheet of an algebraic subvariety, then there is a structure 
of Nash manifold $(\Sigma, \mathscr{R})$ 
for which the components $\{v_1, \ldots , v_N\}$ of $v$ form a basic subset of $\mathscr{R}$. 
\end{proposition}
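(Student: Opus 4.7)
Let $(\Sigma,\mathscr{R})$ be a connected Nash manifold with basic set $\{f_1,\ldots,f_N\}$ and $f=(f_1,\ldots,f_N)$. Set $I:=\{P\in\mathbb{R}[x_1,\ldots,x_N]:P\circ f\equiv 0\text{ on }\Sigma\}$ and $V:=\{x:P(x)=0\ \forall P\in I\}$, so that $f(\Sigma)\subset V$ tautologically. I would first show $I$ is prime by the identity principle: if $P\circ f$ and $Q\circ f$ are not identically zero on the connected real analytic manifold $\Sigma$, their zero sets have empty interior and so does the zero set of their product. Hence $V$ is irreducible. The coordinate ring $\mathbb{R}[x_1,\ldots,x_N]/I$ is isomorphic to $\mathbb{R}[f_1,\ldots,f_N]\subset\mathscr{R}$, whose transcendence degree is at most $n$ by Definition \ref{d:alg_manifold}(c) and at least $n$ since $f$ being an embedding lets one pick, on a suitable open subset of $\Sigma$, $n$ of the $f_i$'s as local coordinates, which are then automatically algebraically independent. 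So $\dim V=n$.

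\textbf{Sheet conditions.} Properties (c) and (a) of Definition \ref{d:sheets} should be reasonably direct: the singular locus of the irreducible $n$-dimensional variety $V$ has dimension strictly smaller than $n$, while $f(\Sigma)$ is $n$-dimensional, so $f(\Sigma)$ contains a smooth point $p_0$ of $V$, near which $f(\Sigma)$ and $V$ are two $n$-dimensional real analytic submanifolds with one contained in the other, hence locally equal, giving (c); property (a) follows since a connected real analytic manifold admits real analytic arcs between any two points. For the maximality (b), suppose $S\supsetneq f(\Sigma)$ satisfies (a) and choose an arc $\gamma:[0,1]\to S\subset V$ with $\gamma(0)\in f(\Sigma)$ and $\gamma(1)\notin f(\Sigma)$. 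The set $T:=\gamma^{-1}(f(\Sigma))$ is closed, because compactness of $\Sigma$ makes $f(\Sigma)$ closed in $\mathbb{R}^N$; I would aim to show $T$ is also open, so that $T=[0,1]$ by connectedness, yielding the desired contradiction. At any $t_0\in T$, since $f$ is a real analytic embedding, there exist real analytic coordinates near $f(x_0)=\gamma(t_0)$ in $\mathbb{R}^N$ straightening $f(\Sigma)$ to a coordinate $n$-plane; the transverse components of $\gamma$ are then real analytic functions of $t$ vanishing at $t_0$, and one forces them to vanish identically near $t_0$ by combining the polynomial relations cutting out $V$ with analytic continuation.

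\textbf{Converse direction.} Given $v:\Sigma\to\mathbb{R}^N$ a real analytic embedding whose image is a sheet of an algebraic subvariety, define
\[
\mathscr{R}_0:=\{g:\Sigma\to\mathbb{R}\text{ real analytic}:\exists\,P\neq 0,\ P(v_1,\ldots,v_N,g)\equiv 0\}.
\]
This is a ring because the elements algebraic over $\mathbb{R}(v_1,\ldots,v_N)$ form a ring; it contains every $v_i$, making $\{v_1,\ldots,v_N\}$ a basic set; its members are real analytic; and its transcendence degree is exactly $n$, namely the dimension of the ambient irreducible subvariety containing $v(\Sigma)$. A Zorn argument, in which the transcendence-degree bound $n$ prevents unbounded chains, then produces a ring $\mathscr{R}\supset\mathscr{R}_0$ maximal among those satisfying (a)--(c) of Definition \ref{d:alg_manifold}, which provides the desired Nash structure.

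\textbf{Main obstacle.} The delicate step is the openness of $T$ in the maximality argument. Although $V$ is globally irreducible as a real algebraic variety, its germ at a point $p_0$ may decompose into several real analytic branches, and an arc in $V$ starting in $f(\Sigma)$ could a priori peel off along one of the other branches at a singular point of $V$. Making the openness rigorous requires pairing the local straightening of $f(\Sigma)$ provided by the embedding $f$ with an analytic continuation argument on $V$, and using the compactness of $\Sigma$ to ensure that the branch selected by $f$ does not "shrink" away along $\gamma$. This analytic--algebraic interplay is the technical heart of the proof.
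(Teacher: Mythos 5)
Your forward direction starts with a genuinely different and slicker route to irreducibility than the paper: you take $I$ to be the full vanishing ideal of $f(\Sigma)$ and prove primality directly from the identity principle for real analytic functions, whereas the paper starts from the auxiliary variety cut out by the relations $P_{i_1\ldots i_{n+1}}$ and reduces to an irreducible component via the descending chain condition. Your argument for $\dim V = n$ via transcendence degree of the coordinate ring is also sound. So that part is a legitimate alternative.

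However, you have misjudged the maximality step. You correctly locate the crux (openness of $T=\gamma^{-1}(f(\Sigma))$, or equivalently that the supremal time $s$ is $1$), but your proposed fix, ``combining the polynomial relations cutting out $V$ with analytic continuation'' to control ``branches'' of $V$, is both vaguer and harder than what is actually needed, and it points in the wrong direction. The paper's resolution uses no information about $V$ whatsoever at this step: since $\gamma(s)\in f(\Sigma)$ by compactness and $\gamma([s-\delta,s])\subset f(\Sigma)$, you can form $\tilde\gamma := y\circ f^{-1}\circ\gamma$ on $[s-\delta,s]$ in local $\Sigma$-coordinates $y$, extend it by its Taylor series to a real analytic arc on $]s-\eta,s+\eta[$, push forward by $f\circ y^{-1}$ to get an arc $\bar\gamma$ in $f(\Sigma)$, and then invoke uniqueness of real analytic continuation: $\gamma$ and $\bar\gamma$ are real analytic and agree on $]s-\eta,s]$, hence agree on the whole interval, so $\gamma([0,s+\eta[)\subset f(\Sigma)$, contradiction. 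The branch-tracking you worry about never arises, because you never need to move inside $V$; the entire argument lives on $\Sigma$ via the embedding $f$.

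In the converse direction there is a genuine gap. You assert that the transcendence degree of $\mathscr{R}_0$ is ``exactly $n$, namely the dimension of the ambient irreducible subvariety containing $v(\Sigma)$,'' but this is precisely what has to be proved, and it is not automatic: the sheet hypothesis only tells you that $v(\Sigma)$ and $V$ agree in one Euclidean neighborhood, which says nothing a priori about the global dimension of $V$ or of the Zariski closure of $v(\Sigma)$ (a real irreducible variety need not be pure-dimensional, and indeed $n+1$ coordinate functions on an $n$-dimensional real analytic submanifold can perfectly well be algebraically independent unless one knows more). The paper fills this in with a two-step argument you cannot skip: first, near a point $q$ where $S$ and $V$ coincide, it picks $p$ of high algebraic dimension and applies Proposition~\ref{p:F3-6}(c) to produce a subvariety $W\subset V$ of dimension exactly $n$ with $p$ a general point; second, it shows $S\subset W$ by propagating the polynomial identities from a neighborhood of $p$ along real analytic arcs using the identity theorem. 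Only after establishing $v(\Sigma)\subset W$ with $\dim W = n$ can one conclude that any $n+1$ of the $v_i$'s satisfy a nontrivial polynomial relation, which is what makes $\{v_1,\ldots,v_N\}$ a basic set and bounds the transcendence degree. Your Zorn step at the end is fine (a union of a chain of rings of transcendence degree $\leq n$ again has transcendence degree $\leq n$, since any $n+1$ elements lie in one member of the chain), but it rests on this unproved claim.
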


The outcome of the discussion above is that Theorem~\ref{t:alg_main} can now be equivalently stated in terms of
Nash manifolds. However note that Theorem~\ref{t:alg_main} requires the representation to be more than just a sheet
of an algebraic subvariety: it really has to be a connected component. For this reason Nash introduces a special term:
a representation $v (\Sigma)$ will be called {\em proper} if it is a {\em connected component} of the corresponding algebraic subvariety in
Proposition~\ref{p:rep=var}.  Hence we can now rephrase Theorem~\ref{t:alg_main} in the following way.

\begin{theorem}[Existence of proper representations]\label{t:alg_main_2}
For any connected smooth closed differentiable manifold $\Sigma$ of dimension $n$ there is a structure of Nash manifold
$(\Sigma, \mathscr{R})$ with a basic set $\{v_1, \ldots , v_{2n+1}\} = \mathscr{B}\subset \mathscr{R}$ such that
$v (\Sigma)$ is a proper representation in $\mathbb R^{2n+1}$.
\end{theorem}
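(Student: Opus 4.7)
The plan is to combine Whitney's embedding theorem with the approximation statement Theorem~\ref{t:alg_approx}. A Whitney smooth embedding of $\Sigma$ into $\R^{2n}$, composed with the inclusion $\R^{2n}\subset\R^{2n+1}$, provides a smooth embedding $w:\Sigma\to\R^{2n+1}$. Theorem~\ref{t:alg_approx} then gives an arbitrarily $C^\infty$-close approximation $\bar v:\Sigma\to\R^{2n+1}$ whose image is a sheet of an $n$-dimensional algebraic subvariety $V\subset\R^{2n+1}$. By Proposition~\ref{p:rep=var} the components $\bar v_1,\ldots,\bar v_{2n+1}$ of $\bar v$ form a basic subset of a Nash manifold structure $(\Sigma,\mathscr R)$, so the only remaining task is to upgrade this sheet representation to a proper one.

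The strategy is to refine the approximation so that at every point $p\in\bar v(\Sigma)$ one can select $n+1$ polynomials $F^p_1,\ldots,F^p_{n+1}$ in the vanishing ideal of $V$ whose gradients at $p$ are linearly independent in $\R^{2n+1}$---exactly the property announced immediately after Theorem~\ref{t:alg_main}. Granted this, the implicit function theorem produces a small neighborhood $U_p$ of $p$ on which the zero locus $M_p=\{F^p_1=\cdots=F^p_{n+1}=0\}\cap U_p$ is a connected smooth $n$-submanifold containing $V\cap U_p$. The $n$-dimensional real analytic piece $\bar v(\Sigma)\cap U_p$ is then open in $M_p$; because $\bar v(\Sigma)$ is compact and hence closed in $\R^{2n+1}$, it is also closed in $M_p$, and the connectedness of $M_p$ forces $\bar v(\Sigma)\cap U_p=M_p=V\cap U_p$. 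Therefore $\bar v(\Sigma)$ is open in $V$, and being compact, closed, and connected it is a connected component of $V$, as required.

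The main obstacle is to secure this pointwise rank condition uniformly along the compact set $\bar v(\Sigma)$. The dimension $2n+1$ is essential here: the normal bundle of $\bar v(\Sigma)$ in $\R^{2n+1}$ has rank $n+1$, exactly matching the number of linearly independent gradient directions one needs to realize algebraically. I expect this step to proceed by first constructing, for each $p$, a real analytic choice of $n+1$ local normal directions to $\bar v(\Sigma)$; then approximating $\bar v$ and the auxiliary analytic functions realizing those directions by polynomials of sufficiently high degree, via a Weierstrass-type argument run in parallel with the approximation of Theorem~\ref{t:alg_approx}; and finally patching the local algebraic data into a single variety $V$ via a transversality argument which ensures that the gradient independence persists after passing to the polynomial approximants and is preserved at every point of the image.
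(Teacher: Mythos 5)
Your overall logical scheme in steps 1--3 is sound: if one could secure the rank condition at every point of $\bar v(\Sigma)$ (namely, $n+1$ polynomials in the vanishing ideal of $V$ with linearly independent gradients), then the implicit-function and open-closed argument you give would indeed force $\bar v(\Sigma)$ to be a connected component of $V$. However, the paper's proof does not proceed this way, and the step you outsource to a vague ``refinement and patching'' is precisely the part that Nash could not do in ambient dimension $2n+1$ and that was only resolved decades later.

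The paper's argument for Theorem~\ref{t:alg_main_2} does not start from a sheet in $\mathbb R^{2n+1}$ and try to upgrade it. Instead, it returns to the construction inside the proof of Theorem~\ref{t:alg_approx}: there, the map $\Psi$ appends to each $x\in\Gamma$ the coefficients of the polynomial $R_x$, and the resulting $\Psi(\Gamma)$ is shown to be an \emph{isolated sheet} of a variety $V\subset\mathbb R^{m+n}$ --- the isolatedness comes from the rigidity of the polynomial factorization in Lemma~\ref{l:poly_dec}, and requires the extra $n$ coordinates. One then projects onto a generic $(2n+1)$-dimensional subspace, checking three things simultaneously: the projection is an embedding on $\Psi(\Gamma)$ (Whitney's argument), the projected sheet stays disjoint from the projected rest of the variety (the measure-theoretic counting argument with the map $(x,y)\mapsto (x-y)/|x-y|$), and the projection of $V$ is still algebraic (the complexification/generic-projection lemma, which is a genuine algebro-geometric input). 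The increase of dimension followed by a carefully controlled projection is the heart of the proof.

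By contrast, you try to obtain properness directly in $\mathbb R^{2n+1}$, and the last paragraph of your proposal --- choosing normal directions, approximating them by polynomials, patching local data --- provides no mechanism to force $V$ to avoid having parasitic branches accumulating on $\bar v(\Sigma)$. Theorem~\ref{t:alg_approx} gives a sheet, and a sheet of a variety in $\mathbb R^{2n+1}$ can fail to be a connected component precisely because other pieces of $V$ may pass through points of the image, or cluster arbitrarily close. Nothing in the Weierstrass approximation of normal vector fields controls the global behaviour of the variety away from the image. In fact the paper points out explicitly (in the introduction to this chapter) that avoiding the dimension increase is a conjecture of Nash which was only settled much later by Akbulut and King: what you propose as a ``refinement'' would, if correct, prove that much stronger result. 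So the gap is not a technical omission but a missing idea: you need either the passage to $\mathbb R^{m+n}$ and the subsequent generic algebraic projection, or a genuinely new argument of Akbulut--King strength.
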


Giving up the stronger requirement of ``properness'' of the representation, Nash is able to provide an approximation
with algebraic representations of any smooth embedding, without increasing the dimension of the ambient space. As
a matter of fact Theorem~\ref{t:alg_main_2} will be proved as a corollary of such an approximation theorem, whose statement goes as follows.

\begin{theorem}[Approximation theorem]\label{t:alg_approx}
Let $\Sigma$ be a connected closed differentiable manifold and $w: \Sigma \to \mathbb R^m$ a smooth embedding. Then for any
$\varepsilon >0$ and any $k\in \mathbb N$ there is a structure of Nash manifold $(\Sigma, \mathscr{R})$ with 
a basic set $\{v_1, \ldots , v_m\}$ such that $\|w-v\|_{C^k}< \varepsilon$.
\end{theorem}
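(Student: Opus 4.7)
My plan is to produce a real analytic embedding $v: \Sigma \to \R^m$ with $\|v - w\|_{C^k} < \varepsilon$ whose image is a sheet of an irreducible algebraic subvariety of $\R^m$; Proposition~\ref{p:rep=var} will then supply the desired Nash structure on $\Sigma$ with basic set $\{v_1, \ldots, v_m\}$. By Whitney's real analytic approximation theorem I may assume $\Sigma$ already carries a real analytic structure for which $w$ is analytic. Fix a tubular neighborhood $U \subset \R^m$ of $w(\Sigma)$ together with the real analytic nearest-point retraction $\pi: U \to w(\Sigma)$, and construct a smooth map $g = (g_1, \ldots, g_{m-n}): U \to \R^{m-n}$ whose zero set is exactly $w(\Sigma)$ and whose differential has maximal rank at every point of $w(\Sigma)$. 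When the normal bundle of $w(\Sigma)$ in $\R^m$ is trivial this is immediate; in general one patches local orthonormal frames of the normal bundle using a smooth partition of unity.

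By Stone--Weierstrass, for every $\delta > 0$ one can find polynomials $P_1, \ldots, P_{m-n}$ with $\|g_i - P_i\|_{C^{k+1}(\overline U)} < \delta$. Set $P = (P_1, \ldots, P_{m-n})$ and $V := \{P = 0\} \subset \R^m$. For $\delta$ small enough, $DP$ is still surjective on a neighborhood $U' \subset U$ of $w(\Sigma)$, and the real analytic map
\[
\Psi: U' \to w(\Sigma) \times \R^{m-n}, \qquad \Psi(x) := (\pi(x), P(x))
\]
is a real analytic diffeomorphism onto a neighborhood of $w(\Sigma) \times \{0\}$. I then define
\[
v: \Sigma \to \R^m, \qquad v(p) := \Psi^{-1}(w(p), 0),
\]
so that $v$ is real analytic, satisfies $P \circ v \equiv 0$, and obeys $\|v - w\|_{C^k} < \varepsilon$ for $\delta$ chosen small in terms of $\varepsilon$ and the geometry of $w(\Sigma)$. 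Crucially, by construction $V \cap U' = v(\Sigma)$.

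To identify $v(\Sigma)$ as a sheet of an irreducible variety, let $V_0$ be the smallest algebraic subvariety of $\R^m$ containing $v(\Sigma)$ (well defined by Noetherianity of $\R[x_1, \ldots, x_m]$). If $V_0 = V_0^{(1)} \cup V_0^{(2)}$ were a non-trivial decomposition, then $v(\Sigma) \cap V_0^{(1)}$ and $v(\Sigma) \cap V_0^{(2)}$ would be two real analytic subsets of the connected real analytic manifold $v(\Sigma)$ covering it; since a connected real analytic manifold cannot be expressed as the union of two proper closed analytic subsets (each such subset has empty interior by analytic continuation), one of them would equal $v(\Sigma)$, contradicting minimality of $V_0$. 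Hence $V_0$ is irreducible, and since $V_0 \subset V$ while $V$ has dimension $n$ throughout $U'$, one has $\dim V_0 = n$. Finally $V_0 \cap U' = v(\Sigma)$, so every point of $v(\Sigma)$ has a neighborhood in $V_0$ contained in $v(\Sigma)$; moreover compactness of $v(\Sigma)$ prevents any real analytic arc in $V_0$ starting inside $v(\Sigma)$ from escaping $v(\Sigma)$ without first exiting $U'$, which yields the maximality required by Definition~\ref{d:sheets}(b). Thus $v(\Sigma)$ is a sheet of $V_0$, and Proposition~\ref{p:rep=var} provides the desired Nash structure with basic set $\{v_1, \ldots, v_m\}$.

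The hardest point, and the only one requiring real care, is the setup in the first step when the normal bundle of $w(\Sigma)$ in $\R^m$ is non-trivial: one must patch local defining equations across charts while preserving the rank of $Dg$ along $w(\Sigma)$, and then verify that the subsequent quantitative Stone--Weierstrass and implicit function arguments survive the patching with uniform constants. A secondary subtlety, less serious but easy to overlook, is the maximality clause in the sheet definition, since sheets may a priori extend across singular points of $V_0$; the compactness of $v(\Sigma)$ is precisely what prevents this pathology from arising here.
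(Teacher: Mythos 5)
Your plan runs into a genuine topological obstruction at the very first step. You propose to find a smooth map $g = (g_1, \ldots, g_{m-n}): U \to \mathbb{R}^{m-n}$ with $g^{-1}(0) = w(\Sigma)$ and $Dg$ of maximal rank along $w(\Sigma)$, and suggest that when the normal bundle is non-trivial one can still "patch local orthonormal frames using a partition of unity." This cannot work: such a $g$ exists \emph{if and only if} the normal bundle of $w(\Sigma)$ in $\mathbb{R}^m$ is trivial, since $\bigl(dg_1(p), \ldots, dg_{m-n}(p)\bigr)$ restricted to the normal directions would be a global trivialization of the conormal bundle. A partition-of-unity average $\sum_\alpha \rho_\alpha g^{(\alpha)}$ of local defining maps destroys the rank condition precisely where the transition functions of the normal bundle are non-trivial (e.g., where they involve a reflection), and no amount of care restores it --- the obstruction is cohomological, not analytic. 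Since $m$ is prescribed by the given embedding $w$, you cannot enlarge the target to force triviality, and plenty of closed manifolds (non-stably-parallelizable ones such as $\mathbb{RP}^2$ or $\mathbb{CP}^2$) have non-trivial normal bundle in any embedding. Nash's construction sidesteps this entirely: he works with the globally defined $m\times m$ orthogonal-projection matrix $\mathbf{K}(x)$ onto $T^\perp_{\pi(x)}\Sigma$, which exists regardless of whether the normal bundle is trivial. The price is that the natural defining map $x\mapsto R_x(\mathbf{L}(x))\,u(x)$ is over-determined ($m$ equations, with the rank-$(m-n)$ matrix $R_x(\mathbf{L}(x))$) and, worse, the coefficients of the polynomial $R_x$ depend only real-analytically on $x$. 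The lift to $\mathbb{R}^{m+n}$ via $\Psi(x) = (x, R_x)$ --- introducing the coefficients of $R_x$ as new polynomial unknowns subject to the divisibility condition $\eta(x,a)=0$ --- followed by the projection back to $\mathbb{R}^m$, is precisely the device that converts the analytic defining map into an algebraic one. Your proposal has no replacement for this step and would need essentially all of it once the global $g$ is unavailable.

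A secondary, less fatal gap: you assert that $\dim V_0 = n$ "since $V_0 \subset V$ while $V$ has dimension $n$ throughout $U'$." This does not follow. An irreducible real algebraic variety can have algebraic dimension strictly larger than its local Euclidean dimension at some of its points (e.g., along the handle of Whitney's umbrella), so the fact that $V \cap U'$ is an $n$-manifold does not bound the algebraic dimension of the Zariski closure $V_0$. Establishing $\dim V_0 = n$ requires an argument in the spirit of the paper's "sheet $\Longrightarrow$ representation" proof of Proposition~\ref{p:rep=var} (locating a general point inside $U'$ via Corollary~\ref{r:Haus=alg} and invoking Proposition~\ref{p:F3-6}(c)), which you cannot simply invoke as a black box since it is exactly what you are in the middle of establishing. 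The remaining steps --- Whitney analytic approximation, the irreducibility of the Zariski closure via analytic continuation, and the maximality of the sheet via compactness --- are sound in outline and consistent with the paper's logic, but they do not rescue the argument from the normal-bundle obstruction in the first step.
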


In the theorem above $\|\cdot\|_{C^k}$ denotes a suitably defined norm measuring the uniform distance between derivatives of
$w$ and $v$ up to order $k$. The norm will be defined after fixing a finite smooth atlas on $\Sigma$, we refer to the
corresponding section for the details. 

As a final corollary of his considerations, Nash also reaches the conclusion that the structure of Nash manifold is in
fact determined uniquely by the differentiable one. More precisely we have the following result.

\begin{theorem}[Uniqueness of the Nash ring]\label{t:alg_uniq}
If two connected Nash manifolds $(\Sigma_1, \mathscr{R}_1)$ and $(\Sigma_2, \mathscr{R}_2)$ are diffeomorphic as 
differentiable manifolds, then they are also isomorphic as Nash manifolds, namely there is a real analytic $\varphi: \Sigma_1
\to \Sigma_2$ for which the map $\phi (f):= f\circ \varphi^{-1}$ is a ring isomorphism of $\mathscr{R}_1$ with $\mathscr{R}_2$. 
\end{theorem}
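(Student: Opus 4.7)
The plan is to reduce the theorem to producing a real analytic diffeomorphism on a single fixed smooth manifold, then to use Theorem~\ref{t:alg_approx} to build such a diffeomorphism, and finally to invoke Proposition~\ref{p:alg_uniq} to recast it as the required ring isomorphism. First, using the given smooth diffeomorphism $\psi: \Sigma_2 \to \Sigma_1$ to transport the analytic structure and the ring from $\Sigma_2$, the pair $(\Sigma_1, \psi^*\mathscr{R}_2)$ is Nash-isomorphic to $(\Sigma_2, \mathscr{R}_2)$ in the obvious way, so the task reduces to showing that two Nash structures $(\Sigma, \mathscr{R})$ and $(\Sigma, \mathscr{R}')$ on the same underlying closed smooth manifold $\Sigma$ are Nash-isomorphic.

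I would then fix basic sets $f = (f_1, \dots, f_N) \subset \mathscr{R}$ and $g = (g_1, \dots, g_M) \subset \mathscr{R}'$, which yield real analytic embeddings whose images are sheets of irreducible algebraic subvarieties by Proposition~\ref{p:rep=var}, and apply Theorem~\ref{t:alg_approx} to the combined smooth embedding $h := (f, g): \Sigma \to \mathbb{R}^{N+M}$. For any small $\varepsilon$ and sufficiently large $k$ this produces a Nash structure $(\Sigma, \mathscr{R}^\ast)$ with a basic set $v = (v^{(1)}, v^{(2)})$ satisfying $\|v - h\|_{C^k} < \varepsilon$. For $\varepsilon$ small enough, each block $v^{(i)}$ is itself a real analytic embedding whose image lies in an analytic tubular neighborhood $U_i$ of $f(\Sigma)$, respectively $g(\Sigma)$; composition with the real analytic retractions $\pi_i$ onto these submanifolds yields real analytic self-diffeomorphisms
\begin{equation*}
\varphi_1 := f^{-1} \circ \pi_1 \circ v^{(1)}, \qquad \varphi_2 := g^{-1} \circ \pi_2 \circ v^{(2)},
\end{equation*}
both $C^k$-close to the identity. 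The candidate for the desired isomorphism is the pullback along $\varphi := \varphi_2 \circ \varphi_1^{-1}: \Sigma \to \Sigma$, which, by Proposition~\ref{p:alg_uniq}, will be a ring isomorphism $\mathscr{R} \to \mathscr{R}'$ provided it genuinely sends one ring into the other.

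The hard part is precisely this last provision: one must verify that each $\varphi_i$ identifies $\mathscr{R}^\ast$ with $\mathscr{R}_i$ as rings, i.e.\ that $f_j \circ \varphi_1 = (\pi_1 \circ v^{(1)})_j$ lies in $\mathscr{R}^\ast$ for every $j$, and analogously for $\varphi_2$. Since the tubular retractions $\pi_i$ are only real analytic and not polynomial, membership in $\mathscr{R}^\ast$ does not follow from $v^{(1)}_j \in \mathscr{R}^\ast$. I would resolve this point by combining the maximality axiom~(d) of Definition~\ref{d:alg_manifold} with the matching transcendence degrees of the three rings: both $\varphi_i^*\mathscr{R}_i$ and $\mathscr{R}^\ast$ satisfy axioms (a)--(c) with the same transcendence degree $n$, their basic representations lie $C^k$-close in $\mathbb{R}^{N}$, respectively $\mathbb{R}^{M}$, and by Proposition~\ref{p:rep=var} any basic set determines its maximal Nash ring uniquely, forcing $\varphi_i^*\mathscr{R}_i = \mathscr{R}^\ast$ and hence $\varphi^*\mathscr{R} = \mathscr{R}'$.
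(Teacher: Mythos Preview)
Your argument has a genuine gap in the final paragraph. You correctly identify the hard point --- that the compositions $(\pi_i \circ v^{(i)})_j$ must lie in $\mathscr{R}^\ast$ --- but your proposed resolution is circular. Saying that ``any basic set determines its maximal Nash ring uniquely'' is true, but the basic sets $\{(\pi_1 \circ v^{(1)})_j\}$ and $\{v^{(1)}_j\}$ are \emph{different} basic sets, and there is no reason for the maximal Nash rings they generate to coincide merely because the sets are $C^k$-close. Indeed, the whole content of the theorem is that any two Nash structures on $\Sigma$ are isomorphic; invoking uniqueness of the Nash ring determined by a basic set to compare two different basic sets is assuming what you are trying to prove. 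The maximality axiom would let you conclude $(\pi_1 \circ v^{(1)})_j \in \mathscr{R}^\ast$ only if you could first show that adjoining this element to $\mathscr{R}^\ast$ preserves the transcendence-degree condition~(c), and for a merely real analytic $\pi_1$ there is no mechanism to verify this.

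The paper's proof avoids this trap by working with \emph{proper} representations $v_1, v_2$ (whose existence is guaranteed by Theorem~\ref{t:alg_main_2}), so that $v_1(\Sigma)$ is a smooth real algebraic submanifold. In that case the nearest-point projection $\pi_1$ is characterized by polynomial orthogonality conditions and is therefore an \emph{algebraic} map, not just an analytic one. The paper then approximates the analytic diffeomorphism (extended to a tubular neighborhood) directly by a polynomial map $z$, and the composite $\pi_1 \circ z$ is algebraic on the nose; its inverse is then automatically algebraic too. This algebraicity is exactly what transports one Nash ring into the other, and it is precisely the ingredient your use of general (non-proper) representations and merely analytic retractions cannot supply.
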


\section{Technical preliminaries}

In this section we collect some algebraic and analytical technical preliminaries, standard facts which will be used in the
proofs of the statements contained in the previous sections. We begin with a series of basic algebraic properties.

\begin{definition}\label{d:field_of_def}
Given an algebraic subvariety $V\subset \mathbb R^N$ and a subfield ${\mathbb F}\subset {\mathbb R}$ we say that
${\mathbb F}$ is a {\em field of definition} of $V$ if there is a set $S$ of polynomials with coefficients in ${\mathbb F}$ such
that $V = \{x\in \mathbb R^N: P(x) =0\, , \;\forall P\in S\}$.
\end{definition}

\begin{proposition}[{Cf. \cite[Cor.~3, p.~73, and Prop.~5, p.~76]{Weil}}]\label{p:F1}
For any algebraic subvariety $V\subset \mathbb R^N$ there is a unique minimal field ${\mathbb F}\subset \mathbb R$ of definition,
namely a field of definition of $V$ which does not contain any smaller field of definition. ${\mathbb F}$ is, moreover, finitely generated over
$\mathbb Q$. 
\end{proposition}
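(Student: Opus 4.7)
The plan is to reduce the claim to a statement about finite-dimensional vector subspaces and then invoke the Hilbert basis theorem. Let $I := I(V) \subset \mathbb{R}[x_1, \ldots, x_N]$ denote the ideal of polynomials vanishing on $V$. One verifies immediately that a subfield $\mathbb{F} \subset \mathbb{R}$ is a field of definition of $V$ if and only if the $\mathbb{R}$-vector space $I$ is spanned by $I \cap \mathbb{F}[x]$; by degree filtration this is equivalent to requiring, for every $d \geq 0$, that the finite-dimensional subspace $J_d := I \cap \mathbb{R}[x]_{\leq d}$ of the finite-dimensional $\mathbb{R}$-space $\mathbb{R}[x]_{\leq d}$ admit an $\mathbb{R}$-basis whose coefficients in the monomial basis lie in $\mathbb{F}$.

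The crucial technical input is then purely linear-algebraic: for any $\mathbb{R}$-subspace $W$ of a finite-dimensional $\mathbb{R}$-vector space equipped with a distinguished $\mathbb{R}$-basis, there exists a unique smallest subfield $\mathbb{F}(W) \subset \mathbb{R}$ over which $W$ is defined, and this field is finitely generated over $\mathbb{Q}$. The cleanest proof goes through Plücker coordinates: the subspace $W$ is encoded, up to an overall scalar, by its Plücker vector in some $\bigwedge^k \mathbb{R}^n$, and $W$ is defined over $\mathbb{F}$ precisely when the ratios of the Plücker coordinates lie in $\mathbb{F}$. The field $\mathbb{F}(W)$ is then the field generated over $\mathbb{Q}$ by these (finitely many) ratios, hence finitely generated, and is manifestly contained in every other field of definition of $W$. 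A more hands-on alternative reduces any matrix whose rows form a basis of $W$ to reduced row echelon form, which is intrinsic to $W$ and whose nonzero entries span $\mathbb{F}(W)$ over $\mathbb{Q}$.

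Applying this degree-by-degree to $I$, the fields $\mathbb{F}(J_d)$ form an increasing chain in $d$: indeed $J_d$ is cut out of $J_{d+1}$ as the kernel of the $\mathbb{Q}$-defined projection onto the top-degree homogeneous piece, so any field of definition of $J_{d+1}$ is also one of $J_d$. Consequently the minimal field of definition of $I$, if it exists, must equal $\bigcup_d \mathbb{F}(J_d)$. To control this union and secure finite generation, the final step invokes the Hilbert basis theorem: since $\mathbb{R}[x_1,\ldots,x_N]$ is Noetherian, $I$ is finitely generated as an ideal, and in particular there exists $d_0$ such that $I$ coincides with the ideal generated by $J_{d_0}$. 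Any field of definition of $J_{d_0}$ is therefore automatically a field of definition of $I$, so the chain stabilizes at $d = d_0$, and the minimal field equals $\mathbb{F}(J_{d_0})$, which is finitely generated over $\mathbb{Q}$ by the preceding paragraph.

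The main obstacle is the linear-algebraic existence and minimality of $\mathbb{F}(W)$; the Plücker-coordinate formulation handles both features cleanly and simultaneously, but care is needed to check that the construction is genuinely intrinsic to $W$ (independent of the chosen spanning set) and that $\mathbb{F}(W)$ is contained in every field of definition rather than merely being some field of definition.
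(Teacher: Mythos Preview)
The paper does not prove this proposition; it simply cites Weil's \emph{Foundations}. So there is no comparison to make with an in-paper argument, and the question is whether your proof stands on its own.

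The bulk of your argument is correct and rather clean: the Pl\"ucker-coordinate (or reduced row echelon) description of the minimal field of a linear subspace is valid, the chain $\mathbb{F}(J_d)$ is increasing for the reason you give, and the Hilbert basis theorem does force stabilisation. This establishes the existence of a unique smallest field $\mathbb{F}_0$ over which the \emph{ideal} $I(V)$ descends, and this $\mathbb{F}_0$ is finitely generated over~$\mathbb{Q}$.

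The gap is in your opening sentence. You assert that ``$\mathbb{F}$ is a field of definition of $V$ if and only if $I$ is $\mathbb{R}$-spanned by $I\cap\mathbb{F}[x]$'' and call this immediate. With the paper's Definition~\ref{d:field_of_def} (a field of definition is one over which the \emph{zero set} can be cut out), the forward implication fails. Take $V=\{\pi\}\subset\mathbb{R}$ and $\mathbb{F}=\mathbb{Q}(\pi^3)$: the polynomial $x^3-\pi^3$ has $\pi$ as its unique real zero, so $\mathbb{F}$ is a field of definition in the paper's sense, yet $I(V)=(x-\pi)$ is not spanned by $I(V)\cap\mathbb{F}[x]=(x^3-\pi^3)\,\mathbb{F}[x]$. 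Worse, the chain $\mathbb{Q}(\pi)\supsetneq\mathbb{Q}(\pi^3)\supsetneq\mathbb{Q}(\pi^9)\supsetneq\cdots$ consists entirely of fields of definition in the set-theoretic sense, and $\pi$ is transcendental over their intersection; so with the paper's literal definition there is \emph{no} minimal field of definition of $\{\pi\}$ at all.

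What this really shows is that the paper's Definition~\ref{d:field_of_def} is slightly looser than the notion Weil actually works with (descent of the ideal, not merely of the zero locus); your argument proves exactly Weil's statement, which is the one the paper needs downstream. So the fix is not to change your proof but to replace the false equivalence with the correct one-sided implication (your $\mathbb{F}_0$ is certainly a field of definition in the paper's sense) and to note that the minimality you obtain is the ideal-theoretic one.
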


\begin{definition}\label{d:dim}
We will say that a certain collection of coordinates $\{x_{i_1}, \ldots , x_{i_m}\}$ is algebraically independent over a field $\mathbb F$
at a point $p= (p_1, \ldots , p_N)$ if there is no nontrivial polynomial $P$ with coefficients in $\mathbb F$ such that $P (p_{i_1}, \ldots , p_{i_m}) = 0$. 

Given a point $p$ in an algebraic subvariety $V\subset \mathbb R^n$ with minimal field of definition $\mathbb F$ we define the algebraic dimension ${\rm dim}_V (p)$  of $p$ with respect to $V$ as the maximal number of coordinates which are algebraically independent over $\mathbb F$ at $p$. The algebraic dimension of $V$ is ${\rm dim}\, (V) = \max \{{\rm dim}_V (p): p \in V\}$ and a point $p\in V$ is called a {\em general point} of $V$ if ${\rm dim}_V (p) = {\rm dim}\, (V)$. 
\end{definition}

\begin{proposition}\label{p:F3-6}
Let $V\subset \mathbb R^N$ be an algebraic subvariety of algebraic dimension $n$ with minimal field of definition $\mathbb F$.
Then the following holds.
\begin{itemize}
\item[(a)] Any collection of $n+1$ coordinates satisfy a nontrivial polynomial relation (as real functions with domain $V$);
\item[(b)] For any general point $p$ of $V$ there is a neighborhood $U$ where $V$ is an $n$-dimensional (real analytic) submanifold  and where any collection of coordinates which are algebraically independent at $p$ over $\mathbb F$ gives a (real analytic) parametrization of $V$.
\item[(c)] If ${\rm dim}_V (p) =m$, then there is an algebraic subvariety $W\subset V$ of algebraic dimension $m$ which contains $p$ and  whose minimal field of definition is contained in $\mathbb F$. 
\end{itemize}
\end{proposition}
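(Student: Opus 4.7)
The plan is to establish first that $\dim V$ equals the transcendence degree of the coordinate ring $\mathbb F[V]$ over $\mathbb F$, and then to derive (a), (b), (c) from this identity. That the transcendence degree is at least $\dim V$ is immediate: if $p$ witnesses $\dim_V(p)=n$, the $n$ coordinates that are algebraically independent at $p$ over $\mathbb F$ are automatically algebraically independent as functions on $V$ (any polynomial relation holding on $V$ would hold at $p$). For the reverse inequality I use that by Proposition~\ref{p:F1} the field $\mathbb F$ is countable; if $x_{i_1},\ldots,x_{i_d}$ were algebraically independent as functions on $V$, then for each non-zero $P\in\mathbb F[T_1,\ldots,T_d]$ the set $\{q\in V:P(q_{i_1},\ldots,q_{i_d})=0\}$ would be a proper algebraic subvariety of $V$, of strictly smaller dimension in each irreducible component of $V$ it meets. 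The countable union of such sets cannot exhaust $V$, so a general point $p$ exists with $\dim_V(p)\ge d$. Part (a) is now precisely the statement $d=n$.

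For (b), I pick a general point $p$ and, after relabeling, take $x_1,\ldots,x_n$ algebraically independent at $p$ over $\mathbb F$; they then also form a transcendence basis of $\mathbb F[V]$ over $\mathbb F$. For each $j>n$ let $Q_j\in\mathbb F[T_1,\ldots,T_n,T]$ be the cleared-denominator minimal polynomial of $x_j$ over $\mathbb F(x_1,\ldots,x_n)$. Characteristic zero forces $\partial_T Q_j$ to be a non-zero polynomial of strictly smaller $T$-degree, and the isomorphism $\mathbb F(x_1,\ldots,x_n)\cong\mathbb F(p_1,\ldots,p_n)$ provided by the algebraic independence of the $p_i$'s makes $Q_j(p_1,\ldots,p_n,T)$ the minimal polynomial of $p_j$; hence $(\partial_T Q_j)(p)\neq 0$, for otherwise $p_j$ would satisfy a polynomial relation of strictly smaller degree over $\mathbb F(p_1,\ldots,p_n)$. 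The implicit function theorem then produces real-analytic functions $\phi_j$ with $x_j=\phi_j(x_1,\ldots,x_n)$ in a neighbourhood $U$ of $p$, so $V\cap U$ lies inside the $n$-dimensional real-analytic graph $M$ of $(\phi_{n+1},\ldots,\phi_N)$. Equality $V\cap U=M$ after further shrinking $U$ follows from the standard fact that a proper closed real-analytic subset of an $n$-manifold has dimension strictly less than $n$, combined with the information from (a) that $V$ itself has algebraic dimension $n$ at the general point $p$.

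For (c), given $p$ with $\dim_V(p)=m$, set $\mathfrak p:=\{f\in\mathbb F[T_1,\ldots,T_N]:f(p)=0\}$ and let $W:=\{x\in\mathbb R^N:f(x)=0\ \forall f\in\mathfrak p\}$. Since $\mathbb R$ is a domain, $\mathfrak p$ is prime, so $W$ is irreducible and defined over $\mathbb F$, giving a minimal field of definition contained in $\mathbb F$. Every polynomial in $\mathbb F[T]$ vanishing on $V$ also vanishes at $p$ and therefore lies in $\mathfrak p$, so $W\subseteq V$. The evaluation homomorphism identifies $\mathbb F[T]/\mathfrak p$ with $\mathbb F[p_1,\ldots,p_N]\subseteq\mathbb R$, whose fraction field has transcendence degree exactly $m$ over $\mathbb F$ by the definition of $\dim_V(p)=m$; together with the identity from part (a) applied to $W$, this yields $\dim W=m$.

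The main technical obstacle will be the last step of (b), namely the passage from ``$V\cap U\subseteq M$'' to ``$V\cap U=M$'' after shrinking $U$. Everything else is either direct bookkeeping on the coordinate ring or a use of the countable Baire/measure-zero argument underpinning (a), whereas matching the algebraic dimension of $V$ with the real-analytic dimension of $M$ at the general point $p$ is the non-formal content of (b), and this is where the interplay between the minimal polynomials $Q_j$ and the full ideal defining $V$ needs to be handled most carefully.
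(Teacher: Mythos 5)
Proposition~\ref{p:F3-6} is not actually proved in the paper; the text refers the reader to Weil's \emph{Foundations of Algebraic Geometry} (Ch.~II and IV, p.~72, Th.~3). What you wrote is therefore a from-scratch argument, and its overall plan---identify $\dim V$ with the transcendence degree of the coordinate ring over $\mathbb F$, apply the implicit function theorem at a general point via the minimal polynomials $Q_j$ of the dependent coordinates, and realize $W$ in (c) as the real locus of the prime ideal of $\mathbb F$-polynomials vanishing at $p$---is the correct one and is essentially what Weil does. However, the proposal has two genuine gaps in (b), one of which you yourself flag.

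First, you deduce $(\partial_T Q_j)(p)\neq 0$ from the claim that $Q_j(p_1,\ldots,p_n,T)$ is the minimal polynomial of $p_j$ over $\mathbb F(p_1,\ldots,p_n)$; but the isomorphism $\mathbb F(x_1,\ldots,x_n)\cong\mathbb F(p_1,\ldots,p_n)$ controls only the first $n$ coordinates and does not by itself transport minimality of $Q_j$ as a relation among \emph{functions on $V$} to minimality as a relation among \emph{numbers}. For that you need the evaluation $\mathbb F[V]\to\mathbb R$ at $p$ to be injective, i.e.\ $p$ to be a generic point of the $\mathbb F$-irreducible variety $V$. This does follow from $\dim_V(p)=\dim V$ via a height-plus-coheight count in the finitely generated domain $\mathbb F[V]$, but it must be stated, and it requires $V$ to be $\mathbb F$-irreducible so that $\mathbb F[V]$ is a domain---an assumption your argument makes throughout without saying so. (One can also sidestep minimality entirely: the $T$-discriminant of $Q_j$ is a nonzero element of $\mathbb F[T_1,\ldots,T_n]$ and hence cannot vanish at the algebraically independent tuple $(p_1,\ldots,p_n)$.) Second, the step from $V\cap U\subseteq M$ to $V\cap U=M$ is not closed by the dimension comparison you propose, and that route is in fact circular: comparing ``the algebraic dimension $n$ of $V$ at $p$'' to the real-analytic dimension of $M$ presupposes the equivalence of algebraic and local Hausdorff dimension, which is precisely what Corollary~\ref{r:Haus=alg} derives \emph{from} Proposition~\ref{p:F3-6}(b). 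What is actually needed is the reverse inclusion $M\subseteq V$ near $p$, i.e.\ that every $R\in\mathbb F[T_1,\ldots,T_N]$ vanishing on $V$ vanishes identically on the graph of $(\phi_{n+1},\ldots,\phi_N)$. This can be shown by implicit differentiation of the $Q_j$: each Taylor coefficient of $R(t,\phi(t))$ at $t=(p_1,\ldots,p_n)$ is an $\mathbb F$-rational function of $p$ with nonvanishing denominator, whose numerator vanishes at $p$ and therefore on all of $V$ by genericity, so the whole Taylor series vanishes. That computation is the algebraic core of Weil's Theorem~3 and is exactly the step your sketch leaves unsupplied.
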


The proofs of the statements (a), (b), and (c) can be found in \cite[Ch.~II and Ch.~IV]{Weil}, more precisely see the discussion at \cite[p.~72]{Weil}
and \cite[Th.~3]{Weil}.  

We state here a simple corollary of the above proposition, for which we give the elementary proof.

\begin{corollary}\label{r:Haus=alg}
The algebraic dimension of a subvariety $V$ coincides with its Hausdorff dimension as a subset of $\mathbb R^N$. In fact,
for any $j\leq {\rm dim}\, (V)$, the subset $V_j := \{v\in V: {\rm dim}_V (p) = j\}$ is a set of Hausdorff dimension $j$.
\end{corollary}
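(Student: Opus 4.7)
The plan is to argue by induction on $n = {\rm dim}\, V$, leveraging three ingredients in tandem: the local manifold structure at general points given by Proposition~\ref{p:F3-6}(b); the containment of every $p$ with ${\rm dim}_V(p)=m$ in a subvariety $W\subseteq V$ of algebraic dimension $m$ whose minimal field of definition lies inside that of $V$, provided by Proposition~\ref{p:F3-6}(c); and the fact that, once a finitely generated subfield $\mathbb{F}\subset\mathbb{R}$ is fixed, there are only countably many algebraic subvarieties of $\mathbb{R}^N$ whose minimal field of definition is contained in $\mathbb{F}$ (a consequence of Proposition~\ref{p:F1}, the fact that each subvariety is cut out by finitely many polynomials, and the countability of any finitely generated field).

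When $n=0$, every point of $V$ is general, so by (b) each $p\in V$ is isolated; hence $V=V_0$ is at most countable and has Hausdorff dimension $0$. For the inductive step, (b) shows that $V_n$ contains a nonempty relatively open subset of $V$ which is an analytic $n$-submanifold of $\mathbb{R}^N$, giving Hausdorff dimension at least $n$ for $V_n$. For the complement $V\setminus V_n$, (c) furnishes, for each $p$ with ${\rm dim}_V(p)<n$, a proper subvariety $W_p\subsetneq V$ of algebraic dimension strictly less than $n$ whose minimal field of definition is contained in $\mathbb{F}$. The family of all such $W_p$ is countable, so by the inductive hypothesis $V\setminus V_n$ is a countable union of sets of Hausdorff dimension $<n$, hence itself of Hausdorff dimension $<n$. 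Combining with $V_n\subseteq V\subseteq\mathbb{R}^N$ yields Hausdorff dimension $n$ for both $V$ and $V_n$.

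For $j<n$, the upper bound for $V_j$ follows verbatim from the same covering argument, now with subvarieties of $V$ of algebraic dimension $j$ (produced by (c) applied to each $p\in V_j$). For the lower bound (assuming $V_j\neq\emptyset$), I would pick $p\in V_j$, produce via (c) a $W\subseteq V$ of algebraic dimension $j$ containing $p$ with minimal field of definition $\mathbb{F}_W\subseteq\mathbb{F}$, and note that, since enlarging the field of coefficients makes algebraic independence harder, ${\rm dim}_V(q)\leq{\rm dim}_W(q)$ for every $q\in W$. The set $G$ of general points of $W$ has Hausdorff dimension $j$ by (b) applied to $W$, while those $q\in G$ with ${\rm dim}_V(q)<j$ lie, by (c), in countably many subvarieties of $V$ of algebraic dimension $<j$, hence form a set of Hausdorff dimension $<j$ by induction. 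Therefore $G\cap V_j$ still has Hausdorff dimension $j$, completing the lower bound.

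The delicate point is the comparison ${\rm dim}_V(q)\leq{\rm dim}_W(q)$ in the lower-bound step: if $W$ were an arbitrary subvariety of $V$ rather than one furnished by (c), its minimal field of definition could be strictly larger than $\mathbb{F}$ (think of the line $\{y=\pi\}\subset\mathbb{R}^2$ inside the whole plane), and the inequality could fail, derailing the induction. It is the containment $\mathbb{F}_W\subseteq\mathbb{F}$ embedded in statement (c), together with the finite generation of $\mathbb{F}$ from Proposition~\ref{p:F1} which preserves countability of the covers at every stage, that allows this elementary induction to close up.
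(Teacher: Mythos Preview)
Your approach is essentially the same inductive scheme as the paper's, and the lower-bound step for $V_j$ via the general locus $G$ of $W$ is just a mild rephrasing of the paper's argument (the paper uses all of $W$ together with the inductive hypothesis $\dim_H W = j$ and the inclusion $W\subset V_0\cup\cdots\cup V_j$, but the content is the same). The comparison ${\rm dim}_V(q)\le{\rm dim}_W(q)$ and the countability considerations are exactly what the paper uses.

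There is, however, a small genuine gap in your treatment of the top stratum. From (b) you only extract that $V_n$ \emph{contains} an $n$-dimensional analytic chart, which gives $\dim_H V_n\ge n$; your sentence ``Combining with $V_n\subseteq V\subseteq\mathbb{R}^N$'' does not produce any upper bound (it only yields $\dim_H V=\dim_H V_n\ge n$, since $V\setminus V_n$ has Hausdorff dimension $<n$). What is missing is the observation, made explicit in the paper, that (b) applies at \emph{every} general point, so $V_n$ is covered by (countably many, by second countability of $\mathbb{R}^N$) real analytic $n$-dimensional charts, whence $\dim_H V_n\le n$. With this one-line addition your proof is complete and matches the paper's.
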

\begin{proof} The second part of the statement obviously implies the first. We focus therefore on the second, which we 
prove by induction over ${\rm dim}\, (V)$. The $0$-dimensional case is obvious: if $V$ is a $0$-dimensional subvariety of $\mathbb R^N$, then $V_0=V$ must be contained in $\mathbb F^N$, which is necessarily a countable set ($\mathbb F$ denotes the minimal field of definition of $V$ and recall that it is finitely generated over $\mathbb Q$). 

Assume therefore that the statement holds 
when the dimension of the variety is no larger than $n-1$: we now want to show that  the claim holds when ${\rm dim}\, (V) = n$.
By Proposition~\ref{p:F3-6}, the subset $V_n$ of points $p\in V$ with maximal algebraic dimension is covered by countably many real analytic $n$-dimensional manifolds. Hence $V_n$ has Hausdorff dimension at most $n$ (cf.~\cite[Sec.~3.3]{EG}). On the other hand by Proposition~\ref{p:F3-6}(b) the Hausdorff dimension must be at least $n$. 
Next, let $j<n$. By Proposition~\ref{p:F3-6}(c) any point
$p\in V_j$ is contained in an algebraic subvariety $W$ of algebraic dimension $j$ with minimal field of definition contained in $\mathbb F$. Each such $W$ has Hausdorff dimension $j$, by inductive assumption. On the other hand, since any such $W$ is defined through a finite set of polynomials with coefficient in $\mathbb F$, the set of such $W$ is countable. We have therefore shown that $V_j$ has Hausdorff dimension at most $j$. 

Now consider a point $q\in V$ with ${\rm dim}_V (q) =j$ and an algebraic subvariety $W\subset V$ as above. Let $\mathbb F'$ be its minimal field of definition and consider any $p = (p_1, \ldots , p_N) \in W$. The algebraic dimension ${\rm dim}_W (p)$ is at most $j$, which means that for any collection of $j+1$ distinct coordinates $p_{i_1}, \ldots , p_{i_{j+1}}$ there is a nontrivial polynomial $P$ with coefficients in $\mathbb F'$ such that $P (p_{i_1}, \ldots , p_{i_{j+1}})=0$. Since $\mathbb F' \subset \mathbb F$, we must necessarily have ${\rm dim}_V (p) \leq j$. Thus,
$W\subset V_0\cup V_1\cup \ldots \cup V_j$. On the other hand, we know by inductive assumption that $W$ has Hausdorff dimension $j$ and we have shown that the dimension of each $V_i$ is at most $i$. We then conclude that the Hausdorff dimension of $j$ must be $j$. 
\end{proof}

We are now ready to state the two technical facts in analysis needed in the rest of the chapter. The first is a standard consequence
of the implicit function theorem for real analytic mappings, see for instance \cite[Th.~1.8.3]{KP}). As usual, the tubular neighborhood of size $\delta$ of a subset $S\subset \mathbb R^N$ is the open set consisting of those points whose distance from $S$ is smaller than $\delta$. In this chapter we will denote it by $U_\delta (S)$. 

\begin {proposition}\label{p:A1}
If $\Sigma\subset \mathbb R^N$ is a closed real analytic submanifold, then there is a $\delta > 0$ with the following two properties:
\begin{itemize}
\item[(a)] For any $x\in U_\delta (\Sigma)$ there is a unique point $u(x)\in \Sigma$ of least distance to $x$.
\item[(b)] The map $x\mapsto u (x)$ is real analytic. 
\end{itemize}
\end{proposition}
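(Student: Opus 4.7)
The plan is to realize the closest-point map as the composition of a projection with the inverse of a real analytic diffeomorphism from a neighborhood of the zero section of the normal bundle of $\Sigma$ to a tubular neighborhood in $\mathbb R^N$. First I would construct the normal bundle $N\Sigma = \{(p,v) \in \Sigma \times \mathbb R^N : v \perp T_p \Sigma\}$: since $\Sigma$ is a real analytic submanifold, local parametrizations together with Gram--Schmidt orthonormalization of a local real analytic frame for $T\Sigma$ produce real analytic local trivializations of $N\Sigma$, making it a real analytic manifold of dimension $N$. Then I would introduce the endpoint map
\[
F : N\Sigma \to \mathbb R^N, \qquad F(p,v) := p + v,
\]
which is real analytic. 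At any point $(p,0)$ its differential is the identification of $T_{(p,0)}(N\Sigma) = T_p\Sigma \oplus N_p\Sigma$ with $\mathbb R^N$, hence invertible, and the real analytic inverse function theorem (e.g.~\cite[Th.~1.8.3]{KP}) gives a neighborhood of $(p,0)$ on which $F$ is a real analytic diffeomorphism onto an open subset of $\mathbb R^N$.

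Next I would upgrade this to a uniform tubular neighborhood, using that $\Sigma$ is compact (closed in the sense of the chapter). Covering the zero section by finitely many such neighborhoods and noting that $F$ is already locally injective, a standard compactness argument gives $\delta_0 > 0$ such that $F$ restricted to $\mathcal U_{\delta_0} := \{(p,v)\in N\Sigma : |v| < \delta_0\}$ is injective; by the local inverse function theorem it is therefore a real analytic diffeomorphism onto its image, which is an open neighborhood of $\Sigma$. Shrinking if necessary, I can choose $\delta \in (0,\delta_0)$ so that $U_\delta(\Sigma) \subset F(\mathcal U_{\delta_0})$.

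For part~(a), given $x\in U_\delta(\Sigma)$ the compactness of $\Sigma$ furnishes at least one nearest point $p^* \in \Sigma$; at this minimizer the first-order condition forces $x - p^* \perp T_{p^*}\Sigma$, so $(p^*, x-p^*) \in \mathcal U_{\delta_0}$ and $F(p^*, x - p^*) = x$. Any other nearest point would likewise produce a preimage in $\mathcal U_{\delta_0}$, contradicting the injectivity of $F$ on $\mathcal U_{\delta_0}$; hence $p^* = u(x)$ is unique. For part~(b), writing $\pi : N\Sigma \to \Sigma$ for the real analytic bundle projection, the identity $u(x) = \pi \circ (F|_{\mathcal U_{\delta_0}})^{-1}(x)$ exhibits $u$ as a composition of real analytic maps on $U_\delta(\Sigma)$, so $u$ is real analytic.

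The only delicate step is upgrading the local injectivity of $F$ near the zero section to injectivity on a uniform neighborhood $\mathcal U_{\delta_0}$; this is the place where compactness of $\Sigma$ is essential, since without it one can have pairs of points collapsing together in a way that prevents any uniform choice of $\delta$. Everything else is a routine application of the real analytic inverse function theorem and of the first-order condition characterizing nearest points.
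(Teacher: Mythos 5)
Your proof is correct and matches, in spelled-out form, exactly the classical tubular-neighborhood argument the paper delegates to references: the endpoint map $F(p,v)=p+v$ on the normal bundle, the real analytic inverse function theorem of \cite[Th.~1.8.3]{KP}, and compactness of $\Sigma$ to obtain a uniform $\delta$, are precisely the ingredients behind the citation of \cite{Hirsch} for smoothness and \cite{KP} for analyticity. The only difference is that you supply the details the paper leaves to the reader.
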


The first statement needs in fact only the $C^2$ regularity of $\Sigma$, cf.~\cite{Hirsch}. Moreover the proof therein uses the implicit function theorem to give that
$u$ is smooth when $\Sigma$ is smooth: the real analyticity of $u$ follows then directly from \cite[Th.~1.8.3]{KP}.

The following is a classical Weierstrass type result. As usual, given a smooth function
$g$ defined in a neighborhood of a compact set $K\subset \mathbb R^m$ we denote by $\|g\|_{C^0 (K)}$ the number $\max \{ |u(x)|:x\in K\}$ and we let
\[
\|g\|_{C^j (K)} := \sum_{|I| \leq j} \|\partial^I g\|_{C^0 (K)}\, ,
\]
where, given a multiindex $I= (i_1, \ldots , i_m)\in \mathbb N^m$, we let $|I|= i_1 + \cdots + i_m$ and
\[
\partial^I f = \frac{\partial^{|I|} f}{\partial x_1^{i_1} \partial x_2^{i_2}\cdots \partial x^{i_m}_m}\, .
\]

\begin{proposition}\label{p:A2}
Let $U\subset \mathbb R^N$ be an open set, $K \subset U$ a compact set and $f: U \to \mathbb R$ a smooth function.
Given any $j\in \mathbb N$ and any $\varepsilon >0$, there is a polynomial $P$ such that $\|f-P\|_{C^j (K)} \leq \varepsilon$.
\end{proposition}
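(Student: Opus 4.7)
The plan is a standard three-step argument: first extend $f$ past $K$ to a compactly supported smooth function on $\mathbb R^N$; next mollify with the Gaussian heat kernel to produce an \emph{entire} function that is $C^j$-close to $f$ on $K$; and finally approximate that entire function in $C^j(K)$ by truncating its Taylor series at the origin.

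For the extension, fix $\delta > 0$ with $K_\delta := \{x \in \mathbb R^N : \operatorname{dist}(x,K) \leq 2\delta\} \subset U$, choose a smooth cutoff $\chi$ that equals $1$ on $\{\operatorname{dist}(\cdot,K) \leq \delta\}$ and is supported in $K_\delta$, and set $\tilde f := \chi f$, extended by zero. Then $\tilde f \in C^\infty_c(\mathbb R^N)$ and $\tilde f = f$ on a neighborhood of $K$. For the mollification, let $G_t(x) := (4\pi t)^{-N/2} e^{-|x|^2/(4t)}$ and put $f_t := G_t * \tilde f$. Differentiating under the integral and invoking the standard mollifier estimate $G_t * g \to g$ uniformly for every bounded uniformly continuous $g$, one obtains $\partial^I f_t = G_t * \partial^I \tilde f \to \partial^I \tilde f$ uniformly on $\mathbb R^N$ as $t \to 0^+$, for every multiindex $I$. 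Hence there exists $t_0 > 0$ with $\|f - f_{t_0}\|_{C^j(K)} < \varepsilon/2$.

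The decisive step is then to approximate $f_{t_0}$ by a polynomial in the $C^j(K)$ norm, and the key observation is that $f_{t_0}$ extends to an entire function of $z \in \mathbb C^N$. Indeed, substituting complex $z$ for $x$ in the defining integral produces an integrand that, for $y$ ranging in the compact set $\operatorname{supp}\tilde f$, is entire in $z$ and uniformly bounded on every bounded subset of $\mathbb C^N$, so the integral defines an entire function $F$ agreeing with $f_{t_0}$ on $\mathbb R^N$. Write its Taylor series at the origin as $F(z) = \sum_\alpha c_\alpha z^\alpha$; absolute convergence on every polydisc, together with legitimate term-by-term differentiation of power series inside their domain of convergence, yields $\partial^I P_n \to \partial^I f_{t_0}$ uniformly on $K$ for each $|I| \leq j$, where $P_n(x) := \sum_{|\alpha| \leq n} c_\alpha x^\alpha$. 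Choosing $n$ so that $\|P_n - f_{t_0}\|_{C^j(K)} < \varepsilon/2$ and combining with the previous step gives $\|f - P_n\|_{C^j(K)} < \varepsilon$. The only non-routine point in this outline is precisely the upgrade from sup-norm convergence of Taylor polynomials to $C^j$ convergence, and it is here that the entireness of $f_{t_0}$---rather than mere real analyticity---is essential: Cauchy-type estimates on a polydisc properly containing a complex neighborhood of $K$ bound all derivatives of the remainder $F - P_n$ simultaneously, so the $C^j$-control comes for free from the $C^0$-control of Taylor tails.
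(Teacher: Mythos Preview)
Your argument is correct. It shares the same two-step skeleton as Nash's proof reproduced in the paper: (i) approximate the compactly supported extension $\tilde f$ in $C^j$ by an \emph{entire} function, then (ii) truncate the Taylor series of that entire function at the origin. The difference lies only in how step~(i) is carried out. Nash extends $\tilde f$ periodically and takes a Fourier partial sum $S_m$, which is a trigonometric polynomial and hence manifestly entire; you instead convolve $\tilde f$ with the heat kernel $G_{t_0}$, and the compact support of $\tilde f$ together with the entireness of $z\mapsto e^{-\sum_j (z_j-y_j)^2/(4t_0)}$ makes $f_{t_0}$ entire. Both routes arrive at the same place and finish identically via Taylor truncation. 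The paper also notes a third route---convolving directly with polynomial kernels as in Rudin's proof of Weierstrass---which produces a polynomial in one shot and so skips step~(ii) altogether; your Gaussian variant trades that directness for a slightly cleaner verification that the intermediate approximant is entire.
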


\begin{proof} Using a partition of unity subordinate to a finite cover of $K$ we can assume, without loss of generality, that $f\in C^\infty_c (U)$. 
The classical Weierstrass theorem corresponds to the case $j=0$, see for instance \cite{Rudin}: however the proof given in the latter reference, which regularizes $f$ by convolution with suitable polynomials, gives easily the statement above for general $j$. Nash in \cite{Nash1952} provides instead the following elegant argument. Consider first a box $[-M/2, M/2]^N\subset \mathbb R^N$ containing the support of $f$ and let $\tilde{f}$ be the $M$-periodic function which coincides with $f$ on the box. If we expand $\tilde{f}$ in Fourier series as
\[
\tilde{f} (x) = \sum_{\lambda \in \mathbb Z^N} a_\lambda e^{2\pi M \lambda \cdot x}\, 
\]
and consider the partial sums
\[
S_m (x):= \sum_{|\lambda|\leq m} a_\lambda e^{2\pi M \lambda \cdot x}\, ,
\]
then clearly $\|S_m - f\|_{C^j (K)} = \|S_m -\tilde{f}\|_{C^j (K)} \leq \varepsilon/2$ for $m$ large enough.
On the other hand $S_m$ is an entire analytic function and thus for a sufficiently large degree $d$ the Taylor polynomial $T^d_m$ of $S_m$ at $0$ satisfies $\|S_m - T^d_m\|_{C^j (K)} \leq \varepsilon/2$. 
\end{proof}

\section{The algebraic description of Nash manifolds and the characterization of representations as sheets}

\begin{proof}[Proof of Proposition~\ref{p:alg_uniq}] 
First of all, for any (proper) ideal $\mathscr{I}$ the set $Z = Z (\mathscr{I})$ of points of $\Sigma$ at which all elements of $\mathscr{I}$ vanish must be nonempty. Otherwise, for any point $p\in \Sigma$ there would be an element $f_p \in \mathscr{I}$ such that $f_p (p) \neq 0$. Choose then an open neighborhood $U_p$ such that $f_p \neq 0$ on $U_p$ and cover $\Sigma$ with finitely many $U_{p_i}$. The function $f:= \sum_i f_{p_i}^2$ would belong to the ideal $\mathscr{I}$ and would be everywhere nonzero. But then $\frac{1}{f}$ would belong to $\mathscr{R}$, $f\cdot \frac{1}{f} =1$ would belong to the ideal $\mathscr{I}$ and the latter would coincide with $\mathscr{R}$, contradicting the assumption that $\mathscr{I}$ s a proper ideal. 

Given a point $p$ and a basic set $\mathscr{B} = \{v_1, \ldots , v_N\}\subset \mathscr{R}$, the function 
\[
g (y) := \sum_i (v_i (y) - v_i (p))^2
\]
vanishes only at $p$ and belongs to $\mathscr{R}$. Thus we have:
\begin{itemize}
\item[(i)] The set $\mathscr{I} (p)$ of all elements which vanish at $p$ must be nonempty. Moreover, it cannot be the whole ring $\mathscr{R}$ because it does not contain the constant function $1$. It is thus a proper ideal and it must be maximal: 
any larger ideal $\mathscr{J}$ would necessarily have $Z ( \mathscr{J}) = \emptyset$.
\item[(ii)] If $\mathscr{I}$ is a maximal ideal, then there must be an element $p\in Z (\mathscr{I})$ and, since $\mathscr{I} \subset \mathscr{I} (p)$, we must necessarily have $\mathscr{I} = \mathscr{I} (p)$. 
\end{itemize}

This shows the first part of the proposition.
Next, let $(\Sigma_1, \mathscr{R}_1)$ and $(\Sigma_2, \mathscr{R}_2)$ be two Nash manifolds. Clearly,
if $\varphi: \Sigma_1\to \Sigma_2$ is a real analytic diffeomorphism such that $\phi (f) := f\circ \varphi^{-1}$ maps $\mathscr{R}_1$ onto $\mathscr{R}_2$, then $\phi$ is a ring isomorphism. Vice versa, let $\phi: \mathscr{R}_1 \to \mathscr{R}_2$ be a ring isomorphism. Using the correspondence above, given a point $p\in \Sigma_1$ we have a corresponding maximal ideal $\mathscr{I} (p) \subset \mathscr{R}$, which is mapped by $\phi$ into a maximal  ideal of $\mathscr{R}_2$: there is then a point
$\varphi (p)\in \Sigma_2$ such that $\phi (\mathscr{I} (p)) = \mathscr{I} (\varphi (p))$. 
We now wish to show that 
\begin{equation}\label{e:iso_campo}
\phi (f) (\varphi (p)) =f (p)\, .
\end{equation}
First observe that:
\begin{equation}\label{e:Van}
\mbox{if $f$ vanishes at $p$, then $\phi (f)$ must vanish at $\varphi (p)$.}
\end{equation}
This follows from the property $\phi (f)\in \phi (\mathscr{I} (p)) = \mathscr{I} (\varphi (p))$. 

Next we follow the convention that, given a number $q\in \mathbb R$, we let $q$ denote both the function constantly equal to $q$ on $\Sigma_1$ and that equal to $q$ on $\Sigma_2$.
Since $1$ is the multiplicative unit of $\mathscr{R}_1$ and $\mathscr{R}_2$, then $\phi (1) =1$. Hence, using the ring axioms, it follows easily that $\phi (q)=q$ for any $q\in \mathbb Q$. Observe next that if $f\in \mathscr{R}_1$ is a positive function on $\Sigma_1$, then $g := \sqrt{f}$ is a real analytic function and it must belong to $\mathscr{R}_1$, otherwise the latter ring would not satisfy the maximality condition of Definition~\ref{d:alg_manifold}(d). Hence, if $f>0$, then $\phi (f) = (\phi (\sqrt{f}))^2 \geq 0$. Thus $f>g$ implies $\phi (f) \geq \phi (g)$. Fix therefore a constant real $\alpha$ and two rational numbers $q > \alpha >q'$. We conclude $q=\phi (q) \geq \phi (\alpha)\geq \phi (q')=q'$. Since $q$ and $q'$ might be chosen arbitrarily close to $\alpha$, this implies that $\phi (\alpha) = \alpha$.

Having established the latter identity, we can combine it with \eqref{e:Van} to conclude \eqref{e:iso_campo}. Indeed, assume $f (p) = \alpha$.
Then $g = f-\alpha$ vanishes at $p$ and thus, by \eqref{e:Van}, $\phi (g) = \phi (f)-\alpha$ vanishes at $\varphi (p)$: thus $\phi (f) (\varphi (p)) =
f (p)$. 

Next, $\varphi^{-1}$ is the map induced by the inverse of the isomorphism $\phi$, from which we clearly conclude $\phi (f) = f\circ \varphi^{-1}$.  
It remains to show that $\varphi$ is real analytic: the same argument will give the real analyticity of $\varphi^{-1}$ as well, thus completing the proof.
Let $\mathscr{B}_1 = \{f_1, \ldots , f_N\}$ be a basic set for $(\Sigma_1 , \mathscr{R}_1)$ and $\mathscr{B}_2 = \{g_{N+1}, \ldots , g_{N+M}\}$ be a basic set for $(\Sigma_2, \mathscr{R}_2)$. Set $g_i := f_i \circ \varphi^{-1} = \phi (f_i)$ for $i\leq N$ and $f_j := g_j \circ \varphi
= \phi^{-1} (g_j)$ for $j\geq N+1$. Then $\{f_1, \ldots , f_{N+M}\}$ and $\{g_1, \ldots , g_{N+M}\}$ are basic sets for $\Sigma_1$ and $\Sigma_2$ respectively. The map $f= (f_1, \ldots , f_{N+M}): \Sigma_1 \to \mathbb R^{N+M}$ is a real analytic embedding of $\Sigma_1$
and $g = (g_1, \ldots , g_{N+M})$ a real analytic embedding of $\Sigma_2$ with the same image $S$. We therefore conclude that
$\varphi = (g|_S)^{-1} \circ f$ is real analytic, which completes the proof. 
\end{proof}

\begin{proof}[Proof of Proposition~\ref{p:rep=var}]
{\bf Representation $\Longrightarrow$ Sheet.} We consider first a Nash manifold $(\Sigma, \mathscr{R})$ of dimension $n$ and a representation $\mathscr{B} = \{f_1, \ldots , f_N\}\subset \mathscr{R}$. Our goal is thus to show that, if we set $f = (f_1, \ldots , f_N)$, then $S := f (\Sigma)$ is a sheet of an $n$-dimensional algebraic subvariety $V\subset \mathbb R^N$. First, recalling that $\mathscr{B}$ is a basic set, we know that for each choice of $1\leq i_1< i_2 < \ldots < i_{n+1}\leq N$ there is a (nontrivial) polynomial $P= P_{i_1\ldots i_{n+1}}$ such that
$P (f_{i_1}, \ldots , f_{i_{n+1}}) =0$. Let then $V_0$ be the corresponding algebraic subvariety, namely the set of common zeros of the polynomials $P_{i_1\ldots i_{n+1}}$. Clearly, by Proposition~\ref{p:F3-6}(b) the dimension of $V_0$ can be at most $n$. Otherwise there would be a point $q\in V_0$ of maximal dimension $d\geq n+1$ and there would be a neighborhood $U$ of $q$ such that $V_0\cap U$ is a real analytic $d$-dimensional submanifold of $\mathbb R^N$. This would mean that, up to a relabeling of the coordinates and to a possible restriction of the neighborhood, $U\cap V_0$ is the graph of a real analytic function of the first $d$ variables $x_1, \ldots, x_{d}$. But then
this would contradict the existence of a nontrivial polynomial of the first $n+1\leq d$ variables which vanishes on $V_0$.

Note moreover that, since $f$ is a smooth embedding of $\Sigma$, by Corollary~\ref{r:Haus=alg} the dimension must also be at least 
$n$. Hence we have concluded that the dimension of $V$ is precisely $n$.

Next, if $V_0$ is reducible, then there are two nontrivial subvarieties $V$ and $W$ of $V_0$ such that $V_0 = V\cup W$.
One of them, say $V$, must intersect $S$ on a set $A$ of positive $n$-dimensional volume. If $P$ is any polynomial among the ones defining $V$, we then must have $P (f_1, \ldots , f_N) =0$ on $A$: however, since $P (f_1, \ldots , f_N)$ is real analytic, $A$ has positive measure and $S$ is a connected submanifold of $\mathbb R^N$, we necessarily have $P (f_1, \ldots , f_N)=0$ on the whole $S$. We thus conclude that $S \subset V =: V_1$. If $V_1$ were reducible, we can go on with the above procedure and create a sequence $V_0\supset V_1 \supset \ldots$ of algebraic varieties containing $S$: however, by the well-known descending chain condition in the Zariski topology (cf.~\cite{Hartshorne}), this procedure must stop after a finite number of steps. Thus, we have achieved the existence of an $n$-dimensional irreducible subvariety $V$ such that
$S\subset V$.

\medskip

We claim that $S$ is a sheet of $V$. First of all, by Corollary~\ref{r:Haus=alg}, $S$ must contain a general point $p$ of $V$
because its dimension is $n$. Moreover,
by Proposition~\ref{p:F3-6} we know that there is a neighborhood $U$ of $p$ such that $U\cap V$ is an $n$-dimensional submanifold. By further restricting $U$ we can assume that both $U\cap V$ and $U\cap S$ are connected $n$-dimensional submanifolds. Since $S\subset V$, we must obviously have $S\cap U = V\cap U$. Hence $p$ is a point which satisfies condition (c) in Definition~\ref{d:sheets}. Next, fix a second point $q\in S$ and let $\bar p= f^{-1} (p)$ and $\bar q = f^{-1} (q)$. Since $\Sigma$ is a connected real analytic manifold, we clearly know that there is $\bar \gamma: [0,1]\to \Sigma$ real analytic\footnote{Here we are using the nontrivial fact that in a connected real analytic manifold any pair of points can be joined by a real analytic arc. One simple argument goes as follows: use first Whitney's theorem to assume, without loss of generality, that $\Sigma$ is a real analytic submanifold of $\mathbb R^N$. Fix two points $p$ and $q$ and use the existence of a real analytic projection in a neighborhood of $\Sigma$ to reduce our claim to the existence of a real analytic arc connecting any two points inside a connected open subset of the Euclidean space. Finally use the Weierstrass polynomial approximation theorem to show the latter claim.}  such that $\bar \gamma (0) = \bar q$ and $\bar \gamma (1) = \bar q$. Thus $\gamma := f \circ \bar\gamma$ is a map as in Definition~\ref{d:sheets}(a). It remains to show that $S$ is maximal among the subsets of $V$ satisfying Definition~\ref{d:sheets}(a). 

So, let $\tilde{S}$ be the maximal one containing $S$ and fix $p\in \tilde{S}$: we claim that indeed $p\in S$. By assumption we know that there is a real analytic curve $\gamma: [0,1]\to \tilde{S}$ such that $\gamma (0) \in S$ is a general point and $\gamma (1) = p$. First of all, since $\gamma (0)$ is a general point of $V$, there is a neighborhood $U$ of $p$ where $S$ and $V$ coincide. Hence there is $\delta>0$ such that $\gamma ([0, \delta[)\subset S$. Set next 
\[
s:= \sup \{s\in [0,1] : \gamma ([0,s[)\subset S\}\, .
\]
Clearly, $s$ is a maximum. Moreover, by compactness of $S$, $q:= \gamma (s) \in S$: we need then to show that $s=1$. Assume, instead, that $s<1$. Let $U$ be some coordinate chart in the real analytic manifold $\Sigma$ containing
$f^{-1} (q)$ and $y : U \to \mathbb R^n$ corresponding real analytic coordinates. There is $\delta >0$ such that
$f^{-1}  (\gamma ([s-\delta, s]))\subset U$. The map $\tilde{\gamma} := y \circ f^{-1} \circ \gamma :[s-\delta, s] \to \mathbb R^n$ is then
real analytic. Hence
there is $\eta>0$ so that $\tilde{\gamma} (t)$ can be expanded in power series of $(t-s)$ on the interval $]s-\eta, s]$. Such power series
converges then on $]s-\eta, s+\eta[$ and extends $\tilde{\gamma}$ to a real analytic map on $]s-\eta, s+\eta[$. Now,
$\gamma|_{]s-\eta, s+\eta[}$ and $\bar \gamma := f \circ y^{-1} \circ \tilde\gamma$ are two maps which coincide on the 
interval $]s-\eta, s]$: since they are both real analytic, they must then coincide on the whole $]s-\eta, s+\eta[$. Hence 
$\gamma ([0, s+\eta[)\subset S$, contradicting the maximality of $s$. 

\medskip

{\bf Sheet $\Longrightarrow$ Representation.} Let $v: \Sigma \to \mathbb R^N$ be a real analytic embedding of an $n$-dimensional real analytic manifold such that $S = v (\Sigma)$ is a sheet of an algebraic subvariety $V$ with minimal field of definition $\mathbb F$. Pick now a point $q\in S$ for which there is neighborhood $U$ with $U\cap S = V \cap U$. By Corollary~\ref{r:Haus=alg} there must necessarily be a point $p\in V\cap U = S\cap U$ with $m := {\rm dim} (V) = {\rm dim}_V (p) \geq n$. 
By Proposition~\ref{p:F3-6}(c) there is an algebraic subvariety $W\subset V$ with algebraic dimension $m$ containing $p$ and with field of definition $\mathbb F'\subset \mathbb F$. Note that by the latter property we must necessarily have ${\rm dim}_W (p) \geq m$ and thus
$p$ is a general point of $W$. Therefore, by Proposition~\ref{p:F3-6}(b) applied to $W$, there is a neighborhood $U'\subset U$ of $p$ such that $U'\cap W$ is an $m$-dimensional
connected submanifold: since $U'\cap S =U' \cap V \supset U' \cap W$ and $S$ is an $n$-dimensional submanifold, $m=n$ and there is a neighborhood
of $p$ where $W$ and $S$ coincide. 

We claim now that $v (\Sigma) = S\subset W$. Fix $p' \in S$: we know that there is an analytic function $\gamma : [0,1]\to S$ such that $\gamma (0) =p$ and $\gamma (1) = p'$. If $P$ is a polynomial of $N$ variables which vanishes on $W$, then $P\circ \gamma$ vanishes on a neighborhood of $0$. Since $P\circ \gamma$ is real analytic, it must thus vanish on the whole interval $[0,1]$ and thus $P (p')=0$. This shows that $p'$ is a zero of any polynomial which vanishes on $W$, which implies that $p'\in W$. From the very definition of sheet, it follows that $S$ is not only a sheet of the subvariety $V$, but also a sheet of the subvariety $W$. 

Having established that $v (\Sigma)$ is a sheet of an $n$-dimensional subvariety of $\mathbb R^N$, it follows that any collection of $n+1$ functions chosen among the coordinates $v_1, \ldots , v_N$ must satisfy a nontrivial polynomial relation. Thus $\mathscr{B} := \{v_1, \ldots , v_N\}$ is a basic set. Now consider the ring $\mathscr{R}'$ of real analytic functions generated by $\mathscr{B}$: such ring obviously satisfies the requirements (a) and (c) of Definition~\ref{d:alg_manifold}. Choosing a maximal one (among those satisfying these two requirements and containing $\mathscr{B}$) we achieve the desired structure $(\Sigma, \mathscr{R})$ of which $v$ is a representation.  
\end{proof}

\section{Proof of the existence of representations and of the approximation theorem}

The proofs of the two theorems follow indeed the same path and will be given at the same time. Before coming to them we need however the following
very important lemma.

\begin{lemma}\label{l:poly_dec}
Let $Q$ and $R$ be two monic polynomials in one variable of degrees $d_1$ and $d_2$ with real coefficients and no common factors. Let $P=Q R$ be their product. 
Then any monic polynomial $\tilde{P}$ of degree $d= d_1+d_2$ with real coefficients in a suitable neighborhood $U$ of $P$ can be factorized in two monic polynomials $\tilde{Q}$ and $\tilde{R}$ of degrees $d_1$ and $d_2$, with real coefficients and
which lie near $Q$ and $R$ respectively. Such decomposition is unique and the coefficients of the polynomials of each factor depend analytically upon those of $\tilde P$. 
\end{lemma}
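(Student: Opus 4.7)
My plan is to realize the factorization as the inverse of a polynomial (hence real-analytic) map from the space of pairs of monic polynomials to the space of their products, and then invoke the analytic inverse function theorem after verifying that the differential at $(Q,R)$ is an isomorphism. The no-common-factor hypothesis will enter precisely at this linear-algebra step, through the classical nonvanishing of the resultant.

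More concretely, I would parameterize a monic polynomial of degree $d_i$ in one variable by the $d_i$-tuple of its non-leading coefficients, so that pairs $(\tilde Q,\tilde R)$ of monic polynomials of degrees $d_1$ and $d_2$ correspond to points of $\mathbb R^{d_1}\times \mathbb R^{d_2}$, and monic polynomials $\tilde P$ of degree $d=d_1+d_2$ correspond to points of $\mathbb R^{d}$. The multiplication map
\[
\Phi:\mathbb R^{d_1}\times \mathbb R^{d_2}\longrightarrow \mathbb R^{d_1+d_2},\qquad \Phi(\tilde Q,\tilde R) = \tilde Q\tilde R - x^{d_1+d_2},
\]
is then polynomial (in particular real-analytic) in the coefficient variables. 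Computing its differential at $(Q,R)$ gives the linear map
\[
L:(q,r)\longmapsto qR + Qr,
\]
where $q$ ranges over polynomials of degree $<d_1$ and $r$ over polynomials of degree $<d_2$, with target the space of polynomials of degree $<d_1+d_2$. Both source and target have dimension $d_1+d_2$, so invertibility of $L$ amounts to injectivity.

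The key step, and the only non-formal one, is proving that $L$ is injective under the coprimality assumption. If $qR + Qr = 0$ with $\deg q<d_1$ and $\deg r<d_2$, then $Q\mid qR$; since $Q$ and $R$ share no factor, unique factorization in $\mathbb R[x]$ forces $Q\mid q$, which together with $\deg q<\deg Q$ yields $q=0$, and then $r=0$ as well. (Equivalently, the matrix of $L$ in the monomial basis is the Sylvester matrix of $Q$ and $R$, and its determinant is the resultant, which is nonzero precisely because $Q$ and $R$ have no common root in $\mathbb C$, i.e. no common factor in $\mathbb R[x]$.) Once this is established, the real-analytic inverse function theorem furnishes a neighborhood $U$ of $P=\Phi(Q,R)+x^{d_1+d_2}$ and a real-analytic local inverse $\tilde P\mapsto (\tilde Q,\tilde R)$ with $\tilde Q$ near $Q$ and $\tilde R$ near $R$; this simultaneously gives existence, analytic dependence, and local uniqueness of the factorization. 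Uniqueness of a factorization close to $(Q,R)$ is then exactly the injectivity part of the inverse function theorem, so no further argument is needed.

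The main obstacle, if any, is the coprimality-to-injectivity passage; everything else is a direct appeal to the analytic inverse function theorem applied to a polynomial map. I would present the argument in that order: set up the coordinates, compute $D\Phi_{(Q,R)}=L$, prove injectivity of $L$ from coprimality via the Sylvester/Bezout observation, and conclude with the inverse function theorem.
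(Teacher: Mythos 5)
Your proposal is correct and follows the same backbone as the paper's argument: identify monic polynomials with their vectors of non-leading coefficients, regard multiplication $(\tilde Q,\tilde R)\mapsto\tilde Q\tilde R$ as a polynomial map between Euclidean spaces of equal dimension, verify that its differential at $(Q,R)$ is the Sylvester pairing $(q,r)\mapsto qR+Qr$ whose determinant is the resultant of $Q$ and $R$, and invoke the real-analytic inverse function theorem. The two places where you depart are streamlinings rather than a different route. First, you give a short, self-contained proof that $L(q,r)=qR+Qr$ is injective (from $Q\mid qR$, coprimality forces $Q\mid q$, and the degree bound kills $q$), whereas the paper cites the resultant-vanishing criterion for the Sylvester determinant without proving it. Second, the paper opens with a separate root-tracking argument to establish uniqueness and reality of the decomposition --- the roots of $\tilde P$ cluster near the roots of $Q$ or of $R$, so the group of roots of $\tilde Q$ is forced, and complex-conjugate pairs stay together, so $\tilde Q$ and $\tilde R$ have real coefficients --- and only then runs the inverse function theorem for existence and analytic dependence. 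By parameterizing from the outset by \emph{real} coefficient vectors, you get reality automatically, and the local-diffeomorphism statement of the inverse function theorem delivers existence and uniqueness in one stroke, so no separate uniqueness argument is needed. The only thing the paper's root-tracking buys that your argument does not is the slightly stronger observation that the factorization is unique even among monic factorizations with \emph{a priori} complex coefficients near $(Q,R)$; since the lemma only asserts uniqueness among real factorizations, your version is complete.
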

\begin{proof}
First of all we show that the decomposition is unique. Note that two polynomials have no common factors if and only if
they have no (complex) root in common. Let $z_1, \ldots , z_{d_1}$ be the roots of $Q$ and $w_1, \ldots , w_{d_2}$ those of $R$ (with repetitions, accounting for 
multiplicities). If $\tilde{P}$ is close to $P = QR$, then its roots will be close to $z_1, \ldots z_{d_1}, w_1, \ldots , w_{d_2}$ and thus they can be divided in unique way
in two groups: $d_1$ roots close to the roots of $Q$ and $d_2$ roots close to those of $R$. Clearly the zeros of the factor $\tilde{Q}$ must be close to those of $Q$ and
thus $\tilde{Q}$ is uniquely determined, which in turn determines also the other factor $\tilde{R}$. Note moreover that the coefficients of
both $\tilde{Q}$ and $\tilde{R}$ must be real: it suffices to show that if a (nonreal) root $\zeta$ of $\tilde{P}$ is a root of 
$\tilde{Q}$, then its complex conjugate $\bar \zeta$ is also a root of $\tilde{Q}$. Indeed, either $\zeta$ is close to a real root of $Q$, in which case $\bar \zeta$ is close to the same root, or $\zeta$ is close to a nonreal root of $z_i$ of $Q$, in which case $\bar \zeta$ is close to $\bar z_i$, which must be a root of $Q$ because $Q$ has real coefficients. 

In order to show the existence and the real analytic dependence, set 
\begin{align*}
Q (x) & =  x^{d_1} + \sum_{i=1}^{d_1} a_i x^{d_1-i}\\
R (x) & = x^{d_2} + \sum_{i=1}^{d_2} b_i x^{d_2-i}\\
P (x) & = x^d + \sum_{i=1}^d c_i x^{d-i}\, .
\end{align*}
We then desire to find a neighborhood $U$ of $c = (c_1, \ldots , c_d)\in \mathbb R^d$ and a real analytic map
$(\alpha, \beta): U\to \mathbb R^{d_1}\times \mathbb R^{d_2}$ with the properties that
\begin{itemize}
\item[(a)] $x^d + \sum_i \tilde c_i x^{d-i} = (x^{d_1} + \sum_j \alpha_j (\tilde c) x^{d_1-j})(x^{d_2} + \sum_k \beta_k (\tilde c) x^{d_2-k})$ for any $\tilde c \in U$;
\item[(b)] $\alpha (c) = a$ and $\beta (c) = b$.
\end{itemize}
Given $\alpha$, $\beta$ vectors in some neighborhoods $U_1$ and $U_2$ of $a$ and $b$, let $Q_\alpha := x^{d_1} + \sum_j \alpha_j x^{d_1-j}$, $R_\beta := x^{d_2} + \sum_k \beta_k x^{d_2-k}$
and $Q_\alpha R_\beta = x^d + \sum_i \gamma_i x^{d-i}$. This defines a real analytic (in fact polynomial!) map $U_1 \times U_2 \ni (\alpha, \beta) \mapsto \gamma (\alpha , \beta) \in \mathbb R^d$ with the property that $\gamma (a, b) = c$. Our claim will then follow from the inverse function theorem if we can show that the determinant of the matrix of partial derivatives of $\gamma$ at the point $(a,b)$ is nonzero. The latter matrix is however the Sylvester matrix of the two polynomials $Q$ and $R$: the determinant of the Sylvester matrix of two polynomials (called the resultant), vanishes if and only if the two polynomials have a common zero, see \cite{Akritas}.  
\end{proof}

We are now ready to prove the two main theorems, namely Theorem~\ref{t:alg_approx} and Theorem~\ref{t:alg_main_2}.

\begin{proof}[Proof of Theorem~\ref{t:alg_approx}]
We start with Theorem~\ref{t:alg_approx} and consider therefore a smooth embedding $w: \Sigma \to \mathbb R^m$ of a smooth closed connected manifold $\Sigma$ of dimension
$n$. By Whitney's theorem we can assume, without loss of generality, that $w$ is real analytic. Consider now a tubular neighborhood $U := U_{4\delta} (\Sigma)$ so that the
nearest point projection $x\mapsto \pi (x) \in \Sigma$ is real analytic on $U$ and let $v: U \to \mathbb R^m$ be the function $v (x) := \pi (x) -x$. For each $x$ let also
$T_{\pi (x)} \Sigma$ be the $n$-dimensional tangent space to $\Sigma$ at $\pi (x)$ (considered as a linear subspace of $\mathbb R^m$) and let $\xi \mapsto {\mathbf K} (x) \xi$ be
the orthogonal projection from $\mathbb R^m$ onto $T^\perp_{\pi (x)} \Sigma$, namely the orthogonal complement of the tangent $T_{\pi (x)} \Sigma$. 
We therefore consider ${\mathbf K} (x)$ to be a symmetric $m\times m$ matrix with coefficients which
depend analytically upon $x$. Let next $u$ and $\mathbf L$ be two maps with polynomial dependence on $x$ which on $U_{3\delta} (\Sigma)$ approximate well the maps $v$ and $\mathbf K$.
More precisely
\begin{itemize}
\item[(i)] $\mathbf{L} (x)$ is an $m\times m$ symmetric matrix for every $x$, with entries which are polynomial functions of the variable $x$; similarly the components of $u(x)$ are polynomial functions of $x$;
\item[(ii)] $\|u-v\|_{C^N (U_{3\delta} (\Sigma))} + \|\mathbf{L}- \mathbf{K}\|_{C^N (U_{3\delta} (\Sigma))} \leq \eta$, where $N$ is a large natural number and $\eta$ a small real
number, whose choices will be specified later. 
\end{itemize}
The characteristic polynomial of $\mathbf{K}$ is $P (\lambda) = (\lambda -1)^{m-n} \lambda^n$. We can then apply Lemma~\ref{l:poly_dec} and,
assuming $\eta$ is sufficiently small, the characteristic polynomial $P_x (\lambda)$ of $\mathbf{L} (x)$ can be factorized as $Q_x (\lambda) R_x (\lambda)$
where 
\begin{itemize}
\item[(iii)] $R_x (\lambda)$ is close to $\lambda^n$;
\item[(iv)] $Q_x (\lambda)$ is close to $(\lambda -1)^{m-n}$;
\item[(v)] The coefficients of $R_x$ and $Q_x$ depend analytically upon $x$.
\end{itemize}
It turns out that both $Q_x$ and $R_x$ have all real roots (since $\mathbf L (x)$ is symmetric, its eigenvalues are all real). Moreover, the eigenvectors
with eigenvalues which are roots of $R_x$ span an $n$-dimensional vector subspace $\tau (x)$ of $\mathbb R^m$ which is close to $T_{\pi (x)} (\Sigma)$. 
On the other hand the eigenvectors with eigenvalues which are roots of $Q_x$ span the orthogonal of $\tau (x)$, which we will denote by $\tau (x)^\perp$ (recall that $\mathbf{L} (x)$ is a symmetric
matrix!). Consider next the symmetric matrix $\mathbf{P} (x) = R_x (\mathbf{L} (x))$. Then the kernel of the linear map $\xi \mapsto \mathbf{P} (x)\xi$ 
is $\tau (x)$. Moreover $|\mathbf{P} (x) \xi -\xi| \leq C \eta |\xi|$ for every $\xi\in \tau (x)^\perp$, where $C$ is only a dimensional constant: this happens because $\mathbf{P} (x)$ is close to $(\mathbf{K} (x))^n$, whose linear action on $T^\perp_{\pi (x)} \Sigma$ is the identity. 

Consider now the map 
\[
z (x) := x + v(x) - \underbrace{\mathbf{K} (x) \mathbf{P} (x) u (x)}_{=:\psi (x)}\, .
\]
The map $x\mapsto z (x)$ is clearly real analytic on $U_{2\delta}$. Moreover, as $\eta\downarrow 0$, the map $v-\psi$ converges to $x \mapsto v (x)- \mathbf{K} (x) \mathbf{K} (x) v (x) = 0$,
because $\mathbf{K} (x) v (x) = v (x)$. The latter convergence is in $C^N$. Since $N$ is larger than $1$, for $\eta$ sufficiently close to $0$ this will imply the local invertibility of the
function $z$. In fact, by the inverse function theorem and compactness of $\overline{U_{3\delta} (\Sigma)}$ we conclude the existence of a $\sigma>0$ and an $\eta_0$ such that, if $\eta < \eta_0$,
then $z$ is injective in $B_\sigma (y)$ for every $y\in U_{2\delta} (\Sigma)$. Then, choosing $\eta < \min \{\eta_0, \sigma/(3 C)\}$ for a suitable dimensional constant $C$ we conclude the global injectivity of $z$ on $U_{2\delta} (\Sigma)$: if we have
$z (x) = z (x')$ and $x\neq x'$, then necessarily $|x-x'|\geq \sigma$. On the other hand the $C^0$ norm of the difference between $z$ and the identity map is given by $C\eta$ and thus
we can estimate 
\[
|z (x) - z(x')|\geq |x-x'| - |z (x)-x| - |z(x')-x'|\geq \sigma - \frac{2\sigma}{3}\, .
\] 
Finally, by possibly choosing $\eta$ even smaller, we can assume that $U_\delta (\Sigma)$ is
contained in $z (U_{2\delta} (\Sigma))$. 

Let now $z^{-1}$ be the inverse of $z$ on $U_\delta (\Sigma)$, which is analytic by the inverse function theorem. We claim that the real analytic subvariety $\Gamma = z^{-1} (\Sigma)$ is a sheet of an algebraic subvariety:
this would complete the proof of Theorem~\ref{t:alg_approx}, provided $N$ is large enough and $\eta$ small enough.

\medskip

Note now that, for any choice of $x$, $x+ v (x) = \pi (x)$ belongs to $\Sigma$ and $\psi (x)$ is orthogonal to $T_{\pi (x)} \Sigma$, by definition of $\mathbf{K} (x)$. Hence
$z (x)$ belongs to $\Sigma$ if and only if $\psi (x) = 0$. We conclude therefore that $\Gamma$ is indeed the set where $\psi$ vanishes. Recall moreover that, choosing $\eta$ sufficiently small,
$\mathbf{P} (x) u (x)$ belongs to the plane $\tau (x)^\perp$ which is close to $T_{\pi (x)}^\perp \Sigma$: hence $\mathbf{K} (x) \mathbf{P} (x) u(x) =0$ is equivalent to the condition
$\mathbf{P} (x) u (x) = 0$. $\Gamma$ is therefore the zero set of 
\[
R_x (\mathbf{L} (x)) u (x) = 0\, .
\]
Note however that the coefficients of the polynomial $R_x (\lambda)$ are just analytic functions of $x$ and {\em not} polynomial functions of $x$: it is therefore
not obvious that $\Gamma$ is a sheet of an algebraic subvariety. From now on we let $\phi (x)$ be the map $R_x (\mathbf{L} (x)) u (x)$. 

Consider now $\mathbb R^{m+n}$ as a product of $\mathbb R^m$ with the linear space of polynomials of degree $n$ and real coefficients in the unknown $\lambda$. 
In other words, to every point $(x, a)\in \mathbb R^{m+n}$ we associate the pair $x\in \mathbb R^n$ and $p_a (\lambda) = \lambda^n + a_1 \lambda^{n-1} + \ldots + a_n$.
For any $(x,a)$ 
consider the polynomial $q_{x,a} (\lambda)$ which is the remainder of the division of $P_x (\lambda)$, the characteristic polynomial of $\mathbf{L} (x)$, by the polynomial $p_a (\lambda)$. In particular, let $\eta_j (x,a)$ be the coefficients of $q_{x, a}$, namely
\[
q_{x,a} (\lambda) = \eta_1 (x,a) \lambda^{n-1} + \eta_2 (x,a) \lambda^{n-2} + \ldots + \eta_n (x,a)\, . 
\]
The corresponding map $(x,a) \mapsto \eta (x,a) = (\eta_1 (x,a), \ldots , \eta_n (x,a))$ is a polynomial map, because the coefficients of $P_x (\lambda)$ depend polynomially on $x$! For any element $(x, a)$, define $\varphi (x, a) := p_a (\mathbf{L} (x)) u (x)$ and consider thus the system of polynomial equations
\begin{align}\label{e:sistema_croce}
\left\{
\begin{array}{ll}
\eta (x,a) = 0\\ \\
\varphi (x,a) = 0
\end{array}
\right.
\end{align}
Such system defines a real algebraic subvariety $V$ of $\mathbb R^{m+n}$. 
Now, consider the analytic map $x \mapsto \Psi (x) = (x, R_x) \in \mathbb R^{m+n}$. Since the remainder of the division of $P_x$ by $R_x$ is $0$, we clearly have 
$\eta (\Psi (x)) =0$. Moreover, since $\varphi (x, R_x) = \phi (x)$, we conclude that $\Psi (\Gamma)$ is a subset of the set of solutions of \eqref{e:sistema_croce},
namely a subset of $V$. Moreover $\Psi (\Gamma)$ is a real analytic embedding of $\Gamma$ and hence also a real analytic embedding of $\Sigma$. We next claim that
$\Psi (\Gamma)$ is in fact an isolated sheet of $V$. The only thing we need to show is that in a neighborhood of $\Psi (\Gamma)$ the only solutions of \eqref{e:sistema_croce}
must be images of $\Gamma$ through $\Psi$. If $(x',a)$ is a zero of \eqref{e:sistema_croce} near an element of $(x, R_x)\in \Psi (\Gamma)$, it then follows that the polynomial $p_a$ 
must be close to the polynomial $R_x$ and must be a factor of $P_{x'}$. Recall however that $R_x (\lambda)$ is close to the polynomial $\lambda^n$ and,
by Lemma~\ref{l:poly_dec}, nearby $\lambda^n$ there is a unique factor of $P_{x'}$ which is a monic polynomial of degree $n$ close to $\lambda^n$: such
factor is $R_{x'}$! This implies that $p_a = R_{x'}$ and
hence that $\varphi (x',a) = \phi (x')$. But then $\phi (x') =0$ implies that $x'\in \Gamma$, which completes the proof that $\Psi (\Gamma)$ is an isolated sheet of the real algebraic subvariety $V$ of $\mathbb R^{m+n}$.

\medskip
In particular, $\Psi (\Gamma)$ is a proper representation, by Proposition~\ref{p:rep=var}. 
But $\Gamma$ is a projection of such representation, which
is still an analytic submanifold and thus it is easy to see that $\Gamma$ is also a representation of $\Sigma$: namely the components of $z^{-1}: \Sigma \to \mathbb R^n$ give a basic set
$\mathscr{B}$ 
of $\Sigma$ and, using the same procedure of the proof of Proposition~\ref{p:rep=var} we can find a Nash ring $\mathscr{R}$ containing $\mathscr{B}$, concluding the proof of Theorem~\ref{t:alg_approx}.
\end{proof}

\begin{proof}[Proof of Theorem~\ref{t:alg_main_2}] Fix a connected smooth closed differentiable manifold of dimension $n$ and, following the previous proof, consider the isolated sheet $\Psi (\Gamma)$ of the algebraic subvariety $V$ of $\mathbb R^{m+n}$ constructed above.
We next use the classical projection argument of Whitney, cf.~\cite{Whitney1936}, to show that, if $\pi$ is the orthogonal projection of $\mathbb R^{m+n}$ onto a generic (in the sense of Baire category) $2n+1$-dimensional subspace of $\mathbb R^{m+n}$, $\pi (\Psi (\Gamma))$ is still a submanifold, it is a connected component of $\pi (V)$ and that $\pi (V)$ is a an algebraic subvariety\footnote{The projection of an algebraic subvariety is not always an algebraic subvariety: here as well we are taking advantage of the genericity of the projection.} of $\mathbb R^{2n+1}$. The latter claim would then give a proper representation in $\mathbb R^{2n+1}$ and would thus show
Theorem~\ref{t:alg_main_2}. 

In order to accomplish this last task, we first observe that it suffices to show the existence of a projection onto an hyperplane, provided $m+n > 2n+1$: we can then keep reducing the dimension of the ambient Euclidean space until we reach $2n+1$. Next, for each hyperplane $\tau \subset \mathbb R^{m+n}$ we denote by $P_\tau$ the orthogonal projection onto it. The classical argument of Whitney implies that:
\begin{itemize}
\item[(a)] For a dense open subset of $\tau$'s in the Grassmanian $G$ of hyperplanes of $\mathbb R^{m+n}$ the map $P_\tau$ restricted on $\Psi (\Gamma)$ is an immersion (i.e. its differential has full rank at every $p\in \Psi (\Gamma)$).
\item[(b)] For a generic subset of $\tau$'s, $P_\tau$ is injective on $\Psi (\Gamma)$.
\end{itemize}
Thus for a dense open subset of $\tau$'s, $P_\tau \circ \Psi$ is an embedding of $\Gamma$. However, note that point (b) cannot be obviously extended to give injectivity of $P_\tau$ on
the whole subvariety $W$, because $W\setminus \Psi (\Gamma)$ is not necessarily a submanifold. We claim that, nonetheless, 
\begin{equation}\label{e:non_si_toccano}
P_\tau (\Psi (\Gamma))\cap P_\tau (W\setminus \Psi (\Gamma)) = \emptyset \qquad \mbox{for $\tau$ in a dense open subset of $G$.}
\end{equation}
Indeed, by Proposition~\ref{p:F3-6}, we know that $W\setminus \Psi (\Gamma)$ can be covered by countably many submanifolds $W_i$, of dimension $d_i \leq n$. Without loss of generality we can
assume that each $W_i$ is compact, has smooth boundary and does not intersect $\Psi (\Gamma)$. Consider the map $\Psi (\Gamma) \times W_i \ni (x,y) \mapsto z (x,y) := \frac{x-y}{|x-y|}$. Since $z$ is smooth, $z (\Psi (\Gamma) \times W_i)$ is a (closed) set of Hausdorff dimension at most $n + d_i \leq 2n < m+n-1$ and thus it is meager. In particular we conclude that the set $ K := z (\Psi (\Gamma) \times (W\setminus \Psi (\Gamma)))$ is a countable union of meager sets and thus a set of first category.
Hence the set $U\subset \mathbb S^{m+n-1}$ of points $p$ for which neither $p$ nor $-p$ belongs to $K$ is a generic subset of $\mathbb S^{m+n-1}$. Clearly, the set of hyperplanes $\tau$ orthogonal
to $\{p, -p\}\subset U$ is a generic subset of hyperplanes for which $P_\tau (\Psi (\Gamma)) \cap P_\tau (W\setminus \Psi (\Gamma)) = \emptyset$. 

Finally, it is a classical fact in real algebraic geometry that, for a generic subset of $\tau$, $P_\tau (W)$ is a real algebraic subvariety. Nash refers to the ``classical algebraic geometrical method of generic linear projection'', cf.~\cite[p. 415]{Nash1954}. However, it is possible to conclude the existence of a good projection directly with an algebraic variant of Whitney's argument\footnote{Many thanks to Riccardo Ghiloni for suggesting this argument, which follows closely the proof of \cite[Lem.~3.2]{Jelonek}.}. For completeness we report this alternative possibility in the next two paragraphs.

\medskip

Consider the complexification $W_\C \subset \C^{m+n}$ of $W$ (i.e., $W_\C$ is the smallest complex algebraic subvariety of $\C^{m+n}$ containing $W$). We have that $W_\C$ has (real) dimension $2n$, $W=W_\C \cap \R^{m+n}$ and $\Psi(\Gamma)$ is contained in the set $W_\C^*$ of nonsingular points of $W_\C$: for any point $p \in \Psi(\Gamma)$ there is a neighborhood $U$ of $p$ in $\C^{m+n}$ such that $U \cap W_\C$ is the zero set of $m$ polynomials with linearly independent gradients. We identify $\PP^{m+n-1}(\C)$ with the hyperplane at infinity of $\C^{m+n}$. Thus, we can consider $\PP^{m+n}(\C)$ as the union $\C^{m+n} \cup \PP^{m+n-1}(\C)$. For each nonzero vector $\tau$ of $\C^{m+n}$ we indicate by $[\tau]$ the corresponding point of $\PP^{m+n-1}(\C)$. Let $S$ denote the set of all $[\tau]$ of the form $\tau=x-y$ with $x,y \in W_\C$ and $x \neq y$. Note that $S$ has Hausdorff dimension at most $4n$ and the same is true for its closure\footnote{Observe that in this context the closure in the Euclidean topology coincides with the Zariski closure.} $T$ in $\PP^{m+n-1}(\C)$. The set $T$ contains all points at infinity of $W_\C$ (i.e. $T$ contains the intersection between $\PP^{m+n-1}(\C)$ and the closure of $W_\C$ in $\PP^{m+n}(\C)$). It is immediate to verify that $T$ contains also all the points $[\tau]$ such that $\tau$ is a nonzero vector of $\C^{m+n}$ tangent to the complex manifold $W_\C^*$ at some of its points. Since $2(m+n-1)>4n$, $T$ turns out to be a proper (i.e. $T \subsetneqq \PP^{m+n-1}(\C)$) complex algebraic subvariety of $\PP^{m+n-1}(\C)$. Thus, the subset $\PP^{m+n-1}(\R)$ of $\PP^{m+n-1}(\C)$ cannot be completely contained in $T$. Choose $[\nu] \in \PP^{m+n-1}(\R) \setminus T$. Denote by $H$ the hyperplane of $\R^{m+n}$ orthogonal to $\tau$ and by $H_\C \subset \C^{m+n}$ its complexification. 

Observe that the orthogonal projection $\rho:\R^{m+n} \to H$ extends to the projection $\rho_\C:\C^{m+n} \to H_\C$ which maps each point $x$ into the unique point of the intersection between $H_\C$ and the projective line joining $[\nu]$ and~$x$. Since $[\nu] \not\in T$, the restriction $\rho_\C'$ of $\rho_\C$ to $W_\C$ is proper and injective, and it is an immersion on $W_\C^*$. In particular, $\rho_\C'(W_\C)$ is a complex algebraic subvariety of $H_\C$ and $\rho_\C'(x)$ is a nonsingular point of $\rho_\C'(W_\C)$ for each $x \in \Psi(\Gamma)$. It follows immediately that the restriction $\rho'$ of $\rho$ to $W$ is an homeomorphism onto its image and it is a real analytic embedding on $\Psi(\Gamma)$. It remains to prove that $\rho'(W)$ is a real algebraic subvariety of $H$. It suffices to show that $\rho'(W)=\rho_\C'(W_\C) \cap H$ or, equivalently, that $\rho_\C'(W_\C) \cap H \subset \rho'(W)$. Let $x \in \rho_\C'(W_\C) \cap H$ and let $y \in W_\C$ with $\rho_\C'(y)=x$. We must prove that $y \in \R^{m+n}$. Note that the conjugate point $\overline{y}$ of $y$ belongs to $W_\C$, because $W_\C$ can be described by real polynomial equations. In this way, since $[\nu]$ is real (i.e. $[\nu] \in \PP^{m+n-1}(\R)$), $\rho_\C'(\overline{y})=\overline{x}=x=\rho_\C'(y)$. On the other hand, $\rho_\C'$ is injective and hence $y \in \R^{m+n}$ as desired.
\end{proof}

\section{Proof of the uniqueness of the Nash ring}

We finally turn to Theorem~\ref{t:alg_uniq}.
Let $(\Sigma, \mathscr{R}_1)$ and $(\Gamma, \mathscr{R}_2)$ be two structures of Nash manifolds on two diffeomorphic manifolds and consider 
two corresponding proper representations $v_1: \Sigma \to \mathbb R^{n_1}$ and $v_2 : \Gamma
\to \mathbb R^{n_2}$. Let $\alpha: \Gamma \to \Sigma$ be a diffeomorphism and, using Whitney's theorem, assume without loss of generality that
$\alpha$ is real analytic and define $a := v_1 \circ \alpha \circ v_2^{-1}$ on $v_2 (\Gamma)$. Consider a neighborhood $U_\delta (v_2 (\Gamma))$ where the nearest point projection $\pi_2$ on $v_2 (\Gamma)$ is real analytic and
let $w:= a \circ \pi_2$: $w$ is
a real analytic mapping from $U_\delta (v_2 (\Gamma))$ onto $v_1 (\Sigma)$. We can then approximate $w$ in $C^1$ with a map $z$ whose coordinate functions are polynomials.
If the approximation is good enough, we can assume that $w$ takes values in a neighborhood $U_\eta$ of $v_1 (\Sigma)$ where the nearest point projection $\pi_1$ is real analytic and well defined.
Now the nearest point projection $\pi_1 (y)$ of a point $y$ onto $v_1 (\Sigma)$ is in fact characterized by the orthogonality of $y-\pi_1 (y)$ to the tangent space to $v_1 (\Sigma)$ at $\pi_1 (y)$. It is easy to see that this is a set of polynomial conditions when $v_1 (\Sigma)$ is, as in this case, a smooth real algebraic submanifold. Thus $\pi_1$ is an algebraic function. 
Hence $\zeta := \pi_1 \circ z$ is also an algebraic function. If $z$ is close enough to $w$ in the $C^k$ norm, then the restriction of $z$ to $v_2 (\Gamma)$ will be close enough to
$a$ in the $C^1$ norm: in particular when this norm is sufficiently small the restriction of $z$ to $v_2 (\Sigma)$ must be a diffeomorphism of $v_2 (\Gamma)$ with $v_1 (\Sigma)$. By the implicit function theorem, the inverse will also be real analytic. Since, however, $z$ is algebraic, its inverse will also be algebraic. Thus $z$ gives the desired isomorphism between the two algebraic structures.

\chapter{$C^1$ isometric embeddings}

\section{Introduction}\label{s:C1_intro}

Consider a smooth $n$-dimensional manifold $\Sigma$ with a smooth Riemannian tensor $g$ on it. If $U\subset \Sigma$ is a coordinate patch, we write $g$ as customary in local coordinates:
\[
g = g_{ij} dx_i \otimes dx_j\, ,
\]
where we follow the Einstein's summation convention. The smoothness of $g$ means that, for any chart of the smooth atlas, the coefficients $g_{ij}$ are $C^\infty$ functions.

An isometric immersion (resp. embedding) $u : \Sigma \to \mathbb R^n$ is an immersion (resp. embedding) which preserves the length of curves, namely such that
\[
\ell_g (\gamma) = \ell_e (u\circ \gamma) \qquad \mbox{for any $C^1$ curve $\gamma: I \to \Sigma$.}
\]
Here $\ell_e (\eta)$ denotes the usual Euclidean length of a curve $\eta$, namely
\[
\ell_e (\eta) = \int |\dot \eta (t)|\, dt\, , 
\]
whereas $\ell_g (\gamma)$ denotes the length of $\gamma$ in the Riemannian manifold $(\Sigma, g)$: if $\gamma$ takes values in a coordinate patch $U\subset \Sigma$ the explicit formula is
\begin{equation}\label{e:Riem_length}
\ell_g (\gamma) = \int \sqrt{ g_{ij} (\gamma (t)) \dot\gamma_i (t) \dot\gamma_j (t)}\, dt\, .
\end{equation}
The existence of isometric immersions (resp. embeddings) is a classical problem, whose formulation is attributed to the Swiss mathematician Schl\"afli, see \cite{Schlaefli}. At the time of Nash's works \cite{Nash1954, Nash1956} comparatively little was known about the existence of such maps. Janet \cite{Janet}, Cartan \cite{Cartan} and Burstin \cite{Burstin} had proved the existence of local isometric embeddings in the case of analytic metrics. For the very particular case of $2$-dimensional spheres endowed with metrics of positive Gauss curvature, Weyl in \cite{Weyl} had raised the question of the existence of isometric embeddings in $\mathbb R^3$. The Weyl's problem was solved by Lewy in \cite{Lewy} for analytic metrics and, only shortly before Nash's work, another brilliant young mathematician, Louis Nirenberg, had settled the case of smooth metrics (in fact $C^4$, see Nirenberg's PhD thesis \cite{NirenbergPhD} and the note \cite{Nirenberg}); the same problem was solved independently by Pogorolev \cite{Pogorelov1951}, building upon the work of Alexandrov \cite{Alexandrov1948} (see also \cite{Pogorelov}). 

\medskip

In his two papers on the topic written in the fifties (he wrote a third contribution in the sixties, cf.~\cite{Nash1966}), Nash completely revolutionized the subject. He first proved a very counterintuitive fact which shocked the geometers of his time, namely the existence
of $C^1$ isometric embeddings in codimension $2$ in the absence of topological obstructions. He then showed the existence of smooth embeddings in sufficiently high codimension, introducing his celebrated approach to ``hard implicit function theorems''.
In this chapter we report the main statements and the arguments of the first paper \cite{Nash1954}.

\medskip

We start by establishing the following useful notation. First of all we will use the Einstein summation convention on repeated indices. We then will denote by $e$ the standard Euclidean metric on $\mathbb R^N$, which in the usual coordinates is expressed by the tensor 
\[
\delta_{ij} dx_i \otimes dx_j\, .
\] 
If $v: \Sigma \to \mathbb R^N$ is an immersion, we denote by $v^\sharp e$ the pull-back metric on $\Sigma$. When $U\subset \Sigma$ is a coordinate patch, the pull-back metric in the local coordinates is then given by
\[
v^\sharp e = (\partial_i v \cdot \partial_j v) dx_i \otimes dx_j\, ,
\]
where $\partial_i v$ is the $i$-th partial derivative of the map $v$ and $\cdot$ denotes the usual Euclidean scalar product.
The obvious necessary and sufficient condition in order for a $C^1$ map $u$ to be an isometry is then given by $u^\sharp e =g$, which amounts to the identities
\begin{equation}\label{e:PDE_isom}
g_{ij} = \partial_i u \cdot \partial_j u\, .
\end{equation}
Note that this is a system of $\frac{n (n+1)}{2}$ partial differential equations in $N$ unknowns (if the target of $u$ is $\mathbb R^N$).

In order to state the main theorems of Nash's 1954 note, we need to introduce the concept of ``short immersion''.
\begin{definition}[Short maps]\label{d:short}
Let $(\Sigma, g)$ be a Riemannian manifold. 
An immersion $v: \Sigma \to \mathbb R^N$ is short if we have the inequality $v^\sharp e \leq g$ in the sense of quadratic forms:
more precisely $h\leq g$ means that 
\begin{equation}\label{e:corta_def}
h_{ij} w^i w^j \leq g_{ij} w^i w^j \qquad \mbox{for any tangent vector $w$}.
\end{equation} 
Analogously  we write $h < g$ when \eqref{e:corta_def} holds with a {\em strict} inequality for any nonzero tangent vector. Hence, if the immersion $v: \Sigma \to \mathbb R^N$ satisfies the inequality $v^\sharp e < g$, we say that it is {\em strictly} short.
\end{definition}

Using \eqref{e:Riem_length} we see immediately that a short map shrinks the length of curves, namely $\ell_e (v(\gamma)) \leq \ell_g (\gamma)$ for every smooth curve $\gamma$.
The first main theorem of Nash's paper is then the following result

\begin{theorem}[Nash's $C^1$ isometric embedding theorem]\label{t:main_C1_1}
Let $(\Sigma, g)$ be a smooth closed $n$-dimensional Riemannian manifold and $v: \Sigma \to \mathbb R^N$ a $C^\infty$ short immersion with $N\geq n+2$. Then, for any $\varepsilon >0$ there is a $C^1$ isometric immersion $u: \Sigma\to \mathbb R^N$ such that $\|u-v\|_{C^0} < \varepsilon$. If $v$ is, in addition, an embedding, then $u$ can be assumed to be an embedding as well. 
\end{theorem}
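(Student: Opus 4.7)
The strategy is Nash's iterative scheme: construct a sequence of smooth strictly short immersions $v_0, v_1, v_2, \ldots$ with $v_0$ a slight modification of $v$, the pullback metrics $v_k^\sharp e$ converging uniformly to $g$, and the $v_k$ themselves converging in $C^1$ to a limit $u$. Since $C^1$ convergence passes the isometry relation to the limit, $u$ will be a $C^1$ isometric immersion. As a first step, I would replace $v$ by $v_0 := (1-\eta)v$ for a small $\eta>0$, ensuring strict shortness $v_0^\sharp e < g$ at the cost of a $C^0$ error absorbable into $\varepsilon/2$. The embedding conclusion will then follow provided that the total $C^0$ displacement $\sum_k\|v_{k+1}-v_k\|_{C^0}$ is smaller than the ``injectivity margin'' of $v$, namely $\tfrac{1}{2}\inf\{|v(p)-v(q)| : d(p,q)\geq\rho\}$ for some small $\rho$ on which $v$ is bi-Lipschitz.

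The heart of the proof is the basic spiral perturbation. Given a strictly short $v_k$ with deficit $h_k := g - v_k^\sharp e > 0$, a smooth amplitude $a(x)\geq 0$, a fixed unit covector $\nu$, and two smooth orthonormal vector fields $\xi_1,\xi_2$ along $v_k(\Sigma)$ orthogonal to the tangent space (available precisely because $N\geq n+2$), set
\[
v_{k+1}(x) := v_k(x) + \tfrac{a(x)}{\lambda}\bigl(\cos(\lambda\,\nu\cdot x)\,\xi_1(x)+\sin(\lambda\,\nu\cdot x)\,\xi_2(x)\bigr).
\]
A direct computation, exploiting $\xi_i\cdot\partial_j v_k=0$ and the identity $\sin^2+\cos^2=1$, gives
\[
v_{k+1}^\sharp e = v_k^\sharp e + a^2\,\nu\otimes\nu + O(\lambda^{-1}),
\]
so one spiral closes one rank-one summand of the deficit, the error being negligible for $\lambda$ large. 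To handle the full deficit globally, I would cover $\Sigma$ by finitely many coordinate charts, fix in each chart a finite family of covectors $\{\nu_j\}$ whose tensors $\nu_j\otimes\nu_j$ span the cone of positive semidefinite symmetric $n\times n$ matrices, and use a subordinate partition of unity to decompose $h_k = \sum_j a_j(x)^2\,\nu_j\otimes\nu_j$. Chaining spiral perturbations over all $j$---at each stage choosing $\lambda$ large relative to the accumulated data---produces $v_{k+1}$ with $\|g - v_{k+1}^\sharp e\|_{C^0} \leq \sigma\,\|h_k\|_{C^0}$ for a fixed $\sigma<1$.

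The convergence bookkeeping is the delicate part: each spiral has $\|v_{k+1}-v_k\|_{C^0}\lesssim a/\lambda$ and $\|D(v_{k+1}-v_k)\|_{C^0}\lesssim a$ (the derivative is $O(1)$ in $\lambda$ because the factor $\lambda$ from differentiating the oscillation cancels the $1/\lambda$ in the amplitude), with $a\lesssim\sqrt{\|h_k\|_{C^0}}$. Since the deficit decays geometrically, so do the amplitudes, and hence $\sum_k\|v_{k+1}-v_k\|_{C^1}<\infty$, giving $v_k\to u$ in $C^1$ and $v_k^\sharp e\to g$ uniformly, whence $u^\sharp e = g$. The main obstacle---and precisely Nash's key insight---is the necessity of \emph{two} orthogonal normal directions (hence codimension at least two): with the pair $(\xi_1,\xi_2)$ the trigonometric identity produces a clean rank-one correction $a^2\nu\otimes\nu$, whereas a single-direction corrugation would contribute an \emph{oscillating} correction of the form $a^2\cos^2(\lambda\nu\cdot x)\,\nu\otimes\nu$ that cannot close the deficit pointwise. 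Once this mechanism is in place, the remaining effort is accounting for the $O(\lambda^{-1})$ terms arising from the $x$-dependence of $\xi_1,\xi_2,a$ and from the fact that $\xi_i$ are only approximately normal to the \emph{perturbed} tangent spaces---all absorbed by taking $\lambda$ sufficiently large at each stage. The embedding conclusion follows from the $C^0$ smallness together with the a priori injectivity of $v$.
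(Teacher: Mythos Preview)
Your strategy and the spiral mechanism are Nash's, but two genuine gaps remain. The first is that strict shortness is not maintained: you decompose the \emph{full} deficit $h_k = g - v_k^\sharp e$ and close it by spirals, which yields $v_{k+1}^\sharp e = g + O(\lambda^{-1})$; the $O(\lambda^{-1})$ error has no definite sign, so $v_{k+1}$ need not be short, and then the decomposition $h_{k+1}=\sum_j a_j^2\,\nu_j\otimes\nu_j$ with \emph{smooth} $a_j$ is unavailable (you cannot take smooth square roots once the tensor touches or crosses zero). Nash's fix is to close only $(1-\varphi)g - v_k^\sharp e$ for a small positive $\varphi$, leaving a cushion so that the new deficit is $\varphi g + O(\lambda^{-1})>0$ for $\lambda$ large; your stated geometric decay $\|h_{k+1}\|\le\sigma\|h_k\|$ is compatible with such a cushion, but the construction you describe does not produce it.

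The second gap is the embedding argument. Bounding the total $C^0$ displacement by the injectivity margin of $v$ at a fixed scale $\rho$ handles pairs with $d(p,q)\ge\rho$, but for $d(p,q)<\rho$ you appeal to ``$v$ bi-Lipschitz on that scale,'' which does not transfer to $u$: the derivative moves by order $\sqrt{\|h_0\|}$ already at the first stage, so $\|Du-Dv\|_{C^0}$ is not small. The limit $u$ is a $C^1$ immersion and hence locally injective, but its local injectivity radius depends on the (uncontrolled) modulus of continuity of $Du$ and cannot be matched to a pre-chosen $\rho$. Nash's remedy is adaptive: keep each intermediate $v_k$ injective (possible since the spiral is normal to the current image and shrinks to zero in $C^0$ as $\lambda\to\infty$), then make the $(k{+}1)$-st $C^0$ perturbation small relative to the injectivity margin of $v_k$ at scale $2^{-k}$, a quantity available only after $v_k$ has been built.
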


The closedness assumption can be removed, but the corresponding statement is slightly more involved and in particular we need the
notion of ``limit set''. 

\begin{definition}[Limit set]\label{d:limit_set}
Let $\Sigma$ be a smooth manifold and $v: \Sigma \to \mathbb R^N$. Fix an exhaustion of compact sets $\Gamma_k \subset \Sigma$, namely $\Gamma_k \subset \Gamma_{k+1}$ and $\cup_k \Gamma_k = \Sigma$. The limit set of $v$ is the collection of points $q$ which are limits of any sequence $\{v (p_k)\}$ such that $p_k \in \Sigma \setminus \Gamma_k$. 
\end{definition}

\begin{theorem}[$C^1$ isometric embedding, nonclosed case]\label{t:main_C1_2}
Let $(\Sigma, g)$ be a smooth $n$-dimensional Riemannian manifold. The same conclusions of Theorem~\ref{t:main_C1_1} can be drawn if the map $v$ is short and its limit set does not intersect its image. Moreover, we can impose that the nearby isometry $u$ has the same limit set as $v$ if $v$ is strictly short. 
\end{theorem}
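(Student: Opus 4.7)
The plan is to reduce to Theorem~\ref{t:main_C1_1} by compact exhaustion. Fix an exhaustion $\Gamma_0 \subset \Gamma_1 \subset \cdots$ with $\Gamma_k$ compact, $\Gamma_k \subset \mathrm{int}(\Gamma_{k+1})$, and $\bigcup_k \Gamma_k = \Sigma$. Starting from $v_0 := v$, I would inductively construct a sequence of $C^1$ short immersions $v_k : \Sigma \to \R^N$ with the following properties: (i) $v_k$ is isometric on $\Gamma_k$; (ii) $v_k \equiv v_{k-1}$ on an open neighborhood of $\Sigma \setminus \Gamma_{k+1}$; (iii) $v_k$ is strictly short on $\Sigma \setminus \Gamma_k$; (iv) $\|v_k - v_{k-1}\|_{C^0(\Sigma)} < \varepsilon_k$ with $\sum_k \varepsilon_k < \varepsilon$. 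Because (ii) makes the sequence eventually constant on every compact set, the limit $u := \lim_k v_k$ is well defined, $C^1$, and isometric on each $\Gamma_k$, hence on all of $\Sigma$, and $\|u - v\|_{C^0} < \varepsilon$.

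For the inductive step I would apply a localized version of Theorem~\ref{t:main_C1_1}: fix a smooth cut-off $\chi$ equal to $1$ on a neighborhood of $\Gamma_k$ and supported in $\Gamma_{k+1}$, and view the construction as taking place on the compact manifold-with-boundary $\Gamma_{k+1}$. Since Nash's perturbative spiralling corrections depend continuously on the metric deficit $g - v_{k-1}^\sharp e$, one can localize them in the support of $\chi$ and have them vanish together with their first derivatives outside it, producing a map which coincides with $v_{k-1}$ near $\partial \Gamma_{k+1}$ and is isometric on the region where $\chi \equiv 1$. Alternatively, one can apply Theorem~\ref{t:main_C1_1} on a closed manifold obtained by doubling $\Gamma_{k+1}$, after first modifying $v_{k-1}$ on a collar so that it matches nicely at the boundary.

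The main obstacle is maintaining strict shortness on $\Sigma \setminus \Gamma_k$ after the isometric correction in $\Gamma_{k+1}$. The transitions happen in the annulus $\Gamma_{k+1}\setminus \Gamma_k$, so one needs a quantitative slack $g - v_{k-1}^\sharp e \geq \delta_k g$ on that annulus in order to absorb the cut-off interpolation without violating shortness on the outer piece. If $v$ is already strictly short this slack is preserved by an inductive choice of $\delta_k$; in the merely short case one first perturbs $v$ into a strictly short map on $\Sigma \setminus \Gamma_0$ by a small homothety-like correction, which is possible because the limit set is disjoint from $v(\Sigma)$ (so the image lies in a set where the ambient is uniformly separated from $L$) and this separation can be used to absorb the initial perturbation without creating collisions.

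Finally, the limit set and embedding assertions follow almost automatically. Since $v_k \equiv v$ on $\Sigma \setminus \Gamma_{k+1}$, the limit set of $v_k$ equals that of $v$ for every $k$; in the strictly short case the slack allows us to take the $\varepsilon_k$'s small enough that the uniform limit $u$ also has the same limit set. For the embedding property, if $v$ is an embedding then $v(\Sigma)$ is closed in $\R^N \setminus L$ (by the limit-set hypothesis), and choosing $\varepsilon_k$ smaller than a fraction of the local injectivity radius of $v$ on each $\Gamma_k$ guarantees that each $v_k$, and thus $u$, remains injective and a homeomorphism onto its image.
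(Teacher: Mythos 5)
Your plan is a genuinely different route from the paper's, and as written it has a gap that I do not see how to close without restructuring the argument to be essentially the paper's.

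The central difficulty is regularity of the intermediate maps. Theorem~\ref{t:main_C1_1} (and any ``localized'' version of it) produces only a $C^1$ map, obtained as the $C^1$-limit of infinitely many Nash spiraling corrections. Your $v_k$ is therefore $C^1$ but not $C^2$ on the region near $\Gamma_k$ where it has been made exactly isometric. But the Nash construction needs strictly more regularity of the input map: it requires an orthonormal pair of normal fields to $v_k(\Sigma)$ (Lemma~\ref{l:NB}), and a primitive-metric decomposition of $g-v_k^\sharp e$ (Proposition~\ref{p:decomp}), both of which presuppose a smooth (or at least $C^2$) immersion and a smooth positive-definite metric deficit. So the iteration cannot be restarted from $v_k$: the recipe that produced $v_1$ from the smooth $v_0$ cannot produce $v_2$ from $v_1$, because $v_1$ no longer has the required regularity near $\Gamma_1$, and the region where the next modification must act --- the annulus $\Gamma_2\setminus\Gamma_1$ --- abuts the merely-$C^1$ zone. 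This is precisely why the paper's iteration in Proposition~\ref{p:iter_C1} keeps \emph{every} intermediate map smooth and only reduces the metric deficit to a small positive quantity everywhere, passing to the $C^1$ limit once, at the end of the whole iteration, rather than compact-piece by compact-piece.

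There is a second, related problem with the localization step: you propose to ``localize [Nash's corrections] in the support of $\chi$ and have them vanish together with their first derivatives outside it.'' Multiplying Nash's spiral correction by a cut-off function is not innocuous: the pullback metric of a $C^1$ immersion depends quadratically and nonlinearly on the derivative, so the cut-off introduces an error tensor of size comparable to the correction itself on a full-measure annulus, and this error is not positive semidefinite, so it cannot be absorbed by shrinking $\lambda$. Nash's way to localize without a cut-off is structural: decompose $g-w^\sharp e$ as a locally finite sum of primitive metrics $a_j^2\, d\psi_j\otimes d\psi_j$, each compactly supported in some chart, and add one spiral per primitive metric; the spiral is supported exactly where the primitive metric is. There is no a posteriori truncation. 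Proposition~\ref{p:iter_C1} already incorporates this: it is stated and proved for a noncompact $\Sigma$ with a locally finite atlas, with the local control encoded in the chartwise parameters $\eta_\ell$ and $\delta'_\ell$, and the noncompact Theorem~\ref{t:main_C1_2} is obtained by iterating it in $q$, choosing $\eta_{q,\ell}\lesssim\min\{\varepsilon,\beta_\ell,2^{-\ell}\}\,2^{-q}$ so that the limit set is preserved and $\|Dv_{q}-Dv_{q-1}\|_0$ is summable. The injectivity and limit-set parts of your last paragraph are in the right spirit (and close to what the paper does), but they are premature until the main iteration is put on a sound footing.

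If you replace ``exactly isometric on $\Gamma_k$'' by ``smooth and $\delta_k$-isometric on $\Gamma_k$'' and iterate simultaneously in both indices, with each stage composed of finitely many primitive spiral corrections, you recover the paper's scheme; but then the exhaustion $\Gamma_k$ is no longer the driving index, and the argument is the same as iterating Proposition~\ref{p:iter_C1} globally.
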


Combined with the classical theorem of Whitney on the existence of smooth immersions and embeddings, the above theorems have the following corollary.

\begin{corollary}\label{c:Nash+Whitney}
Any smooth $n$-dimensional Riemannian manifold has a $C^1$ isometric immersion in $\mathbb R^{2n}$ and a $C^1$ isometric embedding in $\mathbb R^{2n+1}$. If in addition the manifold is closed, then there is a $C^1$ isometric embedding\footnote{Closed manifolds can be $C^1$ isometrically {\em immersed} in lower dimension: already at the time of Nash's paper this could be shown in $\mathbb R^{2n-1}$ (for $n>1$!) using Whitney's immersion theorem. Nowadays we can use Cohen's solution of the immersion conjecture to lower the dimension to $n-a(n)$, where $a(n)$ is the number of $1$'s in the binary expansion of $n$, cf.~\cite{Cohen}.} in 
 $\mathbb R^{2n}$. 
\end{corollary}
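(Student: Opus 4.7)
The plan is to combine Whitney's classical embedding and immersion theorems with Nash's Theorems~\ref{t:main_C1_1} and~\ref{t:main_C1_2}. Whitney provides a smooth (non-isometric) immersion (respectively embedding) into the target Euclidean space; the remaining task is to modify it into a \emph{short} smooth version to which Nash's theorem applies. In both Theorems~\ref{t:main_C1_1} and~\ref{t:main_C1_2} the isometric approximation inherits injectivity from the given short map, so it suffices to produce a short smooth map of the correct topological type.

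For the closed case, Whitney's theorems supply a smooth immersion $w:\Sigma\to\R^{2n}$ and a smooth embedding $w:\Sigma\to\R^{2n+1}$; the strong Whitney embedding theorem additionally yields a smooth embedding $w:\Sigma\to\R^{2n}$ when $n\geq 2$. Compactness makes the pullback $w^\sharp e$ bounded above by $Mg$ for some $M>0$, so $v:=w/(2\sqrt M)$ is strictly short, and Theorem~\ref{t:main_C1_1} applies whenever the ambient dimension is at least $n+2$. The only remaining situation is a closed $1$-manifold in $\R^2$, which is a disjoint union of smooth circles; each such circle embeds isometrically in $\R^2$ as a round circle of the correct circumference, the embeddings being placed pairwise disjointly.

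For the non-closed case, Whitney furnishes a smooth proper embedding $w:\Sigma\to\R^{2n+1}$ (and, analogously, a proper immersion into $\R^{2n}$). The plan is to manufacture from $w$ a smooth strictly short map $v:\Sigma\to\R^{2n+1}$ whose image is disjoint from its limit set and then invoke Theorem~\ref{t:main_C1_2}. I would set $v:=F\circ w$ for a smooth diffeomorphism $F:\R^{2n+1}\to\Omega$ onto a bounded open box $\Omega=(-\epsilon_1,\epsilon_1)\times\cdots\times(-\epsilon_{2n+1},\epsilon_{2n+1})$, of product form $F(x)=(\alpha_1(x_1),\ldots,\alpha_{2n+1}(x_{2n+1}))$. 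The pullback reads $v^\sharp e=\sum_i\alpha_i'(w_i)^2\,dw_i\otimes dw_i$, so one chooses each $\alpha_i'$ small enough on the compact intervals $w_i(K_j)$ of an exhaustion $\Sigma=\bigcup_j K_j$ to dominate $\|dw\|_{g\to e}$, which is bounded on each $K_j$ by continuity. Properness of $w$ then forces $F(w(p))\to\partial\Omega$ as $p\to\infty$ in $\Sigma$, so the limit set of $v$ lies in $\partial\Omega$ and is thus disjoint from $v(\Sigma)\subset\Omega$.

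The main obstacle is this last construction. The tension is between a global demand---$F$ must be a diffeomorphism onto $\Omega$, so each $\alpha_i'$ has prescribed positive integral over $\R$---and a pointwise one---the derivative of $F$ must be small enough along $w(\Sigma)$ to enforce strict shortness against a possibly unbounded $\|dw\|_{g\to e}$. To reconcile these one may have to first modify the Whitney embedding itself, for instance by precomposing with a suitable self-diffeomorphism of $\R^{2n+1}$, so that $\|dw\|_{g\to e}$ grows in a controlled way as a function of $|w|$, after which the product construction above goes through. With $v$ in hand, the remaining verifications---smoothness of $v$, its embedding property, and the identification of the limit set---are routine, and an application of Theorem~\ref{t:main_C1_2} produces the sought $C^1$ isometric embedding.
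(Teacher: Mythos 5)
For the closed case your argument follows the paper: take Whitney's embedding of $\Sigma$ into $\mathbb R^{2n}$ (resp.\ $\mathbb R^{2n+1}$), rescale to make it strictly short, and invoke Theorem~\ref{t:main_C1_1}. Your extra treatment of $n=1$ (circles into $\mathbb R^{2}$) is in fact more careful than the paper's, which silently ignores that Theorem~\ref{t:main_C1_1} needs $N\geq n+2$ and hence $n\geq2$ when $N=2n$.

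For the non-closed case you take a genuinely different route, and here there is a real gap which you correctly sense but do not close. Your plan is to start from a \emph{proper} Whitney embedding $w:\Sigma\to\mathbb R^{2n+1}$ and postcompose with a split diffeomorphism $F$ onto a bounded box $\Omega$, choosing the factors $\alpha_i'$ small along $w(\Sigma)$. The obstruction is exactly the one you identify: the shortness requirement on $\alpha_i'(c)$ at a value $c$ is governed by the worst behavior of $\|dw\|$ on the entire fiber $w_i^{-1}(c)$, which is a \emph{global} constraint on $\Sigma$, while $\alpha_i'$ must also have a prescribed positive integral so that $F$ maps onto $\Omega$. If some fiber $w_i^{-1}(c)$ is noncompact the supremum of $\|dw\|_{g\to e}$ on it may be $+\infty$, so there is no admissible choice of $\alpha_i'(c)$ at all. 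Your suggestion of postcomposing $w$ with a further self-diffeomorphism of $\mathbb R^{2n+1}$ to control $\|dw\|_{g\to e}$ as a function of $|w|$ does not obviously help: such a composition rescales all coordinates of $dw$ jointly, whereas your product-form $F$ needs control coordinate by coordinate, and the fiber-wise supremum issue persists. A second, distinct, gap is your appeal to a \emph{proper} Whitney immersion of $\Sigma$ into $\mathbb R^{2n}$: Whitney's weak theorem produces immersions by projecting a proper embedding in $\mathbb R^{2n+1}$, but such a projection need not remain proper (think of projecting a helix), so the limit-set condition in Theorem~\ref{t:main_C1_2} is not automatically satisfied.

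The paper avoids both difficulties by a construction that makes the shortness constraint \emph{local from the start}. Using the atlas of Lemma~\ref{l:covering} with its $n+1$ pairwise-disjoint classes $\mathcal C_i$, Nash assembles a map $z:\Sigma\to\mathbb R^{(n+1)(n+2)}$ whose $i$-th block of $n+2$ coordinates is, on each $U_\ell\in\mathcal C_i$, given by $\varepsilon_\ell^2\varphi_\ell\Phi_\ell$, $\varepsilon_\ell^2\varphi_\ell$, $\varepsilon_\ell\varphi_\ell$ (and zero off the supports). Each point of $\Sigma$ meets only finitely many charts, so one may shrink the $\varepsilon_\ell$ inductively to enforce strict shortness without any global conflict; the decreasing ratios $\varepsilon_\ell^2/\varepsilon_\ell=\varepsilon_\ell$ separate distinct charts and give injectivity; and $\varepsilon_\ell\to0$ forces the limit set to be exactly $\{0\}$, which $z$ never hits. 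One then projects $z$ generically onto a $(2n+1)$-plane (or $2n$-plane for immersions), and since the limit set is the single linear image $\{0\}$ and one can arrange generically that $0\notin P_\pi(z(\Sigma))$, the hypotheses of Theorem~\ref{t:main_C1_2} are verified with no properness of the projected map needed. This localization is the idea your compression scheme is missing; if you want to rescue your route, you would need to pass to a covering-based construction of this kind rather than compress a single global Whitney embedding.
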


\begin{remark}
In Nash's original paper the $C^0$ estimate of Theorem~\ref{t:main_C1_1} is not mentioned, but it is an obvious outcome of the proof. Moreover, Nash states explicitly that it is possible to relax the condition $N\geq n+2$ to the (optimal) $N\geq n+1$ using more involved computations, but he does not give any detail. Indeed, such a statement was proved shortly after by Kuiper in \cite{Kuiper}, with a suitable adaptation of Nash's argument. The final result is then often called the Nash--Kuiper Theorem.
\end{remark}

\medskip

The Nash--Kuiper $C^1$ isometric embedding theorem is often cited as one of the very first instances of Gromov's $h$-principle, cf.~\cite{Eliashberg,GromovBook}. Note that it implies that any closed $2$-dimensional oriented Riemannian manifold can be 
embedded in an arbitrarily small ball of the Euclidean $3$-dimensional space with a $C^1$ isometry. This statement is rather striking and
counterintuitive, especially if we compare it to the classical rigidity for the Weyl's problem (see the classical works of Cohn-Vossen and Herglotz \cite{CohnVossen,Herglotz}): if $\Sigma$ is a $2$-dimensional sphere and
$g$ a $C^2$ metric with positive Gauss curvature, the image of every $C^2$ isometric embedding $u: \Sigma \to \mathbb R^3$ is the boundary
of a convex body, uniquely determined up to rigid motions of $\mathbb R^3$. Nash's proof of Theorem~\ref{t:main_C1_1} (and Kuiper's subsequent modification) generates indeed a $C^1$ isometry which has no further regularity. It is interesting to notice
that a sufficiently strong H\"older continuity assumption on the first derivative is still enough for the validity of the rigidity statement in the Weyl's problem (see \cite{Borisov1958,CDS}), whereas for a sufficiently low H\"older exponent $\alpha$ the Nash--Kuiper Theorem still holds in $C^{1,\alpha}$
(see \cite{Borisov1965,CDS, DIS}). The existence of a threshold exponent distinguishing between the two different behaviors in low codimension is a widely open problem, cf.~\cite[p. 219]{GromovBook} and \cite[Problem 27]{Yau}, which bears several relations with a well-known conjecture in the theory of turbulence, solved very recently with methods inspired by Nash's approach to Theorem~\ref{t:main_C1_1}, cf.~\cite{DS-Inv,BDIS,S-ICM,BDSV,Isett}.

\section{Main iteration}\label{s:main_C1_iter}

We start by noticing that Theorem~\ref{t:main_C1_1} is a ``strict subset'' of Theorem~\ref{t:main_C1_2}: if $\Sigma$ is closed, then the limit set of any map is empty.  
Moreover, the following simple topological fact will be used several times:

\begin{lemma}\label{l:covering}
Let $\Sigma$ be a differentiable $n$-dimensional manifold and $\{V_\lambda\}$ an open cover of $\Sigma$. Then there is an open cover $\{U_\ell\}$ with the properties that:
\begin{itemize}
\item[(a)] each $U_\ell$ is contained in some $V_\lambda$;
\item[(b)] the closure of each $U_\ell$ is diffeomorphic to an $n$-dimensional ball;
\item[(c)] each $U_\ell$ intersects at most finitely many other elements of the cover;
\item[(d)] each point $p\in \Sigma$ has a neighborhood contained in at most $n+1$ elements of the cover;
\item[(e)] $\{U_\ell\}$ can be subdivided into $n+1$ classes $\mathcal{C}_i$ consisting of pairwise disjoint $U_\ell$'s.
\end{itemize}
\end{lemma}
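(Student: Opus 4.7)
The plan is to build the cover from a sufficiently fine smooth triangulation of $\Sigma$ together with the standard barycentric subdivision trick, which yields a canonical $(n+1)$-coloring of the vertices.

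First, by Whitehead's theorem on smooth triangulability I take a smooth triangulation $K$ of $\Sigma$ and, after subdividing enough times, may assume that the closed star of every vertex is contained in some $V_\lambda$ and lies inside a smooth coordinate chart. I then pass to the first barycentric subdivision $K'$, whose vertices are the barycenters $\hat\sigma$ of simplices $\sigma\in K$; to each such vertex I assign the color $\dim\sigma\in\{0,1,\ldots,n\}$. A simplex of $K'$ has the form $[\hat\sigma_0,\ldots,\hat\sigma_k]$ with $\sigma_0\subsetneq\cdots\subsetneq\sigma_k$, so its vertices carry pairwise distinct colors; in particular, two vertices sharing a color cannot lie in a common simplex of $K'$.

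For each vertex $v$ of $K'$ I tentatively set $U_v:=\mathrm{st}(v)$, the open star in $K'$. Standard facts about combinatorial manifolds (which I would quote or check directly from the simplex structure) then give at once that $\{U_v\}$ is a locally finite open cover refining $\{V_\lambda\}$, that each $p\in\Sigma$ lies in the interior of a unique simplex $\tau$ of $K'$ and hence belongs to exactly the stars of the vertices of $\tau$ (at most $n+1$ of them since $\dim\tau\leq n$), and that $U_v\cap U_w=\emptyset$ whenever $v,w$ do not lie in a common simplex. Grouping the vertices of $K'$ by their color therefore produces the required decomposition into $n+1$ classes $\mathcal{C}_0,\ldots,\mathcal{C}_n$ of pairwise disjoint open sets, taking care of (a), (c), (d) and (e).

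The delicate point is (b): the closed star in a smooth triangulation is \emph{a priori} only a piecewise-linear $n$-ball, with corners along the lower-dimensional faces, whereas the lemma asks for a smooth diffeomorphism with a closed ball. To remedy this I would shrink each $U_v$ to a slightly smaller open set $U_v'$ with smoothly bounded closure diffeomorphic to a closed $n$-ball, for instance by intersecting with a small geodesic (or coordinate) ball around $v$ whose radius is chosen via a subordinate partition of unity so that the family $\{U_v'\}$ still covers $\Sigma$. Such a shrinking clearly preserves (a), (c), (d) and, crucially, the disjointness of each color class $\mathcal{C}_i$, so (e) also survives. The main obstacle in the whole argument is precisely this smoothing/shrinking step, where one must simultaneously round off the corners of the PL stars, keep the covering property, and not destroy the disjointness within each color class; the rest is essentially the combinatorics of barycentric subdivisions.
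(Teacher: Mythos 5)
Your proof takes essentially the same route as the paper: triangulate via Whitehead's theorem, pass to the barycentric subdivision, use the open stars of the new vertices, color by the dimension of the simplex whose barycenter each vertex is (which is exactly the paper's organization into $U^0_i, U^1_i, \ldots, U^n_i$), and shrink at the end to obtain smooth closed balls for (b). Both you and the paper leave the rounding-off step at the level of a plausibility argument rather than a full proof, so the two treatments are at the same level of detail throughout.
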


\begin{proof}
By a classical theorem $\Sigma$ can be triangulated (see \cite{Whitehead}) and by locally refining the triangulation we can assume that each simplex is contained in some $V_\lambda$. Denote by $S$ such triangulation and enumerate its vertices as $\{S^0_i\}$, its $1$-dimensional edges 
as $\{S^1_i\}$ and so on. Then take the barycentric subdivision of $S$ and call it $T$ (cf.~Figure \ref{fig:3}).  We notice the following facts:
\begin{itemize}
\item[(i)] For each vertex $S^0_i$ consider the interior $U^0_i$ of the star of $S^0_i$ in the triangulation $T$, see Figure \ref{fig:4} (recall that the star of $S^0_i$ is usually defined as the union of all simplices of the triangulation which contain $S^0_i$, cf.~for instance \cite[p. 178]{Hatcher}). Observe that the $U^0_i$ are pairwise disjoint. 
\item[(ii)] For each edge $S^1_i$ consider the interior $U^1_i$ of the star of $S^1_i$ in the triangulation $T$, see Figure \ref{fig:4}. The $U^1_i$ are pairwise disjoint. Moreover, observe that
if $U^1_i \cap U^0_j \neq \emptyset$, then $S^0_j \subset S^1_i$.
\item[(iv)] Proceed likewise up to $n-1$.
Complete the collection $\{U^t_i: 0\leq t \leq n-1\}$ with the interiors $U^n_i$ of the $n$-dimensional simplices $S^n_i$ of $S$ and denote such final collection by $\mathscr{C}$. 
\end{itemize}

\begin{figure}
\centering
\begin{tikzpicture}
\draw (-6,0) -- (-3,2) -- (-2,4) -- (-1,1) -- (1,0.5) -- (-1.5,-1) -- (-3.5,-3) -- (-3.5,-1) -- (-6,0);
\draw (-3,2) -- (-1,1) -- (-1.5,-1) -- (-3.5,-1) -- (-3,2);
\draw (-2.5,0) -- (-3,2);
\draw (-2.5,0) -- (-1,1);
\draw (-2.5,0) -- (-1.5,-1);
\draw (-2.5,0) -- (-3.5,-1);

\draw (3,0) -- (6,2) -- (7,4) -- (8,1) -- (10,0.5) -- (7.5,-1) -- (6.5,-3) -- (5.5,-1) -- (3,0);
\draw (6,2) -- (8,1) -- (7.5,-1) -- (5.5,-1) -- (6,2);
\draw (6.5,0) -- (6,2);
\draw (6.5,0) -- (8,1);
\draw (6.5,0) -- (7.5,-1);
\draw (6.5,0) -- (5.5,-1);
\draw [color = red] (3,0) -- (5.75, 0.5);
\draw [color = red] (6,2) -- (4.25,-0.5);
\draw [color = red] (5.5,-1) -- (4.5, 1);
\draw [color = red] (5.75,0.5) -- (6.5,0);
\draw [color = red] (5.5,-1) -- (6.25,1);
\draw [color = red] (6,2) -- (6,-0.5);
\draw [color = red] (6,2) -- (7.5,2.5);
\draw [color = red] (7,4) -- (7, 1.5);
\draw [color = red] (8,1) -- (6.5,3);
\draw [color = red] (6.5,0) -- (7,1.5);
\draw [color = red] (6,2) -- (7.25,0.5);
\draw [color = red] (8,1) -- (6.25,1);
\draw [color = red] (8,1) -- (8.75, -0.25);
\draw [color = red] (10,0.5) -- (7.75, 0);
\draw [color = red] (7.5,-1) -- (9,0.75);
\draw [color = red] (6.5,0) -- (7.75,0);
\draw [color = red] (7.5,-1) -- (7.25,0.5);
\draw [color = red] (8,1) -- (7, -0.5);
\draw [color = red] (5.5,-1) -- (7,-2);
\draw [color = red] (6.5,-3) -- (6.5,-1);
\draw [color = red] (7.5,-1) -- (6,-2);
\draw [color = red] (6.5,0) -- (6.5,-1);
\draw [color = red] (5.5,-1) -- (7,-0.5);
\draw [color = red] (7.5,-1) -- (6,-0.5);
\end{tikzpicture}
\caption{A planar triangulation $S$ and its barycentric subdivision $T$.}\label{fig:3}
\end{figure}
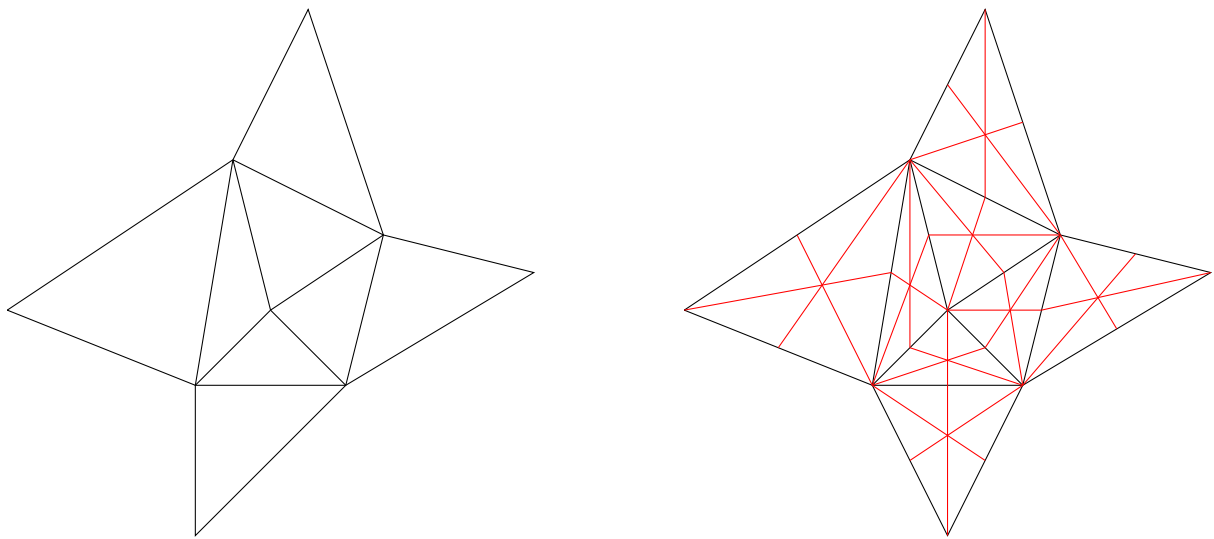

\begin{figure}
\centering
\begin{tikzpicture}
\fill[gray] (-2.75,1) -- ({-6.5/3},1) -- (-1.75,0.5) -- ({-5/3},0) -- (-2,-0.5) -- ({-7.5/3},{-2/3}) -- (-3,-0.5) -- (-3,{1/3}) -- (-2.75,1);

\draw (-6,0) -- (-3,2) -- (-2,4) -- (-1,1) -- (1,0.5) -- (-1.5,-1) -- (-2.5,-3) -- (-3.5,-1) -- (-6,0);
\draw (-3,2) -- (-1,1) -- (-1.5,-1) -- (-3.5,-1) -- (-3,2);
\draw (-2.5,0) -- (-3,2);
\draw (-2.5,0) -- (-1,1);
\draw (-2.5,0) -- (-1.5,-1);
\draw (-2.5,0) -- (-3.5,-1);
\draw (-6,0) -- (-3.25, 0.5);
\draw (-3,2) -- (-4.75,-0.5);
\draw (-3.5,-1) -- (-4.5, 1);
\draw (-3.25,0.5) -- (-2.5,0);
\draw (-3.5,-1) -- (-2.75,1);
\draw (-3,2) -- (-3,-0.5);
\draw (-3,2) -- (-1.5,2.5);
\draw (-2,4) -- (-2, 1.5);
\draw (-1,1) -- (-2.5,3);
\draw (-2.5,0) -- (-2,1.5);
\draw (-3,2) -- (-1.75,0.5);
\draw (-1,1) -- (-2.75,1);
\draw (-1,1) -- (-0.25, -0.25);
\draw (1,0.5) -- (-1.25, 0);
\draw (-1.5,-1) -- (0,0.75);
\draw (-2.5,0) -- (-1.25,0);
\draw (-1.5,-1) -- (-1.75,0.5);
\draw (-1,1) -- (-2, -0.5);
\draw (-3.5,-1) -- (-2,-2);
\draw (-2.5,-3) -- (-2.5,-1);
\draw (-1.5,-1) -- (-3,-2);
\draw (-2.5,0) -- (-2.5,-1);
\draw (-3.5,-1) -- (-2,-0.5);
\draw (-1.5,-1) -- (-3,-0.5);

\fill[gray] (6,2) -- (7,{7/3}) -- (8,1) -- ({20.5/3},1) -- (6,2);

\draw (3,0) -- (6,2) -- (7,4) -- (8,1) -- (10,0.5) -- (7.5,-1) -- (6.5,-3) -- (5.5,-1) -- (3,0);
\draw (6,2) -- (8,1) -- (7.5,-1) -- (5.5,-1) -- (6,2);
\draw (6.5,0) -- (6,2);
\draw (6.5,0) -- (8,1);
\draw (6.5,0) -- (7.5,-1);
\draw (6.5,0) -- (5.5,-1);
\draw (3,0) -- (5.75, 0.5);
\draw (6,2) -- (4.25,-0.5);
\draw (5.5,-1) -- (4.5, 1);
\draw (5.75,0.5) -- (6.5,0);
\draw (5.5,-1) -- (6.25,1);
\draw (6,2) -- (6,-0.5);
\draw (6,2) -- (7.5,2.5);
\draw (7,4) -- (7, 1.5);
\draw (8,1) -- (6.5,3);
\draw (6.5,0) -- (7,1.5);
\draw (6,2) -- (7.25,0.5);
\draw (8,1) -- (6.25,1);
\draw (8,1) -- (8.75, -0.25);
\draw (10,0.5) -- (7.75, 0);
\draw (7.5,-1) -- (9,0.75);
\draw (6.5,0) -- (7.75,0);
\draw (7.5,-1) -- (7.25,0.5);
\draw (8,1) -- (7, -0.5);
\draw (5.5,-1) -- (7,-2);
\draw (6.5,-3) -- (6.5,-1);
\draw (7.5,-1) -- (6,-2);
\draw (6.5,0) -- (6.5,-1);
\draw (5.5,-1) -- (7,-0.5);
\draw (7.5,-1) -- (6,-0.5);

\end{tikzpicture}
\caption{The shaded area on the left depicts one of the sets $U^0_i$, whereas the shaded area on the right depicts one of the sets $U^1_j$.}\label{fig:4}
\end{figure}
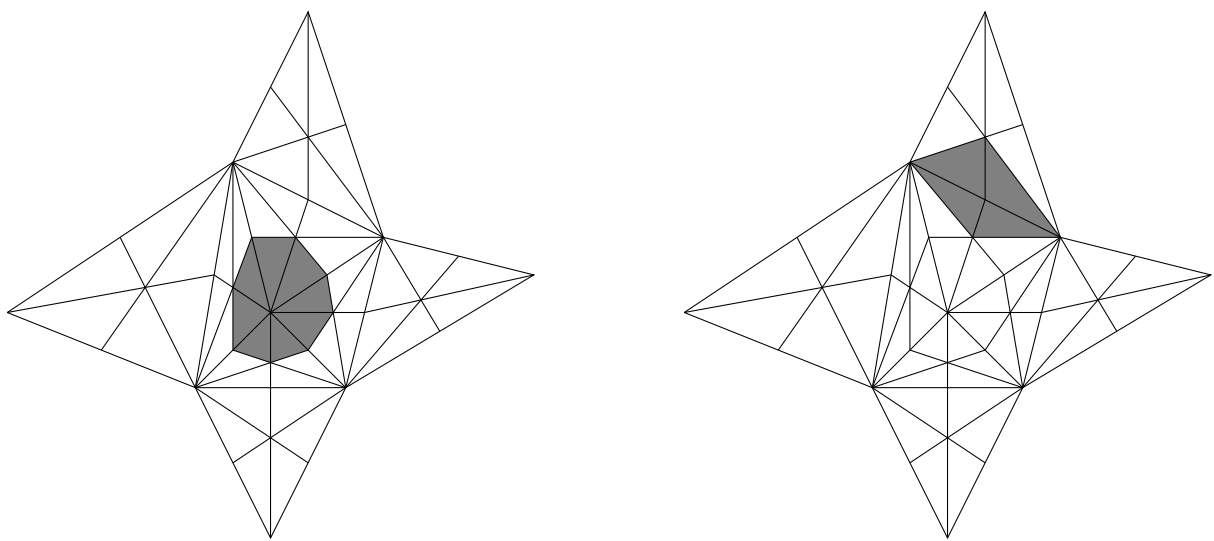

The family $\mathscr{C}$ is obviously an open cover of the manifolds which satisfies (a) and (e) by construction. If two distinct elements $U^s_i$ and $U^t_j$ have nonempty intersection and $s\geq t$, then $s>t$ and $S^t_j$ is a face of $S^s_i$: this implies that $\mathscr{C}$ satisfies (c). Statement (d) is an obvious consequence of (e). Each $U_s^j$ is diffeomorphic to the open Euclidean $n$-dimensional ball, but its closure is only homeomorphic to the closed ball: however, it suffices to choose an appropriate smaller open set for each $U^j_s$ to achieve finally an open cover which satisfies (b) and retains the other four properties. 
\end{proof}

From now on we fix therefore a smooth manifold $\Sigma$ as in Theorem~\ref{t:main_C1_2} and a corresponding smooth atlas $\mathcal{A} = \{U_\ell\}$ (which is either finite or countably infinite) where the $U_\ell$'s have compact closure and satisfy the properties (b), (c), and (d) of Lemma~\ref{l:covering}. 

Given any symmetric $(0,2)$ tensor $h$ on $\Sigma$ we write $h = h_{ij} dx_i \otimes dx_j$ and denote by $\|h\|_{0,U_\ell}$ the supremum of the Hilbert--Schmidt norm of the matrices $h_{ij} (p)$ for $p\in U_\ell$. Similarly, if $v: \Sigma \to \mathbb R^N$ is a $C^1$ map, we write $\|Dv\|_{0, U_\ell}$ for the supremum of the Hilbert--Schmidt norms of the matrices $Dv (p) = (\partial_1 v (p), \ldots , \partial_n v (p))$, where $p\in U_\ell$. Finally, we set
\begin{align*}
\|h\|_0 & := \sup_{\ell} \|h\|_{0, U_\ell}\, ,\\
\|Dv\|_0 &:= \sup_{\ell} \|Dv\|_{0, U_\ell}\, .
\end{align*}
We are now ready to state the main inductive statement\footnote{This is what Nash calls ``a stage'', cf.~\cite[p.~391]{Nash1954}} whose iteration will prove Theorem~\ref{t:main_C1_2}.

\begin{proposition}[Iteration stage]\label{p:iter_C1}
Let $(\Sigma ,g)$ be as in Theorem~\ref{t:main_C1_2} and $w: \Sigma \to \mathbb R^N$ a smooth strictly short immersion. For any choice of positive numbers $\eta_\ell >0$ and any $\delta>0$ there is a smooth short immersion $z: \Sigma \to \mathbb R^N$ such that 
\begin{align}
\|z-w\|_{0, U_\ell} &< \eta_\ell\, \qquad\forall \ell\, ,\label{e:iter_1}\\ 
\|g - z^\sharp e\|_0 & < \delta\, ,\label{e:iter_2}\\
\|Dw - Dz\|_0 & < C \sqrt{\|g - w^\sharp e\|_0}\, ,\label{e:iter_3}
\end{align} 
for some dimensional constant $C$. If $w$ is injective, then we can choose $z$ injective.
\end{proposition}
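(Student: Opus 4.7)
The plan is to produce $z$ as the end-point of a finite sequence $w = w_0, w_1, \ldots, w_K = z$ of smooth maps, each obtained from the previous by a high-frequency ``Nash spiral'' (the standard primitive perturbation of the $C^1$ isometric embedding construction), chosen so that in $C^0$ the pull-back metric picks up one preassigned rank-one symmetric tensor per step. The defect $h := g - w^\sharp e$ is strictly positive because $w$ is strictly short, so I would first write $h$ (locally, then globally by a smooth partition of unity subordinate to the atlas $\mathcal{A}$ from Lemma~\ref{l:covering}) as a finite sum
\[
h \;=\; \sum_{k=1}^{K} a_k^2\, d\ell_k \otimes d\ell_k,
\]
where each $a_k \in C^\infty(\Sigma)$, each $\ell_k$ is a linear function on some coordinate patch $U_{\ell(k)}$ of $\mathcal{A}$, and $a_k$ is compactly supported in $U_{\ell(k)}$. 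The existence of such a decomposition on each chart follows from the elementary fact that the symmetric positive-definite matrices form an open convex cone with non-empty interior in the span of the symmetric rank-one forms $\xi\otimes\xi$; using the finite intersection property (d) of Lemma~\ref{l:covering} and the partition of unity gives the global sum with a fixed finite $K$ depending only on $n$ (and on the multiplicity $n+1$ of the cover).

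The heart of the proof is the single step $w_{k-1} \mapsto w_k$ adding the primitive tensor $a_k^2 d\ell_k \otimes d\ell_k$. Since $N \geq n+2$, on the support of $a_k$ I can choose two smooth unit vector fields $\zeta_1(p), \zeta_2(p) \in \mathbb{R}^N$ that are orthonormal and both orthogonal to the tangent plane $Dw_{k-1}(T_p\Sigma)$; this is possible because the normal bundle of the immersion has rank $\geq 2$. Then I define
\[
w_k(p) \;=\; w_{k-1}(p) \;+\; \frac{a_k(p)}{\lambda_k}\bigl(\zeta_1(p)\cos(\lambda_k \ell_k(p)) + \zeta_2(p)\sin(\lambda_k \ell_k(p))\bigr),
\]
with $\lambda_k$ a frequency to be chosen very large. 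Differentiating, the dominant contribution to $\partial_i w_k$ beyond $\partial_i w_{k-1}$ is $a_k\, \partial_i \ell_k\,(-\zeta_1 \sin + \zeta_2 \cos)$, while the derivatives of $a_k,\zeta_1,\zeta_2$ contribute $O(\lambda_k^{-1})$. Since $\zeta_1 \perp \zeta_2$, $|\zeta_i|=1$, and both $\zeta_i \perp \partial_j w_{k-1}$, the trigonometric identity $\sin^2 + \cos^2 = 1$ gives
\[
w_k^\sharp e \;=\; w_{k-1}^\sharp e \;+\; a_k^2\, d\ell_k \otimes d\ell_k \;+\; O(\lambda_k^{-1})
\]
in $C^0$. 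The $C^0$ change is $\|w_k - w_{k-1}\|_0 \leq 2\|a_k\|_0/\lambda_k$ and the derivative change satisfies $\|Dw_k - Dw_{k-1}\|_0 \leq 2\|a_k\|_0 \|d\ell_k\|_0 + O(\lambda_k^{-1})$.

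Now I iterate: choose $\lambda_1 \ll \lambda_2 \ll \cdots \ll \lambda_K$, each so large that the accumulated $O(\lambda_k^{-1})$ errors in (i) the pull-back metric, (ii) the $C^0$ distance on each $U_\ell$, and (iii) the normal-bundle geometry (used to pick the next $\zeta_1,\zeta_2$) all fit within the prescribed budget. After the $K$ steps one has
\[
z^\sharp e \;=\; w^\sharp e + \sum_k a_k^2\,d\ell_k\otimes d\ell_k + \text{error} \;=\; g + \text{error},
\]
yielding \eqref{e:iter_2}. For \eqref{e:iter_3}, since $\sum_k a_k^2 \|d\ell_k\|^2 = \|h\|_{\mathrm{tr}}$ pointwise up to a bounded factor and the $a_k$ are bounded by $C\sqrt{\|h\|_0}$, the telescoping derivative estimate yields $\|Dz - Dw\|_0 \leq C\sqrt{\|g - w^\sharp e\|_0}$. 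The local $C^0$ estimate \eqref{e:iter_1} follows because only finitely many primitive steps have support intersecting any given $U_\ell$ (property (c) of Lemma~\ref{l:covering}), and each contributes $O(\lambda_k^{-1})$.

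The main technical obstacle I expect is the bookkeeping that makes the step $w_{k-1} \mapsto w_k$ still leave the map an immersion with a well-defined smooth pair of normal fields for step $k+1$; this is why $\lambda_k$ must be chosen after fixing $\lambda_{k-1}$, and why it is crucial that the derivative perturbation $Dw_k - Dw_{k-1}$ is small in the sense that it leaves $Dw_k$ uniformly bounded away from being degenerate (which follows from the bound $a_k \leq C\sqrt{\|h\|_0}$ and a careful choice of the initial order of the primitive tensors). For the non-compact case I would carry out the above only on a compact exhaustion $\Gamma_k$, using a cut-off $a_k$ supported in a neighborhood of $\Gamma_k$ and shrinking the parameters $\eta_\ell$ on shells $\Gamma_{k+1}\setminus\Gamma_k$; this preserves the limit set as claimed. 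Finally, if $w$ is an embedding the $C^0$ bound $\|z-w\|_0<\eta_\ell$ on each chart, together with the injectivity radius of $w$, forces $z$ to remain injective, giving the last assertion.
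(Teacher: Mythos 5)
Your overall strategy --- a decomposition of the metric defect into primitive rank-one tensors, followed by iterated Nash spirals using an orthonormal normal pair $\zeta_1,\zeta_2$ --- is the same as the paper's, and most of the computations you sketch are accurate. But there is a genuine gap that prevents the argument from working as stated: you decompose the \emph{full} defect $h := g - w^\sharp e$ and then claim $z^\sharp e = g + \mathrm{error}$ gives the statement. The error in the pull-back metric after the spirals is $O(\lambda^{-1})$ in $C^0$ but it is \emph{not sign-controlled} (it involves cross terms such as $d a_k \odot \zeta_i$ and $\zeta_i \odot d\ell_k$ with oscillating prefactors, plus $\lambda^{-2} da_k \otimes da_k$). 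So while you do get \eqref{e:iter_2}, you cannot conclude that $z^\sharp e \leq g$, i.e.\ that $z$ is \emph{short} --- which the proposition explicitly demands. In some tangent directions the error could push $z^\sharp e$ strictly above $g$. The paper avoids this by never aiming at the full defect: it introduces a small positive function $\varphi$, decomposes the strictly smaller tensor $(1-\varphi)g - w^\sharp e$ (which is still positive definite for $\varphi$ small), and arranges the errors $\delta'_\ell$ to be absorbed by the safety margin $\varphi g$, so that $z^\sharp e < g$ holds after perturbation. You need this intermediate reduction; without it the word ``short'' in the conclusion is unjustified.

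Two secondary issues. First, your claim of a ``fixed finite $K$'' of primitive tensors is only correct when $\Sigma$ is compact, whereas the proposition must hold for the non-compact $\Sigma$ of Theorem~\ref{t:main_C1_2}; there the decomposition is countably infinite but locally finite (Proposition~\ref{p:decomp}), and each $\lambda_j$ must be chosen locally, using properties~(c)--(d) of Lemma~\ref{l:covering} to keep the bookkeeping finite on every chart. Second, the injectivity argument ``the $C^0$ bound together with the injectivity radius of $w$ forces $z$ to be injective'' is too weak: for non-compact $\Sigma$ the map $w$ has no uniform lower bound on $|w(p) - w(q)|$ when $d(p,q)$ is bounded below, and even on compact patches you must rule out new self-intersections created by the large-derivative oscillation. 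The paper handles this at each step by splitting into three cases (both points outside $\mathrm{supp}\,h_j$; one outside $U_\ell$; both inside $U_\ell$) and, in the last case, projecting $w_j$ onto a normal tubular neighborhood of $w_{j-1}(U_\ell)$ --- which works precisely because the perturbation is normal to $w_{j-1}(U_\ell)$.
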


Note that the right-hand side of \eqref{e:iter_3} might be $\infty$ (because $\Sigma$ is not necessarily compact), in which case the condition \eqref{e:iter_3} is an empty requirement.
We show first how to conclude Theorem~\ref{t:main_C1_2} from the proposition above. Subsequently we close this section by proving Corollary~\ref{c:Nash+Whitney}. The rest of the chapter will then be dedicated to prove Proposition~\ref{p:iter_C1}.

\begin{proof}[Proof of Theorem~\ref{t:main_C1_2}]
Let $v_0:= v$ and $\varepsilon$ be as in the statement and assume for the moment that $v$ is an immersion. Moreover, without loss of generality we can assume that $v$ is strictly short:  it suffices to multiply $v$ by a constant smaller than (but sufficiently close to) $1$. Note that such operation will change the limit set of $v$, which explains why in the last claim of the theorem we assume directly that $v$ is strictly short.

We will produce a sequence of maps $v_q$ by applying iteratively Proposition~\ref{p:iter_C1}. Since the limit set of $v$ is closed and $v (\overline{U_\ell})$ compact, there is a positive number $\beta_\ell$ such that any point of $v (\overline U_\ell)$ is at distance at least $\beta_\ell$ from the limit set of $v$. We then define the numbers 
\begin{align*}
\bar \eta_{q,\ell} & :=  2^{-q-1} \min\{\varepsilon, \beta_\ell, 2^{-\ell}\}\, ,\\
\delta_q &:= 4^{-q}\, .  
\end{align*}
At each $q\geq 1$ we apply Proposition~\ref{p:iter_C1} with $w = v_{q-1}$, $\eta_\ell = \bar \eta_{q,\ell}$, and $\delta = \delta_q$ to
produce $z=: v_q$. We then conclude immediately that:
\begin{itemize}
\item[(a)] $\|v_{q} - v_{q-1}\|_0\leq 2^{-q-1} \varepsilon$ and thus $v_q$ converges uniformly to some $u$ with $\|u-v\|_0 \leq \varepsilon \sum_{q\geq 1} 2^{-q-1} = \frac{\varepsilon}{2}$;
\item[(b)] similarly $\|v-u\|_{0, U_\ell} \leq \beta_\ell \sum_{q\geq 1} 2^{-q-1} = \frac{\beta_\ell}{2}$;
\item[(c)] again by a similar computation $\|u-v\|_{0, U_\ell} \leq 2^{-\ell}$ and thus the limit set of $u$ coincides with the limit set of $v$; combined with the estimate above, this implies that the limit set of $u$ does not intersect the image of $u$;
\item[(d)] $\|Dv_{q}- Dv_{q-1}\|_0 \leq C 2^{-q+1}$ for every $q\geq 2$ and thus $u$ is a $C^1$ map (observe that we claim no bound on $\|Dv_1 - Dv_0\|_0$; on the other hand we do not need it!);
\item[(e)] since $v_q$ converges to $u$ in $C^1$, we have $g - u^\sharp e = \lim_q (g-v_q^\sharp e) = 0$ and thus $u$ is an isometry, from which we also conclude that the differential of $u$ has everywhere full rank and hence $u$ is an immersion.
\end{itemize}
It remains to show that, if $v$ is injective, then the iteration above can be arranged so to guarantee that $u$ is also injective.
To this aim, notice first that all the conclusions above certainly hold in case we implement the same iteration applying Proposition~\ref{p:iter_C1} with parameters
$\eta_{q, \ell}$ smaller than $\bar \eta_{q,\ell}$. Moreover the proposition guarantees the injectivity at each step: we just need to show that the limit map is also injective. For each $q$ consider the compact set $V_q := \cup_{\ell \leq q} \overline{U}_\ell$ and the positive numbers
\[
2 \gamma_i := \min \{ |v_i (x)- v_i (y)|: d (x,y) \geq 2^{-i}, x,y \in V_i \}\,  \qquad \mbox{for $i<q$,}
\]
where $d$ is the geodesic distance induced by the Riemannian metric $g$.
We then set $\eta_{q, \ell} := \min \{ \bar \eta_{q, \ell}, 2^{-q-1} \gamma_1, 2^{-q-1} \gamma_2, \ldots , 2^{-q-1} \gamma_{q-1}\}$ and apply the iteration as above with $\eta_{q,\ell}$ in place of $\bar\eta_{q,\ell}$. We want to check that the resulting $u$ is injective. Fix $x\neq y$ in $\Sigma$ and choose $q$ such that $2^{-q} \leq d (x,y)$ and $x,y\in V_q$. We can then estimate
\[
|u(x)- u(y)|\geq |v_q (x)- v_q (y)| - \sum_{k\geq q} \|v_{k+1}-v_k\|_{0, V_q} \geq 2 \gamma_q - \sum_{k\geq q} 2^{-k-1} \gamma_q \geq \gamma_q>0\, .
\]
Hence $u(x)\neq u(y)$. The arbitrariness of $x$ and $y$ shows that $u$ is injective and completes the proof.\end{proof}

\begin{proof}[Proof of Corollary~\ref{c:Nash+Whitney}]
Recall that, according to Whitney's embedding theorem in its strong form (see \cite{Whitney1944}), any smooth differentiable manifold $\Sigma$ of dimension $n$ can be embedded in $\mathbb R^{2n}$. If the manifold in addition is closed, then it suffices to multiply the corresponding map by a sufficiently small positive constant to make it short and the existence of a nearby $C^1$ isometry with the desired property follows from Theorem~\ref{t:main_C1_1}. 

The general case requires somewhat more care.
Fix a smooth Riemannian manifold $(\Sigma, g)$ of dimension $n$, {\em not closed}.
Below we will produce a suitable smooth embedding $z: \Sigma \to \mathbb R^N$ for $N= (n+1) (n+2)$, with the additional properties that
\begin{itemize}
\item[(i)] $z$ is a short map;
\item[(ii)] the limit set of $z$ is $\{0\}$ and does not intersect the image of $z$.
\end{itemize}
We then can follow the standard procedure of the proof of the Whitney's embedding theorem in its weak form (cf.~\cite{Whitney1936}): if we consider the Grassmannian of $2n+1$ dimensional planes $\pi$ of $\mathbb R^N$, we know that, for a subset of full measure, the projection $P_\pi$ onto $\pi$ is injective and has injective differential on $z (\Sigma)$. A similar argument shows that, for a set of planes $\pi$ of full measure, $P_\pi (z (\Sigma))$ does not contain the origin. Since clearly $P_\pi\circ z$ is also short, the map $v:= P_\pi\circ z$ satisfies  the assumptions of Theorem~\ref{t:main_C1_2}. If we drop the injectivity assumption on $\pi$ (namely we restrict to immersions), we can project on a suitable $2n$-dimensional plane.

Coming to the existence of $z$, we use the atlas $\{U_\ell\}$ of $\Sigma$ given by Lemma~\ref{l:covering} and we let $\Phi_\ell : U_\ell \to \mathbb R^n$ be the corresponding charts. Observe that, since $\Sigma$ is not closed, the atlas is necessarily (countably) infinite. After further multiplying each $\Phi_\ell$ by a positive scalar we can assume, without loss of generality, that $|\Phi_\ell|\leq 1$. Recall the $n+1$ classes $\mathcal{C}_i$ of Lemma~\ref{l:covering}(e). Consider then a family of smooth functions $\varphi_\ell$, each supported in $U_\ell$, with $0\leq \varphi_\ell \leq 1$ and such that for any point $p\in \Sigma$ there is at least one $\varphi_\ell$ which is equal to $1$ in some neighborhood of $p$. Finally, after numbering the elements of the atlas, we fix a vanishing sequence $\varepsilon_\ell$ of strictly monotone positive numbers, whose choice will be specified in a moment. 

We are now ready to define our map $z$, which will be done specifying each component $z_j$. Fix $p\in \Sigma$ and $i\in \{1, \ldots , n+1\}$. If $p$ does not belong to any element of $\mathcal{C}_i$, then we set $z_{(i-1) (n+2) +1} (p) = \ldots = z_{i(n+2)} (p) =0$. Otherwise, there is a unique $U_\ell\in \mathcal{C}_i$ with $p\in U_\ell$ and we set:
\begin{align}
z_{(i-1) (n+2) +j} (p) & = \varepsilon_\ell^2 \varphi_\ell (p) (\Phi_\ell (p))_j \qquad \mbox{for $j\in \{1, \ldots , n\}$,}\\
z_{(i-1) (n+2) +n+1} (p) &= \varepsilon_\ell^2 \varphi_\ell (p)\, ,\\
z_{(i-1) (n+2) + n+2} (p) &= \varepsilon_\ell \varphi_\ell (p)\, .
\end{align}
Now, for any point $p$ there is at least one $\ell$ for which $\varphi_\ell$ is identically equal to $1$ in a neighborhood of $p$: this will have two effects, namely that the differential of $z$ at $p$ is injective and that $z (p) \neq 0$. Since the limit set of $z$ is obviously $0$, condition (ii) above is satisfied. To prove that $z$ is an embedding we need to show that $z$ is injective. Fix two points $p$ and $q$
and fix a $U_\ell \in \mathcal{C}_i$ for which $\varphi_\ell (p)=1$. If $q\in U_\ell$, then either $\varphi_\ell (q) \neq 1$, in which case $z_{(i-1) (n+2) +n+1} (p) \neq z_{(i-1) (n+2) + n+1} (q)$, or $\varphi_\ell (q) =1$. In the latter case we then conclude $z (q) \neq z(p)$ because $\Phi_\ell (p)\neq \Phi_\ell (q)$. 
If $q\not \in U_\ell$ and $\varphi_{\ell'} (q) =0$ for any other $U_{\ell'}\in \mathcal{C}_i$, then $z_{(i-1) (n+2) +n+1} (q) = 0 \neq z_{(i-1) (n+2) + n +1} (p)$. Otherwise there is a $U_{\ell'}\in \mathcal{C}_i$ distinct from $U_\ell$ such that $\varphi_{\ell'} (q) \neq 0$. In this case we have
\[
\frac{z_{(i-1)(n+2) + n+1} (p)}{z_{i(n+2)} (p)} = \varepsilon_{\ell} \neq \varepsilon_{\ell'} = \frac{z_{(i-1)(n+1) + n+1} (q)}{z_{i(n+2)} (q)}\, .
\]
Thus $z$ is injective.

Finally, by choosing the $\varepsilon_\ell$ inductively appropriately small, it is easy to show that we can ensure the shortness of $z$. 
\end{proof}

\section{Decomposition in primitive metrics}

We will call ``primitive metric''\footnote{Although the term is nowadays rather common, it was not introduced by Nash, neither in \cite{Nash1954} nor in the subsequent paper \cite{Nash1956}.} any $(0,2)$ tensor having the structure $a^2 d\psi \otimes d\psi$ for some pair of smooth functions $a$ and $\psi$. Note that such two tensor is only positive semidefinite and thus it is certainly not a Riemannian metric. The next fundamental lemma shows that any Riemannian metric can be written as a (locally finite) sum of primitive metrics satisfying some additional technical requirements.

\begin{proposition}\label{p:decomp}
Let $\Sigma$ be a smooth $n$-dimensional manifold, $h$ a smooth positive definite $(0,2)$ tensor on it and $\{U_\ell\}$ a cover of $\Sigma$. Then there is a countable collection $h_j$ of primitive metrics such that $h = \sum_j h_j$ and
\begin{itemize}
\item[(a)] Each $h_j$ is supported in some $U_\ell$.
\item[(b)] For any $p\in \Sigma$ there are at most\footnote{In his paper Nash claims indeed a much larger $K(n)$, cf.~\cite[bottom of p.~386]{Nash1954}.} $K (n) = \frac{n (n+1)^2}{2}$ $h_j$'s whose support contains $p$. 
\item[(c)] The support of each $h_j$ intersects the supports of at most finitely many other $h_k$'s.
\end{itemize}
\end{proposition}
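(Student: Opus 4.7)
The plan is to refine the given cover $\{U_\ell\}$ using Lemma~\ref{l:covering}, build a smooth partition of unity subordinate to the refinement, and then on each piece of the refinement express the local representation of $h$ as a sum of exactly $N:=\tfrac{n(n+1)}{2}$ primitive metrics by exploiting a linear-algebraic decomposition of the positive definite cone. The bound $K(n)=\tfrac{n(n+1)^2}{2}=(n+1)N$ will come out as the product of the $(n+1)$-coloring of Lemma~\ref{l:covering}(e) with the $N$ primitives contributed by each refinement element.

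The first step is the following linear-algebraic fact: \emph{for every positive definite symmetric matrix $A_0\in\R^{n\times n}$ there exist $N$ vectors $v_1,\ldots,v_N\in\R^n$ such that the rank-one matrices $v_kv_k^T$ are linearly independent in the $N$-dimensional space of symmetric $n\times n$ matrices and $A_0=\sum_k c_k v_kv_k^T$ with $c_k>0$ for every $k$.} This follows from Carath\'eodory's theorem applied inside the $N$-dimensional convex cone of positive semidefinite matrices: any interior point (i.e. any positive definite matrix) is a strictly positive combination of at most $N$ extreme rays, and these must be linearly independent (else the matrix would lie in a simplicial cone of dimension strictly less than $N$). Once such $v_k$ are fixed, the coefficients $c_k(\cdot)$ depend linearly on the matrix, so matrices in an open neighborhood of $A_0$ admit the same expansion with smoothly varying positive coefficients.

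For each $p\in\Sigma$ I then choose a coordinate chart $\Phi_p:W_p\to\R^n$ centered at $p$ with $W_p$ contained in some element of $\{U_\ell\}$, and apply the lemma above to the matrix $A_0:=(h_{ij}(p))$ representing $h$ in these coordinates. After possibly shrinking $W_p$, the coordinate matrix of $h$ admits the expansion $h_{ij}(x)=\sum_k c_k^p(x)(v_k^p)_i(v_k^p)_j$ on $W_p$ with smooth positive coefficients $c_k^p$. Applying Lemma~\ref{l:covering} to the open cover $\{W_p\}_{p\in\Sigma}$ produces a locally finite refinement $\{V_m\}$ with properties (a)--(e), where each $V_m\subset W_{p(m)}\subset U_{\ell(m)}$ for suitable choices of $p(m)$ and $\ell(m)$. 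Next I construct smooth nonnegative functions $\varphi_m$ supported in $V_m$ with $\sum_m\varphi_m^2\equiv 1$ by taking any smooth partition of unity $\{\tilde\varphi_m\}$ subordinate to $\{V_m\}$ and setting $\varphi_m:=\tilde\varphi_m/\sqrt{\sum_j\tilde\varphi_j^2}$. For each $m,k$ let $\psi_{m,k}$ be the globally smooth function obtained by extending $x\mapsto v_k^{p(m)}\cdot\Phi_{p(m)}(x)$ from $V_m$ to $\Sigma$ via multiplication by a cutoff that equals $1$ on $\operatorname{supp}(\varphi_m)$, and set $a_{m,k}:=\varphi_m\sqrt{c_k^{p(m)}}$, which is smooth on $\Sigma$ because $c_k^{p(m)}>0$ on $V_m$ and $\varphi_m$ vanishes outside $V_m$. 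Setting $h_{m,k}:=a_{m,k}^2\,d\psi_{m,k}\otimes d\psi_{m,k}$, each $h_{m,k}$ is a smooth primitive metric supported in $V_m$, and on $V_m$ the identity
\[
\sum_{k=1}^N h_{m,k} \;=\; \varphi_m^2\sum_{k=1}^N c_k^{p(m)}\, v_k^{p(m)}(v_k^{p(m)})^T \;=\; \varphi_m^2\, h
\]
holds (read in the coordinates $\Phi_{p(m)}$); summing over $m$ and using $\sum_m\varphi_m^2=1$ then gives $\sum_{m,k}h_{m,k}=h$.

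Property (a) is immediate from $\operatorname{supp}(h_{m,k})\subset V_m\subset U_{\ell(m)}$, and the local finiteness of $\{V_m\}$ guaranteed by Lemma~\ref{l:covering}(c) gives (c). For (b), Lemma~\ref{l:covering}(d) ensures that every $p\in\Sigma$ lies in at most $n+1$ of the sets $V_m$, and each such $V_m$ contributes exactly $N=n(n+1)/2$ primitive metrics whose support contains $p$, hence the total count is at most $(n+1)N=\tfrac{n(n+1)^2}{2}=K(n)$. The principal obstacle in the argument is the linear-algebraic fact of the second paragraph; although intuitively transparent from the openness of the positive definite cone, some care is needed to guarantee simultaneously the \emph{strict} positivity of the coefficients and the linear independence of the $N$ chosen rank-one matrices. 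Once this is settled, the remainder is a routine assembly using Lemma~\ref{l:covering} and a partition-of-unity argument, with the only further subtlety being to shrink the neighborhoods $W_p$ so that the positivity of the coefficients $c_k^p$ propagates from the center $p$ to all of $W_p$.
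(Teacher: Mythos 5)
Your argument follows the paper's structure closely: decompose $h$ pointwise into $N=\tfrac{n(n+1)}{2}$ primitive metrics via a linear-algebraic choice of basis in $\mathrm{Sym}_{n\times n}$, propagate the strict positivity of the coordinate functions to a neighborhood, refine the resulting cover with Lemma~\ref{l:covering}, and multiply by the squares of a partition of unity so that the factor $n+1$ from the coloring and the factor $N$ from the local decomposition combine to $K(n)$. The bookkeeping for (a)--(c) and the cutoff argument defining $\psi_{m,k}$ are correct.

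The gap is in the linear-algebraic lemma, which you yourself flag as the principal obstacle. Carath\'eodory for cones produces a nonnegative combination of \emph{at most} $N$ extreme rays, not exactly $N$, and interiority of $A_0$ does not force $N$ linearly independent rays to be needed: the spectral decomposition $A_0=\sum_{i=1}^n\lambda_i u_iu_i^T$ already expresses every positive definite matrix as a strictly positive combination of only $n$ linearly independent rank-one matrices, so your claimed implication fails already for $A_0=I$ once $n\geq 2$. What you actually need is a \emph{basis} $\{v_kv_k^T\}_{k=1}^N$ of $\mathrm{Sym}_{n\times n}$ in which $A_0$ has all coordinates strictly positive, and the paper constructs one directly: pick any $N$ linearly independent rank-one matrices $w_kw_k^T$; their sum $M'$ is automatically positive definite (if $v^TM'v=0$ then $w_k\cdot v=0$ for every $k$, making $vv^T$ Frobenius-orthogonal to the whole of $\mathrm{Sym}_{n\times n}$, hence $v=0$); then choose an invertible $L$ with $L^TM'L=A_0$ (diagonalize both positive definite matrices and rescale) and set $v_k:=L^Tw_k$. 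Since $X\mapsto L^TXL$ is a linear automorphism of $\mathrm{Sym}_{n\times n}$, the $v_kv_k^T$ remain linearly independent, and $A_0=\sum_k v_kv_k^T$ with all coefficients equal to $1$. With this substituted for the Carath\'eodory step, the rest of your proof goes through unchanged.
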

\begin{proof} First of all, for each point $p\in \Sigma$ we find a neighborhood $V_p\subset U_\ell$ (for some $\ell$) and $J (n) = \frac{n(n+1)}{2}$ primitive metrics $h_{p1}, \ldots, h_{pJ}$ on $V_p$ such that $h = h_{p1}+ \ldots + h_{p}$. In order to do this fix coordinates on $U_\ell \ni p$ and write $h$ as $h = h_{ij} dx_i \otimes dx_j$. Consider the space ${\rm Sym}_{n\times n}$ of symmetric $n\times n$ matrices and let $M$ be the matrix with entries $h_{ij} (p)$. Now, since the set of all matrices of the form $v\otimes v$ is a linear generator of ${\rm Sym}_{n\times n}$, there are $J$ such matrices $A'_i = w_i\otimes w_i$ which are linearly independent. Consider $M' := \sum_i A'_i$. By standard linear algebra we can find a linear isomorphism $L$ of $\mathbb R^n$ such that $L^T M' L=M$: indeed, since both $M$ and $M'$ are symmetric we can find $O$ and $O_1$ orthogonal such that
$D = O^T M O$ and $D_1 = O_1^T M' O_1$ are diagonal matrices. Since $M$ and $M'$ are both positive definite, the entries of $D$ and $D_1$ are all positive. Let therefore $D^{- \sfrac{1}{2}}$ and $D_1^{- \sfrac{1}{2}}$ be the diagonal matrices whose entries are the reciprocal of the square roots of the entries of $D$ and $D_1$, respectively. If we set $U:= O D^{- \sfrac{1}{2}}$ and $U_1 := O_1 D_1^{- \sfrac{1}{2}}$, then clearly
$U^T M U = U_1^T M' U_1$ is the identity matrix. Thus $L:= U_1 U^{-1}$ is the linear isomorphism we were looking for.
Having found $L$, if we set $A_i = L^T A'_i L = (Lw_i)\otimes (Lw_i) = v_i\otimes v_i$, we conclude that $M = \sum_i A_i$. 

Next, there are unique linear maps $\mathcal{L}_i : {\rm Sym}_{n\times n} \to \mathbb R$ such that $A = \sum_i \mathcal{L}_i (A) v_i\otimes v_i$ for every $A$. Thus, if we consider the maps $\psi_i (x) = v_i \cdot x$ in local coordinates, we find smooth functions $\alpha_i : U_\ell \to \mathbb R$ such that 
\[
h = \sum_{i=1}^J \alpha_i d\psi_i \otimes d\psi_i\, .
\] 
Note that $\alpha_i (p) = \mathcal{L}_i (M) =1$ for every $i\in \{1, \ldots , J\}$ and thus in a neighborhood $V_p$ of $p$ each $\alpha_i$ is the square of an appropriate smooth function $a_i$. The tensors $h_{pi} := a_i^2 d\psi_i\otimes d\psi_i$ are the required primitive metrics.

Finally we apply Lemma~\ref{l:covering} and refine the covering $V_p$ to a new covering $W_\ell$ with the properties listed in the lemma. For each $W_\ell$ we consider a $V_p\supset W_\ell$ and define the corresponding primitive metrics $h_{(\ell1)} = h_{p1}, \ldots , h_{(\ell J)}= h_{pJ}$ (we use the subscript $(\ell j)$ in order to avoid confusions with the explicit expression of the initial tensor $h$ in a given coordinate system!). We then consider compactly supported functions $\beta_\ell \in C^\infty_c (W_\ell)$ with the property that for any point $p$ there is at least a $\beta_\ell$ which does not vanish at $p$ and we set
\[
\varphi_\ell := \frac{\beta_\ell}{\sqrt{\sum_j \beta_j^2}}\, .
\]
The tensors $\varphi_\ell^2 h_{(\ell j)}$ satisfy all the requirements of the proposition.\footnote{The argument of Nash is slightly different, since it covers the space of positive definite matrices with appropriate simplices.}
\end{proof}

\section{Proof of the main iterative statement}

To complete the proof of the Proposition~\ref{p:iter_C1} we still need one technical ingredient.

\begin{lemma}\label{l:NB}
Let $B$ be a closed subset of $\mathbb R^n$ diffeomorphic to the $n$-dimensional closed ball and $\omega:B \to \mathbb R^N$ a smooth immersion with $N\geq n+2$. Then there are two smooth maps $\nu,b: B \to \mathbb R^N$ such that
\begin{itemize}
\item[(a)] $|\nu(q)|=|b(q)|=1$ and $\nu (q)\perp b (q)$ for every $q\in B$;
\item[(b)] $\nu(q)$ and $b(q)$ are both orthogonal to $T_{\omega (q)} (\omega (B))$ for every $q\in B$.
\end{itemize}
\end{lemma}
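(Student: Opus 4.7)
The goal is to produce two smooth, pointwise orthonormal sections of the normal bundle $\mathcal{N} \to B$ defined by $\mathcal{N}_q := (T_{\omega(q)}(\omega(B)))^{\perp} \subset \mathbb{R}^N$. Since $\omega$ is a smooth immersion, the assignment $q \mapsto T_{\omega(q)}(\omega(B))$ varies smoothly in $q$, so $\mathcal{N}$ is a smooth rank-$(N-n)$ subbundle of the trivial bundle $B \times \mathbb{R}^N$. Because $N-n \geq 2$, once we trivialize $\mathcal{N}$ we can simply take $\nu, b$ to be the first two vectors of a smooth orthonormal frame.

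The plan is therefore to exhibit a global smooth orthonormal frame of $\mathcal{N}$ by exploiting the fact that $B$, being diffeomorphic to a closed $n$-ball, is contractible. Identifying $B$ with the closed unit ball of $\mathbb{R}^n$, $B$ is star-shaped with respect to its center $q_0$. Choose any orthonormal basis $\xi_1, \ldots, \xi_r$ of $\mathcal{N}_{q_0}$, where $r := N-n$. For each $q \in B$ consider the straight segment $\gamma_q(t) := (1-t) q_0 + t q$ and let $\pi(q)$ denote orthogonal projection of $\mathbb{R}^N$ onto $\mathcal{N}_q$ (which is smooth in $q$). Define $e_i(q) := X_i(1)$, where $X_i: [0,1] \to \mathbb{R}^N$ is the unique solution of the linear ODE
\[
\dot X_i(t) = \bigl(I - \pi(\gamma_q(t))\bigr) \dot X_i(t) \quad \text{with } X_i(t) \in \mathcal{N}_{\gamma_q(t)}, \qquad X_i(0) = \xi_i,
\]
i.e.\ the parallel transport along $\gamma_q$ associated to the connection $\nabla_{\dot\gamma} X := \pi(\gamma)\frac{d X}{dt}$ on $\mathcal{N}$. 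Standard ODE theory gives existence, uniqueness, and smooth dependence of $e_i(q)$ on $q$. Because $\pi$ is self-adjoint and $X_i, X_j \in \mathcal{N}_{\gamma_q(t)}$, we have
\[
\frac{d}{dt} \langle X_i, X_j \rangle = \langle \pi \dot X_i, X_j \rangle + \langle X_i, \pi \dot X_j \rangle = \langle \nabla_{\dot\gamma} X_i, X_j\rangle + \langle X_i, \nabla_{\dot\gamma} X_j\rangle = 0,
\]
so the connection is metric and $\{e_1(q), \ldots, e_r(q)\}$ remains orthonormal at every $q\in B$. Setting $\nu := e_1$ and $b := e_2$ produces the required pair.

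The main technical point to verify is that the ODE above really defines a metric connection on the subbundle $\mathcal{N}$ (namely that $\nabla_{\dot\gamma}X$ stays in $\mathcal{N}$ and that the above computation is valid), and that $e_i(q)$ depends smoothly on the endpoint $q$; both follow from smoothness of $\pi(q)$ in $q$ and standard smooth dependence of solutions of linear ODEs on parameters. An alternative, slicker justification is to invoke the general fact that every smooth vector bundle over a contractible base is trivializable and then apply Gram--Schmidt to any smooth frame, but the radial parallel-transport construction has the virtue of being entirely explicit.
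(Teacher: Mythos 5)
Your proof is correct, but the route is different from the paper's. You trivialize the normal bundle by radial parallel transport with respect to the metric (Koszul) connection: fix an orthonormal basis of the normal space at the center and slide it out along each radius, the key point being that a metric connection preserves inner products, so the frame stays orthonormal. This gives the two smooth fields in one stroke. The paper instead argues in an elementary fashion without invoking a connection: it observes that it suffices to produce the pair $\nu,b$ merely continuous (one then smooths by convolution, projects back on the normal bundle, and applies Gram--Schmidt), and it obtains the continuous pair by a connectedness argument on the set $R$ of radii $\rho$ such that the pair can be produced on $\overline{B}_\rho(0)$ — extending a pair on $\overline{B}_{\rho_k}$ to a slightly larger ball by constant radial prolongation followed by projection and Gram--Schmidt. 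Your alternative one-line remark (every bundle over a contractible base is trivial, then Gram--Schmidt a smooth frame) is precisely the ``fiber bundle'' option the paper also mentions, citing Steenrod. The trade-off: your parallel-transport argument is structurally cleaner and delivers smooth sections directly, while the paper's ``orthogonal propagation'' stays entirely elementary, needing neither the notion of a connection nor the general triviality theorem. One small presentational caveat: as written, the display $\dot X_i(t)=(I-\pi(\gamma_q(t)))\dot X_i(t)$ with the constraint $X_i(t)\in\mathcal{N}_{\gamma_q(t)}$ is a differential-algebraic characterization (it just states $\pi\dot X_i=0$), not a ready-to-integrate ODE; to appeal to ``standard ODE theory'' you should rewrite it in explicit form, e.g.\ $\dot X_i(t)=[\dot\pi(\gamma_q(t)),\pi(\gamma_q(t))]\,X_i(t)$, after which invariance of $\mathcal{N}$ and smooth dependence on $q$ are immediate.
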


\begin{proof}
For any point $p$ there exists a neighborhood of it and a pair of maps as above defined on the neighborhood: first
select two orthonormal vectors $\nu (p)$ and $b(p)$ which are normal to $T_{\omega (p)} (\omega(B))$ and, by smoothness of $\omega$, observe that
they are almost orthogonal to $T_{\omega (q)} (\omega (B))$ for every $q$ in a neighborhood of $p$. By first projecting on the normal bundle and then using the standard Gram--Schmidt orthogonalization procedure we then produce the desired pair. The problem of passing from the local statement to the global one can be translated into the existence of a suitable section of a fiber bundle: since $B$ is topologically trivial, this is a classical conclusion.\footnote{Nash cites Steenrod's classical book, \cite{Steenrod}.} 

However, one can also use the following elementary argument.\footnote{Nash writes {\em Also they could be obtained by orthogonal propagation}, cf.~\cite[top of p.~387]{Nash1954}.} We first observe that it suffices to produce $\nu$ and $b$ continuous: we can then smooth them by convolution, project on the normal bundle, and use again a Gram--Schmidt procedure to produce a pair with the desired properties. We just have to ensure that the projection on the normal bundle still keeps the two vectors linearly independent at each point. Since $\nu$ and $b$ are orthonormal and orthogonal to $\omega (B)$, this is certainly the case if the smoothings are $\varepsilon$-close to them in the uniform topology, where $\varepsilon >0$ is a fixed geometric constant. Next, in order to show the existence of a continuous pair with properties (a) and (b), assume without loss of generality that $B=\overline{B}_1 (0)\subset \mathbb R^n$ and consider the set $R$ of all radii $r$ for which there is at least one such pair on $\overline{B}_r (0)$. As observed above $R$ is not empty. Let $\rho$ be the supremum of $R$: we claim that $\rho\in R$. Indeed choose $\rho_k\in R$ with $\rho_k \uparrow \rho$ and let $\nu_k, b_k$ be two corresponding continuous maps on $\overline{B}_{\rho_k} (0)$ satisfying (a) and (b).
We define $\tilde{\nu}_k$ and $\tilde{b}_k$ on $B_1$ by setting them equal to $\nu_k$ and $b_k$ on $B_{\rho_k} (0)$ and extending them further by
\[
\tilde{\nu}_k (x)= \nu_k \left(\rho_k \frac{x}{|x|}\right) \qquad \mbox{and}\qquad \tilde{b}_k (x) = b_k \left(\rho_k \frac{x}{|x|}\right)\qquad\mbox{for $|x|\geq \rho_k$.}
\] 
Note that the two maps satisfy (a). As for (b), by the smoothness of $\omega$, for any $\eta>0$ there is $\delta>0$ such that, if $|x|\leq \rho_k +\delta$, then the angle between $\tilde{\nu}_k (x)$ (resp. $\tilde{b}_k (x)$) and the tangent space $T_{\omega (x)} \omega (B)$) is at least $\frac{\pi}{2}-\eta$. On the other hand, once $\eta$ is smaller than a geometric constant, we can project $\tilde{\nu}_k$ and $\tilde{b}_k$ on the normal bundle and apply Gram--Schmidt to produce a continuous pair which satisfies the desired requirements on $\overline{B}_{\sigma} (0)$ for $\sigma_k = \min\{1, \rho_k +\delta\}$. 
Thus $\sigma_k$ belongs to $R$. By definition $\rho \geq \sigma_k$ for every $k$: letting $k\uparrow \infty$ and using that $\rho_k\uparrow \rho$, we conclude $\rho\geq \min \{1, \rho +\delta\}$, namely $\rho =1$. Thus $\sigma_k =1$ for $k$ large enough, which implies $1\in R$ and concludes the proof. 
\end{proof}

\begin{proof}[Proof of Proposition~\ref{p:iter_C1}]
Fix a partition of unity $\varphi_\ell$ subordinate to $U_\ell$. 
Now, each fixed $U_\ell$ intersects a finite number of other $U_j$'s: denote the set of relevant indices by $I (\ell)$. We can therefore choose $\delta_\ell>0$ in such a way that $(1-\delta_\ell) g - w^\sharp e$ is positive definite and 
\begin{equation}\label{e:C1_uno}
\|\delta_\ell g \|_{0, U_j} < \frac{\delta}{2}  \qquad \mbox{for every $j\in I (\ell)$.}
\end{equation}
Construct now the function $\varphi := \sum_\ell \delta_\ell \varphi_\ell$ and set $h:= (1-\varphi) g - w^\sharp e$. Clearly
\begin{equation}\label{e:C1_due}
\|g - (h+w^\sharp e)\|_0 < \frac{\delta}{2}\, 
\end{equation}
and
\begin{equation}\label{e:C1_tre}
g - (h+w^\sharp e) > 0\, .
\end{equation}
In particular, if we choose $\delta'_\ell$ appropriately and we impose that the final map $z$ satisfies
\begin{equation}\label{e:C1_quattro}
\|z^\sharp e - (w^\sharp e +h)\|_{0, U_\ell} < \delta'_\ell\qquad \mbox{for every $\ell$},
\end{equation}
we certainly conclude that $z$ is short and satisfies \eqref{e:iter_2}. Moreover, we will impose the stronger condition
\begin{equation}\label{e:C1_cinque}
\|Dw - Dz\|^2_{0, U_\ell} <2 K(n)^2 \|g - w^\sharp e\|_{0, U_\ell}
\end{equation}
in place of \eqref{e:iter_3}, where $K(n)$ is the constant in Proposition~\ref{p:decomp}.
Hence from now on we focus on producing a map $z$ satisfying the local conditions \eqref{e:iter_1}, \eqref{e:C1_quattro}, and 
\eqref{e:C1_cinque}.

Next, we apply Proposition~\ref{p:decomp} to write $h = \sum_j h_j$, where each $h_j$ is a primitive metric and is supported in some $U_\ell$. We assume the index $j$ starts with
$1$ and follows the progression of natural numbers (note that the $h_j$'s are either finite or countably infinite).
Recall, moreover, that at any point of $\Sigma$ at most $K(n)$ of the $h_j$'s are nonzero and that, for any fixed $j$, only finitely many $U_\ell$ intersect the
support of $h_j$, since the latter is a compact set: the corresponding set of indices will be denoted by $L(j)$. 
We next order the $h_j$'s and we inductively add to the map $w$ a smooth ``perturbation'' map $w^p_j$, whose support coincides with that of $h_j$. 
If we let $w_j := w+ w^p_1 + \ldots + w^p_j$ be the ``resulting map'' after $j$ steps, we then claim the following estimates:
\begin{align}
\|w^p_j\|_{0, U_\ell}< &\; \frac{\eta_\ell}{K(n)} \;\;\quad \qquad \mbox{for all $\ell\in L (j)$,}\label{e:induct_1}\\
\|Dw^p_j\|_{0, U_\ell}^2 < &\; 2 \|h\|_{0, U_\ell} \quad\qquad \mbox{for all $\ell\in L (j)$,}\label{e:induct_2}\\
\|w_j^\sharp e - (w_{j-1}^\sharp e +h_j)\|_{0, U_\ell} < &\; \frac{\delta'_\ell}{K(n)} \quad\quad \qquad \mbox{for all $\ell\in L (j)$\label{e:induct_3}.}
\end{align}
We will prove below the existence of $w^p_j$, whereas we first show how to conclude. We set $z = w + \sum_j w^p_j$. Fix any $U_\ell$ and any point $q\in U_\ell$. Observe that, since 
$\overline{U}_\ell$ is compact, only finitely many perturbations $w^p_j$ are nonzero in $U_\ell$ and thus $z$ is smooth in $U_\ell$. Next, note that at most $K(n)$ $h_j$'s (and hence at most $K(n)$ $w^p_j$'s) are nonzero at $q$. Thus we can sum up all the estimates in \eqref{e:induct_1} and \eqref{e:induct_2} to conclude
\begin{align}
|w (q) - z(q)| \leq&\; \sum_j \|w^p_j\|_{0, U_\ell} <  \eta_\ell\, ,\\
|Dw (q) - Dz (q)| \leq &\; \sum_j \|Dw^p_j\|_{0, U_\ell} < \sqrt{2} K(n) \|h\|_{0, U_\ell}
< \sqrt{2} K(n)\|g-w^\sharp e\|_{0, U_\ell}\, ,
\end{align}  
where in the last inequality we can use \eqref{e:C1_tre}.
Finally, we write
\begin{align}
z^\sharp  e - (w^\sharp e+h) = z^\sharp e - w^\sharp e - \sum_j h_j = \sum_{j\geq 1} (w_j^\sharp e - (w_{j-1}^\sharp e + h_j))
\end{align}
(where $w_0:= w$)
and thus we can use \eqref{e:induct_3} to conclude, at the point $q$ and using the coordinate pach $U_\ell$,
\[
|(z^\sharp e - (g+h)) (q)| < \delta'_\ell\, .
\]
This completes the proof of \eqref{e:iter_1}, \eqref{e:C1_quattro} and \eqref{e:C1_cinque}. 

In order to define $w^p_j$, select a $U_\ell$ and apply Lemma~\ref{l:NB} on $U_\ell$ with $\omega = w_{j-1}$ to find two orthonormal smooth vector
fields $\nu, b: U_\ell \to \mathbb R^N$ with the property that $\nu$ and $b$ are normal to $w_{j-1} (U_\ell)$. Recall that $h_j = a_j^2 d\psi_j\otimes d\psi_j$ and set
\[
w^p_j (x) = a_j (x) \frac{\nu (x)}{\lambda} \cos \lambda \psi_j (x) + a_j (x) \frac{b (x)}{\lambda} \sin \lambda\psi_j (x)\, ,
\]
where $\lambda$ is a positive parameter, which will be chosen very large.

Note first that \eqref{e:induct_1} is obvious provided $\lambda$ is large enough. Next compute, in the coordinate patch $U_\ell$, 
\[
Dw^p_j (x) = \underbrace{- a_j(x) \sin \lambda \psi_j (x)\,\nu (x)\otimes d\psi_j (x)}_{A (x)} + \underbrace{a_j (x) \cos \lambda \psi_j (x)\, b(x)\otimes d\psi_j (x)}_{B (x)} + E (x)\, ,
\]
where $|E (x)|\leq C_{j-1} \lambda^{-1}$, for a constant $C_{j-1}$ which depends on the smooth functions $a_j$, $\psi_j$, $b$ and $\nu$, but not on $\lambda$ (note that in the line
above we understand all summands as $N\times n$ matrices).
We then obviously have
\[
|Dw^p_j (x)|^2 \leq a_j (x)^2 |d\psi_j (x)|^2 + C_{j-1} \lambda^{-1} \leq \|h_j\|_{0, U_\ell} + C_{j-1} \lambda^{-1} \leq \|h\|_{0, U_\ell} + C_{j-1} \lambda^{-1}\, 
\]
(here and in what follows, $C_{j-1}$ denotes constants which might change from line to line but are independent of the parameter $\lambda$).
Since $\|h\|_{0, U_\ell}$ is positive, it suffices to choose $\lambda$ large enough to achieve \eqref{e:induct_2}. 

Next write the tensor $\bar h:= w_j^\sharp e - w_{j-1}^\sharp e$ in coordinates as $\bar h = \bar h_{ik} dx_i\otimes dx_k$ and observe that the $\bar h_{ik}$ are simply the entries
of the symmetric matrix
\[
Dw_j^T Dw_j - Dw_{j-1}^T Dw_{j-1}\, .
\]
Recall that $Dw_j = Dw_{j-1} + A + B + E$. By the conditions on $\nu$ and $b$ we have
\[
0 = A^T B = B^T A = A^T Dw_{j-1} = Dw_{j-1}^T A = B^T Dw_{j-1} = Dw_{j-1}^T B\, .
\] 
We thus conclude that
\[
|Dw_j^T Dw_{j-1} - Dw_{j-1}^T Dw_j - (A^T A + B^T B)| \leq C_{j-1} \lambda^{-1}\, .
\]
On the other hand,
\[
A^T A + B^T B = a_j^2 (\cos^2 \lambda \psi_j + \sin^2 \lambda \psi_j) d\psi_j \otimes d\psi_j = a_j^2 d\psi_j \otimes d\psi_j = h_j\, .
\] 
Hence \eqref{e:induct_3} follows
at once for $\lambda$ large.

It must be noticed that so far we have shown \eqref{e:induct_1}, \eqref{e:induct_2}, and \eqref{e:induct_3} only for the chosen coordinate patch which contains the support of 
$h_j$, whereas the estimates are claimed in all coordinate patches which intersect the support of $h_j$. On the other hand, on these other coordinate patches the same computations yield the same estimates, and since there are only finitely many such patches to take into account, our claims readily follow for an appropriate choice of $\lambda$.

It remains to show that, if $w$ is injective, then $z$ too can be chosen to be injective. Fix $p, q\in \Sigma$. For $j$ sufficiently large we have $z (p) = w_j (p)$ and $z(q) = w_j (q)$. Thus it suffices to show the injectivity of $w_j$. We will show, inductively on $j$, that this can be achieved by choosing $\lambda$ sufficiently large. Thus assume that $w_{j-1}$ is injective. If $p,q$ are not contained in the support of $h_j$, then $w_{j-1} (q) = w_j (q)$ and $w_{j-1} (p)= w_j (p)$ and thus we are done. Since the support of $h_j$ is a compact subset of $U_\ell$, there is a constant $\beta$ such that $|w_{j-1} (p) - w_{j-1} (q)| \geq 2\beta$ for every $q$ in the support of $h_j$ and $p\not\in U_\ell$. For such pairs of points $w_j (p) \neq w_j (q)$ as soon as $\|w_j - w_{j-1}\|_0 \leq \beta$, which can be achieved by choosing $\lambda$ sufficiently large. It remains to check $w_j (p)\neq w_j (q)$ when one point belongs to the support of $h_j$ and the other to $U_\ell$ (and they are distinct!).
Consider that $\overline U_{\ell}$ is a compact set and, since $w_{j-1}$ is injective, its restriction to $\overline U_\ell$ is a smooth embedding. It then follows that, for a sufficiently small $\eta>0$, there is a well-defined orthogonal projection $\pi$ from the normal tubular neighborhood $T$ of thickness $\eta$ of 
$w_{j-1} (U_\ell)$ onto $w_{j-1} (U_\ell)$. Of course if $\lambda$ is sufficiently large $w_j (U_\ell)$ takes values in $T$ and thus, by definition of $w_j - w_{j-1}$, $\pi (w_j (q)) = w_{j-1} (q) \neq w_{j-1} (p) = \pi (w_j (p))$. Obviously this implies $w_j (p)\neq w_j (q)$ and completes the proof.
\end{proof}

\chapter{Smooth isometric embeddings}

\section{Introduction}

Two years after his counterintuitive $C^1$ theorem (see Theorem~\ref{t:main_C1_1}), Nash addressed and solved the general problem of the existence of smooth
isometric embeddings in his other celebrated work \cite{Nash1956}. As in the previous chapter we consider Riemannian manifolds $(\Sigma, g)$, but this time of class $C^k$ with $k\in \mathbb  N \cup \{\infty\}\setminus \{0\}$: this means that there is a $C^\infty$ atlas for $\Sigma$ and that, in any chart the coefficients $g_{ij}$ of the metric tensor in the local coordinates are $C^k$ functions. Nash's celebrated theorem in \cite{Nash1956} is then the following result.

\begin{theorem}[Nash's smooth isometric embedding theorem]\label{t:Ck_1}
Let $k\geq 3$, $n \geq 1$ and $N= \frac{n(3n+11)}{2}$. If $(\Sigma, g)$ is a closed $C^k$ Riemannian manifold of dimension $n$, then there is a $C^k$ isometric embedding $u: \Sigma \to \mathbb R^N$. 
\end{theorem}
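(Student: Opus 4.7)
The plan is to recast the problem as a perturbation problem around a well-chosen initial short embedding and to solve it by a Newton-type iteration in which the unavoidable loss of derivatives is compensated by smoothing operators — what is nowadays called a Nash--Moser scheme. First I would produce an auxiliary smooth \emph{free} short embedding $v_0 : \Sigma \to \mathbb R^N$, where ``free'' means that at every $p \in \Sigma$ the $\frac{n(n+3)}{2}$ vectors $\{\partial_i v_0(p)\}_i \cup \{\partial^2_{ij} v_0(p)\}_{i\le j}$ are linearly independent. Since $N=\frac{n(3n+11)}{2} \ge \frac{n(n+3)}{2}+n(n+4)$, this can be arranged by combining a Whitney embedding of $\Sigma$ into $\mathbb R^{2n+1}$ with auxiliary polynomial components (a Veronese-type construction), and then multiplying by a small enough scalar so that $v_0^{\sharp}e < g$.

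Next I would set up the perturbation equation: look for $u = v_0 + w$ satisfying $u^{\sharp} e = g$, which in coordinates reads
\begin{equation}\label{e:iso-pert}
\partial_i v_0 \cdot \partial_j w + \partial_j v_0 \cdot \partial_i w + \partial_i w \cdot \partial_j w = h_{ij},
\end{equation}
where $h := g - v_0^{\sharp}e$ is a smooth positive-definite symmetric $(0,2)$-tensor. If one further imposes $w \cdot \partial_i v_0 = 0$, then differentiating this orthogonality and substituting into \eqref{e:iso-pert} yields, at the linearized level, the purely algebraic system
\[
-2\, \partial^2_{ij} v_0 \cdot w \;=\; h_{ij}\, .
\]
The freeness of $v_0$ makes the linear map $w \mapsto \bigl(\partial_i v_0 \cdot w,\; \partial^2_{ij} v_0 \cdot w\bigr)$ pointwise surjective onto $\mathbb R^n \oplus \mathrm{Sym}_{n\times n}$, so one obtains a smooth pointwise right inverse $\mathcal M_{v_0}$ producing $w = \mathcal M_{v_0}(h)$ that solves the linearized problem. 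This is an order-zero operator (no derivatives of $h$ are used), and this is the miracle that Nash exploits: unlike the general Newton scheme for nonlinear PDE, inverting the \emph{linearization} costs no derivatives — the loss of derivatives comes only from the quadratic remainder in \eqref{e:iso-pert}, which involves $\|w\|_{C^1}^2 \lesssim \|\mathcal M(h)\|_{C^1}^2 \lesssim \|h\|_{C^1}^2$.

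The heart of the argument, and the main obstacle, is to counter this single-derivative loss with smoothing. I would introduce a family $S_\theta$ of smoothing operators on $\Sigma$ (e.g.\ convolution in charts against mollifiers, glued via partition of unity), satisfying the standard estimates
\[
\|S_\theta f\|_{C^{k+\ell}} \le C\, \theta^{\ell}\, \|f\|_{C^k}\, , \qquad \|f - S_\theta f\|_{C^k} \le C\, \theta^{-\ell}\, \|f\|_{C^{k+\ell}}\, ,
\]
together with the Gagliardo--Nirenberg interpolation $\|f\|_{C^j} \le C\, \|f\|_{C^0}^{1-j/r}\, \|f\|_{C^r}^{j/r}$. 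Then I would define the iteration
\[
v_{q+1} \;=\; v_q + \mathcal M_{v_q}\!\bigl(S_{\theta_q}(g - v_q^{\sharp}e)\bigr) \;+\; (\text{tangential orthogonality correction}),
\]
with a geometrically growing sequence $\theta_q = \theta_0\, \alpha^q$ for a suitably tuned $\alpha > 1$. Freeness of $v_q$ is preserved along the iteration because it is an open condition and $\|v_{q+1}-v_q\|_{C^2}$ stays small. A careful bookkeeping of two competing norms — the ``low'' norm $\|g-v_q^{\sharp}e\|_{C^0}$ and the ``high'' norm $\|v_q\|_{C^{k+r}}$ for a fixed $r$ — then shows that, for appropriate $\alpha$, the low norm decays super-exponentially while the high norm grows only polynomially, so $v_q \to u$ in $C^k$ and the limit is an isometric embedding.

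The quantitative tuning of $\theta_q$, the number $r$ of auxiliary derivatives, and the choice of geometric rate is the genuinely novel and delicate part of Nash's argument and is what will take most of the work; everything else (choice of free initial embedding, algebraic inversion via the second fundamental form, partition-of-unity assembly of smoothing and of the normal corrections) is preparatory. The threshold $k\ge 3$ is forced because the right inverse $\mathcal M_{v_q}$ uses $\partial^2 v_q$, so $v_q \in C^k$ requires the input defect $h$ to have at least the same regularity, and the convergence of the Nash--Moser scheme needs one extra derivative of slack.
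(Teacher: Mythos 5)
Your proposal correctly identifies the key algebraic miracle (the orthogonality Ansatz turning the linearized equation into a zeroth-order system inverted pointwise via freeness) and the need for smoothing to defeat the derivative loss, but it contains a genuine gap at the very first step: you start from a free short embedding $v_0$ and run a Newton-type iteration on the defect $g - v_q^\sharp e$, expecting super-exponential decay from a starting defect that is $O(1)$. That cannot work. Newton (and Nash--Moser) schemes converge only locally: the quadratic remainder in your equation is of size $\|h\|_{C^1}^2$, so if $\|g - v_0^\sharp e\|_{C^1}$ is of order one, the error after one step is again of order one and the iteration has no reason to shrink the defect at all. What is actually proved (in the paper, Theorem~\ref{t:perturbation}) is a \emph{perturbation} statement: given a fixed free $C^\infty$ embedding $w_0$, one finds $\varepsilon_0(w_0)>0$ such that any $(0,2)$-tensor $h$ with $\|h\|_3 \le \varepsilon_0$ is realized as $\bar u^\sharp e - w_0^\sharp e$ for some nearby $\bar u$. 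Your scheme, with the smallness hypothesis, is a legitimate alternative route to that perturbation theorem (the paper instead solves Nash's regularized flow equations \eqref{e:ODE}--\eqref{e:integral}, an infinite-dimensional ODE rather than a discrete Newton iteration — a genuinely different organization, which the text points out has been largely forgotten in the later literature). But that local theorem is not Theorem~\ref{t:Ck_1}; it must be paired with a mechanism that drives the initial defect below the threshold $\varepsilon_0$, and this is precisely the part your proposal is missing.

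The paper's mechanism is the decomposition into a ``$Z$-embedding'' $w_0$ and a ``$Y$-embedding'' $\bar w$. One fixes, via Proposition~\ref{p:primitive_scelte}, finitely many functions $\psi^r$ so that the $d\psi^r\otimes d\psi^r$ span the symmetric tensors at every point, writes $\gamma := \sum_r d\psi^r\otimes d\psi^r < g$, and then — crucially — invokes the $C^1$ isometric embedding theorem (Theorem~\ref{t:main_C1_1}) to obtain a $C^1$ map with pullback metric exactly $g-\gamma$, which is then smoothed and perturbed to a free $w_0$ satisfying $\|w_0^\sharp e - (g-\gamma)\|_0 < 2\delta$. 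This controls only the $C^0$ norm of the defect; to tame the higher norms one writes $g - w_0^\sharp e = \sum_r a_r^2\, d\psi^r\otimes d\psi^r$ and builds the oscillatory ``spiral'' map $\bar w$ with components $\lambda^{-1}a_r \sin(\lambda\psi^r)$, $\lambda^{-1}a_r\cos(\lambda\psi^r)$, whose pullback metric is $\sum_r a_r^2\, d\psi^r\otimes d\psi^r + \lambda^{-2}\sum_r da_r\otimes da_r$. The residual defect $h = g - (w_0\times\bar w)^\sharp e = -\lambda^{-2}\sum_r da_r\otimes da_r$ is then $O(\lambda^{-2})$ in $C^3$ and can be pushed below $\varepsilon_0$ by taking $\lambda$ large, after which the perturbation theorem closes the argument with $u = \bar u\times\bar w$. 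Note the dimension count: the paper splits $N = \bar N + 2N_0 = \frac{n(n+5)}{2} + n(n+3) = \frac{n(3n+11)}{2}$ between the free $Z$-embedding and the spirals, not between freeness and some Veronese padding. Without this ``defect reduction'' stage, or an equivalent one, your iteration has no starting point where it is guaranteed to converge, and the proof is incomplete.
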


In \cite{Nash1956}, Nash covered also the case of nonclosed manifolds as a simple corollary of Theorem~\ref{t:Ck_1}, but with a much weaker bound on the codimension. More precisely he claimed the existence of isometric embeddings for $N' = (n+1) N$. His proof contains however a minor error (Nash really proves the existence of an isometric {\em immersion}) which, as pointed out by Robert Solovay (cf.~Nash's comment in \cite[p.~209]{EssentialNash}), can be easily fixed using the same ideas, but at the price of increasing slightly the dimension $N'$. 

\begin{corollary}[$C^\infty$ isometric embedding, nonclosed case]\label{c:Ck_2}
Let $k\geq 3$, $n \geq 1$, 
\[
N' = (n+1) N = (n+1) \frac{n (3n+11)}{2} \quad\mbox{and}\quad N'' = N' + 2n+2\, .
\] 
If $(\Sigma, g)$ is a $C^k$ Riemannian manifold of dimension $n$, then there is a $C^k$ isometric embedding $u: \Sigma \to \mathbb R^{N''}$ and a $C^k$ isometric immersion $z:\Sigma \to \mathbb R^{N'}$. 
\end{corollary}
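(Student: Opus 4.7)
The plan is to derive the corollary from Theorem~\ref{t:Ck_1} in two stages, mirroring the passage from Theorem~\ref{t:main_C1_1} to Corollary~\ref{c:Nash+Whitney}. First I would construct a $C^k$ isometric immersion of $(\Sigma,g)$ into $\mathbb R^{N'}$ by a coloring argument that reduces the problem locally to Theorem~\ref{t:Ck_1} on closed manifolds. Then I would promote the immersion to an isometric embedding into $\mathbb R^{N''}=\mathbb R^{N'+2n+2}$ by adjoining a suitably modified Whitney proper embedding and absorbing the resulting metric error by re-solving the immersion problem for a corrected metric.

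\medskip

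For the immersion I would apply Lemma~\ref{l:covering} to obtain a locally finite smooth atlas $\{U_\ell\}$ of $\Sigma$ organised into $n+1$ color classes $\mathcal C_1,\dots,\mathcal C_{n+1}$ of pairwise disjoint precompact patches, each $\overline{U_\ell}$ diffeomorphic to a closed $n$-ball, together with a subordinate partition of unity $\{\varphi_\ell^2\}$. For every $\ell$ I would enclose $\overline{U_\ell}$ in a closed $C^k$ manifold $M_\ell$ (for instance the double of $\overline{U_\ell}$, or a torus containing $\overline{U_\ell}$ in a fundamental domain) and extend $g|_{U_\ell}$ to a $C^k$ Riemannian metric on $M_\ell$; Theorem~\ref{t:Ck_1} then supplies an isometric $C^k$ embedding $\tilde f_\ell:M_\ell\to\mathbb R^N$, whose restriction to $U_\ell$ is a local isometric embedding. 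For each color class $j$ the pieces $\{\tilde f_\ell\}_{U_\ell\in\mathcal C_j}$ are then assembled into a single $C^k$ map $F_j:\Sigma\to\mathbb R^N$, supported in $\bigcup_{U_\ell\in\mathcal C_j}\overline{U_\ell}$, whose local pullback equals $\varphi_\ell^2\,g$ on each $U_\ell\in\mathcal C_j$; stacking over the $n+1$ classes yields
\[
u\;:=\;(F_1,\dots,F_{n+1}):\Sigma\longrightarrow\mathbb R^{(n+1)N}=\mathbb R^{N'},
\]
with pullback $\sum_{j}F_j^\sharp e=\sum_\ell\varphi_\ell^2\,g=g$, the required isometric immersion.

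\medskip

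For the embedding I would invoke Whitney's theorem to fix a smooth proper embedding $w:\Sigma\to\mathbb R^{2n+1}$ and, after a suitable modification using one auxiliary $C^k$ coordinate (yielding a $C^k$ embedding $\hat w:\Sigma\to\mathbb R^{2n+2}$), arrange that $\hat w$ remains injective and proper while $\hat w^\sharp e<g$ pointwise on $\Sigma$. Then $g':=g-\hat w^\sharp e$ is a smooth $C^k$ Riemannian metric, to which the immersion step just established applies to produce a $C^k$ isometric immersion $\tilde u:\Sigma\to\mathbb R^{N'}$. The combined map $(\tilde u,\hat w):\Sigma\to\mathbb R^{N'+2n+2}=\mathbb R^{N''}$ is then a $C^k$ isometric immersion of $(\Sigma,g)$, injective and proper because $\hat w$ is, hence a $C^k$ isometric embedding. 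The one additional coordinate with respect to Whitney's optimal $2n+1$ encodes Solovay's correction to Nash's original codimension count and absorbs the positive balancing scalar needed to make $\hat w^\sharp e<g$ hold uniformly on the whole of $\Sigma$ rather than on a precompact exhaustion.

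\medskip

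The main obstacle lies in the immersion step, specifically in the assembly of each $F_j$ from the local pieces $\tilde f_\ell$. Naively setting $F_j|_{U_\ell}:=\varphi_\ell\tilde f_\ell$ produces the pullback
\[
(\varphi_\ell\tilde f_\ell)^\sharp e\;=\;\varphi_\ell^2\,\tilde f_\ell^\sharp e\;+\;2\varphi_\ell\,d\varphi_\ell\,(\tilde f_\ell\cdot d\tilde f_\ell)\;+\;|\tilde f_\ell|^2\,d\varphi_\ell\otimes d\varphi_\ell\,,
\]
whose last two cross-terms do not match the desired $\varphi_\ell^2\,g$. Cancelling them while retaining the $C^k$ regularity of $F_j$ across $\partial U_\ell$ and keeping everything within the $N$ dimensions available per color class is the central technical device of the argument; it exploits the ample codimension $N$ supplied by Theorem~\ref{t:Ck_1} (leaving room for auxiliary coordinates that organise the cancellation) together with the pairwise disjointness of patches within a single color class, which allows the adjustment to be carried out on each $U_\ell$ independently.
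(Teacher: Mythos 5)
Your overall architecture (Lemma~\ref{l:covering} colouring, Theorem~\ref{t:Ck_1} applied locally, stacking $n+1$ copies of $\mathbb R^N$, and an extra $\mathbb R^{2n+2}$ factor for injectivity) matches the paper's, and you are right that the crux is how to glue the local isometric pieces across $\partial U_\ell$ without generating cross-terms. But you leave exactly that step unresolved, and the paper's resolution is a genuinely different idea which your sketch does not contain. You apply the cutoff to the \emph{embedding map}, writing $F_j|_{U_\ell}=\varphi_\ell\tilde f_\ell$, and then observe that the pullback acquires the uncancelled terms $2\varphi_\ell\,d\varphi_\ell\odot(\tilde f_\ell\cdot d\tilde f_\ell)+|\tilde f_\ell|^2\,d\varphi_\ell\otimes d\varphi_\ell$. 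There is no way to cancel these within the $N$ available coordinates per colour class; in particular, enclosing $\overline{U_\ell}$ in a double or a torus and extending $g$ does not help, because the global isometric embedding of that closed manifold has no reason to vanish, or even be small, near $\partial U_\ell$.

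The paper avoids cross-terms entirely by cutting off the \emph{metric}, not the map. After subtracting the pullbacks $h=(v^0)^\sharp e$ and $\sum_\ell\eta_\ell\Phi_\ell^\sharp\sigma$, it decomposes $\tilde g=\sum_\ell g_\ell$ with $g_\ell=\varphi_\ell^2\left(\sum_j\varphi_j^2\right)^{-1}\tilde g$, a positive semidefinite tensor compactly supported in $U_\ell$ (not a Riemannian metric). Each $g_\ell$ is pushed forward to the sphere via a diffeomorphism $\Phi_\ell:U_\ell\to\mathbb S^n\setminus\{N\}$ that is extended to all of $\Sigma$ by the \emph{constant} value $N$, with all derivatives vanishing along $\partial U_\ell$; a small multiple $\eta_\ell\sigma$ of the round metric is then added on the sphere to restore positive definiteness, and only now is Theorem~\ref{t:Ck_1} invoked on $(\mathbb S^n,\bar g_\ell+\eta_\ell\sigma)$ to produce a $C^k$ embedding $w^\ell$ with $w^\ell(N)=0$. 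The composite $u^\ell=w^\ell\circ\Phi_\ell$ is identically $0$ outside $U_\ell$ because the \emph{precomposition} degenerates to a constant there, not because the embedding has been multiplied by a cutoff; its pullback is therefore exactly $g_\ell+\eta_\ell\Phi_\ell^\sharp\sigma$ with no error terms, and the spurious summand $\sum_\ell\eta_\ell\Phi_\ell^\sharp\sigma$ was deducted from $g$ in advance. This ``collapse the domain onto a closed manifold'' device is the missing idea in your sketch.

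A smaller point: requiring $\hat w:\Sigma\to\mathbb R^{2n+2}$ to be simultaneously proper and strictly short ($\hat w^\sharp e<g$) cannot be arranged in general. If $(\Sigma,g)$ is noncompact but has finite diameter (e.g.\ a flat open ball), any short map has bounded image and hence cannot be proper. The paper's $v^0$ is not proper; it is an injective short map whose scale coordinates $\varepsilon_\ell,\varepsilon_\ell^2$ drive the ratio argument of Corollary~\ref{c:Nash+Whitney} to force injectivity of the full map $u=v^0\times v^1\times\cdots\times v^{n+1}$. That is the correct replacement for properness.
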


\medskip

The dimension of the ambient space in the theorems above has been lowered by subsequent works of Gromov and G\"unther. Moreover, starting from Gromov's work, Nash's argument has been improved to show statements similar to Theorem~\ref{t:main_C1_1}. More precisely, Gromov and Rokhlin first proved in
\cite{GrRo} that any short map on a smooth compact Riemannian manifold can be approximated by isometric embeddings of class $C^\infty$ if the dimension of the ambient Euclidean space is at least $\frac{n(n+1)}{2} + 4n +5$. The latter threshold was subsequently lowered by Gromov in \cite{GromovBook} to $\frac{n(n+1)}{2}+2n+3$ and by G\"unther in \cite{Gunther} to $\frac{n(n+1)}{2} + \max\{2n,5\}$ (see also \cite{GuntherICM}).
If $g$ is real analytic and $m\geq \frac{n(n+1)}{2} + 2n +3$, then any short embedding in $\mathbb R^m$ can be uniformly approximated by analytic isometric embeddings: in \cite{Nash1966} Nash extended Theorem~\ref{t:Ck_1}, whereas the approximation statement was shown first by Gromov for $m\geq \frac{n(n+1)}{2} +3n + 5$ in \cite{Gromov70} and lowered to the threshold above in \cite{GromovBook}.
Corresponding theorems can also be proved for noncompact manifolds $M$, but they are more subtle; for instance the noncompact case with real analytic metrics was left in \cite{Nash1966} as an open problem; we refer the reader to \cite{Gromov70,GromovBook} for more details. 

\medskip

On the regularity side, Jacobowitz in \cite{Jacobowitz} extended Nash's theorem to $C^{k, \beta}$ metrics (achieving the existence of $C^{k, \beta}$ embeddings) for $k+\beta > 2$. However, the case of $C^2$ metrics is still an open problem (it is also interesting to notice that K\"allen in \cite{Kallen} used a suitable improvement of Nash's methods for Theorem~\ref{t:main_C1_1} in order to show the existence of $C^{1, \alpha}$ isometric embeddings with $\alpha < \frac{k+\beta}{2}$ when $k+\beta \leq 2$: the existence of a $C^2$ isometric embedding for $C^2$ metrics is thus an endpoint result for two different ``scales''). 

\medskip

The starting point of Nash in proving Theorem~\ref{t:Ck_1} is first to solve the linearization of the corresponding system of PDEs \eqref{e:PDE_isom}: in particular he realized that a suitable ``orthogonality Ansatz'' reduces the linearization to a system of linear equations which {\em does not involve derivatives of the linearization of the unknown}, cf.~\eqref{e:normal}-\eqref{e:linearization2}. The latter system can then be solved via linear algebra when the dimension of the target space is sufficiently high. 

Having at hand a (simple) solution formula for the linearized system, one would like to recover some implicit (or inverse) function theorem to be able to assert the existence of a solution to the original nonlinear
system \eqref{e:PDE_isom}. There are of course several iterative methods in analysis to prove implicit function theorems, but in Nash's case there is a central analytic difficulty: his solution of the linearized system experiences a phenomenon which in the literature is usually called {\it loss of derivative}.
This problem, which was very well known and occurs in several other situations, looked insurmountable. Mathematics needed the genius of Nash in order to realize that one can deal with it by introducing a suitable regularization mechanism, see in particular the discussion of Section \ref{s:path}. 

\medskip

This key idea has numerous applications in a wide range of problems in partial differential equations where a purely functional--analytic implicit function theorem fails. The first author to put Nash's ideas in the framework of an abstract implicit function theorem was J. Schwartz, cf.~\cite{Schwartz}. However, the method became known as the Nash--Moser iteration shortly after Moser succeeded in developing a general framework going beyond an implicit function theorem, which he applied to a variety of problems in his fundamental papers \cite{MoserPNAS,Moser1,Moser2},  in particular to the celebrated KAM theory. Subsequently several authors generalized these ideas and a thorough mathematical theory has been developed by Hamilton in \cite{Hamilton}, who defined the categories of ``tame Fr\'echet spaces'' and ``tame nonlinear maps''. Such ideas are usually presented in the framework of a Newton iteration scheme. However, although Nash's original argument is in some sense close in spirit, in practice Nash truly constructs a smooth ``curve'' of approximate solutions solving a
suitable infinite dimensional ordinary differential equation: the curve starts with a map which is close to be a solution and brings it to a final one which is a solution. This ``smooth flow'' idea seems to have been lost in the subsequent literature. 

\medskip

It is rather interesting to notice that, in order to solve the isometric embedding problem, Nash did not really need to resort to the very idea which made his
work so famous in the literature of partial differential equations: G\"unther has shown in \cite{Gunther} that the linearization of the isometric embedding system can be solved via a suitable elliptic operator. Hence, one can ultimately appeal to standard contraction arguments in Banach spaces via Schauder estimates, at least if we replace the $C^k$ assumption of Nash's Theorem~\ref{t:Ck_1} with a $C^{k,\alpha}$ assumption for some $\alpha$ contained in the open interval $(0,1)$.

\section{The perturbation theorem}\label{s:perturbation}

As in the previous chapters, we use Einstein's convention on repeated indices.
From now on, given a closed $n$-dimensional manifold $\Sigma$, we fix an atlas $\{U_\ell\}$ as in Lemma~\ref{l:covering}. Given a function $f$ on $\Sigma$, we define then $\|D^k f\|_0$ and $\|f\|_k$ as in Section \ref{s:main_C1_iter}. Given an $(i,j)$ tensor $T$, consider its expression in coordinates  in the patch $U_\ell$, namely
\[
T^{\alpha_1\ldots \alpha_i}_{a_1\ldots a_j} (u) \frac{\partial}{\partial u_{\alpha_1}} \otimes \cdots \otimes \frac{\partial}{\partial u_{\alpha_i}} \otimes du_{a_1} \otimes \cdots \otimes du_{a_j}\, . 
\]
We then define 
\[
\|D^k T\|_{0, U_\ell} := \sum_{\alpha_r, a_s}  \|D^k T^{\alpha_1\ldots \alpha_i}_{a_1\ldots a_j}\|_{0, U_\ell}\, , \;\;
\|D^k T\|_0 := \sup_\ell \|D^k T\|_{0, U_\ell} \;\;\mbox{and}\;\;
\|T\|_k := \sum_{i\leq k} \|D^i T\|_0\, .
\]
It is easy to see that these norms satisfy the Leibnitz-type inequality
\begin{equation}\label{e:Leibnitz}
\|D^k (T\otimes S)\|_0 \leq \sum_{i\leq k} \|D^i T\|_0 \|D^{k-i}\|_0
\end{equation}
and, when contracting a given tensor, namely for $\bar{T}^{\alpha_2 \ldots \alpha_i}_{a_2\ldots a_j} = \sum_k T^{k \alpha_2 \ldots \alpha_i}_{k a_2\ldots a_j}$, we have the corresponding inequality
\begin{equation}\label{e:contraction}
\|\bar T\|_0 \leq n \|T\|_0\, .
\end{equation}
Nash's strategy to attack Theorem~\ref{t:Ck_1} is to prove first a suitable perturbation result. Let us therefore start with a smooth embedding $w_0 = (w_1, \ldots , w_N): \Sigma \to \mathbb R^N$ and set $h := g-w_0^\sharp e$. Assuming $h$ small we look for a (nearby) map $u: \Sigma\to \mathbb R^N$ such that 
$u^\sharp e = g$, namely $u^\sharp e - w_0^\sharp e = h$. In fact, we would like to build $u$ as right endpoint of a path of maps starting at $w_0$. More precisely, consider a smooth curve $[t_0, \infty) \ni t \mapsto h (t)$ in
the space of smooth $(0,2)$ tensors joining $0 = h(t_0)$ and $h = h (\infty)$; we would like to find a corresponding smooth deformation $w (t)$ 
of $w (t_0) = w_0$ to $w (\infty) = u$ so that 
\begin{equation}\label{e:path}
w (t)^\sharp e = w_0^\sharp e + h (t)\qquad \mbox{ for all $t$.}
\end{equation} 
Following Nash's convention, we denote with an upper dot the differentiation
with respect to the parameter $t$. 

If we fix local coordinates $x_1, \ldots , x_n$ in a patch $U$ and differentiate \eqref{e:path}, we then find the
following linear system of partial differential equations for the velocity $\dot w (t)$:
\begin{equation}\label{e:linearization}
\frac{\partial w_\alpha}{\partial x_i} \frac{\partial \dot w_\alpha}{\partial x_j} +  \frac{\partial \dot w_\alpha}{\partial x_i} \frac{\partial w_\alpha}{\partial x_j}
= \dot h_{ij}\, .
\end{equation}
In fact, since the expression in the right-hand side of \eqref{e:linearization} will appear often, we introduce the shorthand notation
$2 d w \odot d\dot w$ for it, more precisely:
\begin{definition}
If $u, v\in C^1 (\Sigma, \mathbb R^N)$, we let $du \odot dv$ be the $(0,2)$ tensor $\frac{1}{2} ((u+v)^\sharp e - v^\sharp e - u^\sharp e)$, which in local
coordinates is given by 
\[
\frac{1}{2} \left(\frac{\partial v_\alpha}{\partial x_i} \frac{\partial u_\alpha}{\partial x_j} + \frac{\partial u_\alpha}{\partial x_i}\frac{\partial v_\alpha}{\partial x_j}\right)\, .
\]
\end{definition}
A second important idea of Nash is to assume that $\dot w$ is orthogonal to $w (\Sigma)$, namely
\begin{equation}\label{e:normal}
\frac{\partial w_\alpha}{\partial x_j} \dot w_\alpha = 0 \qquad \forall j\in \{1, \ldots, n\}\, .
\end{equation}
Under this condition we have
\[
0 = \frac{\partial}{\partial x_i} \left(\frac{\partial w_\alpha}{\partial x_j} \dot w_\alpha\right) = 
\frac{\partial \dot w_\alpha}{\partial x_i}\frac{\partial w_\alpha}{\partial x_j} + \dot w_\alpha \frac{\partial^2 w_\alpha}{\partial x_i \partial x_j}\, ,
\]
and we can rewrite \eqref{e:linearization} as 
\begin{equation}\label{e:linearization2}
-2 \frac{\partial^2 w_\alpha}{\partial x_j \partial x_i} \dot w_\alpha 
= \dot h_{ij}\, .
\end{equation}
Clearly, in order to solve \eqref{e:normal}--\eqref{e:linearization2}, it would be convenient if the resulting system of linear equations were linearly independent,
which motivates the following definition.

\begin{definition}\label{d:free}
A $C^2$ map $w:\Sigma\to \mathbb R^{\bar N}$ is called {\em free}\footnote{The term free was not coined by Nash, but introduced later
in the literature by Gromov.} if, on every system of local coordinates $x_1, \ldots , x_n$, the following $n + \frac{n(n+1)}{2}$ vectors are linearly independent at every $p\in \Sigma$:
\begin{equation}\label{e:lin_indip}
\frac{\partial w}{\partial x_j} (p)\, , \frac{\partial^2 w}{\partial x_i \partial x_j}(p)\, , \qquad \forall i\leq j\in \{1, \ldots , n\}\, .
\end{equation}
\end{definition} 

Although the condition \eqref{e:lin_indip} is stated in local coordinates, the definition is independent of their choice.
Observe moreover that a free map is necessarily an immersion and that we must have $\bar N\geq \frac{n (n+3)}{2}$. If a free map is injective,
then we will call it a free embedding.
The main ``perturbation theorem'' of Nash's paper (and in fact the most spectacular part of his celebrated work) is then the following
statement. In order to prove it, Nash introduced his famous regularization procedure to overcome the most formidable obstruction posed by \eqref{e:linearization}. 

\begin{theorem}[Perturbation theorem]\label{t:perturbation}
Assume $w_0: \Sigma \to \mathbb R^N$ is a $C^\infty$ free embedding. Then there is a positive constant $\varepsilon_0$, depending upon $w_0$, such that, if $h$ is a $C^k$ $(0,2)$ tensor with
$\|h\|_3\leq \varepsilon_0$ and $k\geq 3$ (with possibly $k = \infty$), then there is a $C^k$ embedding $\bar u : \Sigma \to \mathbb R^N$ such that $\bar u^\sharp e = w_0^\sharp e + h$. 
\end{theorem}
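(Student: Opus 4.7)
My plan is to follow Nash's strategy of constructing $\bar u$ as the limit as $t\to\infty$ of a smooth one-parameter family $w(t):\Sigma\to\R^N$ with $w(t_0)=w_0$, obtained by integrating an infinite-dimensional ODE designed so that $w(t)^\sharp e$ interpolates between $w_0^\sharp e$ and $w_0^\sharp e+h$. Fix a smooth monotone $\chi:[t_0,\infty)\to[0,1]$ with $\chi(t_0)=0$ and $\chi(\infty)=1$, say $\chi(t)=1-e^{-(t-t_0)}$, and set $h(t):=\chi(t)h$, so $\dot h(t)$ decays exponentially. Enforcing the orthogonality ansatz $(\partial_j w)\cdot\dot w\equiv 0$ reduces the differentiated identity \eqref{e:linearization} to the purely algebraic system \eqref{e:normal}--\eqref{e:linearization2}. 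Since $w_0$ is free and freeness is an open condition in $C^2$, this system is pointwise solvable whenever $w$ is $C^2$-close to $w_0$, and gives a unique velocity $\dot w = L(w)\,\dot h$ where $L$ depends rationally on $w,Dw,D^2 w$.

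The fundamental difficulty is that $L$ \emph{loses two derivatives} in the dependence on $w$: producing $\dot w\in C^k$ out of $L(w)\dot h$ requires $w\in C^{k+2}$, so the Picard scheme for \eqref{e:path} diverges in any fixed Banach space. Nash's decisive idea is to replace $w$ by a smoothed version $S_{\theta(t)}w$ in the velocity formula, where $\{S_\theta\}_{\theta\ge 1}$ is a family of smoothing operators (e.g.\ convolution at scale $\theta^{-1}$ in a tubular neighbourhood of $w_0(\Sigma)$) satisfying the standard Jackson--Bernstein estimates
\[
\|S_\theta f\|_{j} \leq C_{j,k}\,\theta^{(j-k)_+}\|f\|_k\, ,\qquad \|f-S_\theta f\|_j \leq C_{j,k}\,\theta^{j-k}\|f\|_k \ \ (j\le k)\, ,
\]
together with the interpolation $\|f\|_j^2 \le C\|f\|_{j-1}\|f\|_{j+1}$. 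The flow then reads
\begin{equation}\label{e:flowPLAN}
\dot w(t) \;=\; L\!\bigl(S_{\theta(t)} w(t)\bigr)\,\bigl(\dot h(t)+e(t)\bigr)\, ,
\end{equation}
where $e(t)$ is a small correction to be chosen so as to absorb the residual
$\rho(t):=\tfrac{d}{dt}w(t)^\sharp e-\dot h(t)$ created by the mismatch between the true second fundamental form of $w(t)$ and that of $S_{\theta(t)}w(t)$; integrability of $\rho$ and the identity $\int_{t_0}^\infty(\dot h+e)=h$ will ensure the correct terminal pull-back.

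The main obstacle is choosing the schedule $\theta(t)\nearrow\infty$ so that three estimates close simultaneously on $[t_0,\infty)$: (a)~$\|w(t)-w_0\|_3\le\varepsilon_0$ for all $t$, guaranteeing that $S_{\theta(t)}w(t)$ stays free and $L$ stays bounded, and that $w(t)$ stays a $C^2$-small perturbation of an embedding (hence an embedding itself); (b)~the high-order norms grow controllably, namely $\|w(t)\|_k\lesssim\theta(t)^{\alpha}$ for a suitable $\alpha=\alpha(k)$, bootstrapped from the ODE and the smoothing estimates via Gagliardo--Nirenberg interpolation; (c)~the residual $e$ and the velocity $\dot w$ are both $L^1$ in $t$, so that $w(t)$ has a $C^k$-limit $\bar u$. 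The balance works because $L(S_\theta w)$ involves $\|S_\theta w\|_2$, which after interpolation is bounded by a small negative power of $\theta$ times a large positive power of $\|w\|_k$ (when $\|w-w_0\|_0$ is already small); pairing this polynomial growth against the exponential decay of $\dot h$ gives convergence provided $\theta(t)=(1+t)^\beta$ with $\beta=\beta(k)$ large enough.

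Once (a)--(c) are in force, one passes to the limit: $w(t)\to\bar u$ in $C^k$, $S_{\theta(t)}w(t)\to\bar u$ as well, so the commutator correction $e(t)\to 0$ faster than $\dot h$ and the total integral of $\dot h+e$ along the flow equals $h$, giving $\bar u^\sharp e - w_0^\sharp e = h$. Since $\|\bar u-w_0\|_2$ is controlled by $\varepsilon_0$, $\bar u$ remains an embedding. The hardest ingredient will be verifying (b): proving that the ODE preserves the a priori ansatz $\|w\|_k\lesssim\theta(t)^\alpha$ requires combining Leibniz-type bounds \eqref{e:Leibnitz}, the smoothing estimates and the interpolation inequalities into a Gr\"onwall-type argument that exploits the smallness of $\|h\|_3$ via $\varepsilon_0$, and it is here that Nash's insight of trading smoothness for a judiciously tuned loss in $\theta$ is indispensable.
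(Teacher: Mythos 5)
Your high-level strategy --- constructing $\bar u$ as the terminal value of a smooth one-parameter family $w(t)$ obtained from a regularized flow $\dot w = \mathscr{L}(\mathcal{S}_{\theta(t)}w(t))(\,\cdots)$, with the loss of derivatives tamed by interpolation and a schedule $\theta(t)\to\infty$ --- is indeed the paper's, and the roles you assign to the smoothing estimates, the interpolation inequalities and the a priori estimates (a)--(c) are substantially the right ones. The genuine gap is in the treatment of the residual $\rho(t)$, which is precisely where Nash's key trick lies.

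You prescribe the reference path $h(t)=\chi(t)h$ in advance and then wish to add a correction $e(t)$ to the velocity, with the slogan ``$\int_{t_0}^\infty(\dot h+e)=h$ will ensure the correct terminal pull-back.'' That is not the right constraint, and filling it in exposes the difficulty. From $\dot w=\mathscr{L}(\mathcal{S}_{\theta(t)}w)(\dot h+e)$ one gets $2\,d(\mathcal{S}_{\theta(t)}w)\odot d\dot w=\dot h+e$, hence
\[
\tfrac{d}{dt}\,w(t)^\sharp e = 2\,dw\odot d\dot w = \dot h + e - E, \qquad E(t):= 2\,d\bigl(\mathcal{S}_{\theta(t)}w-w\bigr)\odot d\dot w\,,
\]
and integrating gives $\bar u^\sharp e - w_0^\sharp e = h + \int(e-E)$. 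So what must be arranged is $\int(e-E)=0$, which $\int(\dot h+e)=h$ alone does not give. The natural pointwise choice $e(t)=E(t)$ does enforce it but yields, at each fixed $t$, a fixed-point equation
\[
\dot w = \mathscr{L}(\mathcal{S}_{\theta(t)}w)\,\dot h + \mathscr{L}(\mathcal{S}_{\theta(t)}w)\,K_t\,\dot w\,,\qquad K_t\mu := 2\,d\bigl(\mathcal{S}_{\theta(t)}w-w\bigr)\odot d\mu\,,
\]
in which the operator $\mathscr{L}(\mathcal{S}_{\theta(t)}w)K_t$ \emph{loses one derivative} (because $K_t$ differentiates $\mu$). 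The contraction needed to solve for $\dot w$ therefore does not close in any fixed $C^j$, and the very loss of derivatives you set out to eliminate reappears inside your correction.

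Nash's resolution --- and the structural point missing from your sketch --- is \emph{not} to fix $h(t)$ in advance and correct the velocity, but to let $h(t)$ absorb the accumulated errors through the coupled integral equation \eqref{e:integral}: $h(t)=\mathcal{S}_{t^{-1}}\bigl[\psi(t-t_0)h+\int_{t_0}^t E(\tau)\,\psi(t-\tau)\,d\tau\bigr]$. Two features are essential and neither appears in your plan. First, the delay kernel $\psi(t-\tau)$, with $\psi$ vanishing to infinite order at $0$, makes $\dot h(t)$ depend only on $E(\tau)$ for $\tau$ strictly in the past, breaking the circularity. Second, the outer smoothing $\mathcal{S}_{t^{-1}}$ applied to the whole bracket \emph{regains} the derivative that $E$ costs, and this is exactly what makes the coupled system \eqref{e:ODE}--\eqref{e:integral} locally solvable by an ordinary Picard contraction (see the discussion after \eqref{e:operatore}). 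The terminal identity $\bar u^\sharp e=w_0^\sharp e+h$ then drops out because $h(\infty)=h+\int_{t_0}^\infty E$ is built into the scheme and $h(t_0)=0$. Without this feedback, your $e(t)$ is either undefined, fails to fix the pull-back, or reintroduces the Nash--Moser obstruction in disguise. The remaining differences (exponential $\chi$ instead of the cut-off $\psi$, schedule $(1+t)^\beta$ instead of $t$, the logarithmic-convexity form of interpolation) are cosmetic.
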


Solving the embedding problem using Theorem~\ref{t:perturbation} certainly requires to produce maps which are ``close'' to be an isometric embedding.
However note that there is a rather subtle issue: since the threshold $\varepsilon_0$ depends upon $w_0$, producing a ``good starting'' $w_0$ is not at all obvious. We will tackle this issue immediately in the next sections and then come to the proof of Theorem~\ref{t:perturbation} afterwards.

\section{Proof of the smooth isometric embedding theorem}

In order to exploit Theorem~\ref{t:perturbation}, Nash constructs an embedding $u_0$ of $\Sigma$ which is the cartesian product of two smooth maps $w$ and $\bar{w}$, which he calls,
respectively, the $Z$-embedding and the $Y$-embedding. One crucial elementary ingredient is the following
remark.

\begin{remark}\label{r:product} 
If $f_1: \Gamma \to \mathbb R^n$ and $f_2: \Gamma \to \mathbb R^m$ are two $C^1$ maps, then $(f_1\times f_2)^\sharp e = f_1^\sharp e + f_2^\sharp e$, where we just understand $f_1^\sharp e$, $f_2^\sharp e$ and $(f_1\times f_2)^\sharp e$ as $(0,2)$ tensors (note that they are positive semidefinite, but not necessarily positive {\em definite}). 
\end{remark}

The strategy of Nash can be summarized as follows:
\begin{itemize}
\item[(i)] fix first a free $C^\infty$ smooth embedding $w_0$ (the $Z$-embedding) which is
(strictly) short with respect to $g$ (cf.~Definition~\ref{d:short}), and consider the threshold $\varepsilon_0$ needed to apply Theorem~\ref{t:perturbation};
\item[(ii)] then use a construction somewhat reminiscent of the proof of Theorem~\ref{t:main_C1_1} to build a smooth $\bar{w}$ such that $h:= g- w_0^\sharp - \bar{w}^\sharp$ satisfies
$\|h\|_3\leq \varepsilon_0$;
\item[(iii)] if $\bar{u}$ is finally the map produced by Theorem~\ref{t:perturbation} applied to $w_0$ and $h$, we then set $u := \bar u \times \bar w$ and conclude Theorem~\ref{t:Ck_1}.
\end{itemize}
It is indeed not difficult to produce the $Z$- and $Y$-embeddings if we allow very large dimensions. In order
to achieve the dimension $N$ claimed in Theorem~\ref{t:Ck_1}, Nash follows a much subtler argument which requires the metric difference $g - w_0^\sharp e$ to satisfy a certain nontrivial property: an important ingredient is
the following proposition, whose proof is postponed to the end of the section.

\begin{proposition}\label{p:primitive_scelte}
There are $N_0:= \frac{n (n+3)}{2}$ smooth functions $\psi^r$ on $\Sigma$ such that, for each $p$ in $\Sigma$, $\{d\psi^r(p) \otimes d\psi^r (p) : r\in \{1, \ldots N_0\}\}$ spans
the space $S_p = {\rm Sym}\, (T^*_p \Sigma \otimes T^*_p \Sigma)$. 
\end{proposition}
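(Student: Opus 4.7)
The plan is to produce the $\psi^r$'s as restrictions to $\Sigma$ of polynomial functions of a Whitney embedding, and then to use a dimension count to find a global choice that achieves the spanning property everywhere.

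First I would invoke Whitney's embedding theorem to fix a smooth embedding $F=(F^1,\dots,F^m):\Sigma\to\mathbb R^m$ for some large $m$, and consider the finite--dimensional vector space $\mathcal V$ of restrictions to $\Sigma$ of polynomials of degree $\leq 2$ on $\mathbb R^m$. The first easy observation is that for each $p\in\Sigma$ the linear map $E_p:\mathcal V\to T^*_p\Sigma$, $\psi\mapsto d\psi(p)$, is surjective: indeed, since $F$ is an immersion, $\{dF^1(p),\ldots,dF^m(p)\}$ already spans $T^*_p\Sigma$, and these are attained by the linear polynomials $v\cdot F$. Therefore the induced linear map $E_p^{N_0}:\mathcal V^{N_0}\to (T^*_p\Sigma)^{N_0}$ is surjective as well.

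Define the bad set at $p$,
\[
B_p:=\{(\psi^1,\ldots,\psi^{N_0})\in\mathcal V^{N_0}:\{d\psi^r(p)\otimes d\psi^r(p)\}_{r=1}^{N_0}\text{ does not span }S_p\}.
\]
Because $E_p^{N_0}$ is a linear surjection, $B_p$ has the same codimension in $\mathcal V^{N_0}$ as its image $\tilde B_p\subset (T^*_p\Sigma)^{N_0}\cong\mathbb R^{nN_0}$. Now the key codimension estimate: a tuple $(\omega_1,\ldots,\omega_{N_0})$ belongs to $\tilde B_p$ exactly when there exists a nonzero $T\in S_p$ with $\omega_r^{\!\top}T\omega_r=0$ for every $r$. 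For each fixed nonzero $T$ the locus $\{\omega:\omega^{\!\top}T\omega=0\}$ is a nontrivial real quadric in $T^*_p\Sigma$, hence has dimension at most $n-1$. Thus
\[
\dim\tilde B_p\;\le\;\dim\mathbb P(S_p)+(n-1)N_0\;=\;\tfrac{n(n+1)}{2}-1+(n-1)N_0\;=\;nN_0-(n+1),
\]
since $N_0=\frac{n(n+3)}{2}$. Consequently $\mathrm{codim}_{\mathcal V^{N_0}} B_p\ge n+1$.

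Finally consider the incidence set $I:=\{(p,(\psi^r))\in\Sigma\times\mathcal V^{N_0}:(\psi^r)\in B_p\}$, which is a closed (semi-algebraic) subset fibered over $\Sigma$ with fibers of codimension at least $n+1$. Its dimension is therefore at most $n+\dim\mathcal V^{N_0}-(n+1)=\dim\mathcal V^{N_0}-1$, so its projection onto $\mathcal V^{N_0}$ is a proper subset. Any $(\psi^1,\ldots,\psi^{N_0})\in\mathcal V^{N_0}$ outside this projection is a desired tuple. The step I expect to be the main obstacle is the codimension estimate for $\tilde B_p$: one must be careful that the supremum of the dimension of the quadric $\{\omega:\omega^{\!\top}T\omega=0\}$ over nonzero $T$ is $n-1$ and not larger, which is the reason why the counting barely works, with $N_0-\dim S_p=n$ providing exactly the margin needed to beat $\dim\Sigma=n$.
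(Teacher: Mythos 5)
Your proof is correct and follows essentially the same approach as the paper: a dimension (or ``general position'') count on the bad set inside a finite-dimensional space of candidate functions, using a fiber-dimension bound (the paper states this explicitly as Lemma~\ref{l:dimension_bound}). The differences are cosmetic: the paper takes linear combinations $\psi^r = A^r_{ij}(v_i+v_j)$ of a real-analytic embedding $v:\Sigma\to\mathbb R^{2n}$ and parametrizes the constraint by codimension-one subspaces $L\subset S_p$, whereas you take restrictions of low-degree polynomials of a Whitney embedding and parametrize by the dual side, $[T]\in\mathbb P(S_p)$; the key codimension estimate — that for fixed $[T]$ the isotropic cone $\{\omega:\omega^{\!\top}T\omega=0\}$ has codimension at least one in $T_p^*\Sigma$, so each factor contributes one to the codimension — is precisely the paper's observation that for each $(p,L)$ at least one $df_{ij}\otimes df_{ij}$ escapes $L$. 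One small remark: degree-$\le 1$ polynomials already suffice since surjectivity of $E_p$ only uses that $F$ is an immersion, and you should invoke the real-analytic form of Whitney's theorem so that the incidence set $I$ is a countable union of analytic sets, which is what justifies the fiber-dimension bound you apply to it.
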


In fact, if we had the more modest goal of proving the above statement with a much larger $N_0$, we could use the same arguments of Proposition~\ref{p:decomp}. In the proof of Theorem~\ref{t:Ck_1} we still need two technical lemmas, whose proofs will also be postponed. The first one is a classical fact in linear algebra, which will be used also in the next sections.

\begin{lemma}\label{l:Gram}
Consider a $k\times \kappa$ matrix $A$ of maximal rank $k\leq \kappa$. For every vector $v\in \mathbb R^k$, the vector $\omega := A^T (A A^T)^{-1} v$ is a solution of 
the linear system $A \omega = v$. Indeed, $\omega$ gives the solution with smallest Euclidean norm.
\end{lemma}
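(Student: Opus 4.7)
The proof breaks naturally into three short steps: showing that $AA^T$ is invertible, verifying that $\omega$ solves the system, and establishing the minimality property.

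For the first step, I would observe that $AA^T$ is a symmetric $k\times k$ matrix. For any $u\in\mathbb R^k$, one has $u^T(AA^T)u = |A^T u|^2 \geq 0$, with equality if and only if $A^T u = 0$. Since $A$ has rank $k$, the transpose $A^T: \mathbb R^k \to \mathbb R^\kappa$ has trivial kernel, so $A^T u = 0$ forces $u=0$. Hence $AA^T$ is positive definite and in particular invertible, so the expression defining $\omega$ makes sense.

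The second step is a one-line computation: $A\omega = A A^T (AA^T)^{-1} v = v$, so $\omega$ is indeed a solution.

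For the third step (the minimality), the key idea is that $\omega$ lies in the range of $A^T$, which is the orthogonal complement of $\ker A$. Let $\omega'\in\mathbb R^\kappa$ be any other solution, so that $A(\omega'-\omega) = v-v = 0$, i.e., $\omega'-\omega \in \ker A$. Writing $\omega = A^T \xi$ with $\xi := (AA^T)^{-1} v$, the Euclidean inner product gives
\[
\omega \cdot (\omega'-\omega) = (A^T \xi)\cdot(\omega'-\omega) = \xi \cdot A(\omega'-\omega) = 0.
\]
Consequently, by the Pythagorean identity,
\[
|\omega'|^2 = |\omega + (\omega'-\omega)|^2 = |\omega|^2 + |\omega'-\omega|^2 \geq |\omega|^2,
\]
with equality only if $\omega'=\omega$. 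This yields the asserted minimal-norm property. There is no real obstacle here; the only point meriting care is invoking the decomposition $\mathbb R^\kappa = \operatorname{range}(A^T) \oplus \ker A$, which is precisely what makes the candidate $A^T(AA^T)^{-1}v$ land in the orthogonal complement of the affine freedom among solutions.
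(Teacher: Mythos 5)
Your proof is correct and takes essentially the same approach as the paper: both hinge on the observation that $\omega$ lies in the range of $A^T$, which is the orthogonal complement of $\ker A$, and then deduce minimality. Your write-up is somewhat more self-contained than the paper's (you verify that $AA^T$ is invertible and spell out the Pythagorean step, both of which the paper leaves implicit), but the underlying idea is identical.
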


\begin{remark}\label{r:smooth_dependence}
Note two big advantages of the solution $\omega$ determined through the formula $\omega = A^T (A \cdot A^T)^{-1}v$: 
\begin{itemize}
\item[(a)] $\omega$ depends smoothly upon $A$;
\item[(b)] $\omega$ goes to $0$ when $A$ is fixed and $v$ goes to $0$; indeed this statement remains true even if, while $v$ goes to $0$, the matrix $A$ varies in a compact set over which $A\cdot A^T$ is invertible.
\end{itemize}
\end{remark}

The second is a more sophisticated tool which is used indeed twice in this section.\footnote{It must be observed that Nash employs this fact without explicitly stating it and he does not prove it neither he gives a reference. He uses it twice, once in the proof of Theorem~\ref{t:Ck_1} and once in the proof of Proposition~\ref{p:primitive_scelte}, and although in the first case one could appeal to a more elementary argument, I could not see an easier way in the second.}

\begin{lemma}\label{l:dimension_bound}
Consider a real analytic manifold $\mathcal{M}$ of dimension $r$ and a real analytic map $F: \mathcal{M} \times \mathbb R^\kappa \to \mathbb R^k$.  If, for each $q\in \mathcal{M}$, the set $\mathcal{Z} (q) := \{v: F (q,v)=0\}$ has Hausdorff dimension at most $d$, then the 
set $\mathcal{Z}:= \{v: \exists q\in \mathcal{M} \mbox{ with } F (q,v) =0\}$ has dimension at most $r+d$.
\end{lemma}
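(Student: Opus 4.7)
My plan is to exploit the real-analyticity of $F$ in order to replace $\mathcal{Z}$ with a real-analytic subset of $\mathcal{M}\times\mathbb R^\kappa$ whose fibre dimensions we can control, since (as is well known) the naive fibre-slicing inequality for Hausdorff dimension fails without some regularity on the set. First, cover $\mathcal{M}$ by a countable atlas of coordinate charts: since a countable union of sets of Hausdorff dimension at most $r+d$ still has Hausdorff dimension at most $r+d$, we may assume $\mathcal{M}=\Omega$ is an open subset of $\mathbb R^r$. Set
\[
W:=\{(q,v)\in\Omega\times\mathbb R^\kappa : F(q,v)=0\},
\]
which is a real-analytic subvariety of $\Omega\times\mathbb R^\kappa$. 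By the real-analytic analogue of Corollary~\ref{r:Haus=alg} (i.e. the local stratification of real-analytic sets), $W$ decomposes as a countable disjoint union $W=\bigsqcup_i W_i$ of real-analytic submanifolds $W_i\subset\Omega\times\mathbb R^\kappa$ of some dimensions~$d_i$.

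The heart of the argument is to estimate each $d_i$. Consider the restriction to $W_i$ of the projection $\pi_1:\Omega\times\mathbb R^\kappa\to\Omega$: it is real-analytic, so by the constant-rank theorem we may further subdivide $W_i$ (again countably) into pieces on which $d(\pi_1|_{W_i})$ has constant rank $\rho_i$. On each such piece, a local slice of the form $W_i\cap\pi_1^{-1}(q)$ is a real-analytic submanifold of dimension $d_i-\rho_i$ (around each of its points). But this slice sits inside $\{q\}\times\mathcal{Z}(q)$, which by assumption has Hausdorff dimension at most $d$. Hence $d_i-\rho_i\le d$, and since obviously $\rho_i\le r$, we obtain $d_i\le r+d$.

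Therefore $W$ is a countable union of $C^\infty$ submanifolds of dimension at most $r+d$, so $W$ has Hausdorff dimension at most $r+d$ (cf.~\cite[Sec.~3.3]{EG}). The set $\mathcal{Z}$ is the image of $W$ under the projection onto $\mathbb R^\kappa$, which is $1$-Lipschitz; Lipschitz maps do not increase Hausdorff dimension, hence $\mathcal{Z}$ has Hausdorff dimension at most $r+d$ as claimed.

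The only nontrivial input is the local decomposition of real-analytic sets into analytic submanifolds, which is the main obstacle of the proof but is a classical fact (Lojasiewicz's structure theorem). The impossibility of a purely measure-theoretic fibre-slicing argument — which would fail already for Borel sets in $\mathbb R^2$ with countable horizontal fibres — is precisely what forces us to use the real-analytic hypothesis on $F$ in this way.
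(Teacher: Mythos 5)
Your proof is correct in substance, but it takes a genuinely different route from the paper's. The paper reduces to a chart, passes to the regular part $Z^r$ of the analytic set $Z=\{F=0\}$ (taking a connected component of maximal dimension $s$), and then applies Sard's theorem \emph{iteratively} to the coordinate functions $x_1,\dots,x_r$ restricted to $Z^r$: at each step either the generic level set drops the dimension by exactly one, or (by connectedness) the coordinate is constant and the dimension does not drop at all. After $r$ steps one finds a single $q$ with $Z^r\cap(\{q\}\times\mathbb R^\kappa)$ a nonempty manifold of dimension $\geq s-r$, whence $s-r\leq d$. Your argument instead invokes the \L{}ojasiewicz stratification to decompose $Z$ globally into countably many analytic submanifolds $W_i$, and then uses the rank theorem for $\pi_1\vert_{W_i}$ to control \emph{every} fibre at once, getting $d_i-\rho_i\leq d$ and $\rho_i\leq r$. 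Both approaches lean on real-analytic structure theory (the paper needs that $\dim Z=\dim Z^{\mathrm{reg}}$, you need full stratification), but yours is more uniform: you bound all strata, whereas the paper only needs to exhibit one good fibre.

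One step of yours is stated a bit loosely: the constant-rank theorem does not "subdivide" $W_i$ into constant-rank pieces, since the locus $\{ \mathrm{rank}\, d(\pi_1\vert_{W_i})=j\}$ need not be a submanifold. What is true is that on each connected component the maximal rank $\rho$ is attained on an open dense set $U$; $W_i\setminus U$ is a proper real-analytic subset of strictly lower dimension, and one re-stratifies it, inducting on dimension. With that refinement your decomposition into pieces of constant rank is legitimate, and the rest of the argument (fibre of dimension $d_i-\rho_i$ sitting inside $\mathcal Z(q)$, Lipschitz projection onto $\mathcal Z$) goes through exactly as you wrote.
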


\begin{proof}[Proof of Theorem~\ref{t:Ck_1}]
Let $\psi^r$ be the functions of Proposition~\ref{p:primitive_scelte} and set $\gamma := \sum_r d\psi^r \otimes d\psi^r$. After multiplying all the functions by a small factor, we can
assume that $\gamma < g$. Using Theorem~\ref{t:main_C1_1}, we then find a $C^1$ embedding $w: \Sigma \to \mathbb R^{2n}$ such that $w^\sharp e = g-\gamma$. By density of $C^\infty$ functions in $C^1$, we then get a smooth embedding $v$ such that $\|v^\sharp e - (g-\gamma)\|_0< \delta$, where $\delta>0$ is a parameter which will be chosen later. Indeed, by the Whitney's theorem we can assume that $v (\Sigma)$ is a real analytic subvariety, which will play an important role towards the end of the proof. Consider $v$ as an embedding in the larger space $\mathbb R^{\bar N}$ with $\bar N = \frac{n(n+5)}{2}$. We will perturb $v$ to a smooth free embedding $w_0: \Sigma \to \mathbb R^{\bar N}$ with the property that
$\|w_0^\sharp e - (g- \gamma)\|_0 < 2\delta$. Before coming to the proof of the existence of $w_0$, let us first see how we complete the argument. 

First observe that the $(0,2)$ tensor $w_0^\sharp e - (g-\gamma)$ can be written as 
\[
w_0^\sharp e - (g-\gamma) = \sum_r b_r d\psi^r \otimes d\psi^r\, ,
\]
where, thanks to Lemma~\ref{l:Gram}, the coefficients $b_r$ can be chosen smooth. In fact, notice that the coefficients become arbitrarily small as we decrease $\delta$: for a suitable choice of $\delta$ we can thus assume $\|b_r\|_0 \leq \frac{1}{2}$. This is the only requirement on $\delta$: from now on we can consider that the smooth free embedding $w_0$ has been fixed, which in turn gives a positive threshold $\varepsilon_0$ for the applicability of Theorem~\ref{t:perturbation}. Next write
\[
g - w_0^\sharp e = \gamma - \sum_r b_r d\psi^r \otimes d\psi^r = \sum_r (1-b_r) d\psi^r\otimes d\psi^r = \sum_r a_r^2 d\psi^r \otimes d\psi^r\, ,
\]
for the smooth functions $a_r := \sqrt{1-b_r}$. Define $\bar w : \Sigma \to \mathbb R^{2N_0}$ setting
\[
\bar w_{2(i-1) +1} (p) := \frac{a_r (p)}{\lambda} \sin  \lambda \psi^r (p)\, , \qquad \bar w_{2i} (p) := \frac{a_r (p)}{\lambda} \cos \lambda\psi^r (p)\, .
\]
A straightforward computation yields 
\[
\bar w^\sharp e = \sum_r a_r^2 d\psi^r\otimes d\psi^r + \frac{1}{\lambda^2} \sum_r da_r \otimes da_r\, .
\]
In particular,
\[
h := g - (w_0\times \bar w)^\sharp e  = - \frac{1}{\lambda^2} \sum_r da_r \otimes da_r\, .
\]
For $\lambda$ sufficiently large we certainly have $\|h\|_3 \leq \varepsilon_0$ and from Theorem~\ref{t:perturbation} we achieve a $C^k$ embedding $\bar u: \Sigma \to \mathbb R^{\bar N}$ such that $\bar u^\sharp e = w_0^\sharp e +h$. It turns out that $u := \bar u \times \bar w$ is a $C^k$ embedding of $\Sigma$ into $\mathbb R^N = \mathbb R^{\bar N} \times \mathbb R^{2N_0}$ and that $u^\sharp e = g$.

In order to complete the proof, we still need to perturb $v$ to a free $w_0$. 
For any $\eta>0$ we want to construct a free map $w_0: \Sigma \to \mathbb R^{\bar N}$ such that $\|w_0-v\|_1 \leq \eta$. Clearly, for $\eta$ sufficiently small $w_0$ is an embedding. In order to produce $w_0$ we consider the $2n+ n(2n+1)$ functions given by
\[
v_i\, , v_i v_j\, ,  \qquad j\leq i \in \{1, \ldots , 2n\}\, , 
\]  
and those $C^2$ maps $w_0: \Sigma \to \mathbb R^{\bar N}$ given by the formula
\[
(w_0)_\alpha := \sum_i C_\alpha^i v_i + \sum_{j\leq i} D_\alpha^{ij} v_i v_j\, \, ,
\]
for constant coefficients $C_\alpha^i, D_\alpha^{ij}$. We claim that, for a generic choice of the constants $C_\alpha^i$ and $D_\alpha^{ij}$, the map $w_0$ is free. Indeed, consider the set $\mathcal{G}$ of subspaces $L$ of
$\mathbb R^{n + \frac{n(n+1)}{2}}$ with dimension $n-1 + \frac{n(n+1)}{2}$. For each $(p, L)\in \Sigma \times \mathcal{G}$, consider the set $\mathcal{C} (p, L)$ of coefficients $C_\alpha^i, D^\alpha_{ij}$ for which, in a local system of coordinates, 
\begin{equation}\label{e:Valpha}
V_\alpha (p) := \left(\frac{\partial w_\alpha}{\partial x_1} (p), \ldots , \frac{\partial w_\alpha}{\partial x_n} (p),
\frac{\partial^2 w_\alpha}{\partial x_1^2}  (p), \frac{\partial^2 w_\alpha}{\partial x_1 \partial x_2} (p) \ldots , 
\frac{\partial^2 w_\alpha}{\partial x_n^2} (p)\right)\in L 
\end{equation}
for all $\alpha \in \{1, \ldots , \bar N\}$.
This is a set of (linear) conditions which varies analytically as $(p, L)$ varies in the $(2n-1 + \frac{n (n+1)}{2}) = (\bar N -1)$-dimensional manifold
$\Sigma \times \mathcal{G}$. We next show that, if $\bar d$ is the dimension of the linear space of possible coefficients $C_\alpha^i, D_\alpha^{ij}$, then the dimension of each $\mathcal{C} (p, L) $ is at most $d= \bar d - \bar N$. In view of Lemma~\ref{l:dimension_bound} this implies that the union of all $\mathcal{C} (p, L)$ has dimension at most $\bar d -1$. Since the latter is indeed the closed set
$\mathcal{B}$ of ``bad coefficients'' for which $w$ is not free, we have conclude that $\mathcal{B}$ must have empty interior.
 
To complete the proof\footnote{Indeed Nash does not give any argument and just refers to a similar reasoning that he uses in Proposition~\ref{p:primitive_scelte} below.} it remains to bound the dimension of $\mathcal{C} (p, L)$.
Hence fix $p$ and, without loss of generality, assume that $(x_1, \ldots, x_n) = (v_1, \ldots , v_n)$ is a system of coordinates around $p$.
Consider the $M = n + \frac{n (n+1)}{2}$ functions $f_1 = v_1, \ldots , f_n = v_n, f_{n+1} = v_1^2, f_{n+2} = v_1 v_2 , \ldots , v_M = v_n^2$ and
the corresponding vector valued map $f$. It is easy to check that the vectors $\frac{\partial f}{\partial x_1} (p), \ldots, \frac{\partial f}{\partial x_n} (p), \frac{\partial^2 f}{\partial x_1^2} (p) , \frac{\partial^2 f}{\partial x_1 \partial x_2} (p), \ldots , \frac{\partial^2 f}{\partial x_n^2} (p)$ are linearly independent. But then it follows that the vectors 
\[
\bar V_j (p) := \left(\frac{\partial f_j}{\partial x_1} (p), \ldots , \frac{\partial f_j}{\partial x_n} (p),
\frac{\partial^2 f_j}{\partial x_1^2} (p) , \frac{\partial^2 f_j}{\partial x_1 \partial x_2} (p) \ldots , 
\frac{\partial^2 f_j}{\partial x_n^2} (p)\right)\\
\]
are also linearly independent. Hence there is one of them which does not belong to $L$. For each $\alpha\in \{1, \ldots\, \bar N\}$
there is therefore at least one choice of the coefficients $C_\alpha^i, D_\alpha^{ij}$ for which the corresponding vector $V_\alpha (p)$ in \eqref{e:Valpha} does
not belong to $L$. Since $\alpha$ can be chosen in $\bar N$ different ways, the dimension of $\mathcal{C} (p, L)$ is at most $d = \bar d- \bar N$, which completes the proof.
\end{proof}

\begin{proof}[Proof of Proposition~\ref{p:primitive_scelte}]
The argument is very similar to the last part of the proof of Theorem~\ref{t:Ck_1} above. Consider again an embedding $v: \Sigma \to \mathbb R^{2n}$ which makes $v (\Sigma)$ a real analytic submanifold. Let then $f_{ij}$ be the $n (2n+1)$ functions $v_i + v_j$ such that $i\leq j$ and consider
\[
\psi^r := A^r_{ij} f_{ij}\, , \qquad \mbox{for $r\in \{1, \ldots , N_0\}$},
\]
where the space of all possible constant coefficients $A^r_{ij}$ has dimension $\bar d$. Our aim is to show that a generic choice of the coefficients give a system of functions $\psi^r$ which satisfy the conclusions of the proposition.

Let therefore $\mathcal{B}$ be the closed subset of coefficients for which the conclusion fails, namely for each element in $\mathcal{B}$
there exists a point $p$ at which the tensors $d\psi^r (p)\otimes d\psi^r (p)$ do not span the whole space $S_p := {\rm Sym}\, (T^*_p \Sigma\otimes T^*_p \Sigma)$. If we consider the set $\mathcal{G}_p$ of linear subspaces  of $S_p$ of codimension $1$, the real analytic manifold $\mathcal{M} := \{(p, L): L \in \mathcal{G}_p\}$ has dimension $n -1 + \frac{n(n+1)}{2} = N_0 -1$. For each $(p, L)$ we let $\mathcal{C} (p, L)$ be the set of coefficients for which $d\psi^r (p)\otimes d\psi^r (p)$ belongs to $L$ for every $r=1, \ldots , N_0$: this is the zero set of a system of homogeneous quadratic polynomials in the coefficients $A^r_{ij}$. Moreover, in a real analytic atlas for $\mathcal{M}$ these quadratic polynomials depend analytically upon $(p,L)\in \mathcal{M}$. Set $\mathcal{B} = \cup_{(p, L)\in \mathcal{M}} \mathcal{C} (p, L)$. As above we can invoke Lemma~\ref{l:dimension_bound}: if we can bound the dimension of the each $\mathcal{C} (p, L)$ with $\bar d - N_0$, then the dimension of $\mathcal{B}$ is at most $\bar d -1$.

Fix therefore $(p, L)$ and for each $r$ consider the linear space $\pi_r$ of indices $A_{ij}^r$. Without loss of generality we can assume
that $(v_1, \ldots, v_n) = (x_1, \ldots, x_n)$ is a system of coordinates around $p$. Therefore the set $\{d f_{ij}\otimes df_{ij}$ with $i\leq j\leq n\}$ spans the whole space $S_p$ and there is at least one element among them which does not belong to $L$. In turn this means that the subset $\mathcal{C}^r (p, L)\subset \pi_r$ of coefficients $A^r_{ij}$ such that $d\psi^r\otimes d\psi^r$ belongs to $L$ has codimension at least $1$ in $\pi_r$. Therefore the dimension of $\mathcal{C} (p, L) = \mathcal{C}^1 (p, L)\times \mathcal{C}^2 (p, L) \times \ldots \times \mathcal{C}^{N_0} (p, L)$ is at most $d=\bar d -N_0$. This shows $d + N_0 -1 < \bar d$ and completes the proof.\footnote{Nash suggests an alternative argument which avoids the discussion of the dimensions of $\mathcal{C} (p, L)$ and 
$\mathcal{B}$. One can apply his result on real algebraic varieties to find an embedding $v$ which realizes $v (\Sigma)$ as a real algebraic submanifold, cf.~Theorem~\ref{t:alg_main}. Then any set of coefficients $A^r_{ij}$ which is algebraically independent over the minimal field $\mathbb F$ of definition of $v (\Sigma)$ (see Proposition~\ref{p:F1}) belongs to the complement of 
$\mathcal{B}$. Since $\mathbb F$ is finitely generated over the rationals (see Proposition~\ref{p:F1}), it has countable cardinality and the conclusion follows easily.}
\end{proof}

\begin{proof}[Proof of Lemma~\ref{l:Gram}]
It is obvious that $\omega$ solves the desired linear system. Let now $w$ be any solution of minimal Euclidean norm: $w$ is uniquely determined by the property of being orthogonal to the kernel of $A$. However, the kernel of $A$ consists of those vectors which are orthogonal to the image of $A^T$: since the $\omega$ of the lemma belongs to the image of $A^T$, this completes the proof.
\end{proof}

\begin{proof}[Proof of Lemma~\ref{l:dimension_bound}]
Covering $\mathcal{M}$ with a real analytic atlas consisting of countably many charts, we can assume, without loss of generality, that $\mathcal{M}$ is the Euclidean $r$-dimensional ball $B$. Consider next
$Z := \{(q,v): F (q,v) =0\} \subset B \times \mathbb R^\kappa \subset \mathbb R^r\times \mathbb R^\kappa$.
If $\pi: \mathbb R^{r+\kappa}\to \mathbb R^\kappa$ is the projection on the second factor, then $\mathcal{Z} = \pi (Z)$ has at most the dimension of $Z$: it suffices therefore to show that ${\rm dim}\, (Z) \leq r+d$.

Now, $Z$ is a real analytic subvariety in $\mathbb R^{r+\kappa}$ with the property that its slices
$\{q\} \times \mathcal{Z} (q) := Z \cap (\{q\}\times \mathbb R^\kappa)$ all have dimension at most $d$. The dimension $s$ of $Z$ equals the dimension of its regular part $Z^r$ and without loss of generality we can assume that $Z^r$ is connected. Consider now standard coordinates $(x_1, \ldots , x_r)$ on $\mathbb R^r\times \{0\}\subset \mathbb R^{r+\kappa}$ and regard $x_1$ as a function over $Z^r$. By Sard's theorem almost every $\alpha$ is a regular value for $x_1$ on $Z^r$. If one such value $\alpha$ has nonempty preimage, then $Z^r \cap \{x_1 = \alpha\}$ is a submanifold of dimension $s-1$. Otherwise it means that $x_1 (Z^r)$ has measure $0$: since however $x_1 (Z)$ is connected, we must have
$x_1 (Z) = \{ \alpha_0\}$ for some value $\alpha_0$, that is,
$Z^r \cap \{x_1=\alpha_0\} = Z^r$. In both cases we have conclude that there is at least one value $\alpha_0$ such that $Z^r \cap \{x_1 = \alpha_0\}$ is a smooth submanifold of dimension no smaller than $s-1$. Inductively repeating this argument, we conclude that
there is a $q$ such that $Z^r \cap (\{q\}\times \mathbb R^\kappa)$ is a regular submanifold of dimension at least $s-r$.
Since $Z^r \cap (\{q\}\times \mathbb R^\kappa)\subset \{q\}\times \mathcal{Z} (q)$, we infer $s-r\leq d$, which concludes the proof of our claim. 
\end{proof}

\section{Smoothing operator}\label{s:smoothing}

In order to show Theorem~\ref{t:perturbation} we will need to smooth tensors efficiently and get sharp estimates on the $\|\cdot\|_k$ norms of the smoothing. This will be achieved, essentially, by convolution but, since we will need rather refined estimates, the convolution kernel must be chosen carefully. In the remaining sections the specific form of the regularizing operator will play no role:
the only important ingredients are summarized in the following proposition.

\begin{proposition}[Smoothing operator]\label{p:convolution}
There is a family of smoothing operators $\mathcal{S}_\varepsilon$ with $\varepsilon \in ]0, 1[$ such that\footnote{In Nash's paper the operator is called $S_\theta$, where $\theta$ corresponds to $\varepsilon^{-1}$. Since it is nowadays rather unusual to parametrize a family of convolutions as Nash does, I have switched to a more modern convention.}
\begin{itemize}
\item[(a)] $T \mapsto \mathcal{S}_\varepsilon T$ is a linear map on the space of continuous $(i,j)$ tensors; for each such $T$ $\mathcal{S}_\varepsilon T$ is smooth and depends smoothly upon $\varepsilon$.
\item[(b)] For any integers $r\geq s$ and $i,j$, there is a constant $C = C(r,s,i,j)$ such that
\begin{equation}\label{e:conv_der}
\|D^r (\mathcal{S}_\varepsilon T)\|_0 \leq C \varepsilon^{s-r} \|T\|_s \qquad \mbox{for every $C^s$ $(i,j)$ tensor $T$ and $\varepsilon \leq 1$;}  
\end{equation}
\item[(c)] If we denote by $\mathcal{S}'_\varepsilon$ the linear operator $T \mapsto \frac{\partial}{\partial \varepsilon} \mathcal{S}_\varepsilon T$, then for any integers $r,s,i,j$, there is a constant $C=C(r,s,i,j)$ such that
\begin{equation}\label{e:conv_der_2}
\|D^r (\mathcal{S}'_\varepsilon T)\|_0 \leq C \varepsilon^{s-r-1} \|T\|_s \qquad \mbox{for every $C^s$ $(i,j)$ tensor $T$ and $\varepsilon \leq 1$;}  
\end{equation}
\item[(d)] For any integers $s\geq r$ and $i,j$ 
there is a constant $C = C(r,s,i,j)$ such that
\begin{equation}\label{e:conv_der_3}
\|D^r (T-\mathcal{S}_\varepsilon T)\|_0 \leq C \varepsilon^{s-r} \|T\|_s \qquad \mbox{for every $C^s$ $(i,j)$ tensor $T$ and $\varepsilon\leq 1$.}  
\end{equation}
\end{itemize}
\end{proposition}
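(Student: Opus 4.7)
The plan is to build $\mathcal{S}_\varepsilon$ by ordinary convolution in local charts, glued together via a partition of unity. The only nontrivial design choice is the regularising kernel: I take a Schwartz function $\phi$ on $\mathbb{R}^n$ whose Fourier transform $\widehat{\phi}$ is a fixed $C^\infty_c$ cutoff identically equal to $1$ in a neighbourhood of the origin. Equivalently, $\int\phi = 1$ and $\int y^\alpha\phi(y)\,dy = 0$ for every multi-index $\alpha$ with $|\alpha|\ge 1$. This ``infinitely many vanishing moments'' property of $\phi$ is precisely what will allow (b)--(d) to hold for \emph{every} integer $s$, not merely up to a finite order dictated by a compactly supported mollifier.

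Fix the finite partition of unity $\{\chi_\ell\}$ subordinate to the atlas $\{U_\ell\}$ from Lemma~\ref{l:covering}, and for each $\ell$ choose open sets $V_\ell\Subset W_\ell\Subset U_\ell$ with $\mathrm{supp}\,\chi_\ell\subset V_\ell$, together with a smooth cutoff $\rho_\ell$ supported in $U_\ell$ and identically $1$ on $\overline{W_\ell}$. Set $\phi_\varepsilon(x):=\varepsilon^{-n}\phi(x/\varepsilon)$. In the coordinates of the chart $U_\ell$ I identify a tensor with the collection of its component functions, extend $\chi_\ell T$ by $0$ off $U_\ell$ to all of $\mathbb{R}^n$, and define
\[
  \mathcal{S}_\varepsilon T := \sum_\ell \rho_\ell\cdot\bigl((\chi_\ell T)*\phi_\varepsilon\bigr),
\]
the convolution acting componentwise. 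The product with $\rho_\ell$ relocalises each term inside $U_\ell$, so the sum is a globally well-defined smooth tensor on $\Sigma$; the smooth dependence on $\varepsilon$ required in (a) is immediate from differentiating under the integral sign.

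For (b), the Leibniz rule over the finitely many charts reduces the bound to the elementary inequality $\|D^r(f*\phi_\varepsilon)\|_0\le C\varepsilon^{s-r}\|D^sf\|_0$ for $f=\chi_\ell T$. If $r\le s$ one places all $r$ derivatives on $f$ and uses $\|\phi_\varepsilon\|_{L^1}=\|\phi\|_{L^1}$; if $r>s$ one places $s$ derivatives on $f$ and the remaining $r-s$ on $\phi_\varepsilon$, each such derivative costing an $\varepsilon^{-1}$ from the scaling, for a total factor $\varepsilon^{s-r}$, while $\|D^{r-s}\phi_\varepsilon\|_{L^1}=\varepsilon^{-(r-s)}\|D^{r-s}\phi\|_{L^1}$ is harmless since $\phi$ is Schwartz. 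Estimate (c) is the same argument applied to $\partial_\varepsilon\phi_\varepsilon$ in place of $\phi_\varepsilon$: the additional $\varepsilon$-derivative produces exactly the extra factor $\varepsilon^{-1}$ appearing in \eqref{e:conv_der_2}.

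The genuine obstacle is (d), and this is where the moment property of $\phi$ is essential. Since $\rho_\ell\equiv 1$ on $\mathrm{supp}\,\chi_\ell$ and $\sum_\ell \chi_\ell T = T$, one rewrites
\[
  T-\mathcal{S}_\varepsilon T \;=\; \sum_\ell \rho_\ell\bigl(\chi_\ell T - (\chi_\ell T)*\phi_\varepsilon\bigr),
\]
so by Leibniz in $\rho_\ell$ and the fact that differentiation commutes with convolution it suffices to bound $\|f - f*\phi_\varepsilon\|_0$ and, more generally, $\|D^r f - (D^r f)*\phi_\varepsilon\|_0$ where $f$ is a component of $\chi_\ell T$. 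Taylor's formula with integral remainder, combined with the vanishing moments $\int y^\alpha\phi(y)\,dy=0$ for $1\le|\alpha|\le s-1$, gives
\[
  f(x) - (f*\phi_\varepsilon)(x)
  \;=\; \int\bigl[f(x)-f(x-\varepsilon y)\bigr]\phi(y)\,dy
  \;=\; -\int R_s(x,\varepsilon y)\,\phi(y)\,dy,
\]
where $|R_s(x,\varepsilon y)|\le C\varepsilon^s|y|^s\|D^sf\|_0$. Because $\phi$ is Schwartz, $\int|y|^s|\phi(y)|\,dy$ is finite for every $s$, and $\|D^s(\chi_\ell T)\|_0\le C\|T\|_s$ by Leibniz; summing over $\ell$ yields (d). In short, once the operator is set up, the only subtle point is to make sure the chosen $\phi$ has moments vanishing to all orders simultaneously; everything else is bookkeeping over finitely many charts.
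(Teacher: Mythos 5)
Your construction, which works chart by chart and glues with a partition of unity, is valid but takes a genuinely different route from the paper's. De Lellis (following Nash) first embeds $\Sigma$ into $\mathbb{R}^{2n}$ via Whitney's theorem, converts tensors into collections of ambient Euclidean functions through the transformation formulas \eqref{e:S->E}--\eqref{e:E->S}, extends them across a tubular neighbourhood to $\mathbb{R}^{2n}$, and then convolves \emph{once} globally in the Euclidean space; you instead convolve locally in each of the finitely many coordinate patches and relocalize with $\rho_\ell$. Both schemes produce a legitimate smoothing operator on a compact $\Sigma$, and the mollifier is effectively the same in both (a Schwartz $\phi$ with $\widehat\phi\in C^\infty_c$ constant near $0$, so $\int\phi=1$ and all higher moments vanish). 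Your proof of (d) via Taylor's formula with vanishing moments is also a legitimate alternative to the paper's, which obtains (d) by integrating the $\mathcal{S}'_\varepsilon$-estimate (c) in $\varepsilon$.

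There is, however, a genuine gap in your treatment of (c). You assert that (c) is ``the same argument applied to $\partial_\varepsilon\phi_\varepsilon$ in place of $\phi_\varepsilon$.'' That argument---placing $s$ derivatives on the function, $r-s$ on the kernel, and tallying $L^1$-norms---only yields \eqref{e:conv_der_2} when $r\geq s$. But statement (c) is claimed for \emph{all} pairs of integers $r,s$, and the nontrivial case, which the rest of the chapter uses essentially, is $r<s$: there $\varepsilon^{s-r-1}$ is a \emph{decaying} factor and the inequality encodes a real smoothing gain rather than a bookkeeping identity. The paper singles this point out explicitly and obtains it by observing that the kernel $\psi$ defined by $\partial_\varepsilon\phi_\varepsilon=\varepsilon^{-1}\psi_\varepsilon$ can be written as $\psi=\partial^{s-r}_{x_1}\vartheta^{(s-r)}$ for some Schwartz $\vartheta^{(s-r)}$ (because $\widehat\psi$ vanishes near the origin), after which one integrates by parts $s-r$ times inside the convolution. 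In your framework the missing piece is in fact already at hand: $\psi(z)=-n\phi(z)-z\cdot\nabla\phi(z)$ has $\widehat\psi(\xi)=\xi\cdot\nabla\widehat\phi(\xi)$, which vanishes near the origin, so \emph{all} moments of $\psi$ vanish, including the zeroth. Applying the very Taylor argument you use for (d), but with $\psi$ in place of $\phi$ and $D^r f$ in place of $f$, gives $\|D^r f*\psi_\varepsilon\|_0\leq C\varepsilon^{s-r}\|D^s f\|_0$ and hence $\|D^r(\mathcal{S}'_\varepsilon T)\|_0\leq C\varepsilon^{s-r-1}\|T\|_s$ for $r<s$. As written, though, your proof of (c) is incomplete precisely where the statement has content.
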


\begin{proof} As a first step we reduce the problem of smoothing tensors to that of smoothing functions. To achieve this, we fix a smooth embedding of $\Sigma$ into $\mathbb R^{2n}$ (whose existence is guaranteed by the Whitney's embedding theorem), and we therefore regard $\Sigma$ as a submanifold of $\mathbb R^{2n}$. 
We fix moreover a tubular neighborhood $V_{3\eta}$ of $\Sigma$ and assume that the size $3\eta$ is sufficiently small so that the nearest point projection $\pi: V_{3\eta} \to \Sigma$ is well defined and $C^\infty$. Consider now a coordinate patch $U$ on $\Sigma$ and a corresponding system of local coordinates $(u_1, \ldots , u_n)$. We then define the map $x: U \to \mathbb R^{2n}$
where $(x_1 (u), \ldots , x_{2n} (u))$ gives the standard coordinates in $\mathbb R^{2n}$ of the point with coordinates $u$ in $U$. If $\mathcal{N} (U) := \pi^{-1} (U)$, we then define $u: \mathcal{N} (U) \to U$ by letting $u(x)$ be the coordinates, in $U$, of $\pi (x)$. Clearly $u\circ x$ is the identity and $x\circ u$ becomes the identity when restricted on $U\subset \Sigma$. Then, given an $(i,j)$ tensor $T$, which in the local coordinates on $U$ can be expressed as
\[
\sum_{\alpha_1, \ldots, \alpha_i, a_1, \ldots , a_j} T^{\alpha_1\ldots \alpha_i}_{a_1\ldots a_j} (u) \frac{\partial}{\partial u_{\alpha_1}} \cdots \frac{\partial}{\partial u_{\alpha_i}} du_{a_1} \cdots du_{a_j}\, ,
\]
we 
define the functions
\begin{equation}\label{e:S->E}
\mathscr{T}^{\beta_1\ldots \beta_i}_{b_1\ldots b_j} (x) = T^{\alpha_1\ldots \alpha_i}_{a_1\ldots a_j} (u(x)) \frac{\partial x_{\beta_1}}{\partial u_{\alpha_1}} \cdots \frac{\partial x_{\beta_i}}{\partial u_{\alpha_i}} \frac{\partial u_{a_1}}{\partial x_{b_1}} \cdots \frac{\partial u_{a_j}}{\partial x_{b_j}}\, .
\end{equation}
It is easy to check that the functions above do not depend on the chosen coordinates and thus can be defined globally on $\Sigma$. Conversely, if we have global functions $\mathscr{T}$ as above on $\Sigma$, we can ``reconstruct a tensor'' using, in local coordinates, the reverse formulae
\begin{equation}\label{e:E->S}
T^{\alpha_1\ldots \alpha_i}_{a_1\ldots a_j} (u) = \mathscr{T}^{\beta_1\ldots \beta_i}_{b_1\ldots b_j} (x (u)) \frac{\partial u_{\alpha_1}}{\partial x_{\beta_1}} \cdots \frac{\partial u_{\alpha_i}}{\partial x_{\beta_i}} \frac{\partial x_{b_1}}{\partial u_{a_1}} \cdots \frac{\partial x_{b_j}}{\partial u_{a_j}}\, .
\end{equation}
Given these transformation rules and the smoothness of the maps $x\mapsto u (x)$ and $u \mapsto x(u)$, we easily conclude the estimates
\begin{align}
\|D^k T\|_0 &\leq C \sum_{b_1, \ldots , b_j, \beta_1, \ldots , \beta_i} \|\mathscr T^{\beta_1\ldots \beta_i}_{b_1\ldots b_j}\|_k\, ,\\
\|D^k \mathscr T^{\beta_1\ldots \beta_i}_{b_1\ldots b_j}\|_0 &\leq C \|T\|_k\, ,
\end{align}
for a constant $C = C (n,i,j,k)$ which is independent of the tensor $T$. 

Thus, if we have defined a suitable family of smoothing operators $\mathcal{S}_\varepsilon$ on functions over $\Sigma$, we can extend them to tensors with the following algorithm: given a tensor $T$ we produce the functions $\mathscr{T}^{\beta_1\ldots \beta_i}_{b_1\ldots b_j}$ using formula \eqref{e:S->E}; we then apply the smoothing operator
to each function, getting the functions $\mathcal{S}_\varepsilon \mathscr{T}^{\beta_1\ldots \beta_i}_{b_1\ldots b_j}$; we finally use the latter to define $\mathcal{S}_\varepsilon T$ through formula \eqref{e:E->S}. Observe that each of these operations is linear in $T$. 

As a second step we reduce the problem of regularizing functions over $\Sigma$ to that of regularizing functions over $\mathbb R^{2n}$ by a simple extension argument. More precisely, consider a smooth cut-off function $\varphi: \mathbb R^+\to \mathbb R$, which is identically $1$ on $[0, \eta[$, strictly decreasing on $[\eta, 2\eta]$ and identically $0$ on $[2\eta, \infty[$. Given a function $f$ on $\Sigma$, we then extend it to a function $\tilde{f}$ on $V_{3\eta}$ setting $\tilde{f} (x) = \varphi (|x - \pi (x)|) f (\pi (x))$ and subsequently to $\mathbb R^{2n}$ by setting it identically $0$ outside $V_{2\eta}$. 
Again, by the smoothness of $\pi$, it is easy to check that we have the estimate
\[
\|D^k \tilde{f}\|_0 \leq C \|f\|_k\, 
\]
for some constant $C = C(k)$,
where this time $D^k \tilde{f}$ denotes the usual (Euclidean) $k$th derivative and $\|\cdot\|_0$ is the usual maximum norm of a continuous compactly supported function on $\mathbb R^{2n}$.
Conversely, if $\tilde{f}\in C^k_c (\mathbb R^{2n})$, we have 
\[
\|D^k (\tilde{f}|_\Sigma)\|_0 \leq C \|\tilde{f}\|_k = \sum_{i\leq k} \|D^i \tilde f\|_0\, .
\]
Thus, if we can find a suitable regularization operator $\mathcal{R}_\varepsilon$ on $C^k_c (\mathbb R^{2n})$ which satisfies the properties analogous to (a), (b), (c), and (d), we achieve the corresponding desired operator on $C^k(\Sigma)$ via the rule $\mathcal{S}_\varepsilon f = (\mathcal{R}_\varepsilon \tilde{f})|_\Sigma$ (notice again that two points are crucial: the linearity of the maps $f\mapsto \tilde{f}$ and $\tilde{f}\mapsto \tilde{f}_\Sigma$ and the relation $f = \tilde{f}|_\Sigma$). 

We now come to the operator $\mathcal{R}_\varepsilon$ regularizing functions on $\mathbb R^{2n}$, which is 
the convolution with a suitably chosen mollifier $\varphi$ in the Schwartz class $\mathscr{S}$. More precisely, assuming that $m=2n$ and that $\varphi \in \mathscr{S} (\mathbb R^m)$ has integral $1$, we define $\varphi_\varepsilon (x) = \varepsilon^{-m} \varphi(\frac{x}{\varepsilon})$ and set
\[
[\mathcal{R}_\varepsilon f] (x) = f * \varphi_\varepsilon (x) = \int f (x-y) \varphi_\varepsilon (y)\, dy = \frac{1}{\varepsilon^m} \int f (x-y) \varphi \left(\frac{y}{\varepsilon}\right)\, dy\, .
\]
The analog of property (a) is
\begin{equation}\label{e:(a')}
\mbox{$\mathcal{R}_\varepsilon$ maps $C_c (\mathbb R^m)$ into $\mathscr{S} (\mathbb R^m)$ and depends smoothly on $\varepsilon$.}
\end{equation}
The latter is, however, a very standard fact for convolutions. Estimate (b) is also a classical property.
Indeed, given a multiindex $I= (i_1, \ldots , i_m)\in \mathbb N^m$, let $|I|= i_1 + \cdots + i_m$ and
\[
\partial^I f = \frac{\partial^{|I|} f}{\partial x_1^{i_1} \partial x_2^{i_2}\cdots \partial x^{i_m}_m}\, .
\]
If we fix natural numbers $r\geq s$ and consider a multiindex $I$ with $|I| = r$, we can obviously write it as $I= I'+J$ where $|I'|= s$ and $|J| =r-s$. The usual properties of convolutions yield then the following estimate
\[
\| \partial^I (\mathcal{R}_\varepsilon f)\|_0 = \|(\partial^{I'} f)* (\partial^J \varphi_\varepsilon)\|_0 \leq\|\partial^{I'} f\|_0 \|\partial^J \varphi_\varepsilon\|_{L^1}
\leq \|D^s f\|_0 \varepsilon^{s-r} \|\partial^J \varphi\|_{L^1}\, .
\]
Thus, if we define $C := \min_{|J| = r-s} \|\partial^J \varphi\|_{L^1}$, we achieve 
\begin{equation}\label{e:(b')}
\|\partial^I (\mathcal{R}_\varepsilon f)\|_0 \leq C \varepsilon^{r-s} \|D^s f\|_0 \qquad \mbox{when $s\leq r$.}
\end{equation}
Coming to (c), we use elementary calculus to give a formula for $\mathcal{R}'_\varepsilon := \frac{\partial}{\partial \varepsilon} \mathcal{R}_\varepsilon$:
\begin{align*}
\mathcal{R}'_\varepsilon f (x)= \int f (x-y) \left[- \frac{m}{\varepsilon^{m+1}} \varphi \left(\frac{y}{\varepsilon}\right) - 
\frac{1}{\varepsilon^m} \nabla \varphi \left(\frac{y}{\varepsilon}\right)\cdot \frac{y}{\varepsilon^2}\right]\, dy\, .
\end{align*}
If we set $\psi (y) := - m\varphi (y) - \nabla \varphi (y) \cdot y$ and $\psi_\varepsilon (y) = \varepsilon^{-m} \psi (\frac{y}{\varepsilon})$, 
we conclude the identity
\begin{equation}
\mathcal{R}'_\varepsilon f = \varepsilon^{-1} f* \psi_\varepsilon\, .
\end{equation}
Note that even $\psi$ belongs to the Schwartz class. 
Hence,  by the argument given above, the following inequality
\begin{equation}\label{e:figata}
\|D^r (\mathcal{R}'_\varepsilon f)\|_0 \leq C \varepsilon^{s-r-1} \|D^s f\|_0 
\end{equation}
is certainly valid for $r\geq s$. However, the crucial point of estimate (c) is its validity even in the range  $r <s$! In order to achieve this stronger bound we need to choose a specific mollifier $\varphi$: more precisely we require that:
\begin{equation}\label{e:figata2}
\forall k\in \mathbb N \qquad \exists \vartheta^{(k)} \in \mathscr{S}\quad\mbox{such that}\quad \frac{\partial^k \vartheta^{(k)}}{\partial x_1^k} = \psi\, .
\end{equation} 
With this property, for $s>r$ we can integrate by parts $k= s-r$ times to achieve the identity
\[
\mathcal{R}'_\varepsilon f = \varepsilon^{s-r-1} \frac{\partial^{s-r} f}{\partial x_1^{s-r}} * \vartheta^{(s-r)}_\varepsilon\, ,
\]
and, applying the same argument used for \eqref{e:(b')}, we conclude \eqref{e:figata}. 

In order to find a kernel $\varphi$ such that \eqref{e:figata2} holds, we compute first the Fourier transform of $\psi$:
\[
\hat\psi (\xi) = - m \hat\varphi (\psi) - \sum_j \left(-\frac{1}{i} \frac{\partial}{\partial \xi_j}\right) \left (i\xi_j \hat\varphi (\xi)\right) =
\nabla \hat\varphi (\xi)\cdot \xi.
\]
Assume $\hat \varphi\in C^\infty_c (\mathbb R^m)$ and equals $(2\pi)^{\frac{m}{2}}$ in a neighborhood of $0$. Then
$\varphi$ belongs to $\mathscr{S}$ and has integral $1$. Moreover $\hat\psi$ vanishes in a neighborhood of the origin, and thus $(i\xi_1)^{-k} \hat\psi$ belongs to $\mathscr{S}$. But then, if we let $\vartheta^{(k)}$ be the inverse Fourier transform of the latter function, we conclude that $\vartheta^{(k)}\in \mathscr{S}$ and that
$\frac{\partial^k \vartheta^{(k)}}{\partial x_1^k} = \psi$.

To complete the proof, we finally show the analog of estimate (d), namely
\begin{equation}
\|D^r (f - \mathcal{R}_\varepsilon f)\|_0 \leq C \varepsilon^{s-r} \|D^s f\|_0 \qquad \mbox{when $s\geq r$.} 
\end{equation}
For $s=r$ it is an obvious outcome of \eqref{e:(b')}. For $s>r$, we instead integrate \eqref{e:figata} in $\varepsilon$:
\begin{align*}
\|D^r (f -\mathcal{R}_\varepsilon f)\|_0 & \leq \int_0^\varepsilon \left\|D^r \left( \mathcal{R}'_\delta f\right)\right\|_0\, d\delta
\leq C \|D^s f\|_0 \int_0^\varepsilon \delta^{s-r-1}\, d\delta = C \varepsilon^{s-r} \|D^s f\|_0\, 
\end{align*}
(note that $s-r-1\geq 0$ under our assumptions!). 
\end{proof}

\section{A smooth path to prove the perturbation theorem}\label{s:path}

Recalling Section \ref{s:perturbation}, we wish to construct 
\begin{itemize}
\item[(i)] a path $[t_0, \infty) \ni t\mapsto h (t)$ joining $0$ to $h$
\item[(ii)] and a path $[t_0, \infty) \ni t\mapsto w (t)$ joining $w_0$ to $\bar u$ 
\end{itemize}
such that 
\begin{equation}\label{e:path2}
\frac{d}{dt} w (t)^\sharp e = \dot h (t)\, .
\end{equation}
Recall moreover that we have reduced \eqref{e:path2} to solving \eqref{e:normal}--\eqref{e:linearization2} for the 
``velocity'' $\dot w$ of $w$, at least in local coordinates. Assuming that $w(t)$ is a free map for every $t$, we can use Lemma~\ref{l:Gram} to find, in a given coordinate patch, a ``canonical'' solution of the linear system \eqref{e:normal}-\eqref{e:linearization}: more precisely we can write 
\begin{equation}\label{e:notazione_davide_1}
\dot w_\alpha := \mathcal{L}_\alpha^{ij} (Dw, D^2 w) h_{ij}
\end{equation}
where  $\mathcal{L}_\alpha^{ij} (A, B)$
is a suitable collection of functions which
depend smoothly (in fact analytically) upon the entries $A$ and $B$. This defines a linear operator $\mathcal{L} (Dw, D^2w)$ from the space of $(0,2)$ tensors 
over the coordinate patch $U$ into the space of maps $\dot w : U \to \mathbb R^N$. Next, we wish to extend this operator to the whole manifold $\Sigma$: the crucial point is that, although derived in a coordinate patch, the formula above does not depend on the chosen coordinate patch.

\begin{lemma}[Existence of the operator $\mathscr{L}$]
Assume $w: \Sigma \to \mathbb R^N$ is $C^2$ and free. 
Given any $(0,2)$ tensor $\bar h$ and any coordinate patch, the map $\mathcal{L} (Dw, D^2 w) \bar h$ defined above does not depend on the coordinates and the process defines, therefore, a global (linear) operator $\mathscr{L} (w)$ from the space of smooth symmetric $(0,2)$ tensors over $\Sigma$ into the space of smooth maps $C^\infty (\Sigma, \mathbb R^N)$.
\end{lemma}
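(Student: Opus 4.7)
The plan is to recognize that in a coordinate patch the system \eqref{e:normal}--\eqref{e:linearization2} takes the form $A \dot w = v$, where $A$ is the $\bigl(n+\tfrac{n(n+1)}{2}\bigr)\times N$ matrix whose rows are the vectors in \eqref{e:lin_indip} and $v$ is the column vector built from $0$'s and the components $-\tfrac{1}{2}\bar h_{ij}$. Freeness of $w$ says exactly that $A$ has maximal rank everywhere, so Lemma~\ref{l:Gram} applies and gives the canonical solution $\dot w = A^T(AA^T)^{-1}v$. Smoothness of this expression as a function on the patch follows from the $C^\infty$ regularity of $w$ together with Remark~\ref{r:smooth_dependence}(a), and linearity in $\bar h$ is immediate.

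The heart of the lemma is to show that this canonical choice is coordinate-invariant, and my plan is to identify it with an intrinsic geometric object. By Lemma~\ref{l:Gram} the vector $\dot w(p)$ is the unique element of the \emph{row space}
\[
E^2(p) := \operatorname{span}\left\{\frac{\partial w}{\partial x^j}(p),\ \frac{\partial^2 w}{\partial x^i\partial x^j}(p)\right\}\subset \mathbb R^N
\]
that solves the system. Under a change of coordinates $y=y(x)$, the chain rule gives
\[
\frac{\partial^2 w}{\partial y^k\partial y^l} = \frac{\partial^2 w}{\partial x^i\partial x^j}\frac{\partial x^i}{\partial y^k}\frac{\partial x^j}{\partial y^l} + \frac{\partial w}{\partial x^j}\frac{\partial^2 x^j}{\partial y^k\partial y^l},
\]
and similarly $\partial_{y^k} w$ is a linear combination of the $\partial_{x^j} w$. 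Hence $E^2(p)$ is manifestly independent of the chart.

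Next I would verify that the \emph{solution set} of the system is also coordinate-independent. The orthogonality conditions \eqref{e:normal} amount to asking $\dot w \perp T_{w(p)}w(\Sigma)$, which is intrinsic. Plugging the chain-rule identity above into the second-derivative equations and using $\partial_{x^j} w\cdot \dot w = 0$, the lower-order Christoffel-type term cancels, and what remains is exactly the transformation rule of $-2\,\partial^2_{x^i x^j} w\cdot \dot w$ as a $(0,2)$ tensor; since the right-hand side $\bar h_{ij}$ transforms identically, the system in $y$-coordinates is equivalent to the one in $x$-coordinates. The canonical solution, being the unique element of the coordinate-free subspace $E^2(p)$ that lies in the coordinate-free solution set, is therefore globally well defined on $\Sigma$.

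I expect the main obstacle to be precisely this cancellation of the Christoffel-type term: the equations \eqref{e:linearization2} on their own are \emph{not} tensorial, and a priori it is not evident that the minimum-norm solutions in two different charts should coincide. The crucial input is that \eqref{e:normal} and \eqref{e:linearization2} together are equivalent to the tensorial equation \eqref{e:linearization}, which is how the system becomes intrinsic. Once this is in place, global smoothness of $\mathscr{L}(w)\bar h$ follows patch by patch from the local formula, and linearity in $\bar h$ is transparent, completing the construction of the operator.
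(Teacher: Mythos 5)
Your proof is correct and follows essentially the same strategy as the paper: both rest on the coordinate-independence of the solution set of \eqref{e:normal}--\eqref{e:linearization2} together with a coordinate-free characterization of the canonical solution supplied by Lemma~\ref{l:Gram}. The one genuine difference lies in that characterization. The paper simply notes that the canonical solution is the \emph{unique element of minimal Euclidean norm} in the solution set; since the Euclidean norm on the target $\mathbb R^N$ does not depend on the chart chosen on $\Sigma$, coordinate-invariance of the minimal-norm element follows immediately once the solution set is shown to be chart-independent. You instead characterize the canonical solution as the unique solution lying in the row space $E^2(p)$, and then separately verify via the chain rule that $E^2(p)$ is chart-independent. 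This is equivalent --- the minimal-norm solution is exactly the one orthogonal to $\ker A$, hence in $\mathrm{im}(A^T)=E^2(p)$ --- but the extra argument about $E^2(p)$ is not strictly needed; the minimal-norm route is shorter. On the other hand, the chain-rule/Christoffel cancellation you spell out to show the solution set is tensorial is precisely the ``straightforward computation'' the paper elides, and it is worth having in writing: you are right that \eqref{e:linearization2} alone is not tensorial and that the cancellation hinges on pairing it with \eqref{e:normal}, which is the crux of why the construction globalizes.
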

\begin{proof}
Observe that, for each fixed $p\in \Sigma$, the linear space of vectors $z = \dot w (p)$ satisfying the system \eqref{e:normal}--\eqref{e:linearization2} is independent
of the choice of coordinates (in other words, although the coefficients in the system might change, the solution set remains the same: this follows from straightforward computations!).
Since, however, according to Lemma~\ref{l:Gram} the vector $[\mathcal{L} (Dw, D^2 w) h] (p)$ is the (unique) element of minimal norm in such vector space, it turns out that
it is independent of the coordinates chosen to define $\mathcal{L} (Dw, D^2 w) \bar h$. 
\end{proof}

Having defined the operator $\mathscr{L} (w)$ we can rewrite \eqref{e:path2} as a ``formal system of ordinary differential equations'' 
\begin{equation}\label{e:fake_Picard}
\left\{
\begin{array}{l}
\dot w (t) = \mathscr{L} (w (t)) \dot h (t)\, ,\\ \\
w (t_0) = w_0\, .
\end{array}
\right.
\end{equation}
The problem with this approach is that
the operator $\mathscr{L}$ ``loses derivatives'' in its nonlinear entry $w$, namely although it defines the velocity $\dot w$ at order $0$, it depends on first and second
derivatives of $w$. Hence, if $w, \dot h\in C^k$, then $\mathscr{L} (w) h$ is, a priori, only in $C^{k-2}$. There is therefore no classical functional analytic setting to solve \eqref{e:fake_Picard} in the usual way, namely no Banach space where we can apply a Picard--Lindel\"of or a Cauchy--Lipschitz iteration. 

In order to get around this (very discouraging) issue, Nash considered the regularized problem
\begin{equation}\label{e:ODE}
\left\{
\begin{array}{l}
\dot w (t) = \mathscr{L} (\mathcal{S}_{t^{-1}} w (t)) \dot h (t)\\ \\
w (t_0) = w_0\, .
\end{array}
\right.
\end{equation}
However, $h (t)$ must now be chosen carefully and, in fact, it will be chosen depending upon $w(t)$, so that the complete system will be given by the coupling of
\eqref{e:ODE} with a second equation relating $w (t)$ and $h (t)$. In order to describe the latter, we introduce a function $\psi\in C^\infty (\mathbb R)$ which is:
\begin{itemize}
\item[(a)] identically equal to $0$ on the negative real axis; 
\item[(b)] identically equal to $1$ on $[1, \infty)$;
\item[(c)] everywhere nondecreasing. 
\end{itemize}
The path $h$ is then linked to $w$ through the relation
\begin{equation}\label{e:integral}
h (t) = \mathcal{S}_{t^{-1}} \left[ \psi (t- t_0) h  + \int_{t_0}^t [2 d (\mathcal{S}_{\tau^{-1}} w (\tau) - w (\tau))] \odot d \dot w (\tau)\, \psi (t-\tau)\, d\tau\right]\, .
\end{equation}
From now on the system \eqref{e:ODE}--\eqref{e:integral} will be called {\em Nash's regularized flow equations}. 

In order to gain some insight in the latter complicated relation, assume for the moment that we are able to find an initial value $t_0$ and a smooth curve $t \mapsto (w (t), h (t))$ in $C^3$ satisfying \eqref{e:ODE}--\eqref{e:integral} over $[t_0, \infty)$. In particular, when we refer to a ``smooth solution'' of the regularized flow equations, we understand that $\mathcal{S}_{t^{-1}} w (t)$ is a free map for every $t$ in the domain of definition.
 
Assume further that $w (t)$ converges in $C^2$ to some $\bar u$ for $t\uparrow\infty$ and that the integrands in the following computations all decay sufficiently fast, so that we can integrate over the whole halfline $[t_0, \infty)$. The relation \eqref{e:ODE} implies that
\begin{equation}\label{e:formal1}
2 d (\mathcal{S}_{t^{-1}} w (t)) \odot d \dot w (t) = \dot h (t)\, .
\end{equation}
Integrating the latter identity between $t_0$ and $\infty$, we then get
\begin{equation}\label{e:formal2}
\int_{t_0}^\infty 2 d (\mathcal{S}_{\tau^{-1}} w(\tau)) \odot d\dot w (\tau)\, d\tau = h(\infty) - h (t_0) = h\, .
\end{equation}
Letting $t\to \infty$ in \eqref{e:integral} and using that $\mathcal{S}_{t^{-1}}$ converges to the identity, we conclude
\[
h = h(\infty) = h + \int_{t_0}^\infty 2 d (\mathcal{S}_{\tau^{-1}} w(\tau) - w (\tau)) \odot d \dot w (\tau) d\tau\, ,
\]
implying therefore 
\begin{equation}\label{e:formal3}
\int_{t_0}^\infty 2 d (\mathcal{S}_{\tau^{-1}} w(\tau)) \odot d\dot w (\tau)\, d\tau = \int_{t_0}^\infty 2 d w (\tau) \odot d \dot w (\tau) d\tau\, .
\end{equation}
Combining the latter equality with \eqref{e:formal2} we achieve
\begin{equation}\label{e:formal4}
\int_{t_0}^\infty 2 d w (\tau) \odot d \dot w (\tau)d\tau = h\, .
\end{equation}
On the other hand, the integrand in the left-hand side is precisely $\frac{d}{d\tau} w(\tau)^\sharp e$, and thus we immediately conclude
\begin{equation}\label{e:formal5}
\bar{u}^\sharp e - w_0^\sharp e = w(\infty)^\sharp e - w(t_0)^\sharp e = h\, ,
\end{equation}
namely that $\bar u$ is the map in the conclusion of Theorem~\ref{t:perturbation}.

\medskip

In order to carry out the program above, we obviously have to ensure that
\begin{itemize}
\item[(a)] The regularized flow equations, namely the pair \eqref{e:ODE}--\eqref{e:integral}, is locally solvable; more precisely, if there is a solution in the interval $[t_0, t_1]$, it can be prolonged to some larger open interval $[t_0, t')$.
\item[(b)] We have uniform estimates ensuring the global solvability, namely any smooth solution on $[t_0, t')$ can be smoothly prolonged to the closed interval $[t_0, t']$. 
\end{itemize}
The combination of (a) and (b) would then imply the existence of a global solution on $[t_0, \infty)$. We further have to ensure that
\begin{itemize}
\item[(c)] The limit $\bar u$ of $w (t)$ for $t\to \infty$ exists in the strong $C^3$ topology, and we have the appropriate decay of the integrands needed to justify the ``formal computations'' \eqref{e:formal1}--\eqref{e:formal5}
\end{itemize}
This last step will make the computations above rigorous and ensure that $\bar u$ is a $C^3$ isometric embedding. In order to complete the proof of Theorem~\ref{t:Ck_1}, we will then only need to show that, when $h\in C^k$, then $u$ is also in $C^k$.

The program above will be carried out in the subsequent sections under the assumption that $t_0$ is sufficiently large and $\|h\|_3$ sufficiently small, depending on the ``initial value'' $w_0$. Moreover, we will follow a somewhat different order. First we tackle a set of a priori estimates which are certainly powerful enough to conclude (b) and (c), cf.~Proposition~\ref{p:a priori}. We then examine the local existence of the solution, which combined with the estimates of Proposition~\ref{p:a priori} will immediately imply both global solvability and convergence to an isometry, cf.~Proposition~\ref{p:existence}. Finally,
the higher differentiability of $\bar u$ is achieved in Proposition~\ref{p:higher}.

\section{A priori estimates for solutions of Nash's regularized flow equations}

We start by fixing one important constant: $\varepsilon>0$ will be chosen so that
\begin{equation}\label{e:def_epsilon_0}
\mbox{if}\; \|u-w_0\|_2 \leq 4\varepsilon\quad \mbox{then $u$ is a free embedding.}
\end{equation}
Our main a priori estimates are summarized in the following proposition, which is indeed the core of Nash's approach.

\begin{proposition}[A priori estimates]\label{p:a priori}
For any $t_0$ sufficiently large there is $\delta (t_0)>0$ such that, if $\|h\|_3 \leq \delta$, then the following holds. Consider any solution $w$ of \eqref{e:ODE}--\eqref{e:integral} over an interval $I$ (with left endpoint $t_0$ and which might be closed, open or infinite) . If
\begin{align}
&\|w (t)  - w_0\|_3 + t^{-1}\|w(t) -w_0\|_4 \leq  2 \varepsilon\, ,\label{e:starting1}\\
&t^4 \|\dot h (t)\|_0 + \|\dot h (t)\|_4 \leq 2\, ,\label{e:starting2}
\end{align}
then indeed we have the improved bounds
\begin{align}
&\|w(t) -w_0\|_3 + t^{-1}\|w(t) -w_0\|_4 \leq \varepsilon\, ,\label{e:a priori1}\\
&t^4 \|\dot h (t)\|_0 + \|\dot h (t)\|_4\leq 1\, .\label{e:a priori2}
\end{align}
Moreover, 
\begin{equation}\label{e:a priori3}
t^4 \|\dot w(t)\|_0 + \|\dot w (t)\|_4 \leq C_0\, ,
\end{equation}
and, if $I = [t_0, \infty)$, then there is a function $\delta (s)$ with $\lim_{s\to \infty} \delta (s) =0$ such that
\begin{align}
\|w (t) - w(s)\|_3 \leq \delta (s) \qquad \mbox{for all $t\geq s\geq t_0$.}\label{e:Cauchy}
\end{align}
\end{proposition}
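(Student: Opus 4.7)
The plan is to carry out a direct estimate: fix a smooth solution of \eqref{e:ODE}--\eqref{e:integral} on $I$ satisfying the hypotheses \eqref{e:starting1}--\eqref{e:starting2}, and read off \eqref{e:a priori1}--\eqref{e:Cauchy} as consequences. The factor of $2$ between the hypotheses and the conclusions will be purchased by the freedom to take $t_0$ large and $\delta = \delta(t_0)$ small. The main tools are properties (a)--(d) of Proposition~\ref{p:convolution} together with a Moser-type inequality
\[
\|\mathscr{L}(u)k\|_m \leq C(\|u\|_2)\bigl(\|k\|_m + \|u\|_{m+2}\|k\|_0\bigr),
\]
valid whenever $u$ is a free immersion close to $w_0$ in $C^2$. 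This inequality follows from the explicit formula $\mathscr{L}(u)k = A^T(AA^T)^{-1}k$ of Lemma~\ref{l:Gram} and from the smoothness of matrix inversion away from degeneracies. Properties (b) and (d) together keep $\|\mathcal{S}_{t^{-1}}w - w_0\|_2$ strictly below $4\varepsilon$ for $t_0$ large, so \eqref{e:def_epsilon_0} applies to $\mathcal{S}_{t^{-1}}w$ throughout the argument.

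First I would derive \eqref{e:a priori3}. This is essentially just the ODE \eqref{e:ODE}: apply the Moser inequality with $u = \mathcal{S}_{t^{-1}}w$ and $k = \dot h$, and use Proposition~\ref{p:convolution}(b) to bound $\|\mathcal{S}_{t^{-1}}w\|_{m+2}$ in terms of $\|w\|_3$, $\|w\|_4$, and controlled powers of $t$; plugging in \eqref{e:starting1}--\eqref{e:starting2} yields the bound on $t^4\|\dot w\|_0 + \|\dot w\|_4$.

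Next I would bound $\dot h$ by differentiating \eqref{e:integral} in $t$. Since $\psi(0) = 0$, the boundary term at $\tau = t$ vanishes, giving
\[
\dot h = -t^{-2}\mathcal{S}'_{t^{-1}}\!\bigl[\Xi(t)\bigr] + \mathcal{S}_{t^{-1}}\!\Bigl[\psi'(t-t_0)\,h + \!\int_{t_0}^{t} 2\,d(\mathcal{S}_{\tau^{-1}}w - w)\odot d\dot w\;\psi'(t-\tau)\,d\tau\Bigr],
\]
where $\Xi(t)$ denotes the bracketed expression in \eqref{e:integral}. The integrand $2\,d(\mathcal{S}_{\tau^{-1}}w - w)\odot d\dot w$ is controlled by $\|w - \mathcal{S}_{\tau^{-1}}w\|_1\,\|\dot w\|_1$, and by Proposition~\ref{p:convolution}(d) it can be forced to decay like $\tau^{-N}$ for $N$ as large as the available regularity of $w$ allows, once one uses the $\dot w$-bounds just obtained. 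The outer $\mathcal{S}'_{t^{-1}}$-term is handled with property (c) after splitting $\Xi$ into its $h$-piece (which contributes $O(\delta)$) and its integral-piece (which contributes a negative power of $t_0$). Summing up produces \eqref{e:a priori2} once $t_0$ is taken large and $\delta$ is taken small.

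Finally, \eqref{e:a priori1} follows by integrating $\dot w$ in time, using the integrable decay $\|\dot w\|_0 \lesssim t^{-4}$ for the $C^0$ part and the Moser inequality applied to the already-sharpened bounds on $\|\dot h\|_\bullet$ to control the intermediate norms; the $C^3$ improvement uses the small factor $\delta + t_0^{-\alpha}$ obtained in the second step. The Cauchy property \eqref{e:Cauchy} is then the tail estimate for the corresponding $L^1$-in-$t$ bound on $\|\dot w\|_3$. The main obstacle is the delicate bookkeeping of exponents in the step on $\dot h$: the loss of two derivatives in $\mathscr{L}$ must be \emph{strictly} overcompensated by the gain coming from $\mathcal{S}_{t^{-1}}$ and from the smoothing error (d), and only a strict overcompensation produces bounds that actually improve \eqref{e:starting2}. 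This is precisely the point at which Nash's regularization mechanism pays off, and it forces the hierarchy $t_0 \gg 1$ and $\delta = \delta(t_0) \ll 1$.
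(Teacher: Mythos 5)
Your treatment of \eqref{e:a priori2}, \eqref{e:a priori3}, and the $\|w-w_0\|_4$ part of \eqref{e:a priori1} is essentially the same route as the paper: derive the tame bound \eqref{e:anche_questa} on $\mathscr{L}(\mathcal{S}_{t^{-1}}w)$ from the chain-rule estimate and Proposition~\ref{p:convolution}(b), read off \eqref{e:a priori3} from the ODE, differentiate \eqref{e:integral} noting $\psi(0)=0$ kills the boundary term, and split $\dot h$ into a $\mathcal{S}'_{t^{-1}}$-piece controlled by Proposition~\ref{p:convolution}(c) plus a $\psi'$-piece and an $\dot L$-piece; the factor $\delta t_0^5 + t_0^{-1}$ lets you buy the improvement from $2$ down to $1$. (A small caveat: the decay of the integrand $2d(\mathcal{S}_{\tau^{-1}}w-w)\odot d\dot w$ is a \emph{fixed} rate $\tau^{-5}$ in $C^0$ and $\tau^{-2}$ in $C^3$, not ``$\tau^{-N}$ for $N$ as large as you like'' -- $w$ only has four derivatives under control.)

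The gap is in your last step. You claim the $C^3$ part of \eqref{e:a priori1} and \eqref{e:Cauchy} follow from the ``$L^1$-in-$t$ bound on $\|\dot w\|_3$.'' No such bound exists: interpolating between $t^4\|\dot w\|_0 \leq C_0$ and $\|\dot w\|_4 \leq C_0$ gives only $\|\dot w(t)\|_3 \lesssim t^{-1}$, which is borderline \emph{non}-integrable on $[t_0,\infty)$; a small prefactor $\delta + t_0^{-\alpha}$ does not make $\int t^{-1}\,dt$ converge. This is precisely the subtle point the proposition turns on. The paper's remedy is to integrate by parts in time, rewriting
\[
w(t)-w(s) = -\int_s^t \left[\tfrac{d}{d\tau}\mathscr{L}(\mathcal{S}_{\tau^{-1}}w(\tau))\right](h(\tau)-h(t))\,d\tau + \mathscr{L}(\mathcal{S}_{s^{-1}}w(s))(h(t)-h(s))\, ,
\]
so that the integrand now involves $h(\tau)-h(t)$ instead of $\dot h(\tau)$. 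One then shows $\|h(t)-h(s)\|_0\lesssim s^{-3}$, bounds $\|h(t)-h(s)\|_3$ by a quantity tending to $0$ as $s\to\infty$ (using the decomposition \eqref{e:verbatim3} and the fact that $\mathcal{S}_{t^{-1}}\to \mathrm{id}$), and bounds $D(\tau):=\frac{d}{d\tau}\mathscr{L}(\mathcal{S}_{\tau^{-1}}w(\tau))$ by $\|D(\tau)\|_0\lesssim\tau^{-2}$ and $\|D(\tau)\|_3\lesssim\tau$. The resulting integrand decays like $\tau^{-2}$ -- a full extra power of $\tau$ over the naive estimate -- which is integrable, giving a genuine $\delta(s)\to 0$ and, taking $s=t_0$, the $C^3$ part of \eqref{e:a priori1}. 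Without this device the proof does not close.
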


Before coming to the proof we recall here a few useful estimates.

\begin{lemma}\label{l:interpolation}
If $T$ is a smooth $(i,j)$ tensor on $\Sigma$ and $r < \sigma < s$ are three natural numbers, then there is a
constant $C= C(r,s, \sigma, i, j)$ such that
\begin{equation}\label{e:interpolation}
\|T\|_\sigma \leq C \|T\|_r^\lambda \|T\|_s^{1-\lambda}\qquad \mbox{where $\sigma = \lambda r + (1-\lambda) s$.}
\end{equation}
If $\Psi: \Gamma \to \mathbb R^k$ is a smooth map, with $\Gamma \subset \mathbb R^\kappa$ compact and $r$ a natural number, then there is
a constant $C (r, \Psi)$ such that 
\begin{equation}\label{e:composition}
\|\Psi \circ v\|_r \leq C (1 +\|v\|_r) \qquad \mbox{for
every smooth $v: \Sigma \to \Gamma$.}
\end{equation}
For every $r\in \mathbb R$ there is a constant $C(r)$ such that
\begin{equation}\label{e:product}
\|\varphi\psi\|_r \leq C \|\varphi\|_0 \|\psi\|_r + C \|\varphi\|_r\|\psi\|_0 \qquad \mbox{for every $\varphi, \psi\in C^r (\Sigma)$.}
\end{equation}
The inequality extends as well to (tensor) product of tensors, where the constant will depend additionally only on the type of
tensors involved. 
\end{lemma}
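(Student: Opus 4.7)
My plan is to prove the three estimates in the order listed, since the interpolation inequality \eqref{e:interpolation} is the workhorse needed for both \eqref{e:product} and \eqref{e:composition}.

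For the interpolation inequality I would exploit the smoothing operator $\mathcal{S}_\varepsilon$ of Proposition~\ref{p:convolution}. Splitting $\|T\|_\sigma \leq \|\mathcal{S}_\varepsilon T\|_\sigma + \|T - \mathcal{S}_\varepsilon T\|_\sigma$ and invoking part~(b) with base order $r$ on the first summand and part~(d) with base order $s$ on the second (handling separately the harmless low-order derivatives where the exponent on $\varepsilon$ would become nonnegative), I would obtain, for every $\varepsilon \in (0,1]$,
\[
\|T\|_\sigma \leq C \varepsilon^{r-\sigma} \|T\|_r + C \varepsilon^{s-\sigma} \|T\|_s\, .
\]
Optimizing by choosing $\varepsilon = (\|T\|_r/\|T\|_s)^{1/(s-r)}$, which lands in $(0,1]$ provided $\|T\|_r \leq \|T\|_s$ (the reverse regime being trivial since then $\|T\|_\sigma \leq \|T\|_s \leq \|T\|_r^\lambda \|T\|_s^{1-\lambda}$ holds for free), a direct calculation collapses the two terms into $C\|T\|_r^\lambda \|T\|_s^{1-\lambda}$ with $\lambda = (s-\sigma)/(s-r)$.

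For \eqref{e:product} I would expand $\|\varphi\psi\|_r$ chart by chart through the classical Leibniz rule, obtaining $\|\varphi\psi\|_r \leq C \sum_{k=0}^r \|D^k \varphi\|_0 \|D^{r-k}\psi\|_0$. Applying the interpolation step just proved to each factor (with endpoints $0$ and $r$) gives $\|D^k \varphi\|_0 \leq C \|\varphi\|_0^{1-k/r} \|\varphi\|_r^{k/r}$ and the analogous bound for $\psi$. The weighted Young inequality $a^\theta b^{1-\theta} \leq \theta a + (1-\theta) b$ then converts each product into $C(\|\varphi\|_0 \|\psi\|_r + \|\varphi\|_r \|\psi\|_0)$. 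The tensor version is identical once Leibniz is applied componentwise on the local representations introduced in the proof of Proposition~\ref{p:convolution}.

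For \eqref{e:composition}, Faà di Bruno's formula in local coordinates writes every derivative of $\Psi \circ v$ of order $r$ as a finite sum of terms of the shape $(\partial^\alpha \Psi)(v) \prod_{j=1}^k \partial^{\beta_j} v$ with $\sum_j |\beta_j| = r$ and $|\alpha|\leq r$. Compactness of $\Gamma$ makes each factor $(\partial^\alpha \Psi)(v)$ uniformly bounded by a constant depending only on $\Psi$ and $r$, and in particular $\|v\|_0 \leq \mathrm{diam}(\Gamma)$. Invoking interpolation once more, each $\|D^{|\beta_j|} v\|_0$ is bounded by $C(1+\|v\|_r)^{|\beta_j|/r}$; since the exponents sum to $1$, the product is controlled by $C(1+\|v\|_r)$, and summing over the finitely many decompositions closes the argument. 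The main obstacle is really the first step: once the sharp smoothing bounds of Proposition~\ref{p:convolution} are available on \emph{both} sides of the index being regularized, the interpolation inequality falls out after a single optimization, and everything else is pure bookkeeping with the chain and Leibniz rules.
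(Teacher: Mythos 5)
Your proof is correct, and it agrees with the paper on \eqref{e:product} and \eqref{e:composition} (Leibniz plus interpolation plus Young in the first case, Fa\`a di Bruno plus compactness of $\Gamma$ plus interpolation in the second). The genuine difference is in the proof of \eqref{e:interpolation}. The paper reduces to scalar functions on $\mathbb R^n$ by extension, normalizes to the case $r=0$, and then produces the two-term inequality
\[
\|D^\sigma v\|_0 \leq C\,\varepsilon^{s-\sigma}\|D^s v\|_0 + C\,\varepsilon^{-\sigma}\|v\|_0
\]
by an elementary \emph{rescaling} $v_\varepsilon(x) = v(\varepsilon x)$ applied to the fixed inequality $\|D^\sigma v\|_0 \leq C\|D^s v\|_0 + C\|v\|_0$; the argument is self-contained and independent of the smoothing machinery. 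You instead split $T = \mathcal{S}_\varepsilon T + (T - \mathcal{S}_\varepsilon T)$ and invoke Proposition~\ref{p:convolution}(b) and (d) to produce the analogous two-term bound. Both close with the same optimization in $\varepsilon$, so the heart of the argument is identical; what differs is the mechanism generating the one-parameter family of inequalities. Your route is arguably cleaner given that the smoothing estimates have already been established and there is no circularity (Proposition~\ref{p:convolution} is proved without reference to this lemma), at the cost of making the interpolation inequality appear to depend on the specific mollifier construction. The paper's rescaling argument has the virtue of being completely independent and is the more classical derivation of Gagliardo--Nirenberg type $C^0$ interpolation. One small point you rightly flag but should make sure to execute carefully: when summing $\|D^k \cdot\|_0$ over $k \le \sigma$ to recover the full norm, the low-order terms with $k < r$ require that Proposition~\ref{p:convolution}(b) be applied with a base index $\le k$ (or trivially), and then absorbed using $\varepsilon \le 1$ and $r - \sigma < 0$; this is harmless exactly as you say, but it is the step that turns the statement about a single top-order derivative into a statement about the full norm $\|\cdot\|_\sigma$.
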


The lemma above follows from rather standard and well-known arguments and we will give some explanations and references at the end of section. We underline here 
a crucial consequence, which will be used repeatedly in our arguments.

\begin{remark}\label{r:interpola}
From \eqref{e:interpolation} we easily conclude that, if $\|T (t)\|_k \leq \lambda t^j$ and
$\|T\|_{k+i}\leq \lambda t^{j+i}$, then $\|T\|_{k+\kappa} \leq C \lambda t^{j+\kappa}$ for all intermediate $\kappa \in \{1, \ldots , j-1\}$.\footnote{Nash does not
take advantage of this simple remark and introduces instead a rather unusual notation to keep track of all the estimates for the intermediate norms in the bounds
corresponding to \eqref{e:a priori1}--\eqref{e:a priori3}.}
\end{remark}

\begin{proof}[Proof of Proposition~\ref{p:a priori}]
First of all, if $t_0$ is chosen larger than a fixed constant, we can use \eqref{e:starting1} and Proposition~\ref{p:convolution}(d) to conclude that $\|\mathcal{S}_{t^{-1}} w (t) - w_0\|_2 \leq 4\varepsilon$. In turn,  by \eqref{e:def_epsilon_0}, this implies that, when computing the operator $\mathscr{L}$, the entries of $\mathcal{L}^{ij}_\alpha$ belong to a compact set where the corresponding functions are smooth. Observe moreover that $\|w (t)\|_3 \leq C$, for some constant $C$ depending only upon the initial value $w_0$. 
We can thus apply \eqref{e:composition} and Proposition~\ref{p:convolution} to conclude that 
\begin{equation}\label{e:anche_questa}
\|\mathscr{L} (\mathcal{S}_{t^{-1}} w (t))\|_\kappa \leq C (\kappa) (1+ t^{\kappa-1})
\end{equation} 
where $C(\kappa)$ is a constant which depends only upon $\kappa$. In fact, for $\kappa\geq 1$ we have
\begin{align*}
\|\mathscr{L} (\mathcal{S}_{t^{-1}} w (t))\|_\kappa \stackrel{\eqref{e:composition}}{\leq} C (\kappa) \|\mathcal{S}_{t^{-1}} w (t)\|_{\kappa+2}
{\leq} C (\kappa) \|w (t)\|_3 t^{\kappa-1}\, ,
\end{align*}
where the last inequality follows from Proposition~\ref{p:convolution}(b).
In the case of $\kappa=0$, we use instead the estimate $\|\mathcal{S}_{t^{-1}} w(t)\|_2 \leq C \|w (t)\|_2$ (again cf.~Proposition~\ref{p:convolution}(b)). 

Using now \eqref{e:product}, from \eqref{e:ODE} we conclude that
\begin{align}
\|\dot w (t)\|_0 & \leq \|\mathscr{L} (\mathcal{S}_{t^{-1}} w (t))\|_0 \|\dot h (t)\|_0 \leq C  t^{-4}\, ,\label{e:weaker-1}\\
\|\dot w(t)\|_4 & \leq  \|\mathscr{L} (\mathcal{S}_{t^{-1}} w (t))\|_4\|\dot h (t)\|_0 + C\|\mathscr{L} (\mathcal{S}_{t^{-1}} w (t))\|_0\|\dot h (t)\|_4 \leq C\, .\label{e:weaker-2}
\end{align}
Indeed, this shows \eqref{e:a priori3}.

We next introduce some additional functions in order to make some expressions more manageable. More precisely
\begin{align}
E (t) := & 2 d (\mathcal{S}_{t^{-1}} w (t) - w (t)) \odot d \dot w (t)\, ,\label{e:E(t)}\\
L (t):= & \int_{t_0}^t E (\tau) \psi (t-\tau)\, d\tau\, .
\end{align}
Observe that with the introduction of these two quantities we can rewrite \eqref{e:integral} as 
\begin{equation}\label{e:h}
h (t) = \mathcal{S}_{t^{-1}} [ \psi (t-t_0) h + L (t)]\, .
\end{equation}
Recalling Proposition~\ref{p:convolution}, we have $\|\mathcal{S}_{t^{-1}} w(t) - w(t)\|_1 \leq C t^{-2}\|w (t)\|_3 \leq C t^{-2}$. Observe that $\|\dot w (t)\|_1 \leq C t^{-3}$, which follows  
from \eqref{e:weaker-1} and \eqref{e:weaker-2} because of Remark~\ref{r:interpola} (this is just one of several instances where such remark will be used!). Combining the latter estimate with \eqref{e:product}, we then conclude $\|E(t)\|_0 \leq C t^{-5}$. On the other hand,  
\[
\|\mathcal{S}_{t^{-1}} w(t) - w(t)\|_4 \leq C t\, ,
\]
and hence again from \eqref{e:product} we conclude
\begin{equation}\label{e:bound_E(tau)}
\|E(t)\|_3 \leq C \|\mathcal{S}_{t^{-1}} w(t) - w(t)\|_4\|\dot w (t)\|_1 + C \|\mathcal{S}_{t^{-1}} w(t) - w(t)\|_1 \|\dot w (t)\|_4 \leq C t^{-2}\, .
\end{equation}
The latter inequality yields
\begin{equation}\label{e:a priori4}
\|L (t)\|_3 \leq \int_{t_0}^t \|E (\tau)\|_3\, d\tau \leq C t_0^{-1}\, .
\end{equation}
Next, we compute
\[
\dot h(t) = \underbrace{\left(\textstyle{\frac{d}{dt}} \mathcal{S}_{t^{-1}}\right) [ \psi (t-t_0) h + L (t)]}_{=:P(t)} + \mathcal{S}_{t^{-1}} [ \psi' (t-t_0) h + \dot L (t)]\, .
\]
First, we observe that $\psi' (t-t_0)$ vanishes for $t> t_0+1$ and $t<t_0$. Hence
\begin{equation}\label{e:pezzo1}
\|\psi' (t-t_0) \mathcal{S}_{t^{-1}} h\|_4  \leq \left\{\begin{array}{ll} 
C t_0 \delta \qquad &\mbox{for $t\in [t_0, t_0+1]$,}\\ \\
0 &\mbox{otherwise.}
\end{array}\right.
\end{equation}
For the same reason (and because $\psi (0)=0$) we can estimate
\begin{align}
\|\dot L (t)\|_0 \leq &\int_{\max\{t_0, t-1\}}^t \|E (\tau)\|_0\, d\tau  \leq C t^{-5} \, ,\label{e:pezzo2}\\
\|\dot L (t)\|_3 \leq &\int_{\max \{t_0, t-1\}}^t \|E (\tau)\|_3\, d\tau\leq C t^{-2} \, .\label{e:pezzo2_bis}
\end{align}
Next, recalling that $\mathcal{S}' _\varepsilon := \frac{d}{d \varepsilon} \mathcal{S}_\varepsilon$, we have
\[
\textstyle{\frac{d}{dt}} \mathcal{S}_{t^{-1}} = - t^{-2} \mathcal{S}'_{t^{-1}}\, .
\]
Hence, using Proposition~\ref{p:convolution}(c) and \eqref{e:a priori4}, it is straightforward to show that
\begin{equation}\label{e:pezzo3}
t^4 \|P(t)\|_0 + \|P(t)\|_4 \leq C (\|h(t)\|_3+ \|L (t)\|_3) \leq C \delta + C t_0^{-1}\, ,
\end{equation}
where $C$ is independent of $\delta$.  Combining \eqref{e:pezzo1}, \eqref{e:pezzo2}, \eqref{e:pezzo2_bis}, and \eqref{e:pezzo3} we get
\begin{equation}\label{e:doth_bound}
t^4 \|\dot{h} (t)\|_0 + \|\dot h(t)\|_4 \leq C t^{-1} + C \delta (1+ t_0^5) + C t_0^{-1} \leq C t_0^{-1} + C \delta t_0^5\, .
\end{equation} 
Therefore, choosing first $t_0$ large enough and then $\delta \leq \delta_0 (t_0)$ sufficiently small, we conclude a bound which is even stronger than \eqref{e:a priori2}: the left-hand side can be made smaller than any fixed $\eta>0$. 

The estimate on $\|w(t) -w_0\|_4$ in \eqref{e:a priori1} is an obvious consequence of the one above on $\|\dot h (t)\|_4$ through integration of \eqref{e:ODE}: it suffices to choose $\eta$ smaller than a given constant. 
The proof of the remaining parts of \eqref{e:a priori1} and \eqref{e:Cauchy} require instead a subtler argument. However, notice also that
we just need to accomplish \eqref{e:Cauchy}, since $C_0$ is a constant claimed to be independent of $t_0$.

In order to get \eqref{e:Cauchy} we integrate \eqref{e:ODE} and then integrate by parts:
\begin{align}
& w (t) - w(s)\nonumber\\
&\qquad =  \int_s^t \mathscr{L} (\mathcal{S}_{\tau^{-1}} (w (\tau))) \dot h (\tau)\, d\tau\nonumber\\
& \qquad =  - \int_s^t \underbrace{\left[\frac{d}{d\tau} \mathscr{L} (\mathcal{S}_{\tau^{-1}} (w (\tau)))\right]}_{=: D(\tau)} (h (\tau)- h(t))\, d\tau
+  \mathscr{L} (\mathcal{S}_{t^{-1}} (w (s))) (h (t) - h(s))\, .\label{e:verbatim1}
\end{align}
First of all, integrating the bound \eqref{e:a priori2} on $\dot h (t)$, we obviously conclude
\begin{equation}\label{e:verbatim2}
\|h (t) - h(s)\|_0 \leq C s^{-3} \qquad \mbox{for all $t\geq s\geq t_0$.}
\end{equation}
Next, assuming that $t \geq s \geq t_0+1$, we have $\psi (s-t_0) = \psi (t-t_0) =1$ and we can thus compute
\begin{align}
h (t) - h (s) & =  (\mathcal{S}_{t^{-1}} h - \mathcal{S}_{s^{-1}} h) + \underbrace{\mathcal{S}_{t^{-1}} \int_s^t E (\tau)\, \psi (t-\tau)\, d\tau}_{\mbox{(I)}}\nonumber\\
&\qquad + \underbrace{\mathcal{S}_{t^{-1}} \int_{s-1}^s E (\tau)\, (\psi (t-\tau) - \psi (s-\tau))\, d\tau}_{\mbox{(II)}}
+ \underbrace{\vphantom{\int_{s-1}^s} (\mathcal{S}_{t^{-1}} - \mathcal{S}_{s^{-1}}) L(s)}_{\mbox{(III)}}\, .\label{e:verbatim3}
\end{align}
Note next that
\[
\|\mbox{(I)}+ \mbox{(II)}\|_3 \leq C \int_{s-1}^t \| E (\tau)\|_3\, d\tau \leq C \int_{s-1}^\infty \tau^{-2}\, d\tau \leq C s^{-1}\, .
\]
For what concerns (III) note that the bound \eqref{e:bound_E(tau)} on $\|E (\tau)\|_3$ implies that
\[
L (\infty) := \int_{t_0}^\infty E (\tau)\, d\tau
\] 
is well defined, it belongs to $C^3$, and it satisfies the following decay estimate:
\begin{equation}\label{e:verbatim3.5}
\|L (\infty) - L(s)\|_3 \leq C s^{-1}\, .
\end{equation}
Thus we can bound
\[
\|\mbox{(III)}\|_3\leq C s^{-1} + \|\mathcal{S}_{s^{-1}} L (\infty) - \mathcal{S}_{t^{-1}} L (\infty)\|_3\, ,
\]
which in turn leads to 
\begin{equation}\label{e:verbatim4}
\|h (t) - h(s)\|_3 \leq C s^{-1} + \|\mathcal{S}_{s^{-1}} L (\infty) - \mathcal{S}_{t^{-1}} L (\infty)\|_3\ + \|\mathcal{S}_{t^{-1}} h - \mathcal{S}_{s^{-1}} h \|_3\, .
\end{equation}
Using the fact that $\mathcal{S}_{t^{-1}}$ converges to the identity for $t\to \infty$, we reach
\begin{equation}
\|h (t) - h (s)\|_3 \leq \tilde{\delta} (s) \qquad \mbox{for all $t\geq s$,}
\end{equation}
where $\tilde{\delta} (s)$ is a function such that $\lim_{s\to \infty} \tilde{\delta} (s) = 0$.
Using \eqref{e:anche_questa}, \eqref{e:verbatim2} and \eqref{e:verbatim4},  we conclude
\begin{align}\label{e:intermedia}
\|w(t) - w(s)\|_3  \leq & \bar \delta (s) + C \int_s^t (\|D (\tau)\|_3 \tau^{-3} + \|D (\tau)\|_0)\, d\tau\, ,
\end{align}
for some function $\bar \delta (s)$ which converges to $0$ as $s$ goes to $\infty$. 

In order to estimate carefully $D(t)$, we pass to local coordinates. Recalling the notation $\mathcal{L}^{ij}_\alpha = \mathcal{L}^{ij}_\alpha (A,B)$ of \eqref{e:notazione_davide_1}
we compute
\begin{align}
& \textstyle{\frac{d}{dt}} \mathcal{L}_\alpha^{ij} (D \mathcal{S}_{t^{-1}} w(t), D^2 \mathcal{S}_{t^{-1}} w (t))\nonumber \\
& \qquad = \underbrace{D_A \mathcal{L}_\alpha^{ij} (D \mathcal{S}_{t^{-1}} w (t), D^2 \mathcal{S}_{t^{-1}} w(t))}_{D' (t)} \circ \left(-t^{-2} D \mathcal{S}'_{t^{-1}} w(t)
+ \mathcal{S}_{t^{-1}} D \dot w (t)\right)\nonumber\\ 
&\qquad\qquad+  \underbrace{D_B  \mathcal{L}_\alpha^{ij} (D \mathcal{S}_{t^{-1}} w (t), D^2 \mathcal{S}_{t^{-1}} w(t))}_{D'' (t)}
\circ \left(-t^{-2} D^2 \mathcal{S}'_{t^{-1}} w(t)
+ \mathcal{S}_{t^{-1}} D^2 \dot w (t)\right)\, ,\label{e:verbatim5}
\end{align}
where $\circ$ denotes a suitable product structure. 
Now, as already argued for $\mathscr{L} (\mathcal{S}_{t^{-1}} (w(t))$, for any natural number $\kappa$ we have
\begin{equation}\label{e:verbatim6}
\|D' (t)\|_\kappa + \|D'' (t)\|_\kappa \leq C(\kappa) (1 + t^{\kappa-1})\, . 
\end{equation}
Moreover, having derived the bound $\|w (t)\|_4 \leq C t$, we can 
take advantage of Proposition~\ref{p:convolution} to get
\begin{align}\label{e:verbatim7}
\|D(t)\|_0 \leq C \left(t^{-3} \|w (t)\|_4 + \|\dot w (t)\|_2\right) \leq C t^{-2}\, .
\end{align}
In order to estimate the $C^3$ norm, we use \eqref{e:product}, \eqref{e:verbatim6} and argue similarly to get:
\begin{align}
\|D (t)\|_3 \leq C t^2 \left(\|\dot w(t)\|_2 + t^{-3} \|w(t)\|_4\right) + C \left(\|w (t)\|_4 + t\|\dot w (t)\|_4\right) \leq C t\label{e:verbatim8}\, .
\end{align}
Inserting the latter two inequalities in \eqref{e:intermedia}, we clearly conclude \eqref{e:Cauchy} and complete the proof. 
\end{proof}

\begin{proof}[Proof of Lemma~\ref{l:interpolation}] First of all, we observe that it suffices to prove all the claims for functions and in a local
coordinate patch: hence, without loss of generality we can just prove the claim for functions on balls of $\mathbb R^n$.

\medskip

{\bf Proof of \eqref{e:interpolation}.} By the classical extension theorems, it suffices to prove the inequality for functions defined
on the whole $\mathbb R^n$ (under the assumptions that all norms are finite!). In such a case we will in fact have the stronger inequality
\[
\|D^\sigma v\|_0 \leq C \|D^r v\|_0^\lambda \|D^s v\|_0^{1-\lambda}\, .
\]
Clearly, it suffices to prove the inequality in the particular case where $r = 0 <\sigma  <s$, where it takes the form
\[
\|D^\sigma v\|_0 \leq C\|D^s v\|_0^{\sigma/s} \|v\|_0^{1 - \sigma/s}\, .
\]
If $v\equiv 0$, then there is nothing to prove. If $D^s v \equiv 0$, since the function is bounded, then we have $D^\sigma v \equiv 0$ and again the
inequality is trivial.  Otherwise, recall that we have the following elementary bound, with a constant $C$ independent of $v$.
\[
\|D^\sigma v\|_0 \leq C\|D^s v\|_0 + C \|v\|_0\, .
\]
However, since we can rescale the function to $v_\varepsilon (r) = v (\varepsilon r)$, we also have the validity of 
\[
\|D^\sigma v\|_0 \leq C \varepsilon^{s-\sigma}\|D^s v\|_0 + C \varepsilon^{-\sigma}\|v\|_0\, ,
\]
with the very same constant $C$, i.e. independently of $\varepsilon > 0$. 
Choosing $\varepsilon = \|v\|_0^{1/s}\|D^s v\|_0^{-1/s}$ we conclude the proof.

\medskip

{\bf Proof of \eqref{e:composition}.} Again we can assume that the domain of the function is $\mathbb R^n$. 
Denoting by $D^j$ any partial derivative of order $j$, the chain rule can be written symbolically as
\begin{equation}\label{e:chainrule}
D^m(\Psi\circ v)=\sum_{l=1}^m(D^l\Psi)\circ v\sum_{\sigma}C_{l,\sigma}(Dv)^{\sigma_1}(D^2v)^{\sigma_2}\dots(D^mv)^{\sigma_m}
\end{equation}
for some constants $C_{l,\sigma}$,
where the inner sum is over $\sigma=(\sigma_1,\dots,\sigma_m)\in\mathbb N^m$ such that
\begin{equation*}
\sum_{j=1}^m\sigma_j=l,\quad \sum_{j=1}^mj\sigma_j=m.
\end{equation*}
From \eqref{e:interpolation} we have 
\[
\|u\|_j\leq C_h\|u\|_0^{1-\frac{j}{m}}\|u\|_m^{\frac{j}{m}} \qquad \mbox{ for $m\geq j\geq 0$} 
\]
(without loss
of generality we assume both $\|u\|_0$ and $\|u\|_m$ nonzero, otherwise the inequality is trivial: thus we can use \eqref{e:interpolation} also
for the ``extreme cases'' $\sigma =r$ and $\sigma =s$!). Inserting the latter inequality in \eqref{e:chainrule}, we easily achieve \eqref{e:composition}.

\medskip

{\bf Proof of \eqref{e:product}.} Using the notation above we write the Leibniz rule as
\[
D^m (\varphi \psi) = \sum_{j=0}^m \underbrace{C_{j,m} D^j \varphi D^{m-j} \psi}_{S_j}\, .
\]
For each summand we use \eqref{e:interpolation} and Young's inequality to write 
\[
\|S_j\|_0 \leq C \|\varphi\|_0^{1-j/m}\|\varphi\|_m^{j/m}\|\psi\|_0^{j/m}\|\psi\|_m^{1-j/m} \leq C\|\varphi\|_0\|\psi\|_m + C\|\varphi\|_m\|\psi\|_0\, .\qedhere
\]
\end{proof}

\section{Global existence and convergence to an isometry}

In this section we combine the bounds in Proposition~\ref{p:a priori} with a local solvability argument to show that there is a global solution to Nash's regularized flow equations. 

\begin{proposition}\label{p:existence}
There exist $t_0$ and $\delta$ such that, if $\|h\|_3\leq \delta$,  
then there is a  solution $t\mapsto w (t)$ of \eqref{e:ODE}--\eqref{e:integral} on $[t_0, \infty)$ which satisfies the bounds \eqref{e:a priori1}--\eqref{e:Cauchy} for every $t$. Moreover, for $t\to \infty$, $w(t)$ converges in $C^3$ to a free embedding $\bar u$ with $\bar u^\sharp e = w_0^\sharp e + h$.
\end{proposition}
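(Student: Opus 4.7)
The plan is to combine a standard local existence argument with the a priori bounds of Proposition~\ref{p:a priori} to produce a solution on $[t_0,\infty)$, and then pass to the limit using (a rigorous version of) the formal computations \eqref{e:formal1}--\eqref{e:formal5}.

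\textbf{Local existence.} First I would check that the initial data are well-posed: at $t=t_0$ the condition $w(t_0)=w_0$ gives $h(t_0)=0$ directly from \eqref{e:integral}, and using $\psi(0)=\psi'(0)=0$ together with $L(t_0)=\dot L(t_0)=0$ one obtains also $\dot h(t_0)=0$ and $\dot w(t_0)=0$, so the starting hypotheses \eqref{e:starting1}--\eqref{e:starting2} hold strictly at $t_0$. The system \eqref{e:ODE}--\eqref{e:integral} has a Volterra-type structure: given a continuous curve $\tau\mapsto w(\tau)\in C^4(\Sigma,\mathbb R^N)$ on $[t_0,t]$, the formula \eqref{e:integral} and its time-derivative determine $h(t)$ and $\dot h(t)$ without invoking $\dot w(t)$, after which $\dot w(t)=\mathscr{L}(\mathcal S_{t^{-1}}w(t))\dot h(t)$ is read off. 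The smoothing $\mathcal S_{t^{-1}}$ makes $w\mapsto \mathscr L(\mathcal S_{t^{-1}}w)$ a smooth map on $C^4$ (arbitrarily many derivatives of $\mathcal S_{t^{-1}}w$ are controlled by $\|w\|_4$ with a $t$-dependent constant via Proposition~\ref{p:convolution}), and together with the product and composition estimates of Lemma~\ref{l:interpolation}, a standard contraction mapping argument on a small ball of continuous curves $[t_0,t_0+\eta]\to C^4$ produces a unique local smooth solution.

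\textbf{Global continuation and convergence.} Let $T^*$ be the supremum of times $t\geq t_0$ for which a solution exists on $[t_0,t]$ and the starting bounds \eqref{e:starting1}--\eqref{e:starting2} are satisfied. By the above, $T^*>t_0$. Suppose $T^*<\infty$. On $[t_0,T^*)$ Proposition~\ref{p:a priori} yields the strict improvements \eqref{e:a priori1}--\eqref{e:a priori2}, which by continuity hold at $T^*$ itself; reapplying local existence from $T^*$ then contradicts the maximality of $T^*$. Hence $T^*=\infty$ and \eqref{e:Cauchy} is valid on all of $[t_0,\infty)$, so $w(t)\to \bar u$ in $C^3$ for some $\bar u$ satisfying $\|\bar u-w_0\|_3\leq\varepsilon$; by the choice of $\varepsilon$ in \eqref{e:def_epsilon_0}, $\bar u$ is a free embedding.

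\textbf{Identification of the limit.} It remains to rigorously justify \eqref{e:formal1}--\eqref{e:formal5}. By construction, $\dot w(t)$ solves \eqref{e:normal}--\eqref{e:linearization2} with $\mathcal S_{t^{-1}}w(t)$ in place of $w(t)$, which is equivalent to $2\,d(\mathcal S_{t^{-1}}w(t))\odot d\dot w(t)=\dot h(t)$; subtracting this from the identity $\frac{d}{dt}[w(t)^\sharp e]=2\,dw(t)\odot d\dot w(t)$ gives
\[
\tfrac{d}{dt}\bigl[w(t)^\sharp e\bigr] = \dot h(t) - E(t).
\]
The decay bounds $\|\dot h(t)\|_0\leq C t^{-4}$ (from \eqref{e:a priori2}) and the $C^0$ estimate $\|E(t)\|_0\leq C t^{-5}$ obtained during the proof of Proposition~\ref{p:a priori} make the integral from $t_0$ to $\infty$ absolutely convergent in $C^0$, yielding $\bar u^\sharp e-w_0^\sharp e=\lim_{t\to\infty}(h(t)-L(t))$. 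Passing to the limit in \eqref{e:integral} using Proposition~\ref{p:convolution}(d) (so $\mathcal S_{t^{-1}}\to\mathrm{Id}$) and $\psi(t-t_0)=1$ for $t$ large gives $\lim_{t\to\infty} h(t)=h+L(\infty)$, and therefore $\bar u^\sharp e-w_0^\sharp e=h$. I expect the main obstacle to be the local existence step, where one must carefully exploit the Volterra structure of \eqref{e:integral} and the derivative-gaining property of $\mathcal S_{t^{-1}}$ in order to set up a contractive fixed-point problem in spite of the implicit coupling between $w$ and $h$; once that is in place, Proposition~\ref{p:a priori} does essentially all of the remaining work.
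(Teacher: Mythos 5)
Your outline follows the same three-step strategy as the paper: (1) local solvability via a fixed-point argument exploiting the smoothing operators, (2) continuation to $[t_0,\infty)$ using Proposition~\ref{p:a priori}, and (3) passage to the limit to identify $\bar u$. The continuation argument and the limit identification are correct; the derivation of $\frac{d}{dt}\bigl[w(t)^\sharp e\bigr]=\dot h(t)-E(t)$ and the subsequent computation of $\bar u^\sharp e - w_0^\sharp e = h(\infty)-L(\infty) = h$ is a slightly tidier route to the conclusion than the paper's appeal to the ``formal computations,'' but it is substantively the same.

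The one place where your wording would mislead if taken at face value is the ``Volterra-type structure'' claim that a \emph{continuous} curve $w$ on $[t_0,t]$ determines $h(t)$ and $\dot h(t)$ ``without invoking $\dot w(t)$.'' The right-hand side of \eqref{e:integral} contains $E(\tau)=2\,d(\mathcal S_{\tau^{-1}}w(\tau)-w(\tau))\odot d\dot w(\tau)$ for \emph{all} $\tau\in[t_0,t]$, so knowledge of $w$ as a mere $C^0$-in-time curve does not determine $h$; one must carry $\dot w$ (or, equivalently, $\dot h$) as part of the unknown. The paper handles this cleanly by iterating on the pair $(w,\lambda)$ with $\lambda=\dot h$: then $\tilde w(t)$ and the non-$\mu$ part of $\tilde\lambda(t)$ are both time-integrals over $[t_0,t]$, so the fixed-point map is a genuine contraction on a small interval in the $C^0_t C^4_x\times C^0_t C^4_x$ norm, with the Lipschitz bound from $C^4$ to $C^3$ for $\mathscr E$ compensated by the $C^3\to C^4$ regularization of the $\mathcal S_{t^{-1}}$ and $\mathcal S'_{t^{-1}}$ appearing in front of the integrals. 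Your version, read literally, treats $w$ alone as the unknown; this can be made to work if one uses $C^1$-in-time curves and shows that some iterate $\Phi^k$ is contractive (since the derivative component of $\Phi$ is only Lipschitz, not small, in one step), but it is a mild detour from the cleanest formulation, which you would have found by following through on your own remark about the ``implicit coupling between $w$ and $h$.'' Everything else in your proposal is aligned with the paper's proof.
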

\begin{proof}
The whole point lies in the following: 
\begin{itemize}
\item[(Loc)] assume $J = [t_0, t_1]$ is some closed interval (possibly trivial, namely, with $t_1=t_0$) over which we have a solution of \eqref{e:ODE}--\eqref{e:integral} satisfying the bounds \eqref{e:a priori1}--\eqref{e:Cauchy}. Then the solution can be prolonged on some open interval $[t_0, t_2[\supset [t_0, t_1]$ to a solution which satisfies the bounds \eqref{e:starting1}--\eqref{e:starting2}. 
\end{itemize}
The statement (Loc) and Proposition~\ref{p:a priori} easily imply the global existence claimed in the proposition. Indeed, if we let $[t_0, T)$ be the maximal interval over which there is a solution satisfying \eqref{e:a priori1}--\eqref{e:Cauchy}, the statement (Loc) with $t_1=t_0$ and the a priori estimates in Proposition~\ref{p:a priori} imply that $T> t_0$, since for $t_1=t_0$ we can simply set $w(t_0) = w$, $\dot h (t_0) =0$ and
all the bounds \eqref{e:a priori1}--\eqref{e:Cauchy} would be trivially true.  Moreover, if $T< \infty$, then the bounds in Proposition~\ref{p:a priori}
imply that the solution can be smoothly extended to $[t_0, T]$ and (Loc) contradicts the maximality of $T$, establishing the global existence. The convergence to a $C^3$ $\bar u$ follows from the bound \eqref{e:Cauchy}. In turn we have the bound
\[
\|d w (t) \odot d \dot w (t)\|_0 + \|d (\mathcal{S}_{t^{-1}} w (t)) \odot d \dot w (t)\|_0 \leq C t^{-4}\, ,
\]
so that all the integrals used in \eqref{e:formal2}--\eqref{e:formal5} converge in the uniform norm and define continuous functions. 
The computations in \eqref{e:formal2}--\eqref{e:formal5} are thus rigorous and yield $\bar u^\sharp e = w^\sharp e + h$. 

Hence, in what follows we will focus on the proof of (Loc).

\medskip

First of all, we rewrite \eqref{e:ODE}--\eqref{e:integral} in terms of  a fixed point for an integral operator on $(w, \lambda) := (w, \dot h)$. We start by writing 
\begin{equation}\label{e:integral1}
w (t) = w_0 + \int_{t_0}^t \mathscr{L} (\mathcal{S}_{\tau^{-1}}w(\tau)) \lambda (\tau)\, d\tau =: w_0 + \int_{t_0}^t \mathscr{W} (w (\tau), \lambda (\tau))\, d\tau\, .
\end{equation}
We then rewrite the function $E(t)$ of \eqref{e:E(t)} as
\begin{equation}
E (t) = 2 d (\mathcal{S}_{t^{-1}} w (t) - w(t)) \odot d (\mathscr{L} (\mathcal{S}_{t^{-1}} w(t)) \lambda (t)) =: \mathscr{E} (w(t), \lambda (t))\, .
\end{equation}
Finally,
\begin{align}
\lambda (t) & = \frac{d}{dt} \left\{ \mathcal{S}_{t^{-1}} \left[ \psi (t-t_0) h + \int_{t_0}^t \mathscr{E} (w(\tau), \lambda (\tau))\, \psi (t-\tau)\, d\tau\right]\right\}\nonumber\\
& =  \underbrace{\psi' (t-t_0) \mathcal{S}_{t^{-1}} h - t^{-2}\psi (t-t_0)\mathcal{S}'_{t^{-1}} h}_{=\mu (t)}   - t^{-2} \mathcal{S}'_{t^{-1}}  
\int_{t_0}^t \mathscr{E} (w(\tau), \lambda (\tau))\, \psi (t-\tau)\, d\tau\nonumber\\
 &\qquad + \mathcal{S}_{t^{-1}} \int_{t_0}^t \mathscr{E} (w (\tau), \lambda (\tau)) \psi' (t-\tau)d\tau\label{e:operatore}\, .
\end{align}
Observe now that the operator $\mathscr{W}$ is smooth on $C^4$,
because of the regularization of $\mathcal{S}_t$ (cf.~the proof of Proposition~\ref{p:a priori}). The operator $\mathscr{E}$ is locally Lipschitz from $C^4$ to $C^3$ (cf.~the proof of Proposition~\ref{p:a priori}) because it loses one derivative, but on the other hand the operators $\mathcal{S}_t$ and $\mathcal{S}'_t$ in front of the integrals in the above expressions regularize again from $C^3$ to $C^4$.
Hence the local existence in (Loc) follows from classical fixed point arguments.

\medskip

We briefly sketch the details for the reader's convenience. We consider an interval $J=[t_0, t_1]$ as in (Loc) and $t_2>t_1$, whose choice will be specified later. We consider a pair $(w, \lambda)\in C (J, C^4)$ which solves \eqref{e:integral1}--\eqref{e:operatore} and satisfies 
\begin{align}
\|w(t) -w_0\|_3 + t^{-1}\|w(t) -w_0\|_4 & \leq \varepsilon\, ,\label{e:parte1}\\
t^4 \|\lambda (t)\|_0 + \|\lambda (t)\|_4 &\leq 1\, .\label{e:parte2}
\end{align}
(and in case $t_0=t_1$ we simply set $w (t_0)=w_0$ and $\lambda (t_0) =0$). We consider next the space $X$ of pairs $(\underline w , \underline \lambda)\in C ([t_0, t_2], C^4)$ such that
\begin{itemize}
\item[(a)] $w = \underline{w}$ and $\lambda = \underline \lambda$ on the interval $J$;
\item[(b)] the following inequalities hold:
\begin{align}
\|\underline w(t) -w_0\|_3 + t^{-1}\|\underline w(t) -w_0\|_4 & \leq 2 \varepsilon\, ,\label{e:arriva1}\\
t^4 \|\underline \lambda (t)\|_0 + \|\underline \lambda (t)\|_4 & \leq 2\, .\label{e:arriva2}
\end{align}
\end{itemize}
On $X$ we consider the norm $\|(\underline w, \underline \lambda)\|_{4,0} := \max_{t\in [t_0, t_2]} (\|\underline w(t)\|_4 + \|\underline \lambda (t)\|_4)$. $X$ is clearly a complete metric space. We then consider the transformation $\mathscr{A}:X \to C ([t_0, t_2], C^4)$ given by $(\underline w, \underline h)\mapsto \mathscr{A} (\underline w, \underline h) = (\tilde{w}, \tilde{h})$ through the following formulas:
\begin{align*}
\tilde{w} (t) = &\; w_0 + \int_{t_0}^t \mathscr{W} (\underline{w} (\tau), \underline{\lambda} (\tau))\, d\tau\, ,\\
\tilde{\lambda} (t) = &\; \mu (t)  - t^{-2} \mathcal{S}'_{t^{-1}}  
\int_{t_0}^t \mathscr{E} (\underline{w} (\tau), \underline{\lambda} (\tau))\, \psi (t-\tau)\, d\tau
+ \mathcal{S}_{t^{-1}} \int_{t_0}^t \mathscr{E} (\underline{w} (\tau), \underline{\lambda} (\tau)) \psi' (t-\tau)d\tau\, .
\end{align*}
Now, if we assume $t_2 \leq t_1+1$, then $\max_t \|\mathscr{W} (\underline{w} (t), \underline{\lambda} (t)\|_4 \leq C$, because
of the estimates \eqref{e:arriva1}--\eqref{e:arriva2}. Hence we can estimate
\begin{align}
\|\tilde{w} (t) - \underline w (t_1)\|_0 \leq \int_{t_1}^{t_2} \|\mathscr{W} (\underline{w} (\tau), \underline{\lambda} (\tau)\|_4 \, d\tau \leq C (t_2-t_1)
\quad \forall t\geq t_1\, .
\end{align}
Similarly, since $\sup_t \|\mathscr{E} (\underline{w} (t), \underline{\lambda} (t))\|_3 \leq C$ and recalling the estimates of
Proposition~\ref{p:convolution}, we conclude that
\[
\|\tilde{\lambda} (t) - \underline{\lambda} (t_1)\|_4\leq \|\mu (t) - \mu (t_1)\|_4 + C (t_2-t_1) \qquad \forall t\geq t_1\, .
\]
From \eqref{e:parte1}--\eqref{e:parte2} and the smoothness of the map $\mu$, it is easy to see that \eqref{e:arriva1}--\eqref{e:arriva2} is valid for the pair $(\tilde{w}, \tilde{\lambda})$ provided $t_2-t_1$ is smaller than a certain threshold.
In particular, for $t_2-t_1$ small enough the operator $\mathscr{A}$ maps $X$ into itself.

It remains to show the contraction property. Consider two pairs $(w_1, \lambda_1), (w_2, \lambda_2)\in X$ and
$(\tilde{w}_i, \tilde{\lambda}_i) = \mathscr{A} (w_i, \lambda_i)$. Then, using the properties of the operators $\mathcal{S}_{t^{-1}}$ and $\mathcal{S}'_{t^{-1}}$ we easily conclude
\begin{align}
\|\tilde{w}_1 (t) - \tilde{w}_2 (t)\|_{4,0} & \leq \int_{t_1}^{t_2} \|\mathscr{W} (w_1 (\tau), \lambda_1 (\tau)) - \mathscr{W} (w_2 (\tau), \lambda_2 (\tau))\|_4\, d\tau\, ,\\ 
\|\tilde{\lambda}_1 (t) - \tilde{\lambda}_2 (t)\|_{4,0} & \leq  C \int_{t_1}^{t_2} \|\mathscr{E} (w_1 (\tau), \lambda_1 (\tau)) -
\mathscr{E} (w_2 (\tau), \lambda_2 (\tau))\|_3\, d\tau\, .
\end{align}
In turn, recalling the Lipschitz regularity of the operators $\mathscr{W}$ and $\mathscr{E}$, we easily achieve
\begin{align*}
\|\mathscr{A} (w_1, \lambda_1) - \mathscr{A} (w_2, \lambda_2)\|_{4,0} & = \|(\tilde{w}_1, \tilde{\lambda}_1) - (\tilde{w}_2, \tilde{\lambda}_2\|_{4,0}\nonumber\\
& \leq C (t_2-t_1)\|(w_1, \lambda_1)-(w_2, \lambda_2)\|_{4,0}\, .
\end{align*}
Again, it suffices to choose $t_2-t_1$ smaller than a certain threshold to conclude that $\mathscr{A} : X\to X$ is a contraction. 
\end{proof}

\section{Higher regularity of the map $\bar u$}

Finally, in this section we complete the proof of Theorem~\ref{t:perturbation} by showing the following result.

\begin{proposition}\label{p:higher}
The map $\bar u$ of Proposition~\ref{p:existence} belongs to $C^k$ if $h\in C^k$ for $k\geq 4$. 
\end{proposition}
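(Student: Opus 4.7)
The plan is to upgrade the $C^3$-Cauchy estimate \eqref{e:Cauchy} of Proposition~\ref{p:a priori} to a $C^k$-Cauchy estimate by induction on $m$, running from $m=3$ (the base case, handled by Proposition~\ref{p:a priori} itself) up to $m=k$. At each level we establish quantitative bounds on $w(t)$, $\dot w(t)$, $\dot h(t)$, $E(\tau)$, $L(t)$, and $D(\tau)$ shaped exactly like those in the proof of Proposition~\ref{p:a priori} but one derivative higher: roughly $\|w(t)-w_0\|_m\le C_m$, $\|w(t)-w_0\|_{m+1}\le C_m t$, $\|\dot h(t)\|_0\le C_m t^{-(m+1)}$, and $\|\dot h(t)\|_{m+1}\le C_m$. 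Taking $m=k$ and passing to $t\to\infty$ then gives $\bar u\in C^k$.

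The key to the inductive step is that $h\in C^m$ accelerates the decay of $\dot h$ in the uniform norm. Using the identity
\[
\dot h(t) = -t^{-2}\mathcal{S}'_{t^{-1}}[\psi(t-t_0)h + L(t)] + \mathcal{S}_{t^{-1}}[\psi'(t-t_0)h + \dot L(t)]
\]
from the proof of Proposition~\ref{p:a priori} together with Proposition~\ref{p:convolution}(c) applied with the sharper exponent $s=m$, one obtains $\|\dot h(t)\|_0\le C t^{-(m+1)}(\|h\|_m+\|L(t)\|_m)$. Bounding $\|L(t)\|_m$ uniformly in $t$ reduces to the bound $\|E(\tau)\|_m\le C\tau^{-2}$, which follows from the product rule \eqref{e:product} applied to $E(\tau) = 2\,d(\mathcal{S}_{\tau^{-1}}w(\tau)-w(\tau))\odot d\dot w(\tau)$, using Proposition~\ref{p:convolution}(b)(d) to trade one derivative of $w$ for a factor of $\tau$, the higher-order analogue $\|\mathscr{L}(\mathcal{S}_{t^{-1}}w(t))\|_j\le C(1+t^{j-1})$ of \eqref{e:anche_questa} together with \eqref{e:ODE} to control $\|\dot w(\tau)\|_j$, and the inductive hypothesis at level $m-1$ plus Remark~\ref{r:interpola} to control the intermediate norms of $w$ and $\dot h$.

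Once the upgraded a priori bounds are in place, Cauchy convergence in $C^m$ follows by running the computation \eqref{e:verbatim1}--\eqref{e:verbatim8} verbatim with the number $3$ replaced by $m$: integrate by parts to write $w(t)-w(s) = -\int_s^t D(\tau)(h(\tau)-h(t))\,d\tau + \mathscr{L}(\mathcal{S}_{s^{-1}}w(s))(h(t)-h(s))$, establish the higher-order analogues $\|D(\tau)\|_0\le C\tau^{-2}$ and $\|D(\tau)\|_m\le C\tau^{m-2}$ by the same differentiation identity \eqref{e:verbatim5}--\eqref{e:verbatim8} carried out to order $m$, and combine with $\|h(\tau)-h(t)\|_0\le C\tau^{-m}$ (obtained by integrating $\|\dot h\|_0\le C\tau^{-(m+1)}$) and $\|h(\tau)-h(t)\|_m\to 0$ as $\tau,t\to\infty$ (a consequence of $h\in C^m$ and Proposition~\ref{p:convolution}(d)) via the product rule. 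The resulting integrand is bounded by $C\tau^{-2}$, hence $\|w(t)-w(s)\|_m\le\delta_m(s)\to 0$ as $s\to\infty$, closing the induction and producing an $\bar u\in C^k$ which must coincide with the $C^3$-limit of Proposition~\ref{p:existence}.

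The main obstacle is executing the bootstrap in the middle paragraph cleanly: at each level $m$, the bounds on $\|E(\tau)\|_m$, $\|L(t)\|_m$, and $\|D(\tau)\|_m$ require simultaneous control of many intermediate norms of $w$, $\mathcal{S}_{t^{-1}}w$, $\dot w$, and $\dot h$, with precise powers of $t$ matching so that the feedback loop closes. This is technically an exact higher-order replica of the bookkeeping in the proof of Proposition~\ref{p:a priori} (with the exponents $3$ and $4$ replaced throughout by $m$ and $m+1$); most individual steps are mechanical applications of \eqref{e:interpolation}--\eqref{e:product} and Proposition~\ref{p:convolution}, but keeping track of the many interpolation relations is the real substance of the argument.
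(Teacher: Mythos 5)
Your overall approach matches the paper's: inductive step from level $m$ to level $m+1$ (the paper's notation: from $k$ to $k+1$), establishing bounds of the shape $\|w(t)\|_{m}+t^{-1}\|w(t)\|_{m+1}\leq C$, $t^{m+1}\|\dot h(t)\|_0+\|\dot h(t)\|_{m+1}\leq C$, then rerunning the Cauchy argument \eqref{e:verbatim1}--\eqref{e:verbatim8} at the higher order. The base case and the closing Cauchy argument are handled the same way.

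There is, however, a genuine gap in the middle paragraph, and your concluding remark that the bookkeeping is ``an exact higher-order replica'' of Proposition~\ref{p:a priori} obscures it. You claim that $\|E(\tau)\|_m\leq C\tau^{-2}$ follows from the inductive hypothesis at level $m-1$ plus the usual toolbox. Trace what that would take: $E(\tau)=2\,d(\mathcal{S}_{\tau^{-1}}w-w)\odot d\dot w$, so by \eqref{e:product} the bound on $\|E\|_m$ needs $\|\mathcal{S}_{\tau^{-1}}w-w\|_{m+1}$ and $\|\dot w\|_{m+1}$; the former needs a bound on $\|w\|_{m+1}$ and the latter needs $\|\dot h\|_{m+1}$, which via the $B(t)$-term in the formula for $\dot h$ requires a bound on $\|L(t)\|_m$ --- which is exactly what you are trying to extract from integrating $\|E(\tau)\|_m$. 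The loop does not close in one pass. This is a new difficulty that does \emph{not} occur in Proposition~\ref{p:a priori}: there the hypothesis \eqref{e:starting2} hands you $\|\dot h(t)\|_4\leq 2$, one full derivative above the conclusion's level $3$, which pre-empts the circularity; in the inductive step here the hypothesis gives $\|\dot h\|_{m}\leq C$ while the goal is $\|\dot h\|_{m+1}\leq C$, so that extra derivative is precisely what is missing.

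The paper breaks the circle by an iterative improvement: first, using only the \emph{already known} bound $\|L\|_{m-1}\leq C$ together with Proposition~\ref{p:convolution}(c), one gets the \emph{weak} estimate $\|B(t)\|_{m+1}\leq Ct$; propagating through the ODE gives $\|\dot w\|_{m+1}$ growing like a power of $t$, hence $\|E\|_m\leq Ct^{-1}$, hence (since $\int t^{-1}$ diverges) only $\|L\|_m\leq C\log t$ and $\|B\|_{m+1}\leq C\log t$; a second pass then yields $\|E\|_m\leq Ct^{-2}\log t$, which \emph{is} integrable, and this finally closes the loop with $\|L\|_m\leq C$ and $\|B\|_{m+1}\leq C$. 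Without this two-stage improvement through the logarithmic intermediate bound, the feedback loop you invoke does not close, so the central claim $\|E(\tau)\|_m\leq C\tau^{-2}$ is not established by the argument you sketch.
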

\begin{proof}
The proof will be by induction on $k$. Assume that, under the assumption $h\in C^k$, we have shown that
\begin{align}
&\|w (t)\|_k + t^{-1} \|w (t)\|_{k+1} \leq C\, ,\label{e:k_1}\\
&t^{k+1}\|\dot h (t)\|_0 + \|\dot h (t)\|_{k+1}\leq C\label{e:k_2}\, ,
\end{align}
for some constant $C$ independent of $t$. We will then show that, under the assumption that
$h\in C^{k+1}$, the same set of estimates hold with $k+1$ in place of $k$, namely
\begin{align}
\|w (t)\|_{k+1} + t^{-1} \|w (t)\|_{k+2} & \leq C'\, ,\label{e:k+1_1}\\
t^{k+2}\|\dot h (t)\|_0 + \|\dot h (t)\|_{k+1} &\leq C'\label{e:k+1_2}\, ,
\end{align}
with a constant $C'$ which might be worse than $C$, but depends only on $k$ and $t_0$ (the latter is, however,
fixed in the statement of the proposition). 
Indeed the estimate for $\|w (t)\|_{k+1}$ will come from the following stronger claim: there is a function $\delta (s)$ which converges to $0$
as $s\to\infty$ and such that
\begin{equation}\label{e:finale}
\|w (t) - w (s)\|_{k+1}\leq \delta (s) \qquad \mbox{for all $t\geq s\geq t_0$.}
\end{equation}
The claim obviously would complete the proof of the proposition, because it clearly shows that $w(t)$ converges in $C^{k+1}$ as $t\uparrow\infty$. Hence, in the rest of the proof we will focus on showing \eqref{e:k+1_1}, \eqref{e:k+1_2}, and \eqref{e:finale}.

\medskip

We start by estimating $\dot w (t)$ using \eqref{e:ODE} and recalling the same arguments of the proof of Proposition~\ref{p:a priori}: from \eqref{e:k_1}, \eqref{e:k_2}, and Proposition~\ref{p:convolution} we conclude the bounds which are the analog of \eqref{e:weaker-1} and \eqref{e:weaker-2}, namely
\begin{equation}\label{e:appunti(1)}
t^{k+1}\|\dot w (t)\|_0 + \|\dot w (t)\|_{k+1} \leq C\, .
\end{equation}
We next estimate the function $E(t)$ of \eqref{e:E(t)}, again using the arguments of Proposition~\ref{p:convolution}. First, by Proposition~\ref{p:convolution}(c) and \eqref{e:k_1} we get
\begin{equation}
t^{k} \|\mathcal{S}_{t^{-1}} w (t) - w(t)\|_1 + \|\mathcal{S}_{t^{-1}} w (t) - w(t)\|_{k+1} \leq C t\, .
\end{equation}
Then, using \eqref{e:product} we conclude the bounds which are the analog of \eqref{e:bound_E(tau)}, namely
\begin{equation}\label{e:appunti(2)}
t^{k} \|E(t)\|_0 + \|E(t)\|_k \leq C t^{-2}\, .
\end{equation}
We next recall the computation for $\dot h (t)$:
\begin{align}
\dot h (t) =\;& \underbrace{\vphantom{\int_{t_0}^t}- \frac{\psi (t-t_0)}{t^2} \mathcal{S}'_{t^{-1}} h + \psi' (t-t_0) \mathcal{S}_{t^{-1}} h}_{=:A(t)} 
\underbrace{- \frac{1}{t^2} \mathcal{S}'_{t^{-1}}\overbrace{\int_{t_0}^t E (\tau) \psi (t- \tau)\, d\tau}^{=: L(t)}}_{=:B(t)}\nonumber\\
& + \underbrace{\mathcal{S}_t \int_{\max \{t_0, t-1\}}^t E (\tau) \psi' (t-\tau)\, d\tau}_{=: C(t)}\, .
\end{align}
Now, using that $h\in C^{k+1}$, Proposition~\ref{p:convolution}(c), and the fact that $\psi' (t-t_0)$ vanishes for $t-t_0>1$, we easily conclude that
\begin{equation}\label{e:A(t)}
t^{k+2}\|A (t)\|_0 + \|A (t)\|_{k+2} \leq C\, ,
\end{equation}
where the constant $C$ depends on $k$ and $t_0$ (which are both fixed). As for $C(t)$, we can use \eqref{e:appunti(2)} and Proposition~\ref{p:convolution}(b) to conclude
\begin{equation}\label{e:C(t)}
t^{k+2}\|C (t)\|_0 + \|C(t)\|_{k+2} \leq C\, .
\end{equation}
The estimate on $B(t)$ turns out to be more delicate. First notice that, by \eqref{e:appunti(2)}, we certainly conclude that $\|L(t)\|_k \leq C$. Using now Proposition~\ref{p:convolution}(c) we get however the weaker estimate
\begin{equation}\label{e:B(t)_weak}
\|B (t)\|_{k+2} \leq C t\, .
\end{equation}
We can now go back in the argument for \eqref{e:appunti(1)} and recover $\|\dot w (t)\|_{k+2} \leq C t^2$. In turn, plugging this information in the derivation of \eqref{e:appunti(2)}
we get $\|E (t)\|_{k+1} \leq C t^{-1}$. The latter bound can be used to estimate $\|L (t)\|_{k+1} \leq C \log t$ which in turn, using Proposition~\ref{p:convolution}(c), improves \eqref{e:B(t)_weak} to
\begin{equation}\label{e:B(t)_almost}
\|B (t)\|_{k+2} \leq C \log t\, .
\end{equation}
We can now iterate the whole process to reach, respectively,
\begin{align*}
\|\dot h (t)\|_{k+2} \leq \;& C \log t\, ,\\
\|\dot w (t)\|_{k+2} \leq \;& C \log t\, ,\\
\|w (t)\|_{k+2} \leq \;& C t \log t\, ,\\
\|E (t)\|_{k+1} \leq \;& C t^{-2} \log t\, .
\end{align*}
Since however $t^{-2} \log t$ is integrable on $[t_0, \infty)$, we achieve the desired bound $\|B (t)\|_{k+2} \leq C$ and indeed, using again Proposition~\ref{p:convolution}(c),
\begin{equation}\label{e:B(t)}
t^{k+2} \|B (t)\|_0 + \|B (t)\|_{k+2} \leq C\, .
\end{equation}
Clearly \eqref{e:A(t)}, \eqref{e:B(t)} and \eqref{e:C(t)} yield \eqref{e:k+1_2}. As already argued several times, we directly conclude $\|\dot w (t)\|_{k+2}\leq C$ and
$\|w (t)\|_{k+2} \leq C t$, namely \eqref{e:k+1_1}. Besides, following the same reasoning as above we also conclude the following useful bound: 
\begin{equation}\label{e:appunti(2)_improved}
t^{k+1} \|E(t)\|_0 + \|E(t)\|_{k+1} \leq C t^{-2}\, .
\end{equation}

Thus the only bound which remains to show is \eqref{e:finale}:
the argument, however, follows almost verbatim the one for \eqref{e:Cauchy}. We briefly sketch the details. First, we recall the computation in \eqref{e:verbatim1}. 
Then, using the bound \eqref{e:k+1_2} we derive the analog of \eqref{e:verbatim2}, namely 
\begin{equation}\label{e:verba_2}
\|h (t) - h(s)\|_0 \leq C s^{-k-1} \qquad \mbox{for all $t\geq s\geq t_0$.}
\end{equation}
Similarly, using \eqref{e:verbatim3} and \eqref{e:appunti(2)_improved} we derive 
\begin{align}
& \|h (t) - h(s)\|_{k+1}\nonumber\\
& \quad \leq C s^{-1} + \|\mathcal{S}_{t^{-1}} h - \mathcal{S}_{s^{-1}} h\|_{k+1} + \|\mathcal{S}_{t^{-1}} L (\infty) - \mathcal{S}_{t^{-1}} L (\infty)\|_{k+1}
\quad \forall t\geq s\geq t_0\, .\label{e:verba4}
\end{align}
Plugging these inequalities in \eqref{e:verbatim1} and using \eqref{e:anche_questa}, we derive the existence of a function $\bar\delta (s)$ which converges to $0$ as $s\to\infty$ and such that
\begin{equation}\label{e:intermedia_bis}
\|w (t) - w(s)\|_{k+1} \leq \bar\delta (s) +  C \int_s^t (\|D (\tau)\|_{k+1} \tau^{-k-1} + \|D (\tau)\|_0)\, d\tau\, .
\end{equation}
This replaces the analogous estimate \eqref{e:intermedia}, where $D(t)$ is the quantity defined in \eqref{e:verbatim1}. The estimate $\|D (\tau)\|_0 \leq \tau^{-2}$ of \eqref{e:verbatim7} is 
certainly valid here as well. In order to estimate $\|D (t)\|_{k+1}$ we first recall the computations in \eqref{e:verbatim5} and the quantities $D'(t)$ and $D'' (t)$ introduced there. Using the better bounds $\|w (t)\|_{k+2} \leq C t$ and \eqref{e:k+1_1}, the estimate in \eqref{e:verbatim8} can in fact be improved to
\begin{equation}\label{e:verba8}
\|D (t)\|_{k+1} \leq C t\, .
\end{equation}
Inserting the inequalities just found for $\|D(\tau)\|_0$ and $\|D (\tau)\|_{k+1}$ in \eqref{e:intermedia_bis}, we immediately conclude \eqref{e:finale}, which completes our proof. 
\end{proof}

\section{The nonclosed case}

The proof of Corollary~\ref{c:Ck_2} uses a construction very similar to that employed
Corollary~\ref{c:Nash+Whitney} to show the existence of a short embedding of a noncompact manifold. 

\begin{proof}[Proof of Corollary~\ref{c:Ck_2}] Consider an open covering $\{U_\ell\}_\ell$ as in Lemma~\ref{l:covering} and let $\mathcal{C}_i$ be the corresponding classes. As in the proof of Corollary~\ref{c:Nash+Whitney}, fix a family $\{\varphi_\ell\}_\ell$ of smooth functions with the properties that $\varphi_\ell \in C^\infty_c (U_\ell)$ and for every $p\in \Sigma$ there is at least one $\varphi_\ell$ which equals $1$ on a neighborhood of $p$. Moreover, having ordered $\{U_\ell\}_\ell$ we fix a (strictly) decreasing number of parameters $\varepsilon_\ell$, converging to $0$.

Next consider the map $v^0: \Sigma \to \mathbb R^{2 (n+1)}$ defined in the following way: for each $i\in \{1, \ldots , n+1\}$ and every $p\in \Sigma$, set
\begin{align*}
v^0_{2 (i-1) +1} (p) = \varepsilon^2_\ell \varphi_\ell (p) \quad \mbox{and} \quad v^0_{2i} (p) = \varepsilon_\ell \varphi_\ell (p)\, 
\end{align*}
if $p$ is contained in some $U_\ell \in \mathcal{C}_i$, otherwise we set them equal to $0$. As already shown in the proof of Corollary~\ref{c:Nash+Whitney}, the latter map is well-defined, and we let $h: = (v^0)^\sharp e$. Provided we choose the $\varepsilon_\ell$ sufficiently small, we have $g-h > 0$. 

For each $U_\ell$ fix a smooth map $\Phi_\ell$ which maps $U_\ell$ diffeomorphically on the standard sphere $\mathbb S^n\setminus \{N\}$, where $N$ denotes the north pole. We extend it to a smooth map on the whole manifold $\Sigma$ by defining $\Phi_\ell \equiv N$
on $\Sigma \setminus U_\ell$. If $\sigma$ denotes the standard metric on $\mathbb S^n$, we then select a sequence $\eta_\ell$ of sufficiently small positive numbers such that the tensor
\[
\tilde{g} := g-h - \sum_\ell  \eta_\ell \Phi_\ell^\sharp \sigma\, 
\]
is still positive definite. 
For each $U_\ell$ consider the tensor
$g_\ell := \varphi_\ell^2 \left(\sum_\ell \varphi_\ell^2\right)^{-1} \tilde{g}$, so that
\[
\sum_\ell g_\ell = \tilde{g}\, .
\]
Observe that, since $\Phi_\ell$ is a diffeomorphism on the support of $g_\ell$, which in turn is contained in $U_\ell$, the $(0,2)$ tensor
$\bar g_\ell := (\Phi_\ell^{-1})^\sharp g_\ell$ is well-defined on $\mathbb S^n \setminus \{N\}$ and can be extended smoothly
to $\mathbb S^n$ by setting it equal to $0$. Thus there is an isometric embedding $w^\ell$ of $\mathbb S^n$ into $\mathbb R^{N_0}$
such that $(w^\ell)^\sharp e = \bar g_\ell + \eta_\ell \sigma$. By applying a translation we can assume that $w^\ell$ maps the north pole
$N$ in $0$. Thus, $u^\ell := w^\ell\circ \Phi_\ell$ is a smooth map on $\Sigma$ which vanishes identically outside $U_\ell$ and such that
\[
(u^\ell)^\sharp e = g_\ell + \eta_\ell \Phi_\ell^\sharp \sigma\, .
\]
Now, for each $i\in \{1, \ldots, n+1\}$ we define the map $v^i: \Sigma \to \mathbb R^{N_0}$ setting
$v^i (p) = \varphi_\ell (p) u^\ell (p)$
if $p$ belongs to some $U_\ell \in \mathcal{C}_i$ and $0$ otherwise. Finally, let $u = v^0\times v^1 \times \ldots \times v^{n+1}$. Then it is obvious from the construction and from Remark~\ref{r:product} that $u$ is an isometry:
\[
u^\sharp e = (v^0)^ \sharp e + \sum_\ell g_\ell + \sum_\ell \eta_\ell \Phi_\ell^\sharp \sigma = h + \tilde{g} + \sum_\ell \eta_\ell \Phi_\ell^\sharp \sigma = g\, .
\]
It follows therefore that $u$ is necessarily an immersion.
The argument of Corollary~\ref{c:Nash+Whitney} finally shows that $u$ is injective and completes the proof.
Observe that, if we set instead 
\[
\tilde{g} := g - \sum_\ell \eta_\ell \Phi_\ell^\sharp \sigma\, , 
\]
and define analogously the maps $w^\ell$, $u^\ell$ and $v^i$ with $i\in \{1, \ldots , n+1\}$, 
the resulting map $\bar u = v^1 \times \ldots \times v^{n+1}$ is an isometric immersion of $\Sigma$: the only property which is
lost compared to $u$ is indeed the injectivity. 
\end{proof}

\chapter{Continuity of solutions of parabolic equations}

\section{Introduction} 

In 1958 Nash published his fourth masterpiece \cite{Nash1958}, a cornerstone in the theory of partial differential equations. His main theorem regarded bounded solutions of linear second-order parabolic equations with uniformly elliptic nonconstant coefficients. More precisely, equations of the form
\begin{equation}\label{e:parabolic}
\partial_t u = {\rm div}_x  (A (x,t) \nabla u)\, , 
\end{equation}
where:
\begin{itemize}
\item[(a)]  the unknown $u$ is a function of time $t$ and space $x\in \mathbb R^n$;
\item[(b)] $\partial_t u$ denotes the time partial derivative
$\frac{\partial u}{\partial t}$;
\item[(c)] $\nabla u$ denotes the spatial gradient, namely the vector
\[
\nabla u (x,t) = (\partial_1 u (x,t), \ldots , \partial_n u (x,t)) = \left( \frac{\partial u}{\partial x_1} (x,t), \ldots , \frac{\partial u}{\partial x_n} (x,t)\right)\, ,
\]
\item[(d)] and ${\rm div}_x V$ denotes the (spatial) divergence of the vector field $V$, namely
\[
{\rm div}_x V (x,t) = \partial_1 V_1 (x,t) + \ldots + \partial_n V_n (x,t)\, .
\]
\end{itemize}
Following the Einstein's summation convention on repeated indices, we will often write
\[
{\rm div}_x (A\nabla u) = \partial_i (A_{ij} \partial_j u)\, .
\]
\begin{ipotesi}\label{a:ellipticity} In this chapter the coefficients $A_{ij}$ will always satisfy the following requirements:
\begin{itemize}
\item[(S)] Symmetry, namely $A_{ij} = A_{ji}$;
\item[(M)] Measurability, namely each $(x,t) \mapsto A_{ij} (x,t)$ is a (Lebesgue) measurable function;
\item[(E)] Uniform ellipticity, namely there is a $\lambda \geq 1$ such that 
\begin{equation}\label{e:ellipticity}
\lambda^{-1} |v|^2 \leq A_{ij} (x,t) v_i v_j \leq \lambda |v|^2\qquad \mbox{$\forall (x,t)\in \mathbb R^n \times \mathbb R$ and $\forall v\in \mathbb R^n$.}
\end{equation}
\end{itemize}
\end{ipotesi}

Clearly, since the coefficients $A_{ij}$ are not assumed to be differentiable, we have to specify a suitable notion of solution for \eqref{e:parabolic}.

\begin{definition}
In what follows, the term {\em solution} of \eqref{e:parabolic} in an open domain $\Omega\subset \mathbb R^n \times \mathbb R$ will denote a locally summable function $u$ with locally square summable distributional derivatives $\partial_j u$ satisfying the identity
\begin{equation}\label{e:distributional}
\int u (x,t) \partial_t \varphi (x,t)\, dx\, dt = \int \partial_i \varphi (x,t) A_{ij} (x,t) \partial_j u (x,t)\, dx\, dt \qquad \forall \varphi\in C^\infty_c (\Omega)\, .
\end{equation}
\end{definition}
The following is then Nash's celebrated H\"older continuity theorem. As usual we denote by $\|f\|_\infty$ the (essential) supremum of the measurable function $f$ and, in case $f$ coincides with a continuous function a.e., we state pointwise inequalities omitting the ``almost everywhere'' specification.

\begin{theorem}[Nash's parabolic regularity theorem]\label{t:main_reg}
There are positive constants $C$ and $\alpha$ depending only upon $\lambda$ and $n$ with the following property. If the matrix $A$ satisfies Assumption~\ref{a:ellipticity} and $u$ is a bounded distributional solution of \eqref{e:parabolic} in $\mathbb R^n\times (0, \infty)$, then the following estimate holds for all $t_2\geq t_1> 0$ and all $x_1, x_2\in \mathbb R^n$:
\begin{equation}\label{e:Holder_est_par}
|u (x_1, t_1) - u(x_2, t_2)|\leq C \|u\|_\infty \left[\frac{|x_1-x_2|^\alpha}{t_1^{\sfrac{\alpha}{2}}} + 
\left(\frac{t_2-t_1}{t_1}\right)^{\frac{\alpha}{2 (1+\alpha)}}\right] \, .
\end{equation}
\end{theorem}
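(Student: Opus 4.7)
The plan is to reduce the theorem to sharp pointwise estimates on the fundamental solution $\Gamma(x,t;y,s)$ of $\partial_t-\mathrm{div}_x(A\nabla\cdot)$; once these are in hand, any bounded solution can be written as $u(x,t)=\int\Gamma(x,t;y,s)u(y,s)\,dy$ and the H\"older estimate follows from an iterated oscillation decay. The entire argument must be scale invariant under the parabolic rescaling $(x,t)\mapsto(rx,r^2 t)$, which preserves the class of admissible matrices and dictates the form of all quantitative bounds.

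\textbf{Upper bound via Nash's inequality.} First I would establish $\Gamma(x,t;y,s)\le C(t-s)^{-n/2}$. Testing the equation with $\Gamma$ and using (E) gives $\tfrac{d}{dt}\|\Gamma\|_2^2\le -2\lambda^{-1}\|\nabla\Gamma\|_2^2$. Combined with the mass conservation $\int\Gamma\,dx=1$ and the Sobolev-type interpolation $\|f\|_2^{2+4/n}\le C\|\nabla f\|_2^2\|f\|_1^{4/n}$ (which Nash proves by a Fourier-space truncation argument), this yields the ODE $\tfrac{d}{dt}\|\Gamma\|_2^2\le -c\|\Gamma\|_2^{2+4/n}$, whose integration gives $\|\Gamma(\cdot,t;y,s)\|_2\le C(t-s)^{-n/4}$; a duality/semigroup step then upgrades this $L^1\to L^2$ bound to the desired $L^1\to L^\infty$ bound.

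\textbf{Lower bound via entropy and moment (the hard step).} The most delicate ingredient is a matching integrated lower bound, for which I would introduce Nash's \emph{entropy} $E(t)=-\int\Gamma\log\Gamma\,dy$ and the \emph{first moment} $M(t)=\int y\,\Gamma(x,t;y,s)\,dy$. A direct differentiation gives $\dot E=\int A_{ij}(\partial_i\Gamma)(\partial_j\Gamma)/\Gamma\,dy$, and combining this with Cauchy--Schwarz and the $L^\infty$ bound from the previous step forces $\dot E\gtrsim 1/t$, so $E(t)\ge\tfrac{n}{2}\log(t-s)-C$. Meanwhile $\dot M$ is estimated by $\int|\nabla\Gamma|\,dy$ and the same ingredients give $|M(t)-y|\le C\sqrt{t-s}$. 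Combining a large entropy (so $\Gamma$ is sufficiently spread out), a controlled mean (so $\Gamma$ cannot escape to infinity) and the upper bound (so $\Gamma$ cannot have a tall peak), one deduces via Jensen/Chebyshev that $\int_B\Gamma\,dz\ge\theta>0$ on some parabolic ball $B$ of radius $\sim\sqrt{t-s}$ centered near $y$, with $\theta$ depending only on $\lambda$ and $n$. This entropy--moment interplay is the true novelty of Nash's argument and the principal technical obstacle.

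\textbf{Oscillation decay and H\"older regularity.} The preceding overlap lower bound implies that any two nonnegative solutions with unit mass at time $t_0$ satisfy $\int\min(\tilde u_1,\tilde u_2)\,dz\ge\theta$ on a later time slice. Applied to $\tilde u_1=(M-u)/(M-m)$ and $\tilde u_2=(u-m)/(M-m)$, where $M,m$ denote the sup and inf of a bounded solution $u$ on a parabolic cylinder $Q_r=B_r(x_0)\times(t_0-r^2,t_0)$, this forces $\mathrm{osc}_{Q_{r/2}}u\le(1-\eta)\,\mathrm{osc}_{Q_r}u$ for a universal $\eta=\eta(\lambda,n)\in(0,1)$. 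Iterating over the dyadic scales $r,r/2,r/4,\ldots$ yields the spatial H\"older estimate with exponent $\alpha=\log_2(1/(1-\eta))$; the same argument applied to temporally translated cylinders yields the temporal estimate, and the two combine into precisely \eqref{e:Holder_est_par}.
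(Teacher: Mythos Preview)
Your outline captures the opening moves correctly---Nash's inequality gives the $L^2$ decay and then the $L^\infty$ bound $\Gamma\le C(t-s)^{-n/2}$, and the entropy and moment functionals are indeed the key quantities. But the proposal has a genuine gap at the heart of the argument, and the final step is not the route the paper takes.

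\textbf{The missing $G$ bound.} Your passage from ``large entropy, controlled moment, bounded sup'' to an overlap lower bound $\int\min(\Gamma_1,\Gamma_2)\ge\theta$ via ``Jensen/Chebyshev'' is where the real difficulty lies, and it cannot be done that cheaply. Chebyshev on the moment only tells you that each $\Gamma_i$ has most of its mass in a large ball of radius $\sim R\sqrt{t-s}$; it says nothing about \emph{where} in that ball the mass sits, so the two densities could still be essentially disjoint. Nash's actual bridge is the $G$ bound (Proposition~\ref{p:G_bound}): for the normalized solution $U(\xi,t)=t^{n/2}\Gamma(t^{1/2}\xi,t;0,0)$ he studies $G_\delta(t)=\int e^{-|\xi|^2}\log(U+\delta)\,d\xi$ and proves $G_\delta\ge -C(-\log\delta)^{1/2}$ via a differential inequality, the Gaussian Poincar\'e inequality (Lemma~\ref{l:poincare}), and crucially the \emph{two-sided} moment bound $c\sqrt t\le M(t)\le C\sqrt t$ (the lower bound coming from Carleson's inequality, Lemma~\ref{l:carleson}, which you do not mention). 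This logarithmic lower bound is what controls the set where $U$ is small near the source and is the principal technical innovation; without it the overlap estimate does not follow.

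\textbf{The final step is different.} Nash does not run an oscillation-decay iteration on parabolic cylinders. Instead, from the $G$ bound he proves the overlap estimate $\int|\Gamma(\cdot,t;x_1,\bar t)-\Gamma(\cdot,t;x_2,\bar t)|\,dx\le C\bigl(|x_1-x_2|/\sqrt{t-\bar t}\bigr)^\alpha$ by an iteration \emph{in time}: setting $A(t)=\tfrac12\int|\Gamma_1-\Gamma_2|$, he finds a sequence $t_k$ with $A(t_k)\le\sigma^k$ and $t_{k+1}-t_k$ controlled by the moments (Section~4.6). The spatial H\"older estimate then drops out of the representation formula in one line, and the temporal estimate follows by combining the spatial estimate with the moment bound (Section~\ref{s:conclusione}). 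Your proposed step---applying an overlap bound to $\tilde u_1=(M-u)/(M-m)$ and $\tilde u_2=(u-m)/(M-m)$---does not make sense as written: these are not probability densities evolving by the equation, and the overlap estimate concerns fundamental solutions with different source points, not arbitrary nonnegative solutions. An oscillation-decay argument in the De Giorgi--Moser style can be made to work, but it requires a different localization machinery and is not what is done here.
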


From the above theorem, Nash derived a fundamental corollary in the case of second-order elliptic equations
\begin{equation}\label{e:elliptic}
{\rm div}_x (A \nabla v) =0\, ,
\end{equation}
where the measurable coefficients $A_{ij}$ do not depend on $t$. 

\begin{definition}
If $\Omega$ is an open domain of $\mathbb R^n$, the term distributional solution $v$ of \eqref{e:elliptic} in $\Omega$ will denote a locally summable function $v$ with locally square summable distributional derivatives $\partial_j u$ satisfying the identity
\[
\int \partial_i v (x) A_{ij} (x) \partial_j \varphi (x)\, dx = 0 \qquad\qquad \forall \varphi\in C^\infty_c (\Omega)\, .
\]
\end{definition}

The following theorem is nowadays called De Giorgi--Nash theorem, since indeed De Giorgi proved it\footnote{In fact, De Giorgi's statement is stronger, since in his theorem $\|v\|_\infty$ in \eqref{e:Holder_est_ell} is replaced by the $L^2$ norm of $v$ (note that the power of $r$ should be suitably adjusted: the reader can easily guess the correct exponent using the invariance of the statement under the transformation $u_r (x) = u (rx)$).} independently
of Nash in \cite{DeGiorgi} (see \cite{DeGiorgiSel} for the English translation).

\begin{theorem}[De Giorgi--Nash]\label{t:DG-Nash}
There are positive constants $C$ and $\beta$ depending only upon $\lambda$ and $n$ with the following property. If the matrix $A$ satisfies Assumption~\ref{a:ellipticity} and $v$ is a bounded distributional solution of \eqref{e:elliptic} in $B_{3r} (z)\subset \Omega$, then the following estimate holds for every $x,y\in B_{r} (z)$:
\begin{equation}\label{e:Holder_est_ell}
|v (x)-v(y)|\leq C \|v\|_\infty r^{-\beta} |x-y|^\beta\, .
\end{equation}
\end{theorem}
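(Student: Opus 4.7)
The strategy is the one suggested by the statement itself: if $v$ solves the elliptic equation $\mathrm{div}(A\nabla v)=0$ in $B_{3r}(z)$, then $u(x,t):=v(x)$ is a time-independent bounded distributional solution of the parabolic equation \eqref{e:parabolic} with coefficients $A(x,t):=A(x)$ in any cylinder $B_{3r}(z)\times I$, and these coefficients trivially satisfy Assumption \ref{a:ellipticity} with the same constant $\lambda$. Theorem \ref{t:DG-Nash} should therefore follow from a suitable application of Theorem \ref{t:main_reg} to $u$, once the correct scaling is tracked.

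By the translation $z\mapsto 0$ and the rescaling $v\mapsto v(r\,\cdot)$, $A\mapsto A(r\,\cdot)$, both of which preserve $\lambda$, I reduce to showing $|v(x)-v(y)|\leq C\|v\|_\infty |x-y|^\beta$ for all $x,y\in B_1$, where $v$ is a bounded solution of \eqref{e:elliptic} in $B_3$ and $C,\beta$ depend only on $n,\lambda$. The key difficulty is that Theorem \ref{t:main_reg} demands a solution on $\mathbb R^n\times(0,\infty)$, whereas $v$ is only defined on $B_3$. To bridge the gap I would extend $A$ to a matrix $\hat A$ on all of $\mathbb R^n$ satisfying Assumption \ref{a:ellipticity} (for instance by gluing smoothly with $\lambda^{-1}I$ outside $B_3$ via a convex combination, which preserves symmetry, measurability and ellipticity up to a factor depending only on $n,\lambda$), fix a bounded measurable extension $\tilde v$ of $v|_{B_3}$ with $\|\tilde v\|_\infty\leq\|v\|_\infty$, and solve the Cauchy problem
\[
\partial_t u = \mathrm{div}(\hat A\nabla u),\qquad u(\cdot,0)=\tilde v,
\]
obtaining a bounded distributional solution on $\mathbb R^n\times(0,\infty)$ with $\|u\|_\infty\leq\|v\|_\infty$. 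Theorem \ref{t:main_reg} applied at $t_1=t_2=t\in(0,1]$ then yields
\[
|u(x,t)-u(y,t)|\leq C\|v\|_\infty\,\frac{|x-y|^\alpha}{t^{\alpha/2}}\qquad \forall x,y\in\mathbb R^n.
\]

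It remains to compare $u(\cdot,t)$ with $v$ on $B_1$. On $B_3\times(0,\infty)$ the difference $w:=u-v$ satisfies the parabolic equation with the original coefficients $A$ (both $u$ and the stationary $v$ do, since $\hat A=A$ there), and has vanishing initial trace in $B_3$. A quantitative parabolic maximum principle applied on the cylinder $B_3\times(0,T)$ then gives a modulus of continuity
\[
|u(x,t)-v(x)|\leq C\|v\|_\infty\,\omega(t)\qquad\forall x\in B_1,\, t\in(0,1],
\]
with $\omega(t)\to 0$ as $t\downarrow 0$ at a quantifiable rate uniform in the class of coefficients of Assumption \ref{a:ellipticity}. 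Combining the two bounds gives $|v(x)-v(y)|\leq C\|v\|_\infty[|x-y|^\alpha t^{-\alpha/2}+\omega(t)]$, and optimizing $t$ as a function of $|x-y|$ produces the H\"older estimate with some $\beta\in(0,\alpha)$ depending only on $n,\lambda$. Undoing the initial rescaling restores the factor $r^{-\beta}$.

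The main obstacle is precisely the last comparison step: one needs a quantitative rate $\omega(t)$ of convergence $u(\cdot,t)\to v$ which is uniform over the entire class of matrices satisfying Assumption \ref{a:ellipticity}. This is nontrivial but entirely in the spirit of Nash's methods, for example via Gaussian-type upper bounds on the fundamental solution of \eqref{e:parabolic} (Aronson-type estimates), or via a direct barrier/energy construction. A conceptually cleaner alternative, which avoids the auxiliary Cauchy problem altogether, is to deduce first from Theorem \ref{t:main_reg} a purely \emph{interior} H\"older estimate for solutions of \eqref{e:parabolic} on any parabolic cylinder $B_R(x_0)\times(t_0,t_0+R^2)$ (again using a coefficient extension to reduce to the global statement) and to apply that interior estimate directly to the stationary solution $u(x,t):=v(x)$ on $B_3\times(0,1)$; the $r^{-\beta}$ scaling is then immediate from parabolic homogeneity.
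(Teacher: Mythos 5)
Your overall strategy is exactly the one the paper follows: view the stationary $v$ as a solution of the parabolic equation, produce a nearby global parabolic solution $z$ whose initial slice agrees with $v$ on a ball, apply the parabolic H\"older estimate to $z(\cdot,t)$, and then compare $z(\cdot,t)$ with $v$ on a smaller ball, optimizing in $t$. Some minor differences: you propose to extend $A$ outside $B_3$, but this is unnecessary under the stated hypotheses (Assumption~\ref{a:ellipticity} already requires $A$ to be defined and uniformly elliptic on all of $\mathbb R^n\times\mathbb R$); and rather than an arbitrary bounded extension of $v$ followed by a Cauchy problem, the paper simply multiplies $v$ by a cutoff $\varphi\in C^\infty_c(B_3)$ equal to $1$ on $B_2$, extends by $0$, and solves the global Cauchy problem with that initial data. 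These are cosmetic.

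The genuine gap is exactly where you flag it yourself: the rate $\omega(t)$ in $\|u(\cdot,t)-v\|_{L^\infty(B_1)}\le C\|v\|_\infty\,\omega(t)$, uniform over the whole class of coefficients, is the entire content of the reduction, and you leave it as a pointer to ``Aronson-type estimates or a barrier/energy construction.'' The paper supplies this ingredient precisely, as Proposition~\ref{p:stima_2_sol}: if $w$ solves the parabolic equation on $\overline B_2\times[0,\infty)$ with $w(\cdot,0)\equiv 0$, then $\|w(\cdot,t)\|_{L^\infty(B_1)}\le C\|w\|_\infty\,t^{1/2}$. The mechanism is not an a priori barrier but the representation of $w$ in the cylinder $C_2=B_2\times(0,\infty)$ through the nonnegative boundary kernel $\rho(x,t,\xi)$ with $\int\rho=1$, combined with the moment bound $\int|x-x_0|\,S(x,t,x_0,t_0)\,dx\le C_4(t-t_0)^{1/2}$ already proved in Proposition~\ref{p:stime}; pushing the moment bound through the representation formula shows that, for $|x|\le 1$ and small $t$, the kernel puts only mass $O(t^{1/2})$ on the lateral and distant parts of $\partial C_2$, which is exactly where $w$ can be nonzero. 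So the step you defer is not a routine maximum-principle estimate; it is a consequence of the moment bound on the fundamental solution, i.e.\ of the same analysis that drives the whole chapter. Your alternative route (an interior parabolic H\"older estimate on cylinders) runs into the same missing estimate, since comparing a solution on a cylinder to a global one again requires controlling the boundary influence quantitatively. With Proposition~\ref{p:stima_2_sol} in hand, the remaining optimization you describe, $t^{(1+\alpha)/2}=|x_1-x_2|^\alpha$, is exactly what the paper does, and the rescaling back to radius $r$ is as you say.
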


Theorem~\ref{t:DG-Nash} was sufficient to give a positive answer to Hilbert's XIXth problem, namely the regularity of scalar minimizers of uniformly convex Lagrangians in any dimension, cf.~\cite[Teorema III]{DeGiorgi}. The case $n=2$ had been previously settled by Morrey in \cite{Morrey} and it was also known that the H\"older continuity of the first derivative of the minimizer would suffice to conclude its full regularity, see \cite{Hopf,Morrey2}.  The De Giorgi-Nash theorem closed the gap.\footnote{Indeed, it was known that the first partial derivatives of the minimizer satisfy a uniformly elliptic partial differential equation with measurable coefficients. De Giorgi's stronger version of Theorem~\ref{t:DG-Nash} would then directly imply the desired H\"older estimate. Nash's version was also sufficient, because a theorem of Stampacchia guaranteed the local boundedness of the first partial derivatives, cf.~\cite{Stampacchia}.}

The De Giorgi--Nash H\"older continuity theorem is false for elliptic {\em systems}, as it was noticed by De Giorgi in \cite{DeGiorgi2}.
In fact, for vectorial problems in the calculus of variations Ne\v{c}as proved later the existence of nondifferentiable minimizers of smooth uniformly convex functionals when both the domain and the target have sufficiently large dimension. The methods of Ne\v{c}as were refined further in \cite{HLN} and \cite{SY}, and recently the paper \cite{MoSa} used a different construction to show the existence
of a nondifferentiable minimizer when the target is $2$-dimensional and the domain $3$-dimensional. Since Morrey's work shows the regularity for planar minimizers even in the vectorial case, the latter example is in the lowest possible dimensions. Finally, in \cite{SY2} it was shown that if the domain is $5$-dimensional, vectorial minimizers might even be unbounded!

\medskip

Various authors rewrote, simplified and pushed further the De Giorgi--Nash theory.
The two most important contributors are probably Moser \cite{Moser1961}
and Aronson \cite{Ar}.
Moser introduced the versatile Moser iteration, 
based on the study of the time-evolution of successive powers of the solution,
which simplifies the proof (and avoids the explicit use of the entropy functional $Q$, see Definition~\ref{d:funzionali}). 
Moser further proved what is usually called Harnack inequality (although a more appropriate name in this
case would probably be ``Moser--Harnack''). For positive solutions $v$ of \eqref{e:elliptic}, the inequality 
is the estimate 
\[ 
\sup_{B_r (x)} v \leq C \inf_{B_{2r} (x)} v,
\]
where the constant $C$ only depends on $r$, the dimension $n$ and the ellipticity constant $\lambda$.

Aronson established a Gaussian type bound on the associated fundamental solution 
$S (x,t, \bar x, \bar t)$ (cf.~Theorem~\ref{t:fund_sol}), more precisely he bounded the latter from above and
from below with functions of the form
\[ \frac{K}{(t-\bar t)^{n/2}}\, e^{- B|x-\bar x|^2/(t-\bar t)}
\]
(Nash established the (weaker) upper bound with $K (t-\bar t)^{-\sfrac{n}{2}}$, cf.~Proposition~\ref{p:stime}). 

These three results, namely the H\"older continuity, the Moser--Harnack inequality,
and the Gaussian type bounds, are all connected and in some sense equivalent.
Fine expositions of this, as well as clever 
rewritings/simplifications/improvements of the proofs, 
can be found in Bass \cite[Ch. 7]{Ba:book}, \cite{Ba:Nash}
and Fabes and Stroock \cite{FS}.

\medskip

Most of the chapter will be dedicated to Nash's proof of Theorem~\ref{t:main_reg}, whereas Theorem~\ref{t:DG-Nash} will be derived from Theorem~\ref{t:main_reg} in the last section.

\section{Preliminaries and main statements}

Nash's approach to Theorem~\ref{t:main_reg} follows initially the well-known path of proving ``a priori estimates''. More precisely, standard arguments reduce Theorem~\ref{t:main_reg} to the following weaker version. In the rest of our discussion, we will use ``smooth'' to denote $C^\infty$ functions. All the statements will indeed hold under much less restrictive regularity assumptions, namely the existence and continuity of a suitable number of derivatives needed to justify the computations contained in the arguments. On the other hand, since such precise results are not needed later, in order to keep the presentation less technical we will ignore the issue. 

\begin{theorem}(A priori estimate)\label{t:main_reg_2}
Theorem~\ref{t:main_reg} holds under the additional assumptions that
\begin{itemize}
\item[(A1)] $A_{ij}$ is smooth on $\mathbb R^n \times \mathbb R$ for all $i,j = 1, \ldots n$;
\item[(A2)] $A_{ij} = \delta_{ij}$ outside of a compact set $K\times [0, T]$;
\item[(A3)] $u$ is smooth.
\end{itemize}
\end{theorem}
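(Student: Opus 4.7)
My plan is to follow Nash's original route through two-sided pointwise bounds on the fundamental solution $S(x,t;\bar x,\bar t)$ of \eqref{e:parabolic}. Under assumptions (A1)--(A2) this fundamental solution exists, is smooth, strictly positive, satisfies $\int_{\mathbb R^n} S(x,t;\bar x,\bar t)\,dx = 1$ for $t>\bar t$, and every smooth bounded solution $u$ admits the representation $u(x,t) = \int S(x,t;y,s)\,u(y,s)\,dy$ for all $0<s<t$. After normalising $\|u\|_\infty \leq 1$, the proof reduces to Gaussian-type bounds on $S$ that depend only on $n$ and $\lambda$: the H\"older estimate \eqref{e:Holder_est_par} then follows from a standard oscillation-decay iteration.

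For the bounds on $S$ I would follow Nash's functional method. Regard $S(\cdot,t;\bar x,\bar t)$ as a probability density in $x$ and introduce three scalar quantities
\begin{equation*}
G(t) = \int S\log S\,dx,\qquad M(t) = \int x\, S\,dx,\qquad Q(t) = \int |x-M(t)|^2\, S\,dx.
\end{equation*}
Differentiating in $t$ using $\partial_t S = \partial_i(A_{ij}\partial_j S)$ and integrating by parts yields
\begin{equation*}
-\dot G(t) = \int \frac{A_{ij}\partial_i S\,\partial_j S}{S}\,dx \geq \lambda^{-1}\int \frac{|\nabla S|^2}{S}\,dx,
\end{equation*}
a linear variance-growth inequality $\dot Q(t)\leq 2n\lambda$, and a Cauchy--Schwarz control $|\dot M(t)|^2 \leq \lambda^2\int|\nabla S|^2/S\,dx$ that couples the motion of the mean to the entropy production. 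Combining this entropy-production inequality with Nash's inequality $\|f\|_2^{2+4/n} \leq C_n \|\nabla f\|_2^2 \|f\|_1^{4/n}$ applied to $f = S$ (using $\|S\|_1 = 1$) and a Gronwall-type argument on $\int S^2\,dx$ gives the upper estimate $\|S(\cdot,t;\bar x,\bar t)\|_\infty \leq K(t-\bar t)^{-n/2}$. The matching lower bound is the principal technical obstacle: one combines the upper bound on $\|S\|_\infty$, the linear growth of $Q$, and the entropy inequality to show that no substantial portion of the mass of $S$ can concentrate outside a ball of radius $\sim\sqrt{t-\bar t}$ around $M(t)$, and then an averaged-to-pointwise argument using the PDE itself upgrades the resulting integral lower bound on $S$ into the pointwise estimate
\begin{equation*}
S(x,t;\bar x,\bar t) \geq c(n,\lambda)\,(t-\bar t)^{-n/2}\qquad\text{whenever }|x-\bar x|\leq C_1\sqrt{t-\bar t}.
\end{equation*}

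Equipped with the two-sided bounds on $S$, the final step is classical. The representation $u(x,t) = \int S(x,t;y,s)\,u(y,s)\,dy$ combined with the pointwise lower bound on $S$ yields the following oscillation-decay estimate: if $u$ takes values in $[0,1]$ on $B_{2r}(\bar x)\times[\bar t,\bar t+4r^2]$ and $|\{u(\cdot,\bar t)\geq \tfrac12\}\cap B_{2r}(\bar x)|\geq \tfrac12|B_{2r}|$ then $u(x,\bar t+r^2)\geq \eta(n,\lambda)>0$ for every $x\in B_r(\bar x)$. Applied to $u$ or $1-u$ (whichever half-space of values has larger measure), this produces
\begin{equation*}
\operatorname{osc}_{B_r(\bar x)\times\{\bar t+r^2\}}\, u \leq \theta\,\operatorname{osc}_{B_{2r}(\bar x)\times[\bar t,\bar t+r^2]}\, u,\qquad \theta = 1-\eta<1.
\end{equation*}
Iterating on dyadic parabolic cylinders around $(x_1,t_1)$ at scales $r_k = 2^{-k}\sqrt{t_1}$ produces the spatial H\"older estimate with exponent $\alpha = -\log_2\theta$, and interpolating this spatial bound with the crude estimate $\|u\|_\infty \leq 1$ along the time direction produces the weaker time exponent $\alpha/(2(1+\alpha))$ appearing in \eqref{e:Holder_est_par}, thereby completing the a priori estimate.
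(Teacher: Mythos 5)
Your plan correctly reduces the problem to bounds on the fundamental solution $S$ via the representation formula, and your route to the \emph{upper} bound $\|S(\cdot,t;\bar x,\bar t)\|_\infty\leq K(t-\bar t)^{-n/2}$ --- Nash's inequality applied to the energy $E(t)=\int S^2$, then a semigroup/duality step --- matches Proposition~\ref{p:stime} of the paper. After that, however, you diverge substantially, and the divergence introduces a serious gap.

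You propose to prove a \emph{pointwise} two-sided Gaussian estimate, in particular the lower bound $S(x,t;\bar x,\bar t)\geq c(t-\bar t)^{-n/2}$ for $|x-\bar x|\lesssim\sqrt{t-\bar t}$, and then to run a De~Giorgi--Moser oscillation-decay iteration. This pointwise lower bound is Aronson's theorem, which postdates Nash's paper by a decade and is a strictly stronger statement than what is needed. The paper (following Nash) never proves such a bound: it only establishes the $L^1$-level \emph{overlap estimate} of Proposition~\ref{p:overlap}, namely $\int|S(x,t,x_1,\bar t)-S(x,t,x_2,\bar t)|\,dx\leq C(|x_1-x_2|/(t-\bar t)^{1/2})^\alpha$, which already contains the H\"older exponent and yields the spatial estimate \eqref{e:space} in one line, with no dyadic iteration. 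Your ``averaged-to-pointwise argument using the PDE itself'' is where the work would actually hide: upgrading an integral lower bound to a pointwise one at the correct scale is exactly the delicate step in Aronson's/Fabes--Stroock's proofs, and as sketched it is not a proof. If one naively tries to pass from average to pointwise via a local maximum principle or Harnack-type argument, one is in danger of circularity, since the Harnack inequality for measurable coefficients is essentially equivalent to the theorem you are proving.

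There is also a mismatch in the auxiliary functionals. You work with $G=\int S\log S$, the mean $M=\int xS$, and the variance $Q=\int|x-M|^2 S$. Nash's argument hinges instead on the \emph{absolute first moment} $M(t)=\int|x|T\,dx$ and the entropy, linked by Carleson's inequality $M\geq c\,e^{Q/n}$ (Lemma~\ref{l:carleson}); the two-sided bound $C^{-1}t^{1/2}\leq M(t)\leq Ct^{1/2}$ is the cornerstone, and the \emph{lower} moment bound (which you do not derive) is precisely what enters the proof of the $G$-bound (Proposition~\ref{p:G_bound}) via the Gaussian Poincar\'e inequality. Your second-moment inequality $\dot Q\leq 2n\lambda$ is an \emph{upper} bound on spread and does not substitute for Carleson's lower bound on the first moment, which is what prevents over-concentration of $S$ near its peak and ultimately drives the overlap estimate. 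To align with the paper you would replace the Gaussian lower bound and the oscillation-decay iteration by the chain: moment bounds $\Rightarrow$ $G$-bound $\Rightarrow$ overlap estimate $\Rightarrow$ \eqref{e:space}, and obtain the time regularity by splitting the representation integral at scale $\rho$ and optimizing, exactly as in \eqref{e:I1}--\eqref{e:time}.
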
 

Observe a crucial point: it is well known (and it was well known at the time Nash wrote his note) that the assumptions (A1)--(A3) imply the smoothness of any solution of \eqref{e:parabolic}, but the crucial point in Theorem~\ref{t:main_reg_2} is that the constants $C$ and $\alpha$ 
of \eqref{e:Holder_est_par} are {\em independent} of $A$ (more precisely, they depend only on the dimension $n$ and the constant $\lambda$ in \eqref{e:ellipticity}). We will focus on Theorem~\ref{t:main_reg_2} for most of the subsequent sections and only at the end, in Section \ref{s:para_tecnico}, we will
show how to conclude Theorem~\ref{t:main_reg} from it.\footnote{Nash does not provide any argument nor reference, he only briefly mentions that Theorem~\ref{t:main_reg} follows from Theorem~\ref{t:main_reg_2} using a regularization scheme and the maximum principle. Note that a derivation of the latter under the weak regularity assumptions of Theorem~\ref{t:main_reg} is, however, not entirely trivial: in Section \ref{s:para_tecnico} we give an alternative argument based on a suitable energy estimate.}

\medskip

Under the assumptions (A1)--(A3) of Theorem~\ref{t:main_reg_2} we take advantage of the existence of fundamental solutions. More precisely, we recall the following theorem (see \cite[Ch.~1.6]{Friedman}).

\begin{theorem}\label{t:fund_sol}
Under the assumptions of Theorem~\ref{t:main_reg_2}
there is a smooth map 
\[
(x,t,\bar x, \bar t) \mapsto S (x,t, \bar x, \bar t)
\] 
defined for $x, \bar x\in \mathbb R^n$ and $t>\bar t$ with the following properties:
\begin{itemize}
\item[(a)] The map $(x,t) \mapsto S (x,t, \bar x, \bar t) = T (x,t)$ is a classical solution of \eqref{e:parabolic} on $\mathbb R^n \times (\bar t, \infty)$.
\item[(b)] $T (\cdot, t)$ and $\partial_t^k T (\cdot, t)$ belong to the Schwartz space of rapidly decreasing smooth functions $\mathscr{S} (\mathbb R^n)$ and the corresponding seminorms can be bounded uniformly when $t$ belongs to a compact subset of $(\bar t, \infty)$.
\item[(c)] $T>0$ and $\int T (x,t)\, dx = 1$ for every $t>\bar t$.
\item[(d)] $T (\cdot, t)$ converges, in the sense of measures, to the Dirac mass $\delta_{\bar x}$ as $t\downarrow \bar t$, namely 
\[
\lim_{t\downarrow \bar t} \int T (x,t) \varphi (x)\, dx = \varphi (\bar x) 
\]
for any bounded continuous test function $\varphi$. Moreover, for any ball $B_r (\bar x)$, the function $T (\cdot, t)$ converges to $0$ on
$\mathbb R^n \setminus B_r (\bar x)$ with respect to all the seminorms of the Schwartz space.  
\item[(e)] For any $u$ bounded smooth solution of \eqref{e:parabolic} on $\mathbb R^n \times [\bar t, T[$ we have the representation formula
\begin{equation}\label{e:representation}
u (x,t) = \int S (x,t, y, \bar t) u (y, \bar t)\, d y\, .
\end{equation}
Vice versa, given a bounded smooth $u_0 (y) =: u (y, \bar t)$ the formula above gives the unique solution on $[\bar t, \infty[$ subject to the corresponding initial condition.
\item[(f)] The properties above hold for the map $(\bar x, \bar t)\mapsto S(x,t, \bar x, \bar t) = \bar T (\bar x, \bar t)$ on the domain $\mathbb R^n \times (-\infty, t)$, which therefore is a (backward in time) fundamental solution of the adjoint equation 
\begin{equation}\label{e:adjoint}
-\partial_{\bar t} \bar T = \partial_{\bar x_j} (A_{ij} \partial_{\bar x_i} \bar T)\, .
\end{equation}
\end{itemize}
\end{theorem}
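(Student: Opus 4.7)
The plan is to follow Levi's classical parametrix method. Fix $(\bar x, \bar t)$ and, for each auxiliary point $(y,s)$ with $s>\bar t$, let $Z(x,t;y,s)$ be the Gaussian fundamental solution associated with the \emph{frozen} constant-coefficient operator $\partial_t - \partial_i(A_{ij}(y,s)\partial_j\,\cdot)$: explicitly a normalized Gaussian in $x-y$ with covariance matrix $(t-s)A(y,s)$. Thanks to (A2), outside the compact set $K\times[0,T]$ the parametrix $Z$ coincides with the standard heat kernel. I would then seek $S$ in the form
\[
S(x,t;\bar x,\bar t)=Z(x,t;\bar x,\bar t)+\int_{\bar t}^{t}\!\!\int_{\mathbb R^n} Z(x,t;y,s)\,\Phi(y,s;\bar x,\bar t)\,dy\,ds\,,
\]
where $\Phi$ is an unknown density. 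Imposing that $S$ solves \eqref{e:parabolic} in the $(x,t)$ variables produces a Volterra integral equation $\Phi=K+K*\Phi$, with $K$ obtained by applying the true operator to $Z$; the smoothness of $A$ makes $K$ into a weakly singular kernel whose singularity near $t=\bar t$ is integrable. Thus the Neumann series $\Phi=\sum_{k\ge 0}K^{*k}$ converges to a smooth $\Phi$, and (a) holds by construction.

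Properties (b)--(d) follow from careful bookkeeping of the Gaussian bounds through the iteration. Each $K^{*k}$ inherits a Gaussian in $|x-\bar x|^2/(t-\bar t)$, so $S$ and all its $x$-derivatives decay faster than any polynomial in $|x|$ uniformly as $t$ ranges in compact subintervals of $(\bar t,\infty)$; the parabolic equation converts this into analogous control of $\partial_t^k S$, giving (b). For (d), $Z(\cdot,t;\bar x,\bar t)\to\delta_{\bar x}$ is the classical Gaussian delta-approximation, while the correction term carries an extra factor of $(t-\bar t)$ and vanishes as $t\downarrow\bar t$. Positivity in (c) is obtained by approximating $\delta_{\bar x}$ with smooth nonnegative compactly supported data, solving the Cauchy problem classically under (A1)--(A2), and invoking the weak maximum principle; the strict inequality $S>0$ follows from the strong maximum principle. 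Mass conservation $\int S(\cdot,t;\bar x,\bar t)\,dx=1$ comes from integrating the divergence-form equation over $\mathbb R^n$: by (b) boundary terms vanish, giving $\frac{d}{dt}\int S\,dx=0$, and (d) fixes the initial value to $1$.

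For (e) I would set $v(x,t):=\int S(x,t;y,\bar t)u(y,\bar t)\,dy$; differentiation under the integral (justified by (b)) shows that $v$ solves \eqref{e:parabolic} with $v(\cdot,t)\to u(\cdot,\bar t)$ as $t\downarrow\bar t$ in the sense of distributions, and since both $u$ and $v$ are bounded on every slab $\mathbb R^n\times[\bar t,T_0]$, uniqueness of bounded solutions of the Cauchy problem (which holds under (A1)--(A3) via the maximum principle, or equivalently via the energy estimate to be used in Section~\ref{s:para_tecnico}) yields $v\equiv u$. For (f) I would exploit the symmetry $A_{ij}=A_{ji}$: the formal adjoint of $\partial_t-\partial_i(A_{ij}\partial_j\,\cdot)$ in the $(\bar x,\bar t)$ variables is precisely the backward operator $-\partial_{\bar t}-\partial_{\bar x_j}(A_{ij}\partial_{\bar x_i}\,\cdot)$, so one repeats the parametrix construction backward in time to obtain a fundamental solution $S^{\ast}(\bar x,\bar t;x,t)$ of the adjoint, and then verifies $S=S^{\ast}$ by differentiating $s\mapsto\int S(x,t;y,s)\,S^{\ast}(y,s;\bar x,\bar t)\,dy$ in the sliding intermediate time $s$: this quantity is independent of $s$ by the two equations, and passing to the limits $s\downarrow\bar t$ and $s\uparrow t$ identifies the two kernels.

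The main obstacle I expect is not the existence itself but the uniform quantitative estimates in (b). Tracking that \emph{every} Schwartz seminorm of $S(\cdot,t;\bar x,\bar t)$ and of $\partial_t^k S(\cdot,t;\bar x,\bar t)$ stays bounded uniformly as $t$ varies in a compact subset of $(\bar t,\infty)$, and that the convergence in (d) holds in all those seminorms on $\mathbb R^n\setminus B_r(\bar x)$, requires a careful induction on $k$ in the Neumann series, using at each step Gaussian convolution estimates and the compact support of $A-I$ to absorb all polynomial weights in $x$.
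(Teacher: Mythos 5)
Your proposal is a sound reconstruction of the classical Levi parametrix argument, but note that the paper does not prove Theorem~\ref{t:fund_sol} at all: it explicitly cites \cite[Ch.~1]{Friedman} for the existence of $S$ with properties (a)--(e), \cite[Th.~15]{Friedman} for the adjoint property (f), and \cite[Th.~10--11]{Friedman} for the basic regularity and first-order decay, remarking only that the full smoothness and Schwartz decay in (b) and (d) are "stated for completeness" and follow "easily from the arguments given in \cite{Friedman}" once $A$ is smooth and constant outside a compact set. Since Friedman's construction is precisely the parametrix/Volterra scheme you outline, you have essentially reproduced the proof the paper outsources; your closing caveat about the quantitative bookkeeping for all Schwartz seminorms is exactly the part the paper waves away as ``easy'', and the paper even adds that Nash's arguments do not really use that extra information.
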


Except for the smoothness, the existence of a map $S$ with all the properties listed above is given in \cite[Ch.~1]{Friedman}
(note that point (f) is proved in \cite[Th.~15]{Friedman}). The latter reference shows that $S$ has continuous first-order derivatives (in time and space) and continuous second-order derivatives in space when the coefficients $A_{ij}$ are $C^2$ (in fact $C^{1, \alpha}$, cf.~\cite[Th~10]{Friedman}). Decay properties for the function and its first-order space derivatives are then showed in 
\cite[Th~11]{Friedman}. The higher regularity (and the decay of higher derivatives) when the coefficients $A_{ij}$ are smooth and constant outside of a compact set, follows easily from the arguments given in \cite{Friedman},
and we have stated it only for completeness: indeed the arguments of Nash do not really need this additional information.

In the remaining sections we will derive several bounds on the map $S$ which will finally  lead to a proof of Theorem~\ref{t:main_reg_2}
through the representation formula \eqref{e:representation}. Three very relevant quantities which we will compute on the fundamental solutions are the energy, the entropy and the first moment.

\begin{definition}\label{d:funzionali}
Under the assumptions of Theorem~\ref{t:main_reg_2} let $T (x,t) := S (x,t,0,0)$, where $S$ is the map of Theorem~\ref{t:fund_sol}. 
We then introduce
\begin{itemize}
\item[(i)] The {\em energy} $E(t) := \int T (x,t)^2\, dx$.
\item[(ii)] The {\em entropy} $Q (t) := - \int T (x,t) \log T (x,t)\, dx$.
\item[(iii)] The {\em first moment} $M (t) := \int T (x,t) |x|\, dx$. 
\end{itemize}
\end{definition}

On each of these quantities (which by Theorem~\ref{t:fund_sol} are smooth on $(0, \infty)$)
Nash derives subtle crucial estimates, which we summarize in the following proposition.

\begin{proposition}[Bounds on the energy, the entropy and the moment]\label{p:stime}
Under the assumptions of Theorem~\ref{t:main_reg_2} there are positive constants $C_1, C_2, C_3$ and $C_4$, depending only upon $\lambda$ and $n$, such that the following holds. If $T, E, Q$ and $M$ are as in Definition~\ref{d:funzionali}, then
\begin{align}
&E (t) \leq C_1  t^{-\sfrac{n}{2}}\, ,\label{e:energy_bound}\\
&\|T (\cdot, t)\|_\infty \leq C_2 t^{-\sfrac{n}{2}}\, ,\label{e:infty_bound}\\
&Q(t) \geq - C_3 + \frac{n}{2} \log t\, ,\label{e:entropy_bound}\\
&C_4^{-1} t^{\sfrac{1}{2}} \leq M (t) \leq C_4 t^{\sfrac{1}{2}}\, .\label{e:moment_bound}
\end{align}
\end{proposition}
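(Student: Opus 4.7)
The plan is to establish the four estimates in the stated order, since each builds on the previous one. Throughout I use the weak formulation $\tfrac{d}{dt}\int T\,\varphi\,dx=-\int A_{ij}\partial_j T\,\partial_i\varphi\,dx$ (valid by Theorem~\ref{t:fund_sol}(b)), the ellipticity bounds $\lambda^{-1}|v|^2\leq A_{ij}v_iv_j\leq \lambda|v|^2$, and the fact that $T(\cdot,t)$ is a probability density.

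\textbf{Energy and sup bounds.} Taking $\varphi=2T$ gives $E'(t)=-2\int A_{ij}\partial_iT\partial_jT\,dx\leq-2\lambda^{-1}\int|\nabla T|^2\,dx$. The analytic heart is Nash's inequality $\|u\|_2^{2+4/n}\leq c_n\|u\|_1^{4/n}\|\nabla u\|_2^2$, which I would derive via a low/high Fourier split of $u$. Applied to $T(\cdot,t)$ with $\|T\|_1=1$, this yields the closed ODE inequality $E'(t)\leq -c\,E(t)^{1+2/n}$; integrating gives $E(t)\leq C_1 t^{-n/2}$. For the sup bound I would write $T(x,t)=\int S(x,t,y,t/2)\,T(y,t/2)\,dy$ (Theorem~\ref{t:fund_sol}(e) started at time $t/2$) and apply Cauchy--Schwarz in $y$: both $L^2$-factors are bounded by $\sqrt{C_1(t/2)^{-n/2}}$, the second via the same energy bound applied to the backward adjoint fundamental solution (Theorem~\ref{t:fund_sol}(f); the ellipticity constant is unchanged since $A=A^T$).

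\textbf{Entropy lower bound and lower bound on $M$.} From $0<T\leq C_2 t^{-n/2}$ and $\int T=1$, one has $-\log T\geq\tfrac{n}{2}\log t-\log C_2$, hence $Q(t)\geq-C_3+\tfrac{n}{2}\log t$ with $C_3:=\log C_2$. For the lower bound on $M$ a Chebyshev-type argument suffices: $\int_{|x|<R}T\,dx\leq \|T\|_\infty\omega_n R^n\leq C_2\omega_n R^n t^{-n/2}$, so choosing $R=c\,t^{1/2}$ with $C_2\omega_n c^n=\tfrac{1}{2}$ forces $\int_{|x|\geq R}T\,dx\geq\tfrac{1}{2}$ and hence $M(t)\geq R/2 = (c/2)\,t^{1/2}$.

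\textbf{Upper bound on $M$ and main obstacle.} By Cauchy--Schwarz $M(t)^2\leq M_2(t):=\int T|x|^2\,dx$, so it suffices to show $M_2(t)\leq C\,t$. Differentiating $Q$ and using the PDE yields the Fisher-information identity $Q'(t)=\int T\,A_{ij}(\partial_i\log T)(\partial_j\log T)\,dx\geq 0$. Taking $\varphi=|x|^2$ in the weak formulation, writing $\partial_jT=T\partial_j\log T$, and applying Cauchy--Schwarz with the $A$-weighted form under the measure $T\,dx$ produces $|M_2'(t)|^2\leq 4Q'(t)\int T\,A_{ij}x_ix_j\,dx\leq 4\lambda Q'(t)M_2(t)$, equivalently $\tfrac{d}{dt}\sqrt{M_2(t)}\leq\sqrt{\lambda Q'(t)}$. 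Integrating from $s=0$ (where $M_2\to 0$ by Theorem~\ref{t:fund_sol}(d)) and applying a weighted Cauchy--Schwarz in time gives
\[
\sqrt{M_2(t)}\leq\sqrt{\lambda}\int_0^t\sqrt{Q'(s)}\,ds\leq\Bigl(2\lambda\sqrt{t}\int_0^t\sqrt{s}\,Q'(s)\,ds\Bigr)^{1/2}.
\]
I would then integrate by parts to write $\int_0^t\sqrt{s}\,Q'(s)\,ds=\sqrt{t}\,Q(t)-\tfrac{1}{2}\int_0^t Q(s)/\sqrt{s}\,ds$, control the last integral from below using the entropy lower bound just proved, and bound $Q(t)$ from above by the Gaussian maximum-entropy inequality $Q(t)\leq\tfrac{n}{2}\log M_2(t)+C_n$. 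Everything then collapses to a self-improving inequality $M_2(t)/t\leq C\bigl(n\log(M_2(t)/t)+1\bigr)$, which forces $M_2(t)\leq C_4^2\,t$ and hence the desired upper bound on $M(t)$. This last step is the delicate one and the genuinely novel idea of Nash's argument: the natural inequality $|M_2'|^2\leq 4\lambda Q'M_2$ cannot be integrated naively because $\int_0^t Q'\,ds$ is singular at $s=0$ (as it must be for a Dirac initial datum), and only the closed feedback loop between entropy, moment, and Fisher information — with the entropy controlled from below by the sup bound and from above by $\log M_2$ via Gaussian Jensen — makes the argument self-consistent.
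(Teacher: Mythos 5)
Your energy, $L^\infty$, and entropy arguments reproduce the paper's exactly (Nash's inequality, representation at time $t/2$ with Cauchy--Schwarz and the adjoint solution, then $Q \geq -\log\|T\|_\infty$). The moment bound, however, you prove along a genuinely different route than the paper, and it is worth noting both the divergence and the fact that it works.

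For the lower bound $M(t)\gtrsim t^{1/2}$, the paper uses Carleson's inequality $M(t)\geq c\,e^{Q(t)/n}$ (Lemma~\ref{l:carleson}) combined with the entropy lower bound, whereas you bypass Carleson entirely with a Chebyshev argument from the sup bound: at least half of the mass must lie outside a ball of radius $\sim t^{1/2}$. Your route is more elementary and, to my eye, cleaner. For the upper bound, the paper works directly with the first moment $M$ and the scalar inequality $|M'|^2 \leq \lambda Q'$, then closes the self-improving loop $e^{R(t)} \leq c(1 + R(t)/2)$ via Carleson again. You instead control the second moment $M_2$ via $|M_2'|^2 \leq 4\lambda Q' M_2$, i.e.\ $|\tfrac{d}{dt}\sqrt{M_2}| \leq \sqrt{\lambda Q'}$, and close the loop using the Gaussian maximum-entropy inequality $Q \leq \tfrac{n}{2}\log M_2 + C_n$ instead of Carleson. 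Structurally the two are the same --- both must integrate a fractional power of $Q'$ which is nonintegrable near $s=0$, and both neutralize this singularity by an integration by parts that trades $Q'$ for $Q$ (the paper's concavity trick $(1+\xi)^{1/2}\leq 1+\xi/2$ plays the role of your weighted Cauchy--Schwarz plus integration by parts), followed by a self-improving inequality of the form $\rho \leq C\log\rho + C'$. Your version replaces Carleson by the second-moment Gaussian bound, which you may find more familiar; Carleson's inequality is really the first-moment analogue. One minor technical point you should make explicit: the boundary term $\sqrt{s}\,Q(s)\to 0$ as $s\downarrow 0$ in your integration by parts requires both the entropy lower bound already proved and the upper bound $Q(s)\leq \tfrac{n}{2}\log M_2(s)+C_n$ together with $M_2(s)\to 0$ (which follows from Theorem~\ref{t:fund_sol}(d)); the paper has the same issue, also left implicit, via $R\geq 0$.
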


The last bound is in fact the cornerstone of Nash's proof. With it he derives subsequently what he calls {\em $G$ bound}.

\begin{definition}\label{d:G}
Let $T$ be as in Definition~\ref{d:funzionali} and consider the ``normalization'' $U$ of the fundamental solution:
$U (y, t) := t^{\sfrac{n}{2}} T ( t^{\sfrac{1}{2}} y, t)$. For any $\delta\in ]0,1[$ the $G_\delta$-functional is
\begin{equation}\label{e:G}
G_\delta (t) = \int e^{-|y|^2} \log (U (y, t) + \delta)\, dy\, .
\end{equation}
\end{definition}

\begin{proposition}[$G$ bound]\label{p:G_bound}
Under the assumptions of Theorem~\ref{t:main_reg_2} there are constants $C_5$ and $\delta_0$, depending only upon $\lambda$ and $n$, such that the following holds. If $G_\delta$ is as in Definition~\ref{d:G}, then 
\begin{equation}\label{e:G_bound}
G_\delta (t) \geq - C_5 (-\log \delta)^{\sfrac{1}{2}} \qquad \mbox{for all $\delta <\delta_0$.}
\end{equation}
\end{proposition}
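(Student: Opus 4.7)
My plan is to pass to the rescaled fundamental solution $U(y,t) := t^{n/2}T(t^{1/2}y,t)$, derive a differential inequality for $G_\delta(t)$ from the equation $U$ satisfies, and close the argument with a Gaussian Poincar\'e/concentration estimate built on the moment lower bound. A direct change of variables yields that $U$ satisfies the Fokker--Planck type equation
\[
t\,\partial_t U \;=\; \tfrac{1}{2}\operatorname{div}_y(y U) \;+\; \operatorname{div}_y\!\bigl(\tilde A(y,t)\,\nabla_y U\bigr),\qquad \tilde A(y,t) := A(t^{1/2}y,t),
\]
with $\tilde A$ still $\lambda$-uniformly elliptic, while the conclusions of Proposition~\ref{p:stime} transfer to the $t$-uniform bounds $\int U\,dy = 1$, $\|U(\cdot,t)\|_\infty\leq C_2$, $\int U\log U\,dy\leq C_3$, and $C_4^{-1}\leq \int|y|\,U\,dy\leq C_4$. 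A Chebyshev estimate with $R=2C_4$ gives $\int_{|y|\leq R}U\,dy \geq 1/2$, and hence the ``lower mass'' bound
\[
\int e^{-|y|^2}\,U(y,t)\,dy \;\geq\; c_0 \;:=\; \tfrac{1}{2}e^{-4C_4^2}\qquad\text{for every }t>0,
\]
which will be a crucial ingredient in the final step.

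Setting $V := \log(U+\delta)$ and $\rho := e^{-|y|^2}$, I differentiate $G_\delta(t) = \int\rho V\,dy$ along the flow, substitute the PDE, and integrate by parts twice (using $\nabla\rho=-2y\rho$) to get
\[
t\,\frac{dG_\delta}{dt} \;=\; \int\rho\Bigl[\tfrac{|y|^2 U}{U+\delta} + \tfrac{U}{2(U+\delta)}\,y\!\cdot\!\nabla V + 2\,y\!\cdot\!\tilde A\nabla V + \nabla V\!\cdot\!\tilde A\nabla V\Bigr]\,dy.
\]
Using $0\leq U/(U+\delta)\leq 1$, applying Cauchy--Schwarz to the quadratic form $\tilde A$, and invoking uniform ellipticity, the three cross terms are absorbed into $\tfrac{3}{4}\int\rho\,\nabla V\!\cdot\!\tilde A\nabla V\,dy$ plus a finite multiple of $\int\rho|y|^2\,dy$, yielding the differential inequality
\[
t\,\frac{dG_\delta}{dt} \;\geq\; \tfrac{1}{4\lambda}\int\rho|\nabla V|^2\,dy \;-\; C_\lambda.
\]
Dropping the positive Dirichlet term already gives $G_\delta(t_2)\geq G_\delta(t_1) - C_\lambda\log(t_2/t_1)$, so the statement of the proposition is stable under bounded logarithmic dilations of time and it suffices to bound $G_\delta$ from below at any one convenient time.

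The closing argument uses both sides of this inequality. Integrating over $[t,2t]$ and combining with the trivial upper bound $G_\delta\leq \pi^{n/2}\log(C_2+1)$ controls the logarithmic time-average of the weighted Fisher information $\int\rho|\nabla V_\tau|^2\,dy$; picking a good intermediate time $\tau$ and invoking the Gaussian Poincar\'e inequality $\int(V-\bar V)^2\rho\,dy\leq C\int|\nabla V|^2\rho\,dy$ (with $\bar V := G_\delta(\tau)/\pi^{n/2}$) bounds $\|V-\bar V\|_{L^2(\rho)}$ in terms of $\sqrt{|G_\delta|+1}$. Next, the lower mass bound $\int\rho(U+\delta)\,dy \geq c_0$ together with the identity $\int e^V\,d\mu = e^{\bar V}\int e^{V-\bar V}\,d\mu$ (for $d\mu := \rho\,dy/\pi^{n/2}$) and a Herbst-type Gaussian exponential integrability of $V-\bar V$, converts the $L^2$-oscillation bound into a pointwise lower bound of the form $\bar V \geq -C\sqrt{|G_\delta|+1} - C$, a self-improving inequality that forces $G_\delta\geq -C_5\sqrt{-\log\delta}$. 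The delicate step, where I expect the main obstacle to lie, is exactly this conversion from $L^2(\rho)$-oscillation to a pointwise lower bound on $\bar V$ of the correct order: one must pair the crude logarithmic bound $G_\delta\geq \pi^{n/2}\log\delta$ (which feeds in a Dirichlet energy of size $-\log\delta$) with Gaussian concentration of $e^{V-\bar V}$ in a way that precisely halves the exponent, producing the square-root rate claimed by the proposition.
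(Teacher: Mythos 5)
Your proposal is correct in substance, but it takes a genuinely different path from Nash's.

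\textbf{What you do differently.} After deriving the Fokker--Planck form of the rescaled equation, you absorb the cross terms crudely into the Dirichlet form and discard the positive $\frac{|y|^2 U}{U+\delta}$ term, arriving at the non-autonomous inequality $t\,G_\delta'(t)\geq \tfrac{c}{\lambda}\int e^{-|y|^2}|\nabla V|^2 - C_\lambda$. You then time-average over a dyadic interval, pigeonhole a good slice $\tau$ at which the weighted Fisher information is controlled by $1+|G_\delta|$ at the earlier time, apply the Gaussian Poincar\'e inequality, convert the resulting $L^2(\rho)$ oscillation bound into a lower bound on $\bar V = G_\delta(\tau)/\pi^{n/2}$, and bootstrap once from the trivial lower bound $G_\delta\geq \pi^{n/2}\log\delta$. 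Nash instead keeps the term $-nG_\delta$ coming from integrating $H_2$ by parts, Cauchy--Schwarzes the cross term against the Dirichlet form, and plugs in the pointwise lower bound $H_5\geq \bar c\,(1-G_\delta)^2$ (his Lemma~\ref{l:H_5_cattivo}) to obtain an \emph{autonomous} Riccati-type inequality $2t\,G_\delta'\geq \tilde c\,G_\delta^2 + C\log\delta$, then argues by contradiction: a dip below $-C_5\sqrt{-\log\delta}$ forces $G_\delta$ to be increasing there, hence below that level for all smaller times, hence divergent as $t\downarrow 0$. Both arguments ultimately hinge on the same four ingredients --- the differential identity for $G_\delta'$, the Gaussian Poincar\'e inequality, the mass/moment/sup bounds (which together give the lower-mass estimate $\int e^{-|y|^2}U\,dy \geq c_0$), and the trivial bound $G_\delta\geq\pi^{n/2}\log\delta$ --- but you organize them as a one-step self-improvement while Nash packages them into an ODE comparison and a blow-up contradiction. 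Your route is arguably the more transparent of the two, and it is worth noticing that iterating your improvement step (the fixed point of $A\mapsto C\sqrt{A}+C$ is finite and $\delta$-independent) would actually give a $\delta$-uniform bound $G_\delta\geq -A_*$, which is stronger than the statement; Nash's argument as written does not obviously admit such a strengthening.

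\textbf{One caveat.} The tool you invoke for the concentration step --- "Herbst-type Gaussian exponential integrability of $V-\bar V$" --- does not fit here: Herbst's argument requires a pointwise Lipschitz bound on the function, whereas $\nabla V=\nabla U/(U+\delta)$ is controlled only in $L^2(\rho)$, and its sup norm degenerates as $\delta\to 0$. The step is nonetheless correct, but by an elementary Chebyshev-type argument rather than Herbst: using $U\leq C_2$, $\delta\leq 1$, and the lower-mass bound $\int e^{-|y|^2}U\geq c_0$, one shows that when $\bar V$ is sufficiently negative the level set $\{V\geq \bar V/2\}$ has $e^{-|y|^2}$-weighted measure at least $c_1:=\frac{c_0}{2(C_2+1)}$, whence $\|V-\bar V\|^2_{L^2(\rho)}\geq \tfrac{c_1}{4}|\bar V|^2$. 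This is essentially the content of the paper's Lemma~\ref{l:H_5_cattivo}, where the same conclusion is reached via the cut-off $U^*$ and a study of the auxiliary function $g(u)=u^{-1}(\log(u+\delta)-\pi^{-n/2}G_\delta)^2$; your cutoff-on-$V$ is a cleaner way to the same place.
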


In turn Proposition~\ref{p:G_bound} will be used in an essential way to compare fundamental solutions for
different source points. Observe in fact that the integrand defining $G_\delta$
is rather negative at those points $\xi$ which are close to $0$ (the ``source'' of the fundamental solution) and where at the
same time the value of $U$ is low. Our goal, namely bounding $G_\delta (t)$ from below by $- C (- \log \delta)^{\sfrac{1}{2}}$, is thus to gain control on such ``bad points''.  In particular Proposition~\ref{p:G_bound} allows to derive the central ``overlap estimate'' for fundamental solutions, namely the following result.

\begin{proposition}[Overlap estimate]\label{p:overlap}
Under the assumptions of Theorem~\ref{t:main_reg_2} there are positive constants $C$ and $\alpha$, depending only upon $\lambda$ and $n,$ such that, if $S$ is the map of Theorem~\ref{t:fund_sol}, then
\begin{equation}\label{e:overlap}
\int |S (x,t, x_1, \bar t) - S (x,t, x_2, \bar t)|\, dx \leq C \left(\frac{|x_1-x_2|}{(t-\bar t)^{\sfrac{1}{2}}}\right)^\alpha \qquad
\mbox{for all $t> \bar t$.}
\end{equation}
\end{proposition}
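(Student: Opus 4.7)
\emph{Setup and reduction.} With $\tau := t - \bar t$, $r := |x_1-x_2|/\tau^{1/2}$ and $e$ a unit vector such that $x_2 = x_1 + r\tau^{1/2}e$, I would define the rescaled fundamental solutions
$$U_i(y) := \tau^{n/2}\, S(x_i + \tau^{1/2}y,\, t,\, x_i,\, \bar t), \qquad i = 1,2.$$
A change of variables yields $\mathcal{I} := \int|S(\cdot,t,x_1,\bar t) - S(\cdot,t,x_2,\bar t)|\,dx = \int |U_1(y) - U_2(y-re)|\,dy$. Since $\mathcal{I} \leq 2$ always, it suffices to treat $r \leq 1$. The analogues of Propositions~\ref{p:stime} and \ref{p:G_bound} apply to the fundamental solutions with source $(x_i, \bar t)$ (all constants depend only on $n$ and $\lambda$), giving $\int U_i\,dy = 1$, $\|U_i\|_\infty \leq C_2$, and, for $\delta \in (0, \delta_0)$,
$$G_\delta^{(i)} := \int e^{-|y|^2}\log(U_i(y) + \delta)\,dy \geq -C_5(-\log\delta)^{1/2}.$$
Using $|a-b| = a+b-2\min(a,b)$ together with unit mass, the inequality to prove becomes $1 - \mathcal{J} \leq (C/2) r^\alpha$, where $\mathcal{J} := \int \min(U_1(y), U_2(y-re))\,dy$.

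\emph{Weighted logarithmic overlap via the $G$-bound.} The main analytic step is the pointwise algebraic identity $\log(\min(a,b)+\delta) + \log(\max(a,b)+\delta) = \log(a+\delta) + \log(b+\delta)$ for $a, b \geq 0$, applied with $a = U_1(y)$, $b = U_2(y-re)$ and integrated against $e^{-|y|^2}$. This decomposes the weighted log-overlap into three terms. The first is $G_\delta^{(1)}$, bounded below directly. The third (with $\max$) is bounded above by $\pi^{n/2}\log(C_2 + 1)$ via the uniform $L^\infty$ bound. The second, after the change of variable $y \mapsto y+re$, equals $\int e^{-|y+re|^2}\log(U_2(y) + \delta)\,dy$, and differs from $G_\delta^{(2)}$ by an error bounded by
$$\|\log(U_2+\delta)\|_\infty \cdot \|e^{-|\cdot + re|^2} - e^{-|\cdot|^2}\|_{L^1} \leq Cr(-\log\delta),$$
using $\|\nabla e^{-|y|^2}\|_{L^1} < \infty$. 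Combining,
$$\int e^{-|y|^2}\log\bigl(\min(U_1(y), U_2(y-re)) + \delta\bigr)\,dy \geq -C(-\log\delta)^{1/2} - Cr(-\log\delta) - C.$$

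\emph{Extraction of the $L^1$ estimate and main obstacle.} To finish, the weighted logarithmic bound must be upgraded to $1 - \mathcal{J} \leq Cr^\alpha/2$. Set $\phi := (U_1 - U_2(\cdot - re))^+ = U_1 - \min(U_1, U_2(\cdot-re))$, so that $\int \phi = \mathcal{I}/2$; subtracting the bound above from the $G$-bound for $U_1$ gives
$$\int e^{-|y|^2}\log\Bigl(1 + \frac{\phi(y)}{U_1(y) - \phi(y) + \delta}\Bigr)\,dy \leq C(-\log\delta)^{1/2} + Cr(-\log\delta) + C.$$
I would convert this to a bound on $\int \phi$ by splitting the domain into a ``good'' region where $\min(U_1, U_2(\cdot - re)) \geq \eta$ (on which $\log(1+s) \geq s/(1+s)$ together with the $L^\infty$ bound on $U_i$ yields a lower bound linear in $\phi$) and the complementary ``bad'' set, whose Gaussian measure is controlled by the $G$-bound applied at threshold $\eta$ and whose contribution to $\int\phi$ is absorbed via the moment bound \eqref{e:moment_bound} and the $L^\infty$ bound. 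Carefully coupling the three parameters $r, \delta, \eta$ --- choosing $\delta$ and $\eta$ as suitable fractional powers of $r$ --- balances the $(-\log\delta)^{1/2}$, $r(-\log\delta)$ and threshold-cutoff errors, yielding $\mathcal{I} \leq Cr^\alpha$ for some $\alpha = \alpha(n, \lambda) > 0$. The main obstacle I anticipate is precisely this coupling step: the earlier steps reduce to a weighted-logarithmic inequality, but the quantitative passage to an unweighted $L^1$-deficit (and identification of the explicit Hölder exponent) requires delicate accounting of how Gaussian-mass concentration controls the true $L^1$ overlap, and how the shift parameter $r$ couples to the regularization parameter $\delta$.
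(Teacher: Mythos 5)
Your first two steps are a reasonable variant of the paper's opening move (the paper sums the two $G$-bounds centered at $\xi_1, \xi_2$ and uses the rearrangement inequality $ac+bd \le \max\{a,b\}\max\{c,d\}+\min\{a,b\}\min\{c,d\}$, while you use the exact identity $\log(\min+\delta)+\log(\max+\delta)=\log(a+\delta)+\log(b+\delta)$; both reach essentially the same weighted lower bound on the log of the pointwise minimum). But the step you flag as the ``main obstacle'' is not a technicality to be balanced away --- it is a genuine gap that no choice of $\delta,\eta$ as powers of $r$ can close. Your inequality reads
\begin{equation*}
\int e^{-|y|^2}\log\Bigl(1+\tfrac{\phi(y)}{U_1(y)-\phi(y)+\delta}\Bigr)\,dy \;\le\; C(-\log\delta)^{1/2}+Cr(-\log\delta)+C,
\end{equation*}
and the constant $C$ on the right does \emph{not} vanish as $r\to0$: $G_\delta^{(1)}$ is bounded above only by the trivial estimate ($\leq 1$), while the weighted log-overlap is bounded below by $-C(-\log\delta)^{1/2}-Cr(-\log\delta)-C$, and these two one-sided bounds leave an $O(1)$ gap even at $r=0$. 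Using $\log(1+s)\ge c\,s$ on bounded $s$ (via $\|U_i\|_\infty\le C$) and the moment bound to pass from the Gaussian-weighted integral of $\phi$ to the unweighted one, the most this yields is $\int\phi\le C$ with a fixed dimensional constant --- which carries no information, since $\int\phi\le 1$ a priori. In the paper's own language, a single application of Proposition~\ref{p:G_bound} only yields $\int U_-\,d\xi\ge\phi(|\xi_1-\xi_2|)$ for a positive, decreasing $\phi$ with $\phi(0)$ a universal constant \emph{strictly less than} $1$; the overlap is merely bounded away from zero, not shown to approach full mass as $r\to 0$.

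The missing idea is Nash's iteration, which converts this crude one-step bound into the power law by exploiting the semigroup structure of $S$. Writing $T_a := (S(\cdot,\cdot,x_1,0)-S(\cdot,\cdot,x_2,0))^+$, $T_b$ for the negative part, and $A(t):=\int T_a(x,t)\,dx$, the Chapman--Kolmogorov identity yields the propagation inequality \eqref{e:da_iterare}: the deficit at time $t$ is controlled by a $\chi$-weighted average of $\psi(|y-z|/(t-\bar t)^{1/2})$. The moment bound \eqref{e:moment_bound} controls how far the masses of $T_a,T_b$ can spread in time, so that after waiting an increment of order $(\sigma^{-k}M_k)^2$ one can trap a definite fraction of the $\chi$-mass in a region where $|y-z|\lesssim\sigma^{-k}M_k$, and there $\psi\le 1-\varepsilon$. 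This drops $A$ by a fixed factor $\sigma=1-\varepsilon/4$ per step; summing the geometric progression \eqref{e:esponenziale_t_k} for $t_k$ gives $A(t)\le\sigma^k$ when $t\ge t_k$, which unwinds to the clean H\"older bound $A(t)\le C(|x_1-x_2|/t^{1/2})^\alpha$. Without this self-improvement via the evolution in $t$, the $G$-bound by itself cannot produce any $r$-decay, let alone a power law.
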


The H\"older estimate in space for a bounded solution $u$ is a direct consequence of the overlap estimate and of \eqref{e:representation}, whereas the estimate in time will follow from additional considerations taking into account the other bounds derived above.

After collecting some elementary inequalities in the next section, we will proceed, in the subsequent three sections, to prove the three Propositions~\ref{p:stime},~\ref{p:G_bound}, and~\ref{p:overlap}. We will then show in Section \ref{s:conclusione} how Theorem~\ref{t:main_reg_2} follows.

\section{Three elementary inequalities}

In deriving the estimates claimed in the previous section we will use three ``elementary'' inequalities on functions. All of them have been generalized in various ways in the subsequent literature and hold under less restrictive assumptions than those stated here: the statements given below are just sufficient for our purposes and I have tried to keep them as elementary as possible.

The first inequality is nowadays known as ``Nash's inequality''. In \cite{Nash1958} Nash credits the proof to Elias Stein.

\begin{lemma}[Nash's inequality]\label{l:stein}
There is a constant $C$, depending only upon $n$, such that the following inequality holds for any function $v\in \mathscr{S} (\mathbb R^n)$:
\begin{equation}\label{e:stein}
\left(\int_{\mathbb R^n} |v(x)|^2\, dx \right)^{1+\sfrac{2}{n}}\leq C \left(\int_{\mathbb R^n} |\nabla v (x)|^2\, dx\right) \left(\int_{\mathbb R^n} |v (x)|\right)^{\sfrac{4}{n}}\, . 
\end{equation} 
\end{lemma}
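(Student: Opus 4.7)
The plan is to give the standard Fourier-analytic proof attributed to Stein, which exploits the interpolation between the $L^1$ bound on $\hat v$ (controlling low frequencies) and the $L^2$ bound on $|\xi| \hat v$ (controlling high frequencies).

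First I would normalize the Fourier transform so that Plancherel gives $\int |v|^2 = \int |\hat v|^2$ and $\int |\nabla v|^2 = \int |\xi|^2 |\hat v(\xi)|^2 \, d\xi$ (up to constants depending only on $n$). I would then introduce a parameter $R > 0$ and split
\[
\int_{\mathbb R^n} |\hat v(\xi)|^2 \, d\xi = \int_{|\xi| \leq R} |\hat v(\xi)|^2 \, d\xi + \int_{|\xi| > R} |\hat v(\xi)|^2 \, d\xi.
\]
For the low-frequency piece, use the elementary bound $\|\hat v\|_\infty \leq \|v\|_{L^1}$ to get a contribution no larger than $c_n R^n \|v\|_{L^1}^2$, where $c_n$ is the volume of the Euclidean unit ball. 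For the high-frequency piece, factor out $|\xi|^{-2} \leq R^{-2}$ to bound it by $R^{-2} \int |\xi|^2 |\hat v(\xi)|^2 \, d\xi \leq C R^{-2} \|\nabla v\|_{L^2}^2$.

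Next I would optimize the cutoff $R$: the right-hand side of
\[
\|v\|_{L^2}^2 \leq c_n R^n \|v\|_{L^1}^2 + C R^{-2} \|\nabla v\|_{L^2}^2
\]
is minimized (up to a constant) when the two terms are comparable, i.e.\ for $R^{n+2}$ proportional to $\|\nabla v\|_{L^2}^2 / \|v\|_{L^1}^2$. Substituting this choice yields
\[
\|v\|_{L^2}^2 \leq C_n \bigl(\|\nabla v\|_{L^2}^2\bigr)^{n/(n+2)} \bigl(\|v\|_{L^1}^2\bigr)^{2/(n+2)},
\]
and raising both sides to the power $(n+2)/n$ gives exactly \eqref{e:stein}. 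The only ``obstacle''—really a minor one—is to keep track of the constants $c_n$ and the normalization of the Fourier transform so that the final constant $C$ depends only on $n$; the case $\|v\|_{L^1} = 0$ or $\|\nabla v\|_{L^2} = 0$ is trivial (both sides vanish), so the optimization in $R$ is always legitimate for nonzero $v \in \mathscr S(\mathbb R^n)$.
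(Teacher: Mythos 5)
Your argument is precisely the one in the paper: Plancherel, the pointwise bound $|\hat v(\xi)|\le (2\pi)^{-n/2}\|v\|_{L^1}$ for low frequencies, the factor $|\xi|^2/\rho^2$ for high frequencies, and optimization of the cutoff radius, including the same remark to dispose of the degenerate cases. Correct and essentially identical to the paper's proof.
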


The second is a Poincar\'e-type inequality in a ``Gaussian-weighted'' Sobolev space.

\begin{lemma}[Gaussian Poincar\'e inequality]\label{l:poincare}
The following inequality holds for any bounded $C^1$ function $f$ on $\mathbb R^n$ with bounded derivatives and which satisfies
the constraint $\int e^{-|\xi|^2} f (\xi)\, d\xi = 0$:
\begin{equation}\label{e:poincare}
2 \int_{\mathbb R^n} e^{-|\xi|^2} f^2 (\xi)\, d\xi \leq \int_{\mathbb R^n} e^{-|\xi|^2} |\nabla f (\xi)|^2\, d\xi\, .
\end{equation}
\end{lemma}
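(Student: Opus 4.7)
The plan is to prove the inequality by spectral decomposition in the Hermite basis. Integration by parts against the Gaussian weight gives
\[
\int_{\mathbb R^n} e^{-|\xi|^2} |\nabla f|^2\, d\xi = \int_{\mathbb R^n} e^{-|\xi|^2} f \cdot (Lf)\, d\xi,
\]
where $L = -\Delta + 2\xi\cdot\nabla$. This operator is symmetric on $L^2(\mathbb R^n, e^{-|\xi|^2}d\xi)$, and its natural eigenbasis is the family of (physicist's) multivariate Hermite polynomials $H_\alpha(\xi) = \prod_{i=1}^n H_{\alpha_i}(\xi_i)$, which satisfy $L H_\alpha = 2|\alpha|\, H_\alpha$ and form an orthogonal basis of $L^2(\mathbb R^n, e^{-|\xi|^2}d\xi)$.

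My first step would be to expand $f = \sum_\alpha c_\alpha H_\alpha$ in this basis. The mean-zero hypothesis $\int e^{-|\xi|^2} f\, d\xi = 0$ is exactly the statement that $c_0 = 0$, since $H_0 \equiv 1$ is orthogonal to all $H_\alpha$ with $\alpha \neq 0$. Using orthogonality together with the self-adjointness of $L$, I would then compute
\[
\int_{\mathbb R^n} e^{-|\xi|^2} |\nabla f|^2\, d\xi = \sum_{\alpha\neq 0} 2|\alpha|\, c_\alpha^2 \|H_\alpha\|^2
\;\geq\; 2 \sum_{\alpha\neq 0} c_\alpha^2 \|H_\alpha\|^2
= 2 \int_{\mathbb R^n} e^{-|\xi|^2} f^2\, d\xi,
\]
where the inequality uses $|\alpha|\geq 1$ for $\alpha\neq 0$. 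For a totally self-contained derivation of $L H_\alpha = 2|\alpha| H_\alpha$ in one variable, one can use $H_n'(x) = 2n H_{n-1}(x)$ and the orthogonality relation $\int e^{-x^2} H_m H_n\, dx = \sqrt\pi\, 2^n n!\, \delta_{mn}$, obtaining directly that $\int e^{-x^2} (H_n')^2\, dx = 2n \int e^{-x^2} H_n^2\, dx$; the multi-dimensional statement then follows by tensorization because the weight $e^{-|\xi|^2} = \prod_i e^{-\xi_i^2}$ factors.

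The main technical obstacle is the justification of the Hermite expansion and of the integration by parts under the stated hypotheses on $f$, which is merely bounded with bounded derivatives rather than Schwartz. Both sides of \eqref{e:poincare} are already finite by the hypotheses together with the Gaussian weight, so this is a genuine approximation issue rather than a convergence issue. To handle it, I would approximate $f$ by $f_R(\xi) := \chi(\xi/R) f(\xi)$ for a fixed smooth compactly supported cutoff $\chi$ equal to $1$ on the unit ball, and then approximate $f_R$ in the weighted Sobolev norm $\int e^{-|\xi|^2}(f^2 + |\nabla f|^2)\, d\xi$ by finite linear combinations of Hermite polynomials (using their density in this Hilbert space). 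The mean-zero condition is preserved by subtracting a small constant from the polynomial approximation, whose contribution vanishes in the limit. Once the inequality is known for each approximation, dominated convergence (with a Gaussian-integrable dominant provided by $\|f\|_\infty$ and $\|\nabla f\|_\infty$) yields \eqref{e:poincare} in the full generality stated.
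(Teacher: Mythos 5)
Your proposal is correct and takes essentially the same approach as the paper: both expand in the Hermite basis of the Gaussian-weighted $L^2$ space and exploit the spectral gap (smallest nonzero eigenvalue $2$) once the mean-zero condition removes the constant mode. The paper computes directly with the Hermite coefficients of $f$ and of $\partial_{\xi_j}f$ and relates them via integration by parts, whereas you package the same calculation through the Ornstein--Uhlenbeck operator $L=-\Delta+2\xi\cdot\nabla$; you also supply an explicit cutoff-and-density argument to justify the expansion for merely bounded $C^1$ functions, a rigor point the paper leaves implicit.
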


The proof of the final inequality in \cite{Nash1958} is credited to Lennart Carleson:

\begin{lemma}[Carleson's inequality]\label{l:carleson}
There is a positive constant $c$, depending only on $n$, such that the following inequality holds for any positive function $T\in \mathscr{S} (\mathbb R^n)$ with $\int_{\mathbb R^n} T (x)\, dx=1$:
\begin{equation}\label{e:carleson}
\int_{\mathbb R^n} |x| T (x)\, dx \geq c \exp \left[ - \frac{1}{n} \int_{\mathbb R^n} T (x) \log T (x)\, dx \right]\, .
\end{equation}
 \end{lemma}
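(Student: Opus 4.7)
The strategy is the classical ``maximum entropy comparison'': we bound the entropy of $T$ from above by the entropy of the exponential distribution with matching first moment, and then solve for $M$. Concretely, I will compare $T$ with the one-parameter family of probability densities $q_\beta(x) := c_\beta e^{-\beta|x|}$, where the normalization $c_\beta > 0$ is chosen so that $\int q_\beta = 1$; a direct computation in polar coordinates gives $c_\beta = \beta^n / (\omega_{n-1}(n-1)!)$, where $\omega_{n-1}$ is the surface area of the unit sphere $S^{n-1}$.

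The key inequality is the nonnegativity of the Kullback--Leibler divergence: for any two probability densities $T, q$ on $\mathbb{R}^n$, Jensen's inequality applied to the concave function $\log$ yields
\[
\int T(x) \log \frac{q(x)}{T(x)}\, dx \leq \log \int_{\mathbb{R}^n} q(x)\, dx = 0,
\]
and therefore $\int T \log T\, dx \geq \int T \log q\, dx$. Applying this with $q = q_\beta$ and using the explicit form $\log q_\beta(x) = \log c_\beta - \beta |x|$ gives
\[
-Q \;=\; \int T \log T \;\geq\; \log c_\beta - \beta \int |x| T(x)\, dx \;=\; \log c_\beta - \beta M,
\]
i.e.\ $Q \leq -\log c_\beta + \beta M = -n \log \beta + \log(\omega_{n-1}(n-1)!) + \beta M$ for every $\beta > 0$.

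The final step is to optimize the right-hand side over $\beta > 0$. Elementary calculus shows the minimum occurs at $\beta = n/M$ and equals $n \log M + C_n$, with
\[
C_n := -n \log n + n + \log(\omega_{n-1}(n-1)!).
\]
Thus $Q \leq n \log M + C_n$, which rearranges to
\[
M \;\geq\; e^{-C_n/n}\, \exp\!\left(\tfrac{Q}{n}\right) \;=\; c\,\exp\!\left[-\tfrac{1}{n}\int T \log T\, dx\right],
\]
with $c := e^{-C_n/n}$ depending only on $n$, which is the desired inequality. All manipulations are justified by the Schwartz-class assumption on $T$ (so the moment $M$ and entropy $Q$ are finite and the integrations converge absolutely); there is no substantive obstacle, the only mild subtlety being ensuring $M < \infty$ before applying Jensen, which is immediate from $T \in \mathscr{S}(\mathbb{R}^n)$.
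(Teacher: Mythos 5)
Your proof is correct and, after unwinding the packaging, is essentially the same argument as the paper's. The paper's route is the pointwise Young/Fenchel inequality $\tau\log\tau + \lambda\tau \ge -e^{-\lambda-1}$, applied with $\lambda = a|x|+b$ and then integrated; your route is the integrated version of that same fact, namely Gibbs' inequality (nonnegativity of the Kullback--Leibler divergence $\int T\log(T/q_\beta)\ge 0$) against the exponential reference densities $q_\beta = c_\beta e^{-\beta|x|}$. The paper's two free parameters $a$ and $b$ correspond exactly to your $\beta$ and the normalization $\log c_\beta$, and both arguments land on the identical intermediate bound $Q \le n\log M + C_n$ before exponentiating, so the proofs coincide up to presentation.
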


\begin{proof}[Proof of Lemma~\ref{l:stein}]
Consider the Fourier transform\footnote{In order to simplify the notation we omit the domain of integration when it is the entire space.} $\hat {v}$ of $v$:
\[
\hat v (\xi) := (2\pi)^{-\sfrac{n}{2}} \int e^{ix\cdot \xi}\, v (x)\, dx\, .
\]
Recalling the Plancherel identity and other standard properties of the Fourier transform we achieve
\begin{align}
\int |v (x)|^2\, dx & = \int |\hat v (\xi)|^2\, d\xi\label{e:Plancherel}\\
\int |\nabla v (x)|^2\, dx & = \int |\xi|^2 |\hat v (\xi)|^2\, d\xi\label{e:F-derivata}\\
|\hat v (\xi)| & \leq (2\pi)^{-\sfrac{n}{2}} \int |v (x)|\, dx \qquad \forall \xi\in \mathbb R^n\, .\label{e:infty-1}
\end{align}
Using \eqref{e:infty-1} we obviously get
\[
\int_{\{|\xi|\leq \rho\}} |\hat v (\xi)|^2\, d\xi \leq  C \rho^n \left(\int |v (x)|\, dx\right)^2\, ,
\]
whereas using \eqref{e:F-derivata} we have
\[
\int_{\{|\xi|\geq \rho\}} |\hat {v} (\xi)|^2\, d\xi \leq \int \frac{|\xi|^2}{\rho^2} |\hat v (\xi)|^2\, d\xi = \frac{1}{\rho^2} \int |\nabla v (x)|^2\, dx\, .
\]
Equation \eqref{e:Plancherel} and the last two inequalities can be combined to reach 
\begin{equation}\label{e:ottimizza}
\int |v (x)|^2\, dx \leq C \rho^n \left(\int |v (x)|\, dx\right)^2 + \frac{1}{\rho^2} \int |\nabla v (x)|^2\, dx\, ,
\end{equation}
where the constant $C$ is independent of $\rho$. 

Next, the inequality \eqref{e:stein} is trivial if $v$ or $\nabla v$ vanishes identically. Hence, we can assume that both integrals in the right-hand side of \eqref{e:stein} are nonzero. Under this assumption \eqref{e:stein} follows right away from \eqref{e:ottimizza} once we set 
\[
\rho = \left[\frac{\displaystyle\int |\nabla v (x)|^2\, dx}{\left(\displaystyle\int |v (x)|\, dx\right)^2}\right]^{\frac{1}{n+2}}\, .\qedhere
\]
\end{proof}

\begin{proof}[Proof of Lemma~\ref{l:poincare}]
Consider the Hilbert space $H$ of measurable functions $f$ such that $\int e^{-|\xi|^2} f^2 (\xi)\, d\xi < \infty$, with the scalar product
\[
\langle f, g\rangle := \int e^{-|\xi|^2} f (\xi) g (\xi)\, d\xi\, .
\]
It is well known that a Hilbert basis of $H$ is given by suitable products of the Hermite polynomials (cf.~\cite[Sec.~6.1]{Hermite_ref}): if $H_i$ denotes the Hermite polynomial of degree $i$ in one variable, suitably normalized, we define, for any $I = (i_1, \ldots , i_n) \in \mathbb N^n$
\[
H_I (\xi) = H_{i_1} (\xi_1) H_{i_2} (\xi_2) \cdot \ldots \cdot H_{i_n} (\xi_n)\, .
\]
We then have
\begin{align}
\int e^{-|\xi|^2} f^2 (\xi)\, d\xi  & = \sum_I \alpha_I^2\, ,\label{e:alfa}\\
\int e^{-|\xi|^2} (\partial_{\xi_j} f)^2 (\xi)\, d\xi & = \sum_I \beta_{I, j}^2\, ,
\end{align} 
where
\begin{align}
\alpha_I = & \int e^{-|\xi|^2} f (\xi) H_I (\xi)\, d\xi\, ,\\
\beta_{I, j} = & \int e^{-|\xi|^2} \partial_{\xi_j} f (\xi) H_I (\xi)\, d\xi\, .
\end{align}
Integrating by parts and using the relation
\[
\partial_{\xi_j} (e^{-|\xi|^2} H_I (\xi)) = (2 i_j)^{\sfrac{1}{2}} H_I (\xi)
\]
we easily achieve the identity
\[
\sum_{j=1}^n \beta_{I, j}^2 = 2 |I| \alpha_I^2\, .
\]
Therefore we conclude
\begin{align}
\int e^{-|\xi|^2} |\nabla f (\xi)|^2\, d\xi &= 2 \sum_I |I| \alpha_I^2\, .\label{e:2alfa}
\end{align}
Note that $|I|\leq 1$ as soon as $I\neq (0,0,\ldots 0)$. Thus,
the inequality \eqref{e:poincare} is a trivial consequence of \eqref{e:alfa} and \eqref{e:2alfa} provided $\alpha_{(0,0, \ldots , 0)}=0$.
Since the Hermite polynomial $H_0$ is simply constant, the latter condition is equivalent to $\int e^{-|\xi|^2} f (\xi)\, d\xi =0$. 
\end{proof}

\begin{proof}[Proof of Lemma~\ref{l:carleson}]
For every fixed $\lambda\in \mathbb R$, consider the function $\ell (\tau) = \tau \log \tau + \lambda \tau$ on $(0, \infty)$. Observe that the function is convex, it converges to $0$ as $\tau\to \infty$ and converges to $\infty$ as $\tau\to \infty$. Its derivative
$\ell' (\tau) = \log \tau + (1+\lambda)$ vanishes if and only for $\tau_0 = e^{-1-\lambda}$ and moreover $\ell (\tau_0) = - e^{-\lambda -1} < 0$: the latter must thus be the minimum of the function and therefore
\[
\tau \log \tau + \lambda \tau \geq - e^{-\lambda -1} \qquad \mbox{for every positive $\tau$.} 
\]
In particular, for any choice of the real numbers $a>0$ and $b \in \mathbb R$ we have
\begin{equation}\label{e:carleson2}
\int (T(x) \log T (x) + (a |x| +b) T (x))\, dx \geq - e^{-b-1} \int e^{-a|x|}\, dx\, .
\end{equation}
In analogy with the quantities introduced in Definition~\ref{d:funzionali}, we consider the entropy and the moment, namely
\begin{align}
Q &:=- \int T(x) \log T (x)\, dx\, ,\\
M &:= \int |x| T (x)\, dx\, ,\\
\end{align}
and we let $D (n)$ be the dimensional constant
\[
D (n) := \int e^{-|x|}\, dx\, .
\]
Then we can rewrite \eqref{e:carleson2} as
\begin{equation}\label{e:carleson3}
- Q + a M + b \geq - e^{-b-1} a^{-n} D (n)\, 
\end{equation}
(where we have also used that $\int T (x) \, dx =1$). Set $a:= \frac{n}{M} >0$ and $e^{-b} = \frac{e}{D(n)} a^n$. Then \eqref{e:carleson3} turns into
\[
-Q + n - \log \left(\frac{e}{D(n)} \left(\frac{n}{M}\right)^n \right) \geq -1\, .
\]
In turn, the latter is equivalent to
\[
n - n \log n + \log D(n) + n \log M \geq Q\, .
\]
Exponentiating the latter inequality we conclude $M \geq c(n) e^{\sfrac{Q}{n}}$ for some positive constant $c(n)$, which is precisely inequality \eqref{e:carleson}.
\end{proof}

\section{Energy, entropy and moment bounds}

In this section we prove Proposition~\ref{p:stime}.

\begin{proof}[Proof of the energy estimate \eqref{e:energy_bound}] We differentiate $E$ and compute
\begin{align*}
E' (t) & =  2 \int T (x,t) \partial_t T (t,x)\, dx = 2 \int T (x,t) \partial_j (A_{ij} (x,t) \partial_i T (x,t))\, dx \\
& =  - 2 \int \partial_j T (x,t) A_{ij} (x,t) \partial_j T (x,t)\, dx \leq -2 \lambda^{-1} \int |\nabla T (x,t)|^2\, dx\\
& \stackrel{\eqref{e:stein}}{\leq}  - C \left(\int |T (x,t)|^2\, dx \right)^{1+\sfrac{2}{n}} = - C E^{1+\sfrac{2}{n}}\, ,
\end{align*}
where $C$ is a positive constant depending only upon $\lambda$ and $n$. Note moreover that in the last line
we have used $\int T (x,t)\, dx = 1$. Since $E (t)$ is positive for every $t>0$ we conclude that $\frac{d}{dt} E (t)^{-\sfrac{2}{n}} \geq C>0$. By 
Theorem~\ref{t:fund_sol}(d), $\lim_{t\downarrow 0} E (t)^{-1} = 0$ and thus we can integrate the differential inequality to conclude that
\[
E (s)^{-\sfrac{2}{n}} = \int_0^s \frac{d}{dt} E (t)^{-\sfrac{2}{n}}\, dt \geq Cs\, ,
\]
which in turn implies $E (s) \leq C_1 s^{- \sfrac{n}{2}}$, where $C_1$ depends only upon $\lambda$ and $n$. \end{proof}

\begin{proof}[Proof of the uniform bound \eqref{e:infty_bound}] By translation invariance, from the energy estimate we conclude
\[
\int |S(x,t, \bar x, \bar t)|^2\, dx \leq C (t-\bar t)^{-\sfrac{n}{2}}\, .
\]
By Theorem~\ref{t:fund_sol}(f) the above argument applies to the adjoint equation to derive also the bound
\[
\int |S(x,t, \bar x, \bar t)|^2\, d\bar x \leq C (t-\bar t)^{-\sfrac{n}{2}}\, .
\]
On the other hand, using Theorem~\ref{t:fund_sol}(e), we have
\[
T (x,t) = \int S (x,t, \bar x, {\textstyle{\frac{t}{2}}}) T (\bar x, {\textstyle{\frac{t}{2}}})\, d\bar x\, .
\]
Using the Cauchy--Schwarz inequality, we then conclude
\begin{align}
|T (x,t)|^2 & \leq  E (\textstyle{\frac{t}{2}}) \int |S (x,t, \bar x, {\textstyle{\frac{t}{2}}})|^2\, d\bar x \leq C t^{-n}\, .
\end{align}
\end{proof}

\begin{proof}[Proof of the entropy bound \eqref{e:entropy_bound}] The $L^\infty$ bound and the monotonicity of the logarithm gives easily
\[
Q (t) \geq - \log \|T (\cdot, t)\|_\infty \int T (x,t)\, dx = - \log \|T (\cdot, t)\|_\infty \geq - C + \frac{n}{2} \log t\, .
\]
\end{proof}

\begin{proof}[Proof of the moment bound \eqref{e:moment_bound}] The first ingredient is Lemma~\ref{l:carleson}, which gives $M (t) \geq C e^{Q(t)/n}$. Next, differentiating the entropy we get
\begin{align*}
Q' (t) & =  - \int (1+\log T (x,t)) \partial_t T (x,t)\, dx = - \int (1+\log T (x,t)) \partial_j (A_{ij} (x,t) \partial_i T (x,t))\, dx\\
& =  \int \partial_j \log T(x,t) A_{ij} (x,t) \partial_i T(x,t)\, dx\\
& = \int \left(\partial_j \log T(x,t) A_{ij} (x,t) \partial_i \log T(x,t)\right)\, T (x,t)\, dx\, \\
& \geq \lambda^{-1} \int |A (x,t) \nabla \log T (x,t)|^2 T (x,t)\, dx \, .
\end{align*}
Recall that $\int T (x,t)\, dx = 1$ to estimate further
\begin{align*}
Q'(t) & \geq \lambda^{-1} \left(\int |A (x,t) \nabla \log T (x,t)| T (x,t)\, dx\right)^2
= \lambda^{-1}  \left( \int |A (x,t) \nabla T (x,t)|\, dx\right)^2\, .
\end{align*}
Whereas, differentiating the momentum:
\begin{align*}
M ' (t) =& \int |x| \partial_j (A_{ij} (x,t) \partial_i T(x,t))\, dx = - \int \frac{x_j}{|x|} A_{ij} (x,t) \partial_i T (x,t)\, dx\, .
\end{align*}
We thus conclude $|M'(t)|^2 \leq \lambda\, Q' (t)$. 

Let us summarize the inequalities relevant for the rest of the argument, namely the entropy bound \eqref{e:entropy_bound}, Carleson's inequality, and the one just derived:
\begin{align}
Q (t) & \geq - C_3 + \frac{n}{2} \log t\, ,\label{e:e_una}\\
M (t) & \geq C e^{Q(t)/n}\, ,\label{e:e_due}\\
Q' (t)^{\sfrac{1}{2}} & \geq \lambda^{-\sfrac{1}{2}} |M' (t)|\, .\label{e:e_tre}
\end{align}
Recall  moreover that, from Theorem~\ref{t:fund_sol}(d), $\lim_{t\downarrow 0} M (t) =0$. We thus set $M (0)=0$: this information
and the three inequalities above will allow us to achieve the desired bound.

Define $n R (t) = Q (t) +C_3 - \frac{n}{2} \log t$. Observe that $Q' (t) = n R' (t) + \frac{n}{2t}$. Hence we can use \eqref{e:e_due}
and integrate \eqref{e:e_tre} to achieve
\begin{equation}\label{e:e_quattro}
c_1 t^{\sfrac{1}{2}} e^{R (t)} \leq M (t) \leq c_2 \underbrace{\int_0^t \left({\textstyle{\frac{1}{2s}}} + R' (s)\right)^{\sfrac{1}{2}}\, ds}_{=: I(t)}\, .
\end{equation}
Using the concavity of $\xi\mapsto (1+\xi)^{\sfrac{1}{2}}$ on $[-1, \infty)$, we conclude that $(1+\xi)^{\sfrac{1}{2}} \leq 1 + \frac{\xi}{2}$ and thus 
\[
\left({\textstyle{\frac{1}{2s}}} + R' (s)\right)^{\sfrac{1}{2}} \leq \left(\frac{1}{2s}\right)^{\sfrac{1}{2}} \left(1 + \frac{1}{2} 
R' (s) 2s \right) = (2s)^{-\sfrac{1}{2}} + \left({\textstyle{\frac{s}{2}}}\right)^{\sfrac{1}{2}} R' (s) \, .
\]
Hence
\begin{align*}
I(t) & \leq  \int_0^t (2s)^{-\sfrac{1}{2}}\, ds + \int_0^t \left({\textstyle{\frac{s}{2}}}\right)^{\sfrac{1}{2}} R' (s)\, ds
= (2t)^{\sfrac{1}{2}} + \left({\textstyle{\frac{t}{2}}}\right)^{\sfrac{1}{2}} R(t)  - \int_0^t (8s)^{-\sfrac{1}{2}} R (s)\, ds\\
& \leq   (2t)^{\sfrac{1}{2}} + \left({\textstyle{\frac{t}{2}}}\right)^{\sfrac{1}{2}} R(t)\, .
\end{align*}
Inserting the latter inequality in \eqref{e:e_quattro} and dividing by $t^{\sfrac{1}{2}}$ we conclude that
\begin{equation}\label{e:e_cinque}
e^{R(t)} \leq \frac{c_3 M (t)}{t^{\sfrac{1}{2}}} \leq c_4 \left(1 + \frac{R(t)}{2}\right)\, ,
\end{equation}
where $c_3$ and $c_4$ are positive constants (depending only upon $n$ and $\lambda$). Now, the map
\[
\rho \mapsto e^\rho - c_4 \left(1+\frac{\rho}{2}\right)\, 
\]
converges to $\infty$ for $\rho\uparrow \infty$ and thus \eqref{e:e_cinque} implies that $R(t)$ is bounded by a constant 
which depends only upon $\lambda$ and $n$. In turn, again from \eqref{e:e_cinque}, we conclude \eqref{e:moment_bound}. 
\end{proof}

\section{$G$ bound}

In this section we prove Proposition~\ref{p:G_bound}. We will use in an essential way the bounds of Proposition~\ref{p:stime},
especially the moment bound. 

\medskip

We begin by noting the obvious effect of the normalization $U (\xi, t) = t^{\sfrac{n}{2}} T (t^{\sfrac{1}{2}} \xi, t)$.
All the estimates of Proposition~\ref{p:stime}  turn into corresponding ``time-independent'' bounds, which we
collect here:
\begin{align}
 &\int U (\xi, t)\, d\xi = 1\, ,\label{e:L1-norm}\\
 &\int |U (\xi,t)|^2\, d\xi \leq C\, , \label{e:L2-norm}\\
 &\|U (\cdot, t)\|_\infty \leq C\, , \label{e:Linfty_norm}\\
 C^{-1} \leq &\int |\xi| |U (\xi, t)|\, d\xi \leq C\, , \label{e:moment_norm}
\end{align}
for some constant $C$ depending only on $\lambda$ and $n$.

Moreover, the parabolic equation for $T$ transforms into the equation
\begin{equation}\label{e:par_norm}
2 t \partial_t U (\xi, t) = n U (\xi, t) + \xi_i \partial_i U (\xi, t) +2\partial_j (A_{ij} (t^{\sfrac{1}{2}} \xi, t) \partial_i U (\xi,t))\, ,
\end{equation}
and observe that the ``rescaled'' coefficients $\bar A_{ij} (\xi, t) := A_{ij} (t^{\sfrac{1}{2}} \xi, t)$ satisfy the same ellipticity condition
as $A_{ij}$, namely $\lambda^{-1} |v|^2 \leq \bar A_{ij}  v_i v_j \leq \lambda |v|^2$.

Differentiating \eqref{e:G} we achieve
\begin{align}
2 t G_\delta' (t) & =  \int e^{-|\xi|^2} \frac{2t \partial_t U (\xi, t)}{U (\xi, t) + \delta}\, d\xi\nonumber\\
&
\stackrel{\eqref{e:par_norm}}{=}  \underbrace{n\int e^{-|\xi|^2} \frac{U (\xi,t)}{U (\xi, t) + \delta}\, d\xi}_{=: H_1 (t)\geq 0} +
\underbrace{\int e^{-|\xi|^2} \frac{\xi \cdot \nabla U (\xi, t)}{ U(\xi, t) + \delta}\, d\xi}_{=:H_2 (t)}\nonumber\\
&\qquad\qquad +  \underbrace{2 \int e^{-|\xi|^2} \frac{\partial_j (\bar A_{ij} (\xi,t) \partial_i U (\xi,t))}{U (\xi, t) + \delta}\, d\xi}_{=:H_3 (t)}\, .\label{e:G'}
\end{align}
As for $H_2$, integrating by parts we get
\begin{align}
H_2 (t) & =  \int e^{-|\xi|^2} \xi \cdot \nabla (\log (U (\xi, t) + \delta)\, d\xi = - \int e^{-|\xi|^2} (n - 2|\xi|^2) \log (U (\xi, t) + \delta)\, d\xi\nonumber\\
= & - n G_\delta (t) + 2 \int e^{-|\xi|^2}|\xi|^2  \left(\log \delta + \log \left(1 + \delta^{-1} U (\xi, t)\right)\right)\, d\xi\nonumber\\
\geq & - n G_\delta (t) + 2 \log \delta \int |\xi|^2 e^{-|\xi|^2}\, d\xi \geq - n G_\delta (t) + C \log \delta\, .\label{e:H_2_bound}
\end{align}
Finally, integrating by parts $H_3$:
\begin{align}
H_3 (t) & =  - 2 \int \partial_j \left(e^{-|\xi|^2} (U (\xi, t) + \delta)^{-1}\right) \bar A_{ij} (\xi, t) \partial_i U (\xi, t)\, d\xi\nonumber\\
&= 4 \int e^{-|\xi|^2} \xi_j \bar{A}_{ij} (\xi, t) \frac{\partial_i U (\xi, t)}{U (\xi, t) + \delta}\, d\xi
+ 2 \int e^{-|\xi|^2} \frac{\partial_j U (\xi, t) \bar{A}_{ij} (\xi, t) \partial_i U (\xi, t)}{(U (\xi, t) + \delta)^2}\, d\xi\nonumber\\
& =  \underbrace{4\int e^{-|\xi|^2} \xi_j \bar{A}_{ij} (\xi, t) \partial_i \log (U(\xi, t) + \delta)\, d\xi}_{:= H_4 (t)}\nonumber\\
&\qquad + \underbrace{2\int e^{-|\xi|^2} \partial_j \log (U(\xi, t) + \delta) \bar{A}_{ij} (\xi, t) \partial_i \log (U (\xi, t) + \delta)\, d\xi}_{=: H_5 (t)}\, .
\label{e:H_3}
\end{align}
Note first that, by the ellipticity condition, the integrand of $H_5 (t)$ is indeed nonnegative.

Next, for each $(\xi, t)$ consider the quadratic form $\mathcal{A} (v,w) = \bar{A}_{ij} (\xi, t) v_i w_j$. The ellipticity condition guarantees that this is a scalar product. Hence, we have the corresponding Cauchy--Schwarz inequality $|\mathcal{A} (v,w)|^2 \leq \mathcal{A} (v,v) \mathcal{A} (w,w)$. Using this observation, $H_4 (t)$ can be bounded by
\begin{align}
|H_4 (t)| & \leq  4 \int e^{-|\xi|^2} \left(\xi_i \bar{A}_{ij} (\xi,t) \xi_j\right)^{\sfrac{1}{2}} \left(\partial_h \log (U(\xi, t) +\delta) 
\bar{A}_{hk} (\xi, t) \partial_k \log (U (\xi, t) + \delta)\right)^{\sfrac{1}{2}}\, d\xi\nonumber\\
& \leq 4 \left(\int e^{-|\xi|^2} \xi_j \bar{A}_{ij} (\xi, t) \xi_j \, d\xi\right)^{\sfrac{1}{2}} H_5 (t)^{\sfrac{1}{2}}\nonumber\\
& \leq  C  H_5 (t)^{\sfrac{1}{2}}\, .\label{e:bound_H'_3}
\end{align}
Inserting \eqref{e:bound_H'_3}, \eqref{e:H_3} and \eqref{e:H_2_bound} in \eqref{e:G'} we conclude the intermediate inequality
\begin{equation}\label{e:G'_2}
2 t G_\delta' (t) \geq C \log \delta - n G_\delta (t) - C H_5 (t)^{\sfrac{1}{2}} + H_5 (t)\, .
\end{equation}
The moment bound \eqref{e:moment_norm} will be used in a crucial way to prove the following 

\begin{lemma}\label{l:H_5_cattivo}
There are positive constants $\bar G$ and $\bar c$, both depending only upon $\lambda$ and $n$, such that,
if $\delta\leq 1$ and $G_\delta (t) \leq - \bar G$, then $H_5 (t) \geq \bar c (1- G_\delta (t))^2$. 
\end{lemma}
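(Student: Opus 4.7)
The plan is to apply the Gaussian Poincaré inequality (Lemma~\ref{l:poincare}) to the normalized function
\[
f(\xi) := \log(U(\xi,t)+\delta) - \bar L, \qquad \bar L := \frac{G_\delta(t)}{Z}, \quad Z := \int e^{-|\xi|^2}\, d\xi,
\]
which by construction satisfies $\int e^{-|\xi|^2} f\, d\xi = 0$. Since $U(\cdot,t)\in\mathscr{S}$ and $U+\delta\geq \delta>0$, $f$ is smooth and bounded with bounded derivatives, and $|\nabla f|=|\nabla\log(U+\delta)|$. Combining Lemma~\ref{l:poincare} with the ellipticity of $\bar A$ yields the key lower bound
\[
H_5(t) \;\geq\; 2\lambda^{-1}\int e^{-|\xi|^2}|\nabla\log(U+\delta)|^2\, d\xi \;\geq\; \frac{4}{\lambda}\int e^{-|\xi|^2}\bigl(\log(U+\delta)-\bar L\bigr)^2\, d\xi.
\]

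It then remains to show that the integrand on the right is large on a set of definite size whenever $\bar L$ is very negative. Here both the moment bound \eqref{e:moment_norm} and the $L^\infty$ bound \eqref{e:Linfty_norm} are indispensable. Setting $R:=2C_4$, Markov's inequality applied to \eqref{e:moment_norm} yields $\int_{B_R}U(\xi,t)\,d\xi\geq 1/2$. Decomposing this integral over the sublevel set $\{U<c_1\}$ shows that, provided $c_1<1/(4|B_R|)$, the set
\[
A := B_R\cap\{U(\cdot,t)\geq c_1\}
\]
satisfies $\int_A U\geq 1/4$; combined with $\|U\|_\infty\leq C_2$ this forces $|A|\geq m_0:=1/(4C_2)$, with $c_1,R,m_0$ depending only on $\lambda,n$.

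On $A$ we have $\log(U+\delta)\geq \log c_1 =: L_0$, a negative constant depending only on $\lambda,n$. Consequently, whenever $\bar L\leq 2L_0$ (equivalently $G_\delta(t)\leq 2ZL_0$),
\[
\log(U(\xi,t)+\delta)-\bar L \;\geq\; L_0-\bar L \;\geq\; -\tfrac{1}{2}\bar L \;=\; \tfrac{1}{2}|\bar L| \qquad\text{on } A,
\]
and integrating gives $H_5(t)\geq (4/\lambda)\,e^{-R^2}m_0\,\bar L^{2}/4\geq c_0\,G_\delta(t)^2$ for an explicit $c_0=c_0(\lambda,n)>0$. Finally, choosing $\bar G:=\max\{-2ZL_0,\,1\}$ ensures both $\bar L\leq 2L_0$ and $|G_\delta(t)|\geq 1$, so $(1-G_\delta(t))^2\leq 4G_\delta(t)^2$ and hence $H_5(t)\geq (c_0/4)(1-G_\delta(t))^2$, as required.

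The main obstacle is less a technical estimate than a conceptual pivot: one must recognize that the $L^1$ moment bound, coupled with the crude $L^\infty$ ceiling, translates via Markov into the positive-measure ``anchor set'' $A$ on which $U$ is bounded below by a universal constant. This anchor is precisely what prevents $\log(U+\delta)$ from being uniformly as negative as its Gaussian average $\bar L$, converting the Poincaré oscillation into the desired quantitative lower bound. Without the moment bound, a very negative $\bar L$ could in principle be matched by $\log(U+\delta)$ pointwise on a large-Gaussian-mass region, and the argument would collapse.
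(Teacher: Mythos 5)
Your proof is correct, and it takes a genuinely different route from the paper after the common Poincar\'e step. The paper compares the integrand $(\log(U+\delta)-\bar L)^2$ with $U$ itself by introducing the auxiliary function $g(u):=u^{-1}(\log(u+\delta)-\bar L)^2$, analyzing its critical points $\bar u < u_m$, showing $u_m < U_0(t):=\exp(2+\pi^{-n/2}G_\delta(t))$, and exploiting the monotonicity of $g$ on $(u_m,\infty)$ together with the $L^\infty$ ceiling to reduce matters to a lower bound on $\int e^{-|\xi|^2}U^*\,d\xi$, which is finally handled with the moment bound. You instead go directly after a quantitative ``anchor set'': Markov applied to the first moment bound \eqref{e:moment_norm} forces $\int_{B_R}U\geq \tfrac12$ for $R=2C_4$, the trivial decomposition then gives $\int_{B_R\cap\{U\geq c_1\}}U\geq \tfrac14$ for a universal threshold $c_1$, and the $L^\infty$ bound \eqref{e:Linfty_norm} converts this mass into a definite Lebesgue measure $m_0$ for $A=B_R\cap\{U\geq c_1\}$, on which $\log(U+\delta)$ is pinned above $\log c_1$ regardless of $\delta$. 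Both arguments use exactly the same three ingredients (Poincar\'e, the moment bound, the $L^\infty$ bound), but your arrangement isolates the role of the moment bound more transparently -- it prevents $U$ from hiding its unit mass at spatial infinity or in a tiny set -- and it avoids entirely the elementary but somewhat fiddly calculus on $g$ that occupies most of the paper's proof. One small point worth making explicit when you write this up: on $A\subset B_R$ the Gaussian weight is bounded below by $e^{-R^2}$, and it is this that lets you convert the Lebesgue measure bound $|A|\geq m_0$ into the Gaussian-weighted lower bound you need; you use it correctly but silently.
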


We postpone the proof of the lemma after showing how Proposition~\ref{p:G_bound} follows easily from it and from
the inequality \eqref{e:G'_2}. First of all observe that, under the assumption that $G_\delta (t) \geq - \tilde{G}\geq \bar G$,
if the constant $\tilde{G}$ is chosen sufficiently large, then $H_5 (t) - C H_5 (t)^{\sfrac{1}{2}} \geq \bar{c}{2} G_\delta (t)^2$. Hence, we conclude the existence of positive constants $\tilde {G}$, $\tilde{c}$, $C$ (depending only upon $\lambda$ and $n$) such that
\begin{equation}\label{e:quasi_finito}
2 t G'_\delta (t) \geq \tilde c G_\delta (t)^2 + C \log \delta \qquad \mbox{if $G_\delta (t) \leq - \tilde{G}$ and $\delta\leq 1$.}
\end{equation}
Set therefore $C_5 := \left(\frac{C+1}{\tilde c}\right)^{\sfrac{1}{2}}$ and let $\delta_0 \leq 1$ be such that
\[
C_5 (-\log \delta_0)^{\sfrac{1}{2}} \geq  \tilde{G}\, .
\]
We now want to show that with these choices the estimate of Proposition~\ref{p:G_bound} holds. In fact, assume that $\delta \leq \delta_0$ and that at some point $\tau>0$ we have
\[
G_\delta (\tau) < - C_5 (- \log \delta)^{\sfrac{1}{2}}\, .
\]
By our choice of $\delta_0$ this would imply $G_\delta  (\tau) < - \tilde{G}$, which in turn implies, by \eqref{e:quasi_finito},
\begin{equation}\label{e:scoppia}
2\tau G'_\delta (\tau) \geq - \log \delta\, .
\end{equation}
In  particular, there is an $\varepsilon > 0$ such that $G_\delta $ is increasing on the interval $(\tau-\varepsilon, \tau)$. We then conclude
that $G_\delta (\tau -\varepsilon) < - C_5 (- \log \delta)^{\sfrac{1}{2}}$ and we can proceed further: it can only be that $G_\delta < - C_5 (- \log \delta)^{\sfrac{1}{2}}$ on the whole interval $(0, \tau)$. But then \eqref{e:scoppia} would be valid on $(0, \tau)$ and we would conclude that
\[
\lim_{\tau\downarrow 0} G_\delta (\tau) = -\infty\, ,
\]
contradicting the trivial bound $G_\delta > \log \delta$. 

In order to complete the proof of Proposition~\ref{p:G_bound} it remains to show that Lemma~\ref{l:H_5_cattivo} holds.
 
\begin{proof}[Proof of Lemma~\ref{l:H_5_cattivo}]
Observe that, by the ellipticity condition,
\begin{equation}\label{e:ell_again}
H_5 (t) \geq 2 \lambda^{-1} \int e^{-|\xi|^2} |\nabla \log (U (\xi, t) + \delta)|^2\, d\xi\, .
\end{equation}
We now wish to apply Lemma~\ref{l:poincare}. We set for this reason
\[
f (\xi) := \log (U (\xi, t) + \delta) - \pi^{-\sfrac{n}{2}} \int e^{-|\xi|^2} \log (U (\xi, t) + \delta)\, d\xi
= \log (U (\xi, t) + \delta) - \pi^{-\sfrac{n}{2}} G_\delta (t)\, .
\]
This choice achieves $\nabla f = \nabla \log (U +\delta)$ and $\int e^{-|\xi|^2} f (\xi)\, d\xi =0$. We can thus apply 
Lemma~\ref{l:poincare} which, combined with \eqref{e:ell_again}, gives
\begin{equation}\label{e:H_5_bound}
H_5 (t) \geq 4 \lambda^{-1} \int e^{-|\xi|^2}\left(\log (U (\xi, t) + \delta) - \pi^{-\sfrac{n}{2}} G_\delta (t)\right)^2\, d\xi\, .
\end{equation}
Consider now the following function $g$ on the positive real axis:
\[
g (u) := u^{-1} (\log (u+\delta) - \pi^{-\sfrac{n}{2}} G_\delta (t) )^2\, .
\]
Since $U$ is (strictly) positive, we have
\begin{equation}\label{e:G_below_stupid}
\pi^{-\sfrac{n}{2}} G_\delta (t) > \pi^{-\sfrac{n}{2}} \log \delta \int e^{-|\xi|^2}d\xi = \log \delta\, .
\end{equation}
Moreover $g$ is nonnegative and vanishes only at the only positive point $\bar u$ such that 
\[
\log (\bar u + \delta) = \pi^{-\sfrac{n}{2}} G_\delta (t)\, .
\]
Next, differentiating $g$ we find
\[
g' (u) = - u^{-2} (\log (u+\delta) - \pi^{-\sfrac{n}{2}} G_\delta (t) )^2 + 2 u^{-1} (u+\delta)^{-1} (\log (u+\delta) - \pi^{-\sfrac{n}{2}} G_\delta (t))\, .
\]
Hence the derivative $g'$ vanishes at $\bar u$ and at any other (positive) point $u_m$ which solves
\begin{equation}\label{e:zero_of_g'}
\underbrace{\log (u +\delta) - \pi^{-\sfrac{n}{2}} G_\delta (t) - 2 \frac{u}{u+\delta}}_{=: h (u)} = 0\, .
\end{equation}
The function $h (u)$ is negative for $u\leq \bar u$ and thus any solution of the equation must be larger than $\bar u$. In fact 
\[
h (\delta) =
\log 2 + \log \delta - \pi^{-\sfrac{n}{2}} G_\delta (t) -1 \stackrel{\eqref{e:G_below_stupid}}{\leq} \log 2 -1 < 0\, .
\]
Since $\delta \leq 1$, we certainly conclude that any solution $u_m$ of \eqref{e:zero_of_g'} must be larger than $\delta$. On the other hand,
differentiating $h$ we find
\[
h' (u) = \frac{2u}{(u+\delta)^2} - \frac{1}{u+\delta}\, ,
\]
which is strictly positive for $u\geq \delta$. 

We conclude that there is a unique point $u_m > \bar u$ which satisfies \eqref{e:zero_of_g'}. On the other hand 
\begin{equation}\label{e:asintoto}
\lim_{u\uparrow \infty} g (u) = 0\, .
\end{equation}
Hence $u_m$ must be a local maximum for $g$, and
$g$ is strictly decreasing on $]u_m, \infty[$.

Observe next that
\[
\log u_m < \log (u_m+\delta) \leq \pi^{-\frac{n}{2}} G_\delta (t) + 2\, .
\]
We therefore conclude that
\[
u_m < \exp (2+\pi^{-\frac{n}{2}} G_\delta (t)) =: U_0 (t)\, .
\]
Define
\[
U^* (\xi, t) :=
\left\{
\begin{array}{ll}
U (\xi, t) \qquad & \mbox{if $U (\xi, t) \geq U_0 (t)$,}\\ \\
0 \qquad &\mbox{otherwise.}
\end{array}\right.
\]
Summarizing we can bound
\begin{equation}\label{e:bound_H_5_2}
H_5 (t) \geq c \int e^{-|\xi|^2} g (U^* (\xi, t)) U^* (\xi, t)\, d\xi\, .
\end{equation}
Recalling \eqref{e:Linfty_norm}, we have $\|U^* (\cdot, t)\|_\infty \leq C$. If we set $\bar C = \max \{C, e^3\}$, we have
$\|U^* (\cdot, t)\|_\infty \leq \bar C$ and, at the same time,
$\bar C \geq e^3 \geq U_0 (t) \geq u_m$, because for $G_\delta (t)$ we have the trivial bound
\begin{equation}\label{e:trivial}
G_\delta (t) \leq \int \log (U (\xi,t) + \delta)\, d\xi\, \leq \int U (\xi,t)\, d\xi = 1\, .
\end{equation}
Using the monotonicity of $g$ on $]u_m, \infty[$ we then infer
\begin{equation}\label{e:bound_H_5_3}
H_5 (t) \geq c \int e^{-|\xi|^2} (\log (\bar C+ \delta) - \pi^{-\sfrac{n}{2}} G_\delta (t))^2 U^* (\xi, t)\, d\xi\, ,
\end{equation}
where $c$ is a small but positive constant (depending only on $\lambda$ and $n$) and $\bar C$ is a constant larger than $e^3$,
also depending only on $\lambda$ and $n$. In particular, the trivial bound \eqref{e:trivial} implies
\[
 \log (\bar C+ \delta) - \pi^{-\sfrac{n}{2}} G_\delta (t) = \pi^{-\sfrac{n}{2}} \left( \pi^{\sfrac{n}{2}} \log (\bar C + \delta) - 
G_\delta (t)\right) \geq \pi^{-\sfrac{n}{2}} (1- G_\delta (t)) \geq 0\, ,
\]
and we therefore conclude
\begin{align}
H_5 (t) & \geq c_0 (1- G_\delta (t))^2 \int e^{-|\xi|^2} U^* (\xi, t)\, d\xi\nonumber\\
 & = c_0 (1- G_\delta (t))^2 \underbrace{\int_{|\xi|\geq \exp (2+ G_\delta (t))} e^{-|\xi|^2} U (\xi, t)\, d\xi}_{=:I}\, .\label{e:bound_H_5_4}
\end{align}
Clearly, in order to complete the proof of the lemma we just need to show the existence of positive constants $\bar G$ and $\bar c$ such that
\[
G_\delta (t)\leq - \bar G \qquad \implies \qquad I \geq \bar c\, .
\]
Under the assumption $G_\delta (t)\leq - \bar G$, for any $\mu >0$ we can write
\[
I \geq e^{-\mu^2} \int_{\mu \geq |\xi|\geq \exp (2- \bar G)} U (\xi, t)\, d\xi
= e^{-\mu^2} \left(1 - \int_{|\xi|\leq \exp (2 - \bar G)} U (\xi,t)\, d\xi - \int_{|\xi|\geq \mu} U (\xi, t)\, d\xi \right)\, .
\]
Using \eqref{e:Linfty_norm} we have
\[
 \int_{|\xi|\leq \exp (2 - \bar G)} U (\xi,t)\, d\xi \leq C (\exp (2-\bar G))^n
\]
for a constant $C$ depending only on $n$ and $\lambda$. In particular, if we choose $\bar G$ large enough we can assume that the integral above is bounded by $\frac{1}{4}$. Next, using \eqref{e:moment_norm} we get
\[
\int_{|\xi|\geq \mu} U (\xi, t)\, d\xi \leq \frac{1}{\mu} \int U (\xi, t) |\xi|\, d\xi \leq \frac{C}{\mu}\, .
\]
Thus, it suffices to fix $\mu$ large enough so that the latter integral is also smaller than $\frac{1}{4}$. With such choice, $G_\delta (t) \leq - \bar G$ implies $I \geq \frac{1}{2} e^{-\mu^2}$, which thus completes the proof. 
\end{proof}

\section{Overlap estimate}

We are now ready to prove Proposition~\ref{p:overlap}. First of all we notice that, without loss of generality, we can assume
$\bar t =0$. We thus consider two fundamental solutions $S (x,t,x_1, 0)$ and $S (x, t, x_2, 0)$. Fix for the moment a positive time $t$ and set $\xi_i := x_i t^{-\sfrac{1}{2}}$ and

\[
U_i (\xi) := t^{\sfrac{n}{2}} S (t^{\sfrac{1}{2}}\xi, t, x_i, 0)\, .
\] 
By Proposition~\ref{p:G_bound} we have
\begin{equation}\label{e:G_i}
\int e^{-|\xi-\xi_i|^2} \log (U_i (\xi) + \delta)\, d\xi  \geq - C_5 (-\log \delta)^{\sfrac{1}{2}}
\end{equation}
for all $\delta\leq \delta_0$. In particular, in the rest of this paragraph we will certainly assume $\delta\leq 1$. 

We then add the two inequalities above to get
\begin{equation}\label{e:sum}
\int \left[e^{-|\xi-\xi_1|^2} \log (U_1 (\xi) + \delta) + e^{-|\xi-\xi_2|^2} \log (U_2 (\xi) + \delta)\right]\, d\xi  
\geq - 2 C_5 (-\log \delta)^{\sfrac{1}{2}}\quad \forall \delta\leq \delta_0\, .
\end{equation}
Let 
\begin{align*}
U_+  (\xi) &:= \max \{U_1 (\xi), U_2 (\xi)\}\, ,\\ 
U_- (\xi) &:= \min \{U_1 (\xi), U_2 (\xi)\}\, ,\\
f_+ (\xi) &:= \max \{ \exp (- |\xi-\xi_1|^2), \exp (-|\xi-\xi_2|^2)\}\, ,\\ 
f_- (\xi) &:= \min \{ \exp (- |\xi-\xi_1|^2), \exp (-|\xi-\xi_2|^2)\}\, .
\end{align*}
Recalling the elementary bound $ac + bd \leq \max \{a,b\}\max \{c,d\} + \min \{a,b\} \min \{c, d\}$ we then conclude
\begin{equation}\label{e:rearrang}
\int \left[f_+ (\xi) \log (U_+ (\xi) +\delta) + f_- (\xi) \log (U_- (\xi) +\delta)\right]\, d\xi \geq - 2C_5 (-\log \delta)^{\sfrac{1}{2}}\, . 
\end{equation}
Since $\delta\leq 1$, we have
\[
\log (U_+ (\xi) + \delta) \leq U_+ (\xi) \leq U_1 (\xi) + U_2 (\xi)\, ,
\]
and consequently we can bound
\begin{equation}\label{e:sciocca}
\int f_+ (\xi) \log (U_+ (\xi) + \delta)\, d\xi \leq \int (U_1 (\xi)+ U_2 (\xi))\, d\xi \leq 2\, .
\end{equation}
Next, we bound
\[
\log (U_- (\xi) + \delta) = \log \delta + \log (1 + \delta^{-1} U_- (\xi)) \leq \log \delta + \delta^{-1} U_- (\xi)\, ,
\]
and thus
\begin{equation}\label{e:furba}
\int f_- (\xi) \log (U_- (\xi) + \delta)\, d\xi \leq \log \delta \int f_- (\xi)\, d\xi + \delta^{-1} \int U_- (\xi)\, d\xi\, .
\end{equation}

Now, observe that $\int f_- (\xi) \, d\xi$ is simply a function $w$ of $|\xi_1-\xi_2|$, which is positive and decreasing. 
Thus, combining \eqref{e:rearrang}, \eqref{e:sciocca}, and \eqref{e:furba} we achieve
\begin{equation}\label{e:da_sotto}
\int U_- (\xi) \, d\xi\geq \max_{\delta \leq \delta_0} \delta 
\left[-2- w (|\xi_1-\xi_2|) \log \delta - 2 C_5 (- \log \delta)^{\sfrac{1}{2}}\right] = : \phi (|\xi_1-\xi_2|)\, .
\end{equation}
The function $\phi$ is nonnegative and decreasing. Considering the rescaling which defined
the $U_i$'s we then conclude
\begin{equation}\label{e:overlap_1}
\int \min \{S (x,t, x_1, 0), S (x,t, x_2,0)\}\, dx = \int U_- (\xi)\, d\xi \geq \phi \left(\frac{|x_1-x_2|}{t^{\sfrac{1}{2}}}\right)\, ,
\end{equation}
Next, recall the elementary identity
\[
|\sigma -\tau| = \sigma + \tau - 2 \min \{\sigma, \tau\}\, ,
\]
valid for every positive $\sigma$ and $\tau$. In particular, we can combine it with \eqref{e:overlap_1} to conclude
\begin{align}
\frac{1}{2} \int |S (x,t, x_1, 0) - S (x,t, x_2, 0)| \, dx & = 1 -  \int \min \{S (x,t, x_1, 0), S (x,t, x_2,0)\}\, dx\nonumber\\
& \leq 1 - \phi \left(\frac{|x_1-x_2|}{t^{\sfrac{1}{2}}}\right):= \psi \left(\frac{|x_1-x_2|}{t^{\sfrac{1}{2}}}\right)\, ,\label{e:overlap_2}
\end{align}
where $\psi$ is a positive increasing function strictly smaller than $1$ everywhere. Observe, moreover, that with the same argument we easily achieve 
\begin{align}
\frac{1}{2} \int |S (x,t, x_1, \bar t) - S (x,t, x_2, \bar t)| \, dx \leq \psi \left(\frac{|x_1-x_2|}{(t-\bar t)^{\sfrac{1}{2}}}\right)\, \label{e:overlap_3}\, ,
\end{align}
whenever $t\geq \bar t$. 

We will pass from \eqref{e:overlap_2} to \eqref{e:overlap} through an iterative argument. In order to implement such argument
we introduce the functions
\begin{align}
T_a (x,t) = &  \max \{S (x,t, x_1, 0) - S (x,t, x_2,0), 0\}\, ,\\
T_b (x,t) = & \max \{S (x,t, x_2, 0) - S (x,t, x_1, 0), 0\}\, ,
\end{align}
and
\[
A (t) := \int T_a (x,t)\, dx = \int T_b (x,t)\, dx = \frac{1}{2} \int |S (x,t,x_1, 0) - S (x, t, x_2, 0)|\, dx\, .
\]
Note, moreover, that although we have defined $A$ only for $t> \bar t$, from the first identity in the derivation of \eqref{e:overlap} and the properties of the fundamental solution, it is easy to see that $\lim_{t\downarrow 0} A (t) =1$. 

Furthermore, let $T^*_a (x,t, \bar t)$ and $T_b^* (x,t, \bar t)$ be the solutions of \eqref{e:parabolic} with respective initial data
$T_a (x, \bar t)$ and $T_b (x, \bar t)$ at $t$. Note therefore the identities
\begin{align}
T_a^* (x, t, \bar t) = & \int S (x,t, y, \bar t) T_a (y, \bar t)\, dy = \int S (x,t,y, \bar t) \underbrace{T_a (y, \bar t) T_b (z, \bar t) A(\bar t)^{-1}}_{=: \chi (y,z,\bar t)}\, dy\, dz\, ,\\
T_b^* (x,t, \bar t) = & \int S (x,t, z, \bar t) T_b (z, \bar t)\, dz = \int S (x,t, z, \bar t) \chi (y,z,\bar t)\, dy\, dz\, .
\end{align}
Moreover, $T_a^* (x,\bar t,\bar t) - T_b^* (x,\bar t,\bar t) = S(x,\bar t,x_1, 0) - S (x,\bar t, x_2, 0)$ and thus 
\[
T_a^* (x,t, \bar t) - T_b^* (x,t, \bar t) = S (x,t,x_1, 0) - S (x,t, x_2, 0) \qquad  \mbox{for every $t\geq \bar t$.}
\] 
We therefore conclude the inequality
\begin{equation}\label{e:propagazione}
|S (x,t, x_1, 0) - S (x,t, x_2, 0)| \leq \int |S(x,t, z, \bar t) - S (x,t,y, \bar t)| \chi (y,z, \bar t)\, dy\, dz\, .
\end{equation}

Note that, in principle, $A(t, \bar t)$ is defined for $t> \bar t$. On the other hand, it follows easily from the first equality in \eqref{e:overlap_2}, that $\lim_{t\downarrow \bar t} A (t, \bar t) =1$. 
Integrating \eqref{e:propagazione} we then obtain
\begin{equation}\label{e:da_iterare}
A (t) \leq \int \psi \left(\frac{|y-z|}{(t-\bar t)^{\sfrac{1}{2}}}\right) \chi (y,z,\bar t)\, dy\, dz\, \quad \forall t> \bar t\, .
\end{equation}
Observe in particular that
\begin{equation}\label{e:monotona}
A (t) < \int \chi (y,z, \bar t)\, dy \, dz = A (\bar t)\, . \qquad \forall t>\bar t\, ,
\end{equation}
namely $A$ is strictly monotone decreasing. 

Let $\varepsilon := \phi (1) = 1 -\psi (1)$ and define $\sigma := 1- \frac{\varepsilon}{4}$. For each natural number $k\geq 1$ we let
$t_k$ be the first time such that $A (t_k)\leq \sigma^k$, if such time exists. Since
\[
A( |x_1-x_2|^2) \leq \psi (1) = 1- \varepsilon < \sigma\, ,
\]
we have the inequality
\begin{equation}\label{e:t_1}
t_1 \leq |x_1-x_2|^2 \, .
\end{equation}
We wish to derive an iterative estimate upon $t_{k+1}- t_k$. 

In order to do so, we let $x_0 := \frac{x_1+x_2}{2}$ and define the moments
\begin{align}
M_a (t) & :=  \int |x-x_0| T_a (x,t)\, dx\, ,\\
M_b (t) &:= \int |x-x_0| T_b (x,t)\, dx\, ,\\
M_k  & := \max \{M_b (t_k), M_a (t_k)\}\, .
\end{align}
Strictly speaking the moments are not defined for $t=0$. However since the functions converge to $0$ as $t\downarrow 0$, we set $M_a (0) = M_b (0) = 0$. 
Observe that
\[
\int_{|y-x_0|\geq 2 \sigma^{-k} M_k} T_a (y,t_k)\, dy \leq  \frac{\sigma^k}{2 M_k} \int T_a (y, t_k) |y-x_0|\, dy \leq  \frac{\sigma^k}{2}\, .
\]
Moreover, an analogous estimate is valid for $T_b$. Since the total integral of $T_a (y, t_k)$ (respectively $T_b (z, t_k)$) is in fact $A (t_k) = \sigma^k$, we conclude
\begin{align}
\int_{|y-x_0|\leq 2 \sigma^{-k} M_k} T_a (y)\, dy \geq  \frac{\sigma^k}{2}\, ,\label{e:momento_100}\\
\int_{|z-x_0|\leq 2 \sigma^{-k} M_k} T_b (z)\, dz \geq  \frac{\sigma^k}{2}\, .\label{e:momento_101}
\end{align}
Consider the domain $\Omega_k := \{(y,z): |y-x_0|\leq 2 \sigma^{-k} M_k, |z-x_0|\leq 2 \sigma^{-k}M_k\}$ and its complement $\Omega_k^c$. 
Observe that on $\Omega_k$ we have $|y-z|\leq 4 \sigma^{-k} M_k$. Thus for $t' > t_k$ we can use \eqref{e:da_iterare} to estimate
\begin{align}
A (t') & \leq  \int_{\Omega_k^c} \chi (y,z,t_k)\, dy\, dz + \psi \left(4 \sigma^{-k} M_k (t'-t_k)^{-\sfrac{1}{2}}\right) \int_{\Omega_k} \chi (y,z, t_k)\, dy\, dz\nonumber\\
& \leq \int \chi (y,z,t_k)\, dy\, dz - \left[1-\psi \left(4 \sigma^{-k} M_k (t'-t_k)^{-\sfrac{1}{2}}\right)\right] \int_{\Omega_k} \chi (y ,z,t_k)\, dy\, dz\nonumber\\
& \leq  A (t_k) -  \left[1-\psi \left(4 \sigma^{-k} M_k (t'-t_k)^{-\sfrac{1}{2}}\right)\right] A (t_k)^{-1} \left(\frac{\sigma^k}{2}\right)^2\nonumber\\
& = \sigma^k \left[\frac{3}{4} + \frac{1}{4} \psi \left(4 \sigma^{-k} M_k (t'-t_k)^{-\sfrac{1}{2}}\right)\right]\, .\label{e:stima_t_k+1}
\end{align}
If we set
\[
t':= t_k+16 \sigma^{-2k} M_k^2\, ,
\]
then
\[
\psi \left(4 \sigma^{-k} M_k (t'-t_k)^{-\sfrac{1}{2}}\right) = \psi (1) = 1 - \varepsilon\, ,
\]
and \eqref{e:stima_t_k+1} gives
\[
A(t')\leq \sigma^k \left(1-\frac{\varepsilon}{4}\right) = \sigma^{k+1}\, .
\]
We thus infer the recursive estimate
\begin{equation}\label{e:t_k-ricorsivo}
t_{k+1}\leq t_k + 16 \sigma^{-2k} M_k^2\, .
\end{equation}
We wish next to estimate $M_k$. Observe that
\begin{align*}
T_a (x, t') & =  \max \{S (x,t', x_1, 0) - S (x, t', x_2, 0), 0\} = \max \{T_a^* (x,t', t) - T_b^* (x,t',t), 0\}\\
&  \leq T_a^* (x,t', t) = \int S (x,t', y,t) T_a (y,t)\, dy\, . 
\end{align*}
Now,
\begin{align*}
M_a (t') & = \int |x-x_0| T_a (x, t')\, dx \leq \int (|x-y| + |y-x_0|) S (x,t', y,t) T_a (y,t)\, dy\, dx\\
& = \int |y-x_0| T_a (y, t)\, dy + \int T_a (y, t) \int |x-y| S (x,t', y, t)\, dx\, dy\, .  
\end{align*}
Using the moment bound we then infer
\[
M_a (t') \leq M_a (t) + A(t) C_4 (t'-t)^{\sfrac{1}{2}}\, .
\]
This, and the analogous bound on $M_b (t')$, leads to the recursive estimate
\[
M_{k+1} \leq M_k + \sigma^{k+1} C_4 (t_{k+1} - t_k)^{\sfrac{1}{2}} \leq M_k (1+4 C_4)\, .
\]
Clearly, since $t_0 =0$ and $M_0 = M_a (t_0) = M_b (t_0) = \frac{|x_1-x_2|}{2}$, we have
\begin{equation}\label{e:M_k}
M_k \leq \frac{|x_1-x_2|}{2} (1+ C_4)^k\, .
\end{equation}
Thus the recursive bound \eqref{e:t_k-ricorsivo} becomes
\begin{equation}\label{e:t_k-ricorsivo-2}
t_{k+1} \leq t_k + 4 |x_1-x_2|^2 \big[\underbrace{\sigma^{-2} (1+C_4)^2}_{B}\big]^k\, .
\end{equation}
Summing \eqref{e:t_k-ricorsivo-2} and taking into account that $t_1 \leq |x_1-x_2|^2$ we clearly reach
\begin{equation}\label{e:esponenziale_t_k}
t_{k+1} \leq 4 |x_1-x_2|^2 \frac{B^{k+1}-1}{B-1}\leq 4 |x_1-x_2|^2 B^{k+1}\, ,
\end{equation}
where $B$ is a constant larger than $2$ which depends only on $\lambda$ and $n$ (if $B$ as defined in \eqref{e:t_k-ricorsivo-2} is smaller than $2$, we can just enlarge it by setting it equal to $2$).

We next set $t_0=0$ (and recall that $A (0):=\lim_{t\downarrow 0} A(t) = 1$).
Hence, for any $t\geq 0$ there is a unique natural number $k$ such that
\[
t_k \leq t < t_{k+1}\, .
\]
We then conclude
\begin{align}
\int |S (x,t, x_1, 0) - S (x,t, x_2, 0)|\, dx = A(t) \leq A (t_k) \leq \sigma^k \qquad \forall t \geq t_k\, .
\end{align}
Observe on the other hand that
\[
k +1 \geq - (\log B)^{-1} \log \frac{4 |x_1-x_2|^2}{t}\qquad \mbox{for all $t\geq t_k$.}
\]
If we set $ \alpha:= - 2 (\log B)^{-1} \log \sigma$, which is a positive number depending therefore only upon $\lambda$ and $n$, we reach
the estimate
\begin{equation}
\int |S (x,t, x_1, 0) - S (x,t, x_2, 0)|\, dx \leq \sigma^{-1} 4^{\sfrac{\alpha}{2}} \left(\frac{|x_1-x_2|}{t^{\sfrac{1}{2}}}\right)^\alpha\, .
\end{equation}
This is exactly the desired estimate, and hence the proof of Proposition~\ref{p:overlap} is finally complete.

\section{Proof of the a priori estimate}\label{s:conclusione}

First of all observe that, by Theorem~\ref{t:fund_sol}(f), \eqref{e:overlap} can also be used to prove
\begin{equation}\label{e:overlap_duale}
\int |S (x_1,t, y, \bar t) - S (x_2,t, y, \bar t)|\, dy \leq C \left(\frac{|x_1-x_2|}{(t-\bar t)^{\sfrac{1}{2}}}\right)^\alpha \qquad
\mbox{for all $t> \bar t$.}
\end{equation}
This easily gives the H\"older continuity of any solution $u$ through Theorem~\ref{t:fund_sol}(e):
\begin{align}
|u (x_1, t) - u (x_2, t)| & \leq \int |S (x_1,t, y, 0) - S (x_2,t, y, 0)| |u (y, 0)|\, dy\nonumber\\ 
& \leq C \|u\|_\infty \left(\frac{|x_1-x_2|}{t^{\sfrac{1}{2}}}\right)^\alpha\, .\label{e:space}
\end{align}
As for the time continuity, we use
\[
u (x, t) - u (x,s) = \int S (x,t,y,s) u (y,s)\, dy - u (x,s) \int S (x,t, y,s)\, dy
\]
to estimate
\begin{align}
|u (x,s) - u (x,t)| &\leq \int S (x,t, y,s) |u (y, s) - u (x,s)|\, dy\nonumber\\
& \leq \underbrace{\int_{|y-x|\leq \rho} S (x,t,y,s) |u (y, s) - u(x,s)|\, dy}_{=I_1}\nonumber\\ 
& \qquad+  \underbrace{\int_{|y-x|\geq \rho} S (x,t,y,s) |u(y, s) - u(x,s)|\, dy}_{=I_2}\, ,
\end{align}
where $\rho>0$ will be chosen later. Using \eqref{e:space} (and the fact that the integral of the fundamental solution equals $1$), we can estimate
\begin{equation}\label{e:I1}
I_1 \leq C \|u\|_\infty s^{-\sfrac{\alpha}{2}} \rho^\alpha\, .
\end{equation}
For $I_2$ we use the moment bound \eqref{e:moment_bound}:
\begin{equation}\label{e:I2}
I_2 \leq 2\rho^{-1} \|u\|_\infty \int |y-x| S (x, t,y,s)\, dy \leq C \|u\|_\infty \rho^{-1} (t-s)^{\sfrac{1}{2}}\, .
\end{equation}
We thus get
\[
|u(t,x) - u (s,x)|\leq C \|u\|_\infty \left(\rho^\alpha s^{-\sfrac{\alpha}{2}} + (t-s)^{\sfrac{1}{2}} \rho^{-1}\right)\, .
\]
Choosing $\rho^{1+\alpha} = s^{\sfrac{\alpha}{2}} (t-s)^{\sfrac{1}{2}}$ we conclude
\begin{equation}\label{e:time}
|u(t,x) - u (s,x)|\leq C \|u\|_\infty \left(\frac{t-s}{s}\right)^{\frac{\alpha}{2 (1+\alpha)}}\, .
\end{equation}
The combination of \eqref{e:space} and \eqref{e:time} gives Theorem~\ref{t:main_reg_2}.

\section{Proof of Nash's parabolic regularity theorem}\label{s:para_tecnico}

In order to conclude Theorem~\ref{t:main_reg} from Theorem~\ref{t:main_reg_2}, fix measurable coefficients $A_{ij}$ satisfying 
Assumption~\ref{a:ellipticity} and a bounded distributional solution $u$ on $\mathbb R^n \times (0, \infty)$. Without loss of generality we can assume that the $A_{ij}$ are defined also for negative times, for instance we can set $A_{ij} (x,-t) = A_{ij} (x,t)$ for every $x$ and every $t>0$. Next, we observe that, if $\varphi$ is a smooth compactly supported nonnegative convolution kernel in $\mathbb R^n \times \mathbb R$, the regularized coefficients $B^\varepsilon_{ij} = A_{ij} * \varphi_\varepsilon$ satisfy Assumption~\ref{a:ellipticity} with the same constant $\lambda$ in \eqref{e:ellipticity}. Consider moreover a cutoff function $\psi^\varepsilon$ which is nonnegative, compactly supported in $B_{2\varepsilon^{-1}} \times (-2\varepsilon, 2\varepsilon^{-1})$, identically equal to $1$ on $B_{\varepsilon^{-1}}\times (-\varepsilon^{-1} , \varepsilon^{-1})$ and never larger than $1$. If we set $A^\varepsilon_{ij} = \psi^\varepsilon B^\varepsilon_{ij} + (1-\psi^\varepsilon) \delta_{ij}$, again the matrix $A^\varepsilon$ satisfies Assumption~\ref{a:ellipticity} with the same $\lambda$ as the matrix $A$. Note also that
\begin{equation}\label{e:convergence}
\lim_{\varepsilon \to 0} \|A^\varepsilon_{ij} - A_{ij}\|_{L^1 (B_R (0) \times (-R, R))} = 0 \qquad \mbox{for every $R>0$.}
\end{equation}
We now wish to construct solutions $u^\varepsilon$ to the ``regularized'' parabolic problem
\begin{equation}\label{e:parabolic_reg}
\partial_t u^\varepsilon = {\rm div}_x (A^\varepsilon \nabla u^\varepsilon)\, ,
\end{equation}
which converge to our fixed solution $u$ of the limiting equation \eqref{e:parabolic}. In order to do so, we fix a smooth mollifier $\chi$ and a family of cut-off functions $\beta^\varepsilon$  {\em in space}. Such pair is the ``spatial analog'' of the pair $(\varphi , \psi^\varepsilon)$ used to regularize $A$. For every time $s$ we define the regularized time-slice
\[
\bar{u}^{\varepsilon,s} (x) := [u (\cdot, s) * \chi_\varepsilon] (x) \beta^\varepsilon (x)\, .
\]
By classical parabolic theory, there is a unique smooth solution $u^{\varepsilon, s}$ of 
\eqref{e:parabolic_reg} on $\mathbb R^n \times [s, \infty[$ subject to the initial condition $u^{\varepsilon, s} (\cdot ,s) = \bar{u}^{\varepsilon ,s}$: in fact this statement follows easily from
Theorem~\ref{t:fund_sol}. Moreover, by the classical maximum principle (cf.~for instance \cite{Friedman}) we have
\begin{equation}\label{e:maximum_principle}
\|u^{\varepsilon, s}\|_\infty \leq \|\bar u^{\varepsilon, s}\|_\infty \leq \|u\|_\infty\, .
\end{equation}
The key to pass from Theorem~\ref{t:main_reg_2} to Theorem~\ref{t:main_reg} is then the following lemma. 

\begin{lemma}\label{l:approx}
For almost every $s>0$, $u^{\varepsilon, s}$ converges weakly$^*$ in $L^\infty (\mathbb R^n \times (s, \infty))$ to $u$. 
\end{lemma}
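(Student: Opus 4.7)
The plan is to identify $u^{\varepsilon, s}$ as an approximation scheme for the unique bounded solution of \eqref{e:parabolic} on $\mathbb{R}^n \times (s, \infty)$ with initial datum $u(\cdot, s)$. By the maximum principle \eqref{e:maximum_principle} the family $\{u^{\varepsilon, s}\}_\varepsilon$ is uniformly bounded in $L^\infty$, so by Banach--Alaoglu every sequence $\varepsilon_k \downarrow 0$ admits a subsequence along which $u^{\varepsilon_k, s}$ converges in the weak-$\ast$ topology of $L^\infty(\mathbb{R}^n \times (s, \infty))$ to some $v$. It therefore suffices to prove that every such $v$ coincides with $u$ on $(s, \infty)$, since this forces the entire family to converge.

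First, I would derive a Caccioppoli-type energy inequality. Multiplying \eqref{e:parabolic_reg} by $\varphi^2 u^{\varepsilon, s}$ for a spatial cutoff $\varphi \in C^\infty_c(\mathbb{R}^n)$, integrating, and using the uniform ellipticity of $A^\varepsilon$ together with Young's inequality yields, after integration over $[s, T]$,
\[
\int_s^T \int_{\mathbb{R}^n} \varphi^2 |\nabla u^{\varepsilon, s}|^2 \, dx \, dt \leq C(\varphi, T, \lambda)\, \|u\|_\infty^2.
\]
Hence, up to a further extraction, $\nabla u^{\varepsilon_k, s} \rightharpoonup \nabla v$ weakly in $L^2_{\mathrm{loc}}(\mathbb{R}^n \times (s, \infty))$. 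Moreover $A^{\varepsilon_k} \to A$ strongly in $L^2_{\mathrm{loc}}$ (since $A^{\varepsilon_k}$ is uniformly bounded and converges in $L^1_{\mathrm{loc}}$ by \eqref{e:convergence}), and so $A^{\varepsilon_k} \nabla u^{\varepsilon_k, s} \rightharpoonup A \nabla v$ weakly in $L^1_{\mathrm{loc}}$. Passing to the limit in the distributional formulation \eqref{e:distributional} of \eqref{e:parabolic_reg} then shows that $v$ is a bounded distributional solution of \eqref{e:parabolic} on $\mathbb{R}^n \times (s, \infty)$.

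It remains to show that $v$ and $u$ agree on $(s, \infty)$, and the main obstacle here is matching the initial trace at $t = s$. Using the equation together with the energy bound, the maps $t \mapsto u^{\varepsilon_k, s}(\cdot, t)$ are equicontinuous from $[s, T]$ into $H^{-1}_{\mathrm{loc}}(\mathbb{R}^n)$; the same argument applied to $u$ yields, for almost every $s > 0$, a representative of $t \mapsto u(\cdot, t)$ which is continuous into $H^{-1}_{\mathrm{loc}}$ at $t = s$. Restricting further to those $s$ which are Lebesgue points of $t \mapsto u(\cdot, t) \in L^1_{\mathrm{loc}}(\mathbb{R}^n)$ (still a full-measure set), standard properties of convolution give $\bar u^{\varepsilon_k, s} \to u(\cdot, s)$ in $L^1_{\mathrm{loc}}(\mathbb{R}^n)$, and the equicontinuity above then forces $v(\cdot, s^+) = u(\cdot, s)$. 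Finally, uniqueness of bounded distributional solutions of \eqref{e:parabolic} with matching initial trace is a standard energy argument: the difference $w = v - u$ satisfies the same linear parabolic equation with zero initial trace, and a Gronwall estimate on $\int \varphi^2 w^2 \, dx$ with a suitably chosen expanding cutoff $\varphi$ forces $w \equiv 0$, completing the proof.
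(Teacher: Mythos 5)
Your proof follows the same overall strategy as the paper: weak-$\ast$ compactness from the uniform $L^\infty$ bound, a local energy estimate to identify the limit as a distributional solution, matching the trace at $t=s$ for a full-measure set of $s$, and uniqueness for bounded solutions with matching data. The compactness step, the Caccioppoli estimate, the passage to the limit using $A^{\varepsilon}\to A$ in $L^2_{\mathrm{loc}}$, and the selection of good times $s$ (you call them Lebesgue points; the paper imposes the equivalent averaged continuity condition \eqref{e:approx_cont}) are all sound and essentially parallel what is done in the text.

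However, there is a genuine gap in the uniqueness step, and it is exactly the place where the key technical ideas of the proof live. You write that uniqueness follows from ``a Gronwall estimate on $\int \varphi^2 w^2\, dx$ with a suitably chosen expanding cutoff $\varphi$.'' If $\varphi$ is a standard cutoff equal to $1$ on $B_R$ and supported in $B_{2R}$, the test-function computation produces, after absorbing the coercive term, an error of the form $C\int w^2\,|\nabla\varphi|^2\,dx$. Since $w$ is only bounded and the domain is all of $\mathbb R^n$, this error is of order $\|w\|_\infty^2 R^{n-2}$ and does \emph{not} vanish as $R\to\infty$ (it in fact blows up for $n\geq 3$), so the Gronwall argument does not close. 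The paper instead tests against $\chi^2 e^{h}(w*\gamma_\varepsilon)$ with the weight $h(x,t)=-\alpha|x|^2/t$, whose crucial property \eqref{e:esponenziale}, namely $\partial_t h = -\frac{1}{4\alpha}|\nabla h|^2$, lets the negative $\partial_t h$ contribution absorb precisely the dangerous $w^2|\nabla h|^2$ terms, while the exponential decay of $e^h$ makes the cutoff error $\int e^h w^2|\nabla\chi|^2$ vanish as $R\to\infty$. (A simpler fixed weight such as $e^{-|x|}$ would also work here with Gronwall, but some decaying weight is indispensable; ``expanding cutoff'' alone is not enough.) You should also be aware that one cannot test directly with $e^h w$ because $w$ lacks the needed time regularity: the paper first mollifies in space, exploits the symmetry of the mollifier to move derivatives across the convolution, and then passes to the limit in the mollification parameter. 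These two points --- the Aronson-type weight and the spatial regularization of the test function --- are the substance of the argument and are missing from your sketch.
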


We will turn to the lemma in a moment. With its aid Theorem~\ref{t:main_reg} is a trivial corollary of Theorem~\ref{t:main_reg_2} and of the estimate \eqref{e:maximum_principle}. Indeed
the solutions $u^{\varepsilon,s}$ will satisfy the uniform estimate 
\begin{equation}\label{e:Holder_uniform}
|u^{\varepsilon,s} (x_1, t_1) - u^{\varepsilon,s} (x_2, t_2)|\leq C \|u\|_\infty \left[\frac{|x_1-x_2|^\alpha}{(t_1-s)^{\sfrac{\alpha}{2}}} + 
\left(\frac{t_2-t_1}{t_1-s}\right)^{\frac{\alpha}{2 (1+\alpha)}}\right] \, ,
\end{equation}
for all $t_2\geq t_1>s >0$ and all $x_1, x_2\in \mathbb R^n$. By the Ascoli--Arzel\`a Theorem the family $u^{\varepsilon,s}$ is precompact in $C^0$, and up to subsequences will then converge uniformly to a H\"older function $u^s$ on any compact set $K\subset \mathbb R^n \times (s, \infty)$: by  Lemma~\ref{l:approx} $u^s$ will coincide with $u$ for almost every $s$ and we will thus conclude
\begin{equation}\label{e:Holder_con_s}
|u (x_1, t_1) - u (x_2, t_2)|\leq C \|u\|_\infty \left[\frac{|x_1-x_2|^\alpha}{(t_1-s)^{\sfrac{\alpha}{2}}} + 
\left(\frac{t_2-t_1}{t_1-s}\right)^{\frac{\alpha}{2 (1+\alpha)}}\right] \, .
\end{equation}
Letting now $s$ go to $0$ we achieve Theorem~\ref{t:main_reg}. 

\begin{proof}[Proof of Lemma~\ref{l:approx}] {\bf Step 1.} First we will prove that \eqref{e:distributional} can in fact be upgraded to the following stronger statement for almost every pair of times $t>s$:
\begin{align}
\int u (x,t) \varphi (x,t)\, dx & = \int_s^t \int u (x, \tau) \partial_t \varphi (x,\tau)\, dx\, d\tau
- \int_s^t \int  \partial_i \varphi (x,\tau) A_{ij} (x,\tau) \partial_j u (x,\tau)\, dx\, d\tau \nonumber\\
&\qquad +  \int u (x,s) \varphi (x,s)\, dx \qquad \forall \varphi\in C^\infty_c (\mathbb R^n \times (0, \infty))\, .\label{e:test}
\end{align}
The argument is standard, but we will include it for the reader's convenience. In particular we will prove that
\eqref{e:test} holds for every pair $s<t$ satisfying the property
\begin{equation}\label{e:approx_cont}
\lim_{\varepsilon \to 0} \frac{1}{\varepsilon} \left[\int^s_{s-\varepsilon} \int_{B_R}  |u(x, \tau) - u(x,s)|\, dx\, d\tau
+ \int^{t+\varepsilon}_t \int_{B_R} |u(x, t) - u (x, \tau)|\, dx\, d\tau \right] = 0 
\end{equation}
for all $R>0$. By standard measure theory implies, any time that we fix $R \in \mathbb N$, \eqref{e:approx_cont} holds for almost every $s<t$. 

On the other hand,
to pass from \eqref{e:distributional} to \eqref{e:test} using \eqref{e:approx_cont} we just argue with the following classical procedure:
\begin{itemize}
\item[(i)] We fix a monotone $\chi\in C^\infty (\mathbb R)$ which is identically $1$ on $]-\infty, 0]$ and identically $0$ on $]1, \infty[$. 
\item[(ii)] We test \eqref{e:distributional} with $\varphi (x,\tau) \chi (\frac{\tau - t}{\varepsilon})\chi (\frac{s-\tau}{\varepsilon})$.
\item[(iii)] We let $\varepsilon$ go to $0$. 
\end{itemize}

\medskip

{\bf Step 2.} Next, using \eqref{e:maximum_principle} and the weak$^*$ compactness of bounded sets in $L^\infty$, we can assume the convergence of $u^{\varepsilon, s}$, up to subsequences, to some $L^\infty$ function $u^s$. We wish to show that $u^s$ has first-order distributional derivatives $\partial_j u^s$ which are locally square summable. In order to do so, we borrow some ideas from
\cite{Aronson} and consider the function
\[
h (x,t) := - \frac{\alpha |x|^2}{t}\, ,
\] 
where $\alpha >0$ will be chosen in a moment.
We use the equation \eqref{e:parabolic_reg} to derive the following equality:
\begin{align}
&\int e^{h (x,t)} |u^{\varepsilon,s} (x, t)|^2 \, dx + 2 \int_s^t \int e^{h (x,\tau)} \partial_j u^{\varepsilon, s} (x, \tau) A^\varepsilon_{ij} (x, \tau)
\partial_i u^{\varepsilon,s} (x, \tau)
\, dx\, d\tau\nonumber\\
& \qquad =  \int_s^t \int e^{h (x, \tau)} \left[\partial_t h (x, \tau) |u^{\varepsilon,s} (x, \tau)|^2 - 2 u^{\varepsilon,s} (x, \tau) \partial_j u^{\varepsilon,s}
(x,\tau) A_{ij}^{\varepsilon} (x, \tau) \partial_i h (x, \tau)\right]\, dx\, d\tau\nonumber\\
&\qquad\quad + \int e^{h(x,s)} |u^{\varepsilon,s} (x,s)|^2\, dx \, . \label{e:4.102}
\end{align}
Note that, for each fixed $\varepsilon$ the solution $u^{\varepsilon,s}$ is smooth and all derivatives are bounded, by standard regularity theory for linear parabolic differential equations, see for instance \cite[Sec.~7.2.3]{Evans}. Thus all the integrals above are finite and the equality above follows from usual calculus formulae.

Now, observe that the last integral in \eqref{e:4.102} is bounded by $C \|u\|_\infty^2$ for some constant $C= C (\alpha, s)$. 
Using the ellipticity of $A^\varepsilon_{ij}$ we can thus estimate
\begin{align*}
&\int e^{h (x,t)} |u^{\varepsilon,s} (x, t)|^2 \, dx + 2 \lambda^{-1} \int_s^t \int e^{h (x, \tau)} |\nabla u^{\varepsilon ,s} (x,\tau)|^2\, dx\, d\tau\\
&\qquad \leq  \int_s^t\int e^{h(x, \tau)} \left[ \partial_t h (x, \tau) |u^{\varepsilon, s} (x, \tau)|^2+ 2 \lambda |u^{\varepsilon, s} (x, \tau)| |\nabla u^{\varepsilon, s} (x, \tau)||\nabla h (x, \tau)|\right]\, dx\, d\tau\nonumber\\
&\qquad\quad+ C \|u\|_\infty^2\, .
\end{align*}
The weight $h$ has the following fundamental property:
\begin{equation}\label{e:esponenziale}
\partial_t h = - \frac{1}{4\alpha} |\nabla h|^2\, . 
\end{equation}
Thus, it suffices to choose $\alpha$ small, depending only upon $\lambda$, to conclude, via Young's inequality, 
\begin{align*}
&\int e^{h (x,t)} |u^{\varepsilon,s} (x, t)|^2 \, dx + 2 \lambda^{-1} \int_s^t \int e^{h (x, \tau)} |\nabla u^{\varepsilon ,s} (x,\tau)|\, dx\, d\tau\\
&\qquad \leq \lambda \int_s^t \int e^{h (x, \tau)} |\nabla u^{\varepsilon ,s} (x,\tau)|^2\, dx\, d\tau + C \|u\|_\infty^2\, .
\end{align*}
The latter inequality gives an upper bound on 
\[
\int_s^t \int e^{h (x, \tau)} |\nabla u^{\varepsilon ,s} (x,\tau)|^2\, dx\, d\tau
\]
which depends upon $\|u\|_\infty$ and $\lambda$, but not upon $\varepsilon$. We thus infer a uniform bound
for $\|\nabla u^{\varepsilon, s}\|_{L^2 (B_R (0)\times (s, \infty))}$ for every positive $R$. In turn such bound implies that the partial derivatives $\partial_j u^s$ are locally square summable and that $\partial_j u^{\varepsilon, s}$ converge (locally) weakly in $L^2$ to $\partial_j u^s$
(again up to subsequences, which we do not label for notational convenience).  

\medskip

{\bf Step 3.} Passing to the limit in the weak formulation of \eqref{e:parabolic_reg} and using that the initial data $u^{\varepsilon, s} (\cdot, s)$ converges (locally in $L^1$) to $u (\cdot, s)$, we then infer the corresponding of \eqref{e:test} for every $t>s$
(in this case we need no restriction upon $t$ because we know that $u^s$ converges locally uniformly!), namely, the
validity of
\begin{align}
\int u^s (x,t) \varphi (x,t)\, dx = & \int u^s (x,s) \varphi (x,s)\, dx + \int_s^t \int u^s  (x, \tau) \partial_\tau \varphi (x,\tau)\, dx\, d\tau\nonumber\\
& - \int_s^t \int  \partial_i \varphi (x,\tau) A_{ij} (x,\tau) \partial_j u^s (x,\tau)\, dx\, d\tau \label{e:test2}
\end{align}
for every test function $\varphi\in C^\infty_c (\mathbb R^n \times (0, \infty))$.
If we consider $w := u - u^s$ we then subtract \eqref{e:test2} from \eqref{e:test} to conclude the following identity for almost every pair $t\geq s$ and for every test $\varphi\in C^\infty_c (\mathbb R^n \times (0, \infty))$:
\begin{align}
\int w (x,t) \varphi (x,t)\, dx = & \int_s^t \int w  (x, \tau) \partial_\tau \varphi (x,\tau)\, dx\, d\tau\nonumber\\
& - \int_s^t \int  \partial_i \varphi (x,\tau) A_{ij} (x,\tau) \partial_j w (x,\tau)\, dx\, d\tau \, .\label{e:test3}
\end{align}
Our goal is to use the latter integral identity, which is a weak form of \eqref{e:parabolic} with initial data $w (\cdot, s) = 0$, to derive
that $w=0$ almost everywhere: this would imply that $u=u^s$ almost everywhere and thus complete the proof of the lemma.

\medskip

{\bf Step 4.} In order to carry on the above program we wish to test \eqref{e:test3} with $\varphi = e^h w$, but we must face two difficulties:
\begin{itemize}
\item[(i)] $w$ is not smooth enough. Indeed the first-order partial derivatives in space are locally square summable and pose no big difficulties, but note that in \eqref{e:test3} there is a term with a partial derivative in time, which for $e^h w$ is not even a summable function.  
\item[(ii)] $e^{h} w$ is not compactly supported in space (the assumption of being compactly supported in time can be ignored, since all domains of integration are bounded in time). 
\end{itemize}
In order to remove these two problems we fix a cutoff function $\chi\in C^\infty_c (\mathbb R^n)$ and a compactly supported smooth kernel {\em in space only}, namely, a nonnegative $\gamma \in C^\infty_c (\mathbb R^n)$ with integral $1$. We then consider the spatial regularization
\[
w * \gamma_\varepsilon (x,\tau) = \int w (y,\tau) \gamma \left(\frac{x-y}{\varepsilon}\right)\, dy\, ,
\]
and define the test function $\varphi := \chi^2 e^{h} w * \gamma_\varepsilon$.  The map $x\mapsto w *\gamma_\varepsilon (x, t)$ is smooth for every fixed $t$ and moreover $\|\nabla (w * \gamma _\varepsilon) (\cdot ,t)\|_\infty \leq C \|w\|_\infty \varepsilon^{-1}$. To gain regularity in time we can use the weak form of the equation to show that, in the sense of distributions,
\begin{equation}\label{e:time_der_dist}
\partial_t (w * \gamma_\varepsilon) = ({\rm div}_x (A \nabla w)) *\gamma_\varepsilon = (A_{ij} \partial_j w) * \partial_i \gamma_\varepsilon\, .
\end{equation}
Since $\partial_t w$ is locally square summable, we conclude that $\partial_t (w*\gamma_\varepsilon)$ is a locally bounded measurable function
and thus that $w*\gamma_\varepsilon$ is locally Lipschitz in the space-time domain $\mathbb R^n \times (0, \infty)$. Hence the test function $\varphi := \chi^2 e^{h} w * \gamma_\varepsilon$ is Lipschitz and compactly supported and, although the test function in our definition of distributional solution is assumed to be smooth, it is easy check that, nonetheless, \eqref{e:test3} holds for our (possibly less regular) choice. 
Inserting such $\varphi$ in \eqref{e:test3}, and using \eqref{e:time_der_dist}, we then achieve
\begin{align*}
& \int e^{h (x,t)} w (x, t) w*\gamma_\varepsilon (x,t) \chi^2 (x)\, dx\nonumber\\
&\qquad =  \int_s^t \int e^{h(x, \tau)} \partial_t h (x, \tau) w (x, \tau) w*\gamma_\varepsilon (x, \tau) \chi^2 (x)\, dx\, d\tau\\
&\qquad\quad+ \underbrace{\int_s^t \int e^{h (x, \tau)} w (x, \tau) [(A_{ij} \partial_j w) * \partial_i \gamma_\varepsilon] (x, \tau) \chi^2 (x)\, dx\, d\tau}_{=:(I)}\\
&\qquad \quad- \int_s^t \int e^{h (x, \tau)} \partial_i w (x, \tau) A_{ij} (x, \tau) \chi (x)\cdot \\
&\qquad\qquad\cdot [\partial_j w*\gamma_\varepsilon (x, \tau) \chi (x) +
w*\gamma_\varepsilon (x, \tau) (\partial_j h (x, \tau) \chi (x) + 2 \partial_j \chi (x))]\, dx\, d\tau\, .  
\end{align*}
Next, assuming that $\gamma$ is a symmetric kernel, we can use the standard identity 
\[
\int (f*\gamma) (x) g (x)\, dx= \int f (x) (g*\gamma) (x)\, dx
\] 
to conclude
\[
(I) = -  \int_s^t \int e^{h (x, \tau)} \partial_j w (x, \tau) A_{ij} (x, \tau) 
[(\chi^2 \partial_i w + \chi^2 w \partial_i h + 2 w \chi \partial_i \chi) * \gamma_\varepsilon] (x, \tau)\, dx\, d\tau\, .
\]
Letting $\varepsilon$ go to $0$ we then conclude
\begin{align*}
& \int e^{h(x, t)}  w^2 (x, t) \chi^2 (x)\, dx \\
& \qquad = - 2 \int_s^t \int e^{h(x, \tau)} \chi^2 (x) \partial_i w (x, \tau) A_{ij} (x, \tau) \partial_j w (x, \tau)\, dx\, d\tau\\
&\qquad\quad + \int_s^t \int e^{h (x, \tau)} \chi^2 (x) w^2 (x, \tau) \partial_t h (x, \tau)\, dx\, d\tau\\
&\qquad\quad - 2 \int_s^t \int e^{h (x, \tau)} w(x, \tau)  \chi (x)  \partial_i w (x, \tau) A_{ij} (x, \tau) (2 \partial_j \chi  (x) + \chi (x) \partial_j h (x, \tau))\, dx\, d\tau\, .
\end{align*}
Using now the ellipticity of $A_{ij}$ and \eqref{e:esponenziale} we achieve
\begin{align*}
& \int e^{h(x, t)}  w^2 (x, t) \chi^2 (x)\, dx\nonumber\\
&\qquad \leq - 2\lambda^{-1} \int_s^t \int e^{h(x, \tau)} \chi^2 (x) |\nabla w (x, \tau)|^2\, dx\, d\tau\\
& \qquad\quad - (4\alpha)^{-1} \int_s^t \int e^{h (x, \tau)} \chi^2 (x) w^2 (x, \tau) |\nabla h (x, \tau)|^2\, dx\, d\tau\\
& \qquad\quad + 2 \lambda \int_s^t \int e^{h (x, \tau)} |w (x, \tau)| |\nabla w (x, \tau)| (\chi^2 (x) |\nabla h (x, \tau)| + 2 |\chi (x)||\nabla \chi (x)|)\, dx\, d\tau\, .
\end{align*}
From the latter we recover, using Young's inequality, 
\begin{align*}
& \int e^{h(x, t)} w^2 (x, t) \chi^2 (x)\, dx\\
&\qquad \leq 
- (4 \alpha)^{-1} \int_s^t \int e^{h (x, \tau)} \chi^2 (x) w^2 (x, \tau) |\nabla h (x, \tau)|^2\, dx\, d\tau\\
&\qquad\quad + C (\lambda) \int_s^t \int e^{h (x, \tau)} \chi^2 (x) w^2 (x, \tau) |\nabla h (x, \tau)|^2\, dx\, d\tau\\
&\qquad\quad  + C (\lambda) \int_s^t \int e^{h(x, \tau)} w^2 (x, \tau) |\nabla \chi (x)|^2\, dx\, d\tau\, ,
\end{align*}
where $C (\lambda)$ is a constant which only depends on $\lambda$. Hence, choosing $\alpha$ sufficiently small, depending only on
$\lambda$, we conclude
\begin{align}\label{e:energia_Ar}
\int e^{h (x,t)} w^2 (x, t) \chi^2 (x)\, dx &\leq C (\lambda) \int_s^t \int e^{h(x, \tau)} w^2 (x, \tau) |\nabla \chi (x)|^2\, dx\, d\tau\, .
\end{align}
Next, consider a cut-off function $\beta \in C^\infty_c (B_2)$ which is identically $1$ on $B_1$ and, for any $R>0$, set
$\chi (x) := \beta (\frac{x}{R})$. Insert the latter  in \eqref{e:energia_Ar}. Using that $|\nabla \chi (x)|\leq C R^{-1}$ and the fact that $e^h w^2$ is integrable, when we let $R\uparrow \infty$ we conclude
\[
\int e^{h(x,t)} w^2 (x,t)\, dx \leq 0\, .
\]
This implies that $w (\cdot, t)\equiv 0$ for almost every $t\geq s$ and thus concludes the proof.  
\end{proof}

\section{Proof of the De Giorgi--Nash theorem}

By standard Sobolev space theory, cf.~\cite[Sec.~7.2\&7.3]{Evans}, $v|_{B_{3r} (z)}$ is the unique minimum of the energy functional
\begin{equation}
\mathscr{E} (w) := \int_{B_{3r} (z)} \partial_i w (x) A_{ij} (x) \partial_j w (x)\, dx 
\end{equation}
among those functions $w\in W^{1,2} (B_{3r} (z))$ such that $w-v \in W^{1,2}_0 (B_{3r} (z))$. If we first extend $A$ and $v$ and we then regularize them by convolution to $A^\varepsilon$ and $v^\varepsilon$, we can consider the corresponding solutions of the regularized elliptic equations, using the same arguments of the last section (a proof of the regularity of the solution can be found, for instance, in \cite[Sec.~6.3]{Evans}. By the maximum principle (cf.~again \cite[Sec.~6.4]{Evans}), we will have $\|v^\varepsilon\|_\infty \leq \|v\|_\infty$ and $v^\varepsilon$ will be a minimizer of the corresponding regularized energy functional.
Since $\|v^\varepsilon\|_{W^{1,2} (B_{3r} (z))}$ would be uniformly bounded, we can assume, after extraction of a convergent subsequence, that $v^\varepsilon$ converges weakly in $W^{1,2} (B_{3r} (z))$ to some $\bar v$, which in turn is a distributional solution 
of \eqref{e:elliptic} subject to the constraint $\bar v - v \in W^{1,2}_0 (B_{3r} (z))$. As such, $\bar v$ must be a minimizer of the same variational problem as $v|_{B_{3r} (z)}$, which we already know to be unique. Thus $\bar v = v|_{B_{3r} (z)}$ and so it suffices to prove Theorem~\ref{t:DG-Nash} under the a priori assumption that $A$ and $v$ are smooth. 
Moreover, by rescaling $v$ to $\bar v (x) := v (rx +z)$, we can assume that $r=1$ and $z=0$.

\medskip

Under these additional assumptions, we can consider $v (x,t) := v(x)$ as a stationary smooth solution of the parabolic problem
\begin{equation}\label{e:para_ell}
\partial_t u (x,t) = \partial_j (A_{ij} (x) \partial_i u (x,t))
\end{equation}
on $C_3 := B_3 \times (0, \infty)$. Theorem~\ref{t:DG-Nash} is then a simple corollary of Theorem~\ref{t:main_reg_2}
and the following proposition, which is a direct outcome of the theory developed by Nash.

\begin{proposition}[$L^\infty$ estimate for the initial-boundary value problem]\label{p:stima_2_sol}
There is a constant $C$ depending only upon $n$ and $\lambda$ with the following property. Assume that $A_{ij} (x,t)$ satisfies the assumptions of
Theorem~\ref{t:main_reg_2} and $w: \overline B_2 \times [0, \infty[\to \mathbb R$ is a smooth bounded solution of \eqref{e:parabolic} with $w (x,0)=0$ for every $x$. Then
\begin{equation}\label{e:bordo_lontano}
\| w(\cdot, t)\|_{L^\infty (B_1)} \leq C \|w\|_\infty t^{\sfrac{1}{2}}\, .
\end{equation}
\end{proposition}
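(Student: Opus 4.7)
The claim is a localized short-time $L^\infty$ estimate: since $w$ starts at zero on $B_2$ and is everywhere controlled by $\|w\|_\infty$, the behaviour of $w$ on $B_1$ at time $t$ should reflect only the boundary values on $\partial B_2$, which by Nash's moment bound \eqref{e:moment_bound} can influence $B_1$ only to order $t^{1/2}$. The plan is to realize this heuristic by (i) a Caccioppoli/energy estimate with a spatial cutoff giving an $L^2$ bound of order $t^{1/2}\|w\|_\infty$ on $B_{3/2}$, and then (ii) a Moser iteration upgrading this $L^2$ control to $L^\infty$ control on $B_1$, with constants depending only on $n$ and $\lambda$.

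First I would choose a cutoff $\eta\in C^\infty_c(B_2)$ with $\eta\equiv 1$ on $B_{3/2}$, $0\le\eta\le 1$, test the equation against $\eta^2 w$ and integrate over $B_2$ (the boundary term on $\partial B_2$ vanishes because $\eta=0$ there). Using the symmetry and ellipticity of $A$ and absorbing the cross term $\eta w\,\nabla\eta\cdot A\nabla w$ by Young's inequality, this yields
$$
\sup_{0\le\tau\le t}\int\eta^2 w^2(\cdot,\tau)\,dx \;+\; \int_0^t\!\!\int\eta^2|\nabla w|^2\,dx\,d\tau \;\le\; C\int_0^t\!\!\int|\nabla\eta|^2 w^2\,dx\,d\tau \;\le\; C\|w\|_\infty^2\, t,
$$
with $C$ depending only on $n$ and $\lambda$ (we use $w(\cdot,0)=0$ to drop the initial term). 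In particular $\|w(\cdot,\tau)\|_{L^2(B_{3/2})}^2 \le C\|w\|_\infty^2\tau$ for every $\tau\in(0,t]$, and integrating in $\tau$ gives the spacetime bound $\|w\|_{L^2(B_{3/2}\times(0,t))}^2\le C\|w\|_\infty^2 t^2$.

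The second step is Moser's local boundedness estimate, which for solutions of \eqref{e:parabolic} with the present ellipticity hypothesis gives
$$
\|w\|_{L^\infty(B_1\times(t/2,t))}^2 \;\le\; \frac{C}{t}\int_0^t\!\!\int_{B_{3/2}} w^2\,dx\,d\tau,
$$
with $C=C(n,\lambda)$. This is obtained by testing the equation against $\eta_k^2 |w|^{2p-2}w$ for a shrinking sequence of cutoffs $\eta_k$ with nested supports between $B_{3/2}$ and $B_1$, combining the resulting Caccioppoli estimate for $|w|^p$ with the Nash inequality \eqref{e:stein} (or equivalently with the Sobolev embedding $W^{1,2}\hookrightarrow L^{2n/(n-2)}$) to produce an iteration of the form $\|w\|_{L^{2p\sigma}(Q_{k+1})}\le C_k^{1/p}\|w\|_{L^{2p}(Q_k)}$ with $\sigma=1+2/n>1$, and iterating along $p=p_0\sigma^k$. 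Inserting the $L^2$ bound of Step~1 then gives
$$
\|w(\cdot,t)\|_{L^\infty(B_1)} \;\le\; \|w\|_{L^\infty(B_1\times(t/2,t))} \;\le\; C t^{-1/2}\bigl(C\|w\|_\infty^2 t^2\bigr)^{1/2} \;=\; C\|w\|_\infty\, t^{1/2},
$$
which is precisely \eqref{e:bordo_lontano}.

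The main obstacle is executing the Moser iteration so that the constants depend only on $n$ and $\lambda$ and not on the smoothness bounds of $A$; this is classical but requires careful tracking of the interplay between the shrinking cutoffs and the growing exponents (see for instance \cite{Moser1961,Ba:book}). A more Nash-flavoured alternative, which avoids Moser iteration, is to extend $A$ to a matrix $\widetilde A$ on $\mathbb R^n$ with the same ellipticity, apply Duhamel's formula to the equation satisfied by $\eta w$ on $\mathbb R^n$ (whose commutator forcing is supported in $B_2\setminus B_{3/2}$, at distance $\ge 1/2$ from $B_1$), and then exploit the fundamental solution $\widetilde S$ of $\widetilde A$ together with the moment estimate \eqref{e:moment_bound} and its higher-order analogues to read off the $t^{1/2}$ decay directly; however, justifying the higher moment decay rigorously is itself a nontrivial extension of the machinery of Proposition~\ref{p:stime}.
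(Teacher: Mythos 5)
Your two-step strategy (Caccioppoli energy estimate plus Moser iteration) is a genuinely different route from the paper's, which instead invokes the boundary representation kernel $\rho$ for the initial-boundary value problem on the cylinder $C_2$ (cited from Friedman), feeds fundamental solutions $S(\cdot,\cdot,x_0,t_0)$ with $t_0<0$ into that kernel, and then uses the moment bound \eqref{e:moment_bound} twice to conclude that $\int_{\partial C_2}|x-y(\xi)|\rho(x,t,\xi)\,d\xi\leq 2C_4 t^{1/2}$; since $w$ vanishes on the bottom of the cylinder, bounding the $\rho$-mass on the lateral boundary (where $|x-y(\xi)|\geq 2-|x|\geq 1$) finishes the proof. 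Your approach avoids the representation kernel entirely and is ``Moser-flavored'' rather than ``Nash-flavored'': it does not rely on the machinery of Proposition~\ref{p:stime}. Your Step~1 Caccioppoli estimate is correct and gives $\int_{B_{3/2}}w^2(\cdot,\tau)\,dx\leq C\|w\|_\infty^2\tau$, hence $\int_0^t\int_{B_{3/2}}w^2\leq C\|w\|_\infty^2 t^2$.

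There is, however, a gap in your Step~2. The asserted Moser bound
\[
\|w\|_{L^\infty(B_1\times(t/2,t))}^2 \leq \frac{C}{t}\int_0^t\!\!\int_{B_{3/2}} w^2\,dx\,d\tau,\qquad C=C(n,\lambda),
\]
is not the right form. If you run the iteration on the thin slab $B_{3/2}\times(0,t)$ with shrinking spatial \emph{and temporal} cutoffs (the latter needed because the slab is far from being a parabolic cylinder when $t$ is small), the temporal cutoff has slope of order $t^{-1}$, and the product over the iteration steps yields a prefactor $t^{-(n+2)/2}$, not $t^{-1}$. Feeding this into Step~1 gives only $\|w(\cdot,t)\|_{L^\infty(B_1)}\lesssim \|w\|_\infty t^{(2-n)/4}$, which is useless for $n\geq 2$. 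The fix is to exploit $w(\cdot,0)=0$ a second time, \emph{inside} the Moser iteration: with zero initial data you may test against $\eta_k^2|w|^{2p-2}w$ with \emph{purely spatial} cutoffs and integrate in $\tau$ from $0$ without any temporal cutoff; the Caccioppoli inequality for $|w|^p$ then holds on $(0,t)$ with no boundary term at $\tau=0$, the iteration constant $\prod_k(Cp_k4^k)^{1/(2p_k)}$ is a pure $C(n,\lambda)$ with no $t$-dependence, and one obtains
\[
\|w\|_{L^\infty(B_1\times(0,t))}^2 \leq C(n,\lambda)\int_0^t\!\!\int_{B_{3/2}} w^2\,dx\,d\tau \leq C\|w\|_\infty^2 t^2,
\]
which gives the (even stronger) conclusion $\|w(\cdot,t)\|_{L^\infty(B_1)}\leq C\|w\|_\infty t$ and in particular \eqref{e:bordo_lontano}. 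So your route does work, but only after recognizing that the zero initial condition must be used at both stages; with only the generic Moser slab bound the argument breaks for $n\geq 2$. Your closing remark about a Duhamel/fundamental-solution alternative is closer to the paper's actual proof; you are right that it needs moment control of the free fundamental solution, which is exactly what the paper's Proposition~\ref{p:stime} provides, and the paper implements this via the boundary kernel $\rho$ rather than via higher moments.
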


With Proposition~\ref{p:stima_2_sol} at hand, it is easy to conclude Theorem~\ref{t:DG-Nash}. Indeed, multiply $v$ by a smooth cut-off function $\varphi\in C^\infty_c (B_3)$ taking values in $[0,1]$ and identically $1$ on $B_2$. Extend $\varphi v$ smoothly on $\mathbb R^n$ by setting it equal to $0$ on $\mathbb R^n \setminus B_3$. Let $z$ be the solution on $\mathbb R^n \times \mathbb [0, \infty)$ of 
\eqref{e:para_ell} with $z (\cdot, 0) = \varphi v$. Note that $\|z\|_\infty \leq \|v\|_\infty$ by the maximum principle. We can apply 
Proposition~\ref{p:stima_2_sol} to $w (\cdot, t):= z (\cdot, t) -  v (\cdot)$ to conclude 
\begin{equation}\label{e:quasi_ell_1}
\|v - z(\cdot, t)\|_{L^\infty (B_1)} \leq 2 C\|v\|_\infty t^{\sfrac{1}{2}}\, .
\end{equation}
On the other hand, by Theorem~\ref{t:main_reg_2} we have
\begin{equation}\label{e:quasi_ell_2}
|z (x_1, t) - z (x_2, t)| \leq C \|v\|_\infty \frac{|x_1-x_2|^\alpha}{t^{\sfrac{\alpha}{2}}}\, .
\end{equation}
In particular, for $x_1, x_2\in B_1$, we can combine the last two inequalities to conclude
\begin{equation}\label{e:ottimizza_2}
|v (x_1) - v (x_2)| \leq C \|v\|_\infty \left( t^{\sfrac{1}{2}} +  \frac{|x_1-x_2|^\alpha}{t^{\sfrac{\alpha}{2}}}\right) \qquad \forall t>0\, .
\end{equation}
Choose now $t^{\sfrac{1}{2} + \sfrac{\alpha}{2}} = |x_1-x_2|^\alpha$ to conclude that
\begin{equation}\label{e:fatto}
|v (x_1) - v (x_2)|\leq C \|v\|_\infty |x_1-x_2|^{\alpha / (1+\alpha)}\, .
\end{equation}
So, to complete the proof of Theorem~\ref{t:main_reg_2} we only need to show Proposition~\ref{p:stima_2_sol}.

\begin{proof}[Proof of Proposition~\ref{p:stima_2_sol}] Consider any smooth solution $u$ of \eqref{e:parabolic} in 
$C_2 := \bar B_2\times [0, \infty[$.
The boundary values on $\partial C_2$ determine then the solution through a representation formula of the form
\[
u (x, t) := \int_{\partial C_2} u(\xi) \rho (x,t, \xi)\, d\xi\, ,
\]
where the integral is taken with respect to the standard surface measure on the boundary $\partial C_2$, cf.~\cite[Sec.~1.4]{Friedman}. If we set $\xi = (y (\xi), \tau (\xi))$, then the kernel $\rho (x,t, \xi)$ satisfies the conditions
\begin{itemize}
\item[(i)] $\int \rho (x,t, \xi)\, d\xi=1$;
\item[(ii)] $\rho (x,t, \xi)\geq 0$;
\item[(iii)] $\rho (x,t,\xi) =0$ if $t\leq \tau (\xi)$.
\end{itemize}
Since the fundamental solutions $S(x,t, x_0, t_0)$ with $t_0<0$ are also smooth solutions of the parabolic equation in the cylinder $C_2$, we reach the identity
\begin{equation}
S (x,t, x_0, t_0) = \int_{\partial C_2} S (y (\xi), \tau (\xi), x_0, t_0) \rho (x,t, \xi)\, d\xi\, .
\end{equation}
Multiplying by $|x-x_0|$ and integrating we then have
\begin{equation}
\int |x-x_0| S (x,t, x_0, t_0)\, dx_0 = \int\int_{\partial C_2} |x-x_0| S (y (\xi), \tau (\xi), x_0, t_0) \rho (x,t, \xi)\, d\xi\, dx_0\, .
\end{equation}
In particular, using the moment bound \eqref{e:moment_bound} we conclude
\begin{equation}\label{e:mom_ell}
\int \int_{\partial C_2} (|x-y (\xi)| - |x_0 - y (\xi)|)  S (y (\xi), \tau (\xi), x_0, t_0) \rho (x,t,\xi)\, d\xi\, dx_0\leq C_4 (t-t_0)^{\sfrac{1}{2}}\, .
\end{equation}
From the latter inequality, using again the moment bound, we achieve
\begin{align}
& \int_{\partial C_2} |x- y (\xi)|\rho (x,t, \xi)\, d\xi\nonumber\\
&\qquad =  \int \int_{\partial C_2} |x-y (\xi)| S (y (\xi), \tau (\xi), x_0, t_0) \rho (x,t,\xi)\, d\xi\, dx_0\nonumber\\
&\qquad \leq   C_4 (t-t_0)^{\sfrac{1}{2}} + \int_{\partial C_2} \int |x_0 - y (\xi)|S (y (\xi), \tau (\xi), x_0, t_0) \, dx_0\, \rho (x,t, \xi)\, d\xi\nonumber\\
&\quad\;\;\stackrel{\eqref{e:moment_bound}}{\leq} 2 C_4 (t-t_0)^{\sfrac{1}{2}}\, .\label{e:mom_ell_2}
\end{align}
Letting $t_0$ go to $0$, we thus conclude
\begin{equation}\label{e:mom_ell_3}
\int_{\partial C_2} |x- y (\xi)|\rho (\xi, x,t)\, d\xi \leq 2 C_4 t^{\sfrac{1}{2}}\, .
\end{equation}
Let $\mathcal{L}: = \partial C_2 \setminus B_2 (0)\times \{0\}$ and observe that $|x-y (\xi)|\geq 2-|x|$ if $\xi\in \mathcal{L}$. Thus, using \eqref{e:mom_ell_3} and the fact that $\rho\geq 0$, we conclude
\begin{equation}\label{e:norm_ell_4}
2 C_4 t^{\sfrac{1}{2}} \geq \int_{\mathcal{L}} |x- y (\xi)|\, \rho (x, t, \xi)\, d\xi \geq (2 - |x|) \int_{\mathcal{L}} \rho (x,t, \xi)\, d\xi\, .
\end{equation}
Consider now a solution $w$ as in the proposition. Since $w =0$ on $B_2 (0) \times \{0\}$, for any $(x,t)$ we have
\begin{equation}
|w (x,t)|\leq \int_{\mathcal{L}} \rho (x,t,\xi) |w (\xi)|\, d\xi \stackrel{\eqref{e:norm_ell_4}}{\leq} \frac{C t^{\sfrac{1}{2}}}{2-|x|} \|w\|_\infty\, .
\end{equation}
The latter inequality for $x\in B_1 (0)$ obviously implies \eqref{e:bordo_lontano}.
\end{proof}

\chapter{The other papers in pure mathematics}

\section{A path space and Stiefel--Whitney classes} In 1955 Whitney communicated to the Proceedings of the National Academy of Sciences a two pages note of Nash, \cite{Nash1955}, where he gives a very direct proof of the topological invariance of the Stiefel--Whitney classes of smooth manifolds, a theorem proved three years before by Thom (cf.~\cite{Thom}). For the definition of Stiefel--Whitney classes of a smooth vector bundle we refer to \cite{MilSta}: given a differentiable manifold its Stiefel--Whitney classes are then the corresponding classes of the tangent bundle and the theorem of Thom shows that such classes are a topological invariant. In fact, Thom derived this consequence from a stronger theorem, namely that the homotopy type of a tangent bundle as fiber space over a topological manifold $M$ is the same for any differentiable structure on $M$. Nash shows that this conclusion can be inferred from the definition of an appropriate path space $X$ of the topological manifold $M$, where, loosely speaking, the tangent bundles can be embedded.

\begin{definition}\label{d:path_space}
Given a topological manifold $M$, $X$ is the space of continuous mappings $\gamma: [0,1]\to M$ which do not ``recross'' the starting point $\gamma (0)$. $X$ is endowed with the topology induced by uniform convergence and with a natural projection map $\pi: X \to M$ defined by $\pi (\gamma) := \gamma (0)$. 
\end{definition}

Given a differentiable structure on $M$, we can define on its tangent bundle a smooth Riemann tensor $g$ and use it to ``embed the tangent bundle in $X$'' (more precisely, we will embed the sphere bundle in $X$, see below). To this aim, first of all we
assume, by suitably modifying $g$, that
\begin{itemize}
\item[(I)] any pair of points in the Riemannian manifold $(M,g)$ with geodesic distance no larger than $1$ can be joined by a unique geodesic segment of length $1$.
\end{itemize} 
Hence we can consider the subset $G$ of $X$ consisting of those paths which are geodesic segments with length $1$ parametrized with arc-length. Of course, the sphere bundle on $M$ given by the tangent vectors $v$ with unit length 
is isomorphic to $G$ as fiber bundle over $M$. Nash's main observation can then be stated as follows. 

\begin{theorem}\label{t:SW_classes}
If (I) holds, then 
$G$ is a fiber deformation retract of $X$, i.e., there is a continuous map $\Phi : [0,1]\times X \to X$ such that 
\begin{itemize}
\item[(a)] $\Phi (0, \gamma) = \gamma$ for every $\gamma \in X$;
\item[(b)] $\Phi (1, \gamma) \in G$ for every $\gamma \in X$;
\item[(c)] $\Phi (1, \eta) = \eta$ for every $\eta \in G$;
\item[(d)] $\pi (\Phi (s, \gamma)) = \pi (\gamma)$ for every $\gamma \in X$ and every $s\in [0,1]$.  
\end{itemize}
\end{theorem}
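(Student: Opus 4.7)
The plan is to construct $\Phi$ fibrewise, building a local model in $T_pM$. Since the statement is invariant under conformal rescaling of $g$, I first arrange that for every $p\in M$ the exponential map $\exp_p$ restricts to a diffeomorphism $\overline D_1\subset T_pM\to\overline B_1(p)\subset M$; write $L_p:=\exp_p^{-1}$ for its inverse. The bundle structures on $X$ and $G$ let me build $\Phi$ pointwise in $p$ and assemble, so I focus on a single fibre.

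The core is the interior formula, valid when $\gamma\in X$ has $\gamma([0,1])\subset\overline B_1(p)$ for $p:=\gamma(0)$. Set $v_t:=L_p\gamma(t)\in\overline D_1$, so $v_0=0$ and $v_t\neq 0$ for $t>0$ by non-recrossing; write $\hat v_t:=v_t/|v_t|$ on $(0,1]$ and $u(\gamma):=v_1/|v_1|\in S_pM$, the latter well defined by non-recrossing and equal to $\dot\eta(0)$ when $\gamma=\eta\in G$. Define
\[
\Phi(s,\gamma)(t):=\exp_p\!\Bigl(\bigl((1-s)|v_t|+st\bigr)\cdot U_s(\gamma,t)\Bigr),
\]
where $U_s(\gamma,t)\in S_pM$ is a jointly continuous family of unit vectors with $U_0(\gamma,t)=\hat v_t$ and $U_1(\gamma,t)=u(\gamma)$. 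The radial factor lies in $(0,1]$ for $t>0$, so $\Phi(s,\gamma)(t)\neq p$; at $s=1$ we get the unit-speed radial geodesic $t\mapsto\exp_p(tu(\gamma))\in G$; when $\gamma=\eta\in G$ we have $\hat v_t\equiv u(\eta)$, so $U_s(\eta,t)\equiv u(\eta)$ and $\Phi(s,\eta)=\eta$; and $\pi\Phi=\pi$ trivially.

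For general $\gamma\in X$ I would precede the interior homotopy, rescaled to $s\in[\tfrac12,1]$, by a clipping homotopy on $s\in[0,\tfrac12]$ that deforms $\gamma$ into a path in $\overline B_1(\gamma(0))$: use a continuous retraction $\pi_p$ of a tubular neighbourhood of $\overline B_1(p)$ onto $\overline B_1(p)$ obtained by clipping radial geodesics from $p$ at distance $1$. Non-recrossing is preserved since $\pi_p^{-1}(p)=\{p\}$, and the global dependence on $p$ is smooth because $\exp_p$, $L_p$ and $\pi_p$ all depend smoothly on $p$ through the Riemann tensor.

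The main obstacle is the construction of the continuous family $U_s(\gamma,t)$: the na\"ive choice $U_s:=\bigl((1-s)\hat v_t+s u(\gamma)\bigr)/|(1-s)\hat v_t+s u(\gamma)|$ breaks precisely when $\hat v_t=-u(\gamma)$ at $s=\tfrac12$. The resolution exploits that $t\mapsto\hat v_t$ is a continuous map from the contractible interval $(0,1]$ to $S_pM$ (well defined thanks to non-recrossing), so it lifts canonically to the universal cover $\widetilde{S_pM}\to S_pM$; inside the cover a linear interpolation toward the constant lift of $u(\gamma)$ is unambiguous and descends to the required $U_s(\gamma,t)$. Verifying that the resulting $\Phi(s,\gamma)(t)$---which involves the product of $U_s$ with a radial factor vanishing at $t=0$---is jointly continuous on $[0,1]\times X\times[0,1]$ in the sup-norm topology, and in particular extends continuously at $t=0$ where the radial factor absorbs any ambiguity in $U_s$, is the technical heart of the proof. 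Once this is in place, all the remaining pieces glue together to give $\Phi$.
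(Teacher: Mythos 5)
The paper does not give a proof here (it states that Nash ``sketches very briefly'' an ``elementary exercise''), so your proposal must stand on its own. The skeleton is sound: pull back to $T_pM$ via $\exp_p^{-1}$, interpolate the radius toward the arc-length parametrization and the direction toward $u(\gamma)=v_1/|v_1|$, and observe that the radial factor $(1-s)|v_t|+st$ stays positive for $t>0$ (so non-recrossing is preserved) while vanishing at $t=0$ (so that the behaviour of $U_s$ near $t=0$ is irrelevant to continuity). Your verification of (a)--(d) modulo the construction of $U_s$ is correct.

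The gap is in the construction of $U_s$. You propose to lift $t\mapsto\hat v_t$ to the universal cover $\widetilde{S_pM}$ and then ``linearly interpolate'' there. This only helps when $\dim M=2$: then $S_pM\cong S^1$, the universal cover is $\mathbb R$, and linear interpolation (with basepoint fixed so the lift of $\hat v_1$ equals the constant lift of $u(\gamma)$) makes sense. But for $\dim M\geq 3$, $S_pM\cong S^{n-1}$ is already simply connected, so $\widetilde{S_pM}=S_pM$: the lift is the identity, and there is no ``linear interpolation'' on a sphere --- the antipodal problem you flagged persists, unaddressed. Fortunately the fix is simpler than what you tried: take $U_s(\gamma,t):=\hat v_{(1-s)t+s}$ (reparametrize the domain rather than interpolate in the target). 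Then $U_0=\hat v_t$, $U_1=\hat v_1=u(\gamma)$, $U_s(\eta,t)\equiv u(\eta)$ for $\eta\in G$, and $U_s$ is defined and continuous whenever $(1-s)t+s>0$, i.e.\ away from $(s,t)=(0,0)$, exactly where the vanishing radial factor saves you. A secondary gap is the ``clipping'' step: a retraction of a tubular neighbourhood of $\overline B_1(p)$ onto $\overline B_1(p)$ does not suffice, since $\gamma$ may leave any fixed neighbourhood, and ``clipping radial geodesics from $p$ at distance $1$'' is only meaningful inside the injectivity radius at $p$, which hypothesis (I) bounds below by $1$ but not by the diameter of $M$. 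You would need a continuous-in-$(p,\gamma)$ way of producing, from an arbitrary $\gamma\in X$, a curve in $\overline B_1(p)$ with the same germ at $0$ --- this deserves an explicit construction rather than an appeal to a tubular-neighbourhood retraction.
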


The proof, which Nash sketches very briefly, is an elementary exercise.

\section{Le probl\`eme de Cauchy pour les \'equations\\ diff\'erentielles d'un fluide g\'en\'eral}

In 1962, four years after his last masterpiece on the continuity of solutions to parabolic equations, Nash published a twelve pages paper in French, whose aim was to prove the short-time existence of smooth solutions to the compressible 
Navier--Stokes equations for a viscous heat-conducting fluid. More precisely he considers the following system of five partial differential equations, in the unknowns $\rho, v$ and $T$ which represent, respectively, the density, the velocity and the temperature of the fluid and are therefore functions of the time $t$ and the space $x\in \mathbb R^3$:
\begin{equation}\label{e:comp_NS}
\left\{
\begin{array}{l}
\partial_t \rho + {\rm div}_x\, (\rho v) = 0\, ,\\ \\
\rho \partial_t v_i + \rho \left[v_j \partial_j v_i\right] + \partial_i p = \partial_j \sigma_{ij} + \rho F_i\, , \\ \\
\partial_t T + v_j \partial_j T = \frac{1}{\rho T S_T} \left[{\rm div}\, (\varkappa \nabla T) + \rho^2 T S_\rho {\rm div}\, v\right] + \frac{2\eta}{\rho T S_T} \mathcal{S} (v)_{ij} \mathcal{S} (v)_{ij} + \frac{\zeta}{\rho T S_T}  ({\rm div}\, v)^2\, .
\end{array}
\right.
\end{equation}
In the system above\footnote{The first two equations are the first two equations from \cite[p.~487, (1)]{Nash1962} whereas the third should correspond to \cite[p.~488, (1c)]{Nash1962}. The latter is derived by Nash from the third equation in 
\cite[p.~487, (1)]{Nash1962}, which in turn corresponds to the classical conservation law for the entropy, see, for instance, \cite[(49.5)]{Landau}. The third equation of \cite[p.~487, (1)]{Nash1962} contains two typos, which disappear in \cite[p.~488, (1c)]{Nash1962}. The latter however contains another error: Nash has $\eta$ and $\zeta$ in place of  $\frac{\eta}{\rho T S_T}$ and $\frac{\zeta}{\rho T S_T}$, but it is easy to see that this would not be consistent with the way he describes its derivation.  

Nash's error has no real consequence for the rest of the note, since he treats the coefficients in front of $\mathcal{S} (v)_{ij} \mathcal{S} (v)_{ij}$
and  $({\rm div}\, v)^2$ as arbitrary real analytic functions of $\rho$ and $T$ and the same holds for $\frac{\eta}{\rho T S_T}$ and $\frac{\zeta}{\rho T S_T}$ under the assumption $S_T \neq 0$. The latter inequality is needed in any case even to treat Nash's ``wrong'' equation for $T$.}:
\begin{itemize}
\item[(i)] We use Einstein's convention on repeated indices;
\item[(ii)] The pressure $p$ is a function of the density $\rho$ and the temperature $T$;
\item[(iii)] $\sigma_{ij}$ is the Cauchy stress tensor, given by the formula
\begin{equation}
\sigma_{ij} = \eta \left(\partial_i v_j + \partial_j v_i\right) + \left(\zeta - {\textstyle{\frac{2}{3}}} \eta\right) {\rm div}\, v\; \delta_{ij} \, ,
\end{equation}
with $\eta$ and $\zeta$ (the viscosity coefficients) which are functions of $\rho$ and $T$;
\item[(iv)] $F= (F_1, F_2, F_3)$ is the external force acting on the fluid;
\item[(v)] $\varkappa$, the heat conductivity, is a function of the temperature $T$ and the density $\rho$;
\item[(vi)] The entropy $S$ is a function of $\rho$ and $T$, whereas $S_T$ and $S_\rho$ are the corresponding partial derivatives with respect to $T$ and $\rho$;
\item[(vii)] $\mathcal{S} (v)$ is the traceless part of the symmetrized derivative of $v$, more precisely
\begin{equation}
\mathcal{S} (v)_{ij} = \frac{1}{2} \left[ \partial_i v_j + \partial_j v_i - {\textstyle{\frac{2}{3}}} {\rm div}\, v\; \delta_{ij}\right]\, .
\end{equation}
\end{itemize}
The functions $\eta, \zeta, \varkappa, S$ and $p$ are thus known and determined by the thermodynamical properties of the fluid. They display a rather general behavior, although they must obey some restrictions: we refer to the classical textbook \cite{Landau} for their physical meaning and for the derivation of the equations. In his paper Nash assumes that all the functions $\eta, \zeta, \varkappa, p, S$ and $S_T$ are real analytic and positive. \footnote{Indeed Nash does not mention the positivity of $S_T$, although this is is certainly required by his argument when he reduces the existence of solutions of \eqref{e:comp_NS} to the existence of a solutions of a suitable parabolic system, cf.~\cite[(6) and (7)]{Nash1962}: the equation in $T$ is parabolic if and only if $\frac{\varkappa}{\rho T S_T}$ is positive.

I also have the impression that his argument does not really need the positivity of $S$ and $p$, although these are quite natural assumptions from the thermodynamical point of view.}

Similarly, the external force $F$ is given. 
Nash considers then the Cauchy problem for \eqref{e:comp_NS} in the whole threedimensional space, namely he assumes
that the density, the velocity and the pressure are known at a certain time, which without loss of generality we can assume to be the time $0$. This problem has received a lot of attention in the last 30 years and we refer to the books \cite{Feireisl,Lions} for an account of the latest developments in the mathematical treatment of \eqref{e:comp_NS}. 

\medskip

In order to give his existence result, Nash first passes to the Lagrangian formulation of \eqref{e:comp_NS} and he then eliminates the density $\rho$. Subsequently he shows the existence, for a finite time, of a (sufficiently) smooth solution of the resulting system of equations under the assumption that the initial data and the external force are (sufficiently) smooth. In particular, he writes the system as a second-order parabolic linear system
of partial differential equations with variable coefficients, where the latter depend upon the unknowns (it must be noted that such dependence involves first-order spatial derivatives of the unknowns and their time integrals). The existence result is therefore achieved through a fixed point argument, taking advantage of classical estimates for second-order linear parabolic systems. 

\section{Analyticity of the solutions of implicit\\ function problems with analytic data}

In 1966 Nash turned again one last time to the isometric embedding problem, addressing the real analytic case. More precisely, his aim was to prove that, if in Theorem~\ref{t:Ck_1} we assume that the metric $g$ is real analytic, then there is a real analytic isometric embedding of $(\Sigma, g)$ in a sufficiently large Euclidean space. The most important obstacle in extending the proof of \cite{Nash1956} to the real analytic case is the existence of a suitable smoothing operator which replaces the one in Section \ref{s:smoothing} in the real analytic context. 

In his twelve pages paper Nash gives indeed two solutions to the problem. Most of the paper is devoted to prove the existence of a suitable (real) analytic smoothing operator
on a general compact real analytic manifold. But he also remarks that the real analytic case of the isometric embedding problem for compact Riemannian manifolds $\Sigma$ can be reduced to the existence of real analytic isometric embeddings for real analytic Riemannian manifolds which are tori, at the price of enlarging the dimension of the Euclidean target: it simply suffices to take a real analytic immersion of $\Sigma$ into $\mathbb T^{2n+1}$ using Whitney's theorem and then to extend the real analytic Riemannian metric $g$ on $\Sigma$ to the whole torus (a problem which can be solved using Cartan's work \cite{Cartan1957}). 
On the other hand the existence of a suitable regularizing analytic operator on the torus is an elementary consequence of the Fourier series expansion. 

Nash leaves the existence of real analytic embeddings for noncompact real analytic Riemannian manifolds as open and it points out that ``\ldots The case of non-compact manifolds seem to call for a non-trivial generalization of the methods''. The noncompact case was indeed settled later by Gromov (cf.~\cite{GromovBook}).

\section{Arc structure of singularities}

In 1968 Nash wrote his last paper in pure mathematics. Although it was published 28 years later (see \cite{Nash1996}), its content was promoted by Hironaka and later by Lejeune-Jalabert (cf.~\cite{Jalabert}): thus the content of Nash's work became known very much before it was finally published. Nash's idea is to use the space of complex analytic arcs in a complex algebraic variety as a tool to study its singularities and in particular their resolutions (whose existence had been established only four years before Nash's paper in the celebrated work of Hironaka, \cite{Hironaka}). In his paper he formulated a question which became known in algebraic  geometry as Nash's problem. A complete solution of the problem has not yet been reached although a lot of progress has been made in recent years (we refer the reader to the very recent survey \cite{deFernex}). 

\medskip

Nash's problem (and his ideas) are nowadays formulated for varieties (in fact, schemes) on a general algebraically closed field of any characteristic. However \cite{Nash1996} is concerned with complex varieties and in this brief description we will stick to the latter case. Take therefore a complex variety $V$. The space $X$ of arcs in $V$ is then given by the jets of holomorphic maps $x: \Omega \to V$ where $\Omega$ is an arbitrary open
subset of $\mathbb C$ containing the origin.\footnote{In the modern literature it is customary to take an equivalent definition
of $X$ through formal power series; we refer to \cite{KollarNemethi} for the latter
and for several important subtleties related to variants of the Nash arc space.}
An interesting case is that where
$W= V_s$ is the set of singularities of $V$: $X(V_s)$ consists of those arcs which ``pass through'' a singularity. In \cite{Nash1996} Nash realized that this space has, roughly speaking, the structure of an ``infinite dimensional complex variety'' (for a precise formulation we refer to \cite[p.~32]{Nash1996} or to \cite[Th.~2.6]{deFernex}; see also the earlier work of Greenberg \cite{Greenberg})
which has finitely many irreducible components, cf.~\cite[Prop.~1]{Nash1996}. Nash calls such components {\em arc families}. 

The main idea of Nash is to establish a relation between the arc families of $X(V_s)$ and the irreducible components of the image of $V_s$ through a resolution of the singularities of $V$. More precisely, having fixed a resolution of the singularities $V^* \to V$ (namely a {\em smooth} algebraic variety $V^*$ together with a proper birational map $V^*\to V$), we can look at the components $W^*_1, \ldots\, W^*_L$ of the image $W^*$ of $V_s$ in $V^*$. Nash lifts almost every arc in $X(V_s)$ to a unique arc of $X (W^*)$ and through this procedure establishes the existence of an injective map from the arc families of $X (V_s)$ to the components of $W^*$, cf.~\cite[Prop.~2]{Nash1996}\footnote{In fact, Nash claims the proposition with {\em any} algebraic subset $W$ of $V$ in place of $V_s$ but, although the proposition does hold for $W=V_s$, it turns out to be false for a general algebraic subset $W$; cf.~\cite[Ex.~3.7]{deFernex} for a simple explicit counterexample.}.  As a corollary, given two different resolutions $V^*$ and $V^{**}$, and the corresponding components $W^*_1, \ldots, W^*_{L^*}$, $W^{**}_1, \ldots, W^{**}_{L^{**}}$ of the preimage of $V_s$ in $V^*$ and $V^{**}$, Nash establishes the existence of a birational correspondence $W_j^* \to W^{**}_k$ between those pairs which correspond to the same arc family (cf.~\cite[Cor., p.~38]{Nash1996}). 

As a consequence of his considerations, such components are {\em essential}, i.e., they must appear in any resolution of the singularities of $V$. He then raised the question whether all essential components must correspond to an arc family: this is what algebraic geometers call, nowadays, Nash's problem. In high dimension the answer is known to be negative since the work \cite{IK} and it has been shown very recently that in fact the answer is negative already for some threedimensional varieties, cf.~\cite{dF2013,JK}. It must be noticed that Nash was indeed rather careful with the higher dimensional case of his question: quoting \cite[p.~31]{Nash1996} ``\ldots We do not know how complete is the representation of essential components by arc families''. However in the twodimensional case, i.e. the case of algebraic surfaces, it is a classical fact that there is a unique minimal resolution, namely containing only essential components, and Nash conjectured that each essential component is indeed
related to an arc family.  The conjecture has been  proved only recently in \cite{FdBP}. 

\medskip

Nonetheless the studies on Nash's problem are very far from being exhausted. Indeed the answer has been proved to be affirmative in a variety of interesting cases (see the survey articles \cite{deFernex,PS15}) and several mathematicians are looking for the ``correct formulation'' of the question (see, for instance, \cite{JK}), possibly leading to a complete understanding of the relations between resolutions of the singularities and the arc space. 

\section{The Nash blow-up}

In algebraic geometry the term ``Nash blow-up'' refers to a procedure with which, roughly speaking, the singular points of an algebraic variety are replaced by all the limits of the tangent spaces to the regular points. If $X$ is an algebraic subvariety of $\mathbb C^n$ of pure dimension $r$, the Nash blow-up is then the (closure of the) graph of the Gauss map: more precisely, if we denote by $\mathbf{Gr}\, (r,n)$ the Grassmanian of $r$-dimensional complex linear subspaces of $\mathbb C^n$, then the Nash blow-up of $X$ is the closure of the set of pairs $(x, T_x X)\in \mathbb C^n \times \mathbf{Gr}\, (r, n)$, where $x$ varies among all regular points of $X$ and $T_x X$ denotes the tangent space to $X$ at $x$. Although such definition is given in terms of the embedding, it can be shown that in fact the Nash blow-up of $X$ depends only upon $X$.

A long standing open problem is whether after a finite number of Nash blow-ups every singular variety becomes smooth (indeed, in characteristic $p$ the answer is negative and one needs to state the problem in terms of ``normalized Nash blow-ups'', cf.~\cite{Nobile}). According to \cite{Spivakovsky}, such question was posed by Nash to Hironaka in a private communication in the early sixties and the term ``Nash blow-up'' was first used by Nobile a decade later in \cite{Nobile}, where he proved that the answer to Nash's question is affirmative for curves in characteristic $0$. Building upon the work of Hironaka \cite{Hironaka}, Spivakovsky proved in the late eighties that the answer is affirmative for surfaces in characteristic $0$ for the normalized Nash blow-up (cf.~\cite{Spivakovsky}). In general the question of Nash is still widely open and constitutes an active area of research.

Curiously, it seems that the problem was posed a few years before Nash by Semple in \cite{Semple} and for this reason some authors have used recently the term Semple-Nash blow-up, cf.~for instance \cite{TeisTrang}. 

\bibliographystyle{plain}

\end{document}